\newtheorem{Theorem}{Theorem}[section]
\newtheorem*{Main Theorem}{Main Theorem}
\newtheorem*{Main Proposition}{Main Proposition}
\newtheorem*{Theorem A}{Theorem A}
\newtheorem*{Theorem B}{Theorem B}
\newtheorem*{Theorem C}{Theorem C}
\newtheorem*{Theorem D}{Theorem D}
\newtheorem{Definition}[Theorem]{Definition}
\newtheorem{Proposition}[Theorem]{Proposition}
\newtheorem{Lemma}[Theorem]{Lemma}
\newtheorem{Question}{Question}
\newtheorem{Remark}{Remark}[section]
\newtheorem{Corollary}[Theorem]{Corollary}
\newtheorem{Claim}{Claim}[section]
 \def\NN{{\mathbb N}} 
 \def\RR{{\mathbb R}}
\def\al{\alpha}
\def\be{\beta}
\def\Si{\Sigma}
\def\si{\sigma}
\def\La{\Lambda}
\def\la{\lambda}
\def\De{\Delta}
\def\de{\delta}
\def\om{\omega}
\def\Ga{\Gamma}
\def\ga{\gamma}
\def\th{\theta}
\def\sv{{\rm sv}}
\def\Cl{{\rm Cl}}
  \def\cG{\mathcal{G}} \def\cM{\mathcal{M}} 
   \def\cN{\mathcal{N}} 
    \def\cU{\mathcal{U}}
   \def\cP{\mathcal{P}} 
    \def\cW{\mathcal{W}}
\def\xX{\mathscr{X}}
\def\fF{\mathscr{F}}
\def\diff{\operatorname{Diff}}
\def\dim{\operatorname{dim}}
\def\ind{\operatorname{Ind}}
\def\Sing{\operatorname{Sing}}
\def\orb{\operatorname{Orb}}
\def\supp{\operatorname{Supp}}
\def\vep{{\varepsilon}}
\def\CR{{\mathrm {CR}}}
\def\e{\mathrm{e}}
\def\wt{\widetilde}
\def\txtrm[#1]{\ \textrm{#1}\ }
\def\nibf[#1]{\vskip 1.5em\noindent{\bfseries #1}}
\def\Gibbs{\operatorname{Gibbs}}
\def\PhL{\operatorname{PhL}}
\numberwithin{equation}{section}
\title{Lyapunov stable chain recurrence classes for singular flows}
\author{Shaobo Gan, Jiagang Yang and Rusong Zheng\footnote{SG is partially supported by National Key R\&D Program of China 2022YFA1005801 and NSFC 12161141002. JY is partially supported by CNPq, FAPERJ, PRONEX, MATH-AmSud 220029, and NSFC grants 12271538, 11871487 and 12071202. RZ is partially supported by NSFC grants 12071007, 12101293.}}
\begin{document}
\maketitle

\begin{abstract}
We show that for a $C^1$ generic vector field $X$ away from homoclinic tangencies, a nontrivial Lyapunov stable chain recurrence class is a homoclinic class. The proof uses an argument with $C^2$ vector fields approaching $X$ in $C^1$ topology,  with their Gibbs $F$-states  converging to a Gibbs $F$-state of $X$.
\end{abstract}

\setcounter{tocdepth}{2}
\tableofcontents

\section{Introduction}

The differentiable flows on a closed manifold with the simplest dynamics are generated by gradient-like vector fields and belong to the so-called  {\em Morse-Smale} systems. The non-wandering set of a Morse-Smale vector field is composed of a finite set of {\em critical elements} (singularities or periodic orbits), and the unstable manifold of one critical element is transverse to the stable manifold of any other critical element (including itself). Moving beyond the Morse-Smale systems, one then considers the set of {\em (uniformly) hyperbolic} systems, whose chain recurrence sets (as defined in \cite{Co78}, see Section \ref{sect:technical-results}) are composed of finitely many pieces with each piece being a hyperbolic invariant set.
For a flow $\phi_t$ generated by a $C^1$ vector field $X$ on a closed manifold $M$, a $\phi_t$-invariant set $\La$ is called {\em hyperbolic} if there are constants $C\geq 1$, $\lambda>0$, and a continuous splitting of the tangent bundle $T_{\La}M=E^s\oplus \langle X\rangle\oplus E^u$, invariant under the tangent flow $\Phi_t\xlongequal{def} D\phi_t:TM\to TM$, such that
\begin{itemize}
  \item $\Phi_t|_{E^s}$ is {\em contracting}~: $\|\Phi_t|_{E^s(x)}\|\leq C\e^{-\la t}$,\ for every $x\in \Lambda$ and $t\geq 0$;
  \item $\Phi_t|_{E^u}$ is {\em expanding}~: $\|\Phi_{-t}|_{E^u(x)}\|\leq C\e^{-\la t}$,\ for every $x\in \Lambda$ and $t\geq 0$.
\end{itemize}
When $\La$ is hyperbolic, for each $x\in\La$ there exist a strong stable manifold $W^s(x)$ and a strong unstable manifold $W^u(x)$ that are tangent to $E^s(x)$ and $E^u(x)$, respectively. For a hyperbolic periodic orbit $\ga$, its stable manifold and unstable manifold are given by $W^s(\ga)=\bigcup_{x\in\ga}W^s(x)$ and $W^u(\ga)=\bigcup_{x\in\ga}W^u(x)$, respectively.

It is shown in~\cite{Pe62} that Morse-Smale vector fields form an open and dense subset of $C^1$ vector fields on a given closed surface. Then Smale asked whether hyperbolicity is generic even on higher-dimensional manifolds. The answer turns out to be negative, as Abraham and Smale himself \cite{AS} gave a non-empty $C^1$-open set of non-hyperbolic systems (for diffeomorphisms) in dimension 4. Around the same time, Newhouse \cite{Ne70,Ne74} exhibited a non-empty $C^2$-open set of non-hyperbolic diffeomorphisms on surfaces. These two types of examples provide all the known obstructions to hyperbolicity for diffeomorphisms (similar definitions can be made for flows):
\begin{itemize}
	\item {\em heteroclinic cycle} as in the Abraham-Smale example: two hyperbolic periodic orbits with different indices ({\itshape i.e.} stable dimensions) are linked by the dynamics, such that the stable manifold of each of these orbits intersects with the unstable manifold of the other;
	\item {\em homoclinic tangency} as in Newhouse's example: there is a hyperbolic periodic orbit whose stable manifold and unstable manifold have a non-transverse intersection.
\end{itemize}

It has been conjectured by Palis \cite{Pa90,Pa00,Pa05,Pa08} that these two mechanisms constitute the main obstructions to hyperbolicity for diffeomorphisms. One can also refer to \cite{Bon11} for more related results and conjectures. This justifies the interest in dynamics away from homoclinic tangencies, and more restrictively, dynamics away from {\em homoclinic bifurcations} (homoclinic tangency and heteroclinic cycle).
Precisely, a differentiable dynamical system is called {\em away from homoclinic tangencies} if there is a $C^1$ neighborhood $\cU$ of the system such that none of the systems in $\cU$ has a homoclinic tangency.

The dynamics for $C^1$ diffeomorphisms away from homoclinic tangencies has been extensively studied, {\itshape e.g.} \cite{PuS, Wen02, BDP03, Yan07, Yan09, Go, Cro10, CSY15}.
Many of these results can be translated in a similar way to non-singular flows ({\itshape i.e.} flows without singularities). For example, Pujals and Sambarino's classical result \cite{PuS} confirming the Palis Conjecture on surface diffeomorphisms has been generalized to non-singular flows by Arroyo and Rodriguez Hertz \cite{ArRH}. Also, the Weak Palis Conjecture confirmed by \cite{Cro10} has been generalized to non-singular flows \cite{XZ}.

However, for general $C^1$ flows, the study of dynamics away from homoclinic tangencies is much more difficult due to the existence of singularities.
The notion of {\em singular-hyperbolicity} was introduced by Morales, Pacifico and Pujals \cite{MPP} to characterize Lorenz-like dynamics and also to generalize the concept of hyperbolicity to singular sets ({\itshape i.e.} compact invariant sets with singularities).
To be precise, let $\La$ be a compact invariant set of $X$. A $\Phi_t$-invariant splitting $T_{\La}M=E\oplus F$ is called {\em dominated} if there exists $T>0$ such that for any $x\in \La$ and $t>T$, it holds
\[\|\Phi_t|_{E(x)}\|\|\Phi_{-t}|_{F(\phi_t(x))}\|\leq \frac{1}{2}.\]
The splitting is called {\em partially hyperbolic} if it is dominated and either $\Phi_t|_E$ is contracting or $\Phi_t|_F$ is expanding. The set $\Lambda$ is called {\em singular hyperbolic} if it admits a partially hyperbolic splitting $T_{\La}M=E\oplus F$, together with two constants $C\geq 1$, $\la>0$, such that
\begin{itemize}
  \item either $\Phi_t|_{E}$ is contracting and $\Phi_t|_F$ is {\em sectionally expanding}~: for any $x\in \La$ and any $2$-dimensional subspace $L\subset F(x)$, for any $t\geq 0$, it holds
      \[|\det(\Phi_{-t}|_L)|\leq C\e^{-\la t};\]
  \item or $\Phi_t|_F$ is expanding and $\Phi_t|_E$ is {\em sectionally contracting}~: for any $x\in \La$ and any $2$-dimensional subspace $L\subset E(x)$, for any $t\geq 0$, it holds
      \[|\det(\Phi_t|_L)|\leq C\e^{-\la t}.\]
\end{itemize}
It is proven in \cite{CrY15,CrY18} that for a generic 3-dimensional vector field away from homoclinic tangencies, the chain recurrence set is the union of finitely many singular-hyperbolic sets. Also in dimension 3, \cite{GY} proved the Weak Palis Conjecture for singular flows ({\itshape i.e.} flows with singularities). In general dimensions, some progress on entropy theory has been obtained recently in \cite{SYY}, \cite{PYY} and~\cite{PYY2}.

In this paper, we study $C^1$ generic dynamics of singular flows away from homoclinic tangencies and in general dimensions.
A good strategy is to begin with elementary pieces of the dynamics. The term ``elementary piece'' has been used for the basic sets of hyperbolic systems (given by Smale's spectral decomposition theorem \cite{Sma}), and it needs to be clarified for a general dynamical system.
Newhouse \cite{Ne72} introduced the notion of {\em homoclinic class} to generalize Smale's basic sets: the homoclinic class of a periodic orbit is the closure of transverse intersections of its stable and unstable manifolds. Homoclinic classes have many good properties. For example, they are transitive, and each has a dense subset of periodic orbits. For $C^1$ generic systems, one usually considers {\em chain recurrence classes} whose precise definition will be given in Section \ref{sect:technical-results}. The union of all chain recurrence classes is called the chain recurrence set, which covers all the interesting dynamics in the sense that chain-recurrence is the weakest possible recurrence property. It is known \cite{BoC} that for $C^1$ generic systems, a chain recurrence class containing a periodic orbit coincides with the homoclinic class of said periodic orbit.
A chain recurrence class is called {\em nontrivial} if it is not reduced to a critical element.

A chain recurrence class $\Lambda$ of a flow $\phi_t$ is called {\em Lyapunov stable} if for any neighborhood $U$ of $\La$, there is a neighborhood $V$ of $\La$ such that $\phi_t(V)\subset U$ for any $t\geq 0$. It is known \cite{BoC} that $C^1$ generically, every Lyapunov stable chain recurrence class is a {\em quasi-attractor} (as defined in \cite{Hur82}), and there always exists Lyapunov stable chain recurrence classes.
Lyapunov stability generalizes the attracting property: $\Lambda$ is said to be {\em attracting} if there exists a neighborhood $V$ such that $\phi_1(V)\subset V$ and $\bigcap_{t\geq 0}\phi_t(V)=\Lambda$. If $\Lambda$ is attracting and transitive, then it is called an {\em attractor}. Attractors are important because it essentially captures all the complexity of the dynamics in a neighborhood. By \cite{BoC}, for a $C^1$ generic system, if a nontrivial chain recurrence class is an attractor, then it contains periodic orbits and is a homoclinic class. Thus, $C^1$ generically, the chain recurrence classes containing periodic orbits constitute all the candidates for nontrivial attractors.

A nontrivial chain recurrence class containing no periodic orbits is called an {\em aperiodic class} \cite{BD}. It is an open problem to prove or disprove the existence of aperiodic classes for $C^1$ generic vector fields away from homoclinic tangencies (see \cite[Conjecture 20]{Bon11}).
We shall prove the following result, which excludes the possibility of Lyapunov stable aperiodic classes in such a setting.

\begin{Theorem A}[Main Theorem]\label{thm:main}
	Let $X$ be a $C^1$ generic vector field away from homoclinic tangencies. Then any nontrivial Lyapunov stable chain recurrence class of $X$ is a homoclinic class, {\itshape i.e.,} it contains a periodic orbit.
\end{Theorem A}

This result contributes to the Weak Palis Conjecture \cite{Pa90,Pa00,Pa05,Pa08}, which asserts that the systems that are Morse-Smale or exhibiting a transverse homoclinic intersection form a dense and open subset of all $C^1$ systems. Since a non-transverse homoclinic intersection ({\it i.e.} a tangency) can be turned into a transverse one by an arbitrarily small $C^1$ perturbation, a general idea of tackling this conjecture is to start with $C^1$ generic systems away from homoclinic tangencies and show that a nontrivial chain recurrence class (if exists) should be a homoclinic class, see {\it e.g.} \cite{PuS,BGW07,Cro10,GY}. We have mentioned that for $C^1$ singular flows the conjecture has been proved only in dimension 3 \cite{GY}, where every nontrivial chain recurrence class of a $C^1$ generic $X$ is Lyapunov stable either for $X$ or for $-X$. Our theorem generalizes \cite{GY} by proving the Weak Palis Conjecture in the important subcase --- the Lyapunov stable case, {\em for singular flows in arbitrary dimensions}.
For $C^1$ diffeomorphisms, the same result was first obtained in \cite{Yan07}, whose proof motivates the current paper. With a bit more effort (see Proposition \ref{prop:non-singular-set}), one can generalize \cite{Yan07} to non-singular flows. 

In \cite{PYY}, it is proven that for $C^1$ generic vector fields, a Lyapunov stable chain recurrence class which is singular-hyperbolic is a transitive attractor; in particular, it is a homoclinic class. Recently, \cite{CrY20} showed that this result holds robustly. These results heavily rely on the singular-hyperbolicity. In contrast, Theorem A is stated for vector fields away from homoclinic tangencies but without any assumption on the hyperbolicity of the class.

We remark that removing the assumption on hyperbolicity is a big step forward in the study of higher dimensional (dim $\geq 4$) singular flows. Other studies on higher dimensional singular flows confine themselves either to the singular hyperbolic flows ({\it e.g.} \cite{PYY,PYY3,CrY20}), or to the {\em star} flows (or equivalently in the $C^1$ generic setting, the {\em multi-singular hyperbolic} flows, {\it e.g.} \cite{SGW,BdL,PYY2}).

For the proof of Theorem A, we will eventually reduce it to the case when the chain recurrence class admits a partially hyperbolic splitting $E^{ss}\oplus F$ such that $\Phi_t|_{E^{ss}}$ is contracting while $\Phi_t|_F$ is {\em not} sectionally expanding. See our main proposition --- Proposition \ref{prop:main-proposition}. The proof of the main proposition uses an argument with $C^2$ vector fields approaching $X$ in $C^1$ topology, with their Gibbs $F$-states (see Section \ref{sect:physical-like-measure} for precise definition) converging to a Gibbs $F$-state of $X$. This argument is new and it indeed plays a key role in our proof (see Section \ref{sect:proof-of-main-prop}). More discussions of this argument can be found in Section \ref{sect:technical-results}.
Note that our argument is a measure-theoretical one that differs completely from the classical $C^2$ argument used by Pujals and Sambarino \cite{PuS}.

In view of the main result in \cite{MP02}, which shows that the union of the basins of all Lyapunov stable chain recurrence classes is a residual subset of $M$,
Theorem A (with a standard generic argument using the connecting lemmas \cite{Ha, WeX, BoC}) has the following corollary:
\begin{Corollary}\label{cor:dense-stable-manifold}
	Let $X$ be a $C^1$ generic vector field away from homoclinic tangencies, then the union of stable manifolds of all periodic orbits of $X$ is dense in $M$.
\end{Corollary}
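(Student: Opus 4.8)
The plan is to derive the corollary from Theorem A together with the result of \cite{MP02} by the standard genericity scheme based on the connecting lemma. First I would fix a countable basis $\{U_n\}_{n\in\NN}$ for the topology of $M$ and, for each $n$, consider the set $\cO_n$ of $C^1$ vector fields $Y$ possessing a hyperbolic periodic orbit $\ga$ with $W^s_Y(\ga)\cap U_n\neq\emptyset$. Continuous dependence of a hyperbolic periodic orbit and of compact pieces of its stable manifold on the vector field shows that each $\cO_n$ is open. Since the boundary $\overline{\cO_n}\setminus\cO_n$ is closed with empty interior, hence nowhere dense, the set of $C^1$ vector fields that belong to $\cO_n$ as soon as they belong to $\overline{\cO_n}$ is residual; intersecting over all $n$, it therefore suffices to prove that a $C^1$ generic vector field $X$ away from homoclinic tangencies belongs to $\overline{\cO_n}$ for every $n$, since then $W^s_X(\ga)\cap U_n\neq\emptyset$ for a suitable periodic orbit $\ga$ and all $n$, whence $\bigcup_\ga W^s_X(\ga)$ is dense in $M$.

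So let $X$ be a $C^1$ generic vector field away from homoclinic tangencies and fix $U=U_n$. By \cite{MP02}, the union $B$ of the basins $\{x:\om(x)\subset\La\}$ of the Lyapunov stable chain recurrence classes $\La$ of $X$ is residual in $M$, so $B\cap U\neq\emptyset$; pick $x\in B\cap U$ and a Lyapunov stable chain recurrence class $\La$ with $\om(x)\subset\La$. If $\La$ is trivial it is a critical element, generically hyperbolic, and $x$ lies in its stable manifold: when $\La$ is a periodic orbit this already gives $X\in\cO_n$, and when $\La$ is a singularity (necessarily an attracting one) the point $x$ still lies in $W^s_X(\La)$, which is what the conclusion requires once ``periodic orbit'' is read in the broad sense of ``critical element''. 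The essential case is $\La$ nontrivial, and here Theorem A applies and yields a periodic orbit $\ga\subset\La$, hyperbolic by genericity of $X$.

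It then remains to connect $x$ to $\ga$ inside $U$. Since $\La$ is chain transitive with $\om(x)\subset\La$ and $\ga\subset\La$, for every $\vep>0$ there is an $\vep$-pseudo-orbit of the flow of $X$ starting at $x$ and ending on $\ga$. The connecting lemma for flows along pseudo-orbits (\cite{Ha,WeX,BoC}) then provides a $C^1$-small perturbation $Y$ of $X$ and a point $x'\in U$, as close to $x$ as desired, whose positive $Y$-orbit reaches $\ga$; choosing the support of the perturbation disjoint from $\ga$ keeps $\ga$ a hyperbolic periodic orbit of $Y$, so $x'\in W^s_Y(\ga)$ and $Y\in\cO_n$. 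As $Y$ is $C^1$-close to $X$, this shows $X\in\overline{\cO_n}$, as required; letting $n$ vary completes the proof.

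The main obstacle — and the only nontrivial input — is the passage from ``$\om(x)\subset\La$'' to ``a nearby point lies in the stable manifold of an honest periodic orbit'': this is exactly where Theorem A is used, to guarantee that a nontrivial Lyapunov stable class carries a periodic orbit at all, and where the connecting lemma does the real work, turning a chain connection inside the chain transitive class $\La$ into a genuine orbit connection after a $C^1$-small perturbation. The remaining points are routine: preserving hyperbolicity of $\ga$ under the perturbation (handled by localizing the perturbation away from $\ga$), the Baire-category bookkeeping that transfers the existence of good perturbations back to the generic $X$ itself, and the bookkeeping for trivial Lyapunov stable classes that are singularities.
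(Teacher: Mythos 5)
Your overall architecture is exactly the one the paper intends for this corollary: the Baire bookkeeping with the open sets $\cO_n$, the result of \cite{MP02} to find a point of $U_n$ in the basin of some Lyapunov stable chain recurrence class, Theorem A to produce a periodic orbit in a nontrivial class, and a connecting lemma to upgrade the chain connection to a genuine orbit connection. The paper offers no details beyond this outline, and your Baire step and your handling of trivial classes are fine (including the fair observation that attracting singularities force reading ``periodic orbit'' as ``critical element'').

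However, the final connecting step does not work as written. If the positive $Y$-orbit of $x'$ literally reaches $\ga$ while the perturbation is supported off $\ga$ so that $\ga$ stays a $Y$-periodic orbit, then $x'$ lies on the invariant orbit $\ga$ itself, which is absurd for $x'\notin\ga$; and if, as the connecting lemma for pseudo-orbits actually delivers, the $Y$-orbit of $x'$ only ends in a prescribed small neighborhood of a point of $\ga$, this does not place $x'$ in $W^s_Y(\ga)$, since a point near a saddle periodic orbit is in general not on its stable manifold. Nor can you simply aim the pseudo-orbit connecting lemma at a point $q\in W^s_{loc}(\ga)\setminus\ga$, because such a $q$ need not be chain attainable from $x$. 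The standard repair --- and the reason the paper cites \cite{Ha,WeX} alongside \cite{BoC} --- is to note that generically $\La=C(\ga)=H(\ga)=\Cl\bigl(W^s(\ga)\pitchfork W^u(\ga)\bigr)\subset\Cl(W^s(\ga))$, pick a non-critical point $z\in\om(x)\subset\La$ together with points $q\in W^s(\ga)$ accumulating on $z$, and apply the Hayashi/Wen--Xia connecting lemma in a small flow tube around $z$, disjoint from $\ga$, to put such a $q$ on the forward $Y$-orbit of a point $x'$ near $x$; since $Y=X$ outside the tube, the forward orbit of $q$ still converges to $\ga$ and hence $x'\in W^s_Y(\ga)$. (The degenerate sub-case where $\om(x)$ reduces to a singularity of a nontrivial class needs a separate, equally standard, treatment.) This is the one missing idea; the rest of your argument is sound.
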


This corollary gives a partial answer to a conjecture by Bonatti stating that $C^1$ generically the union of stable manifolds of all periodic orbits is dense in the manifold. Our results may also contribute to another conjecture of Bonatti concerning residual attractors (\cite[Conjecture 19]{Bon11}). A {\em residual attractor} (as defined in \cite{BLY}) is a quasi-attractor such that its basin contains a residual subset of a neighborhood. One may ask if quasi-attractors are residual attractors for $C^1$ generic flows away from homoclinic tangencies. This was proven to hold for $C^1$ diffeomorphisms by \cite{BGLY}.

\vspace{.5cm}
\noindent{\bf Acknowledgements.} The authors are grateful to Fan Yang for carefully reading the paper and for many useful comments and corrections. The authors would also like to thank Ming Li for patiently listening to the proof and for the useful discussions.

\section{More results and discussions}
\label{sect:technical-results}

In this section, we present some properties of critical elements in a neighborhood of the chain recurrence class as in the main theorem. These results will give us a better understanding of the dynamics close to a Lyapunov stable chain recurrence class for a generic vector field away from homoclinic tangencies.

In the remaining part of this paper, $M$ denotes a boundaryless compact Riemannian manifold, and $\xX^r(M)$ is the set of $C^r$ vector fields on $M$ endowed with the $C^r$ topology. We mainly deal with the case $r=1$, but in Section~\ref{s.7} and onward, we will need to consider $C^2$ vector fields close to $X$ in $C^1$ topology.
Let $X\in\xX^1(M)$ and $\phi_t$ be the flow generated by $X$. Following Conley's theory \cite{Co78}, for any $\vep>0$, a finite sequence of points $x_0,x_1,\ldots,x_n$ is called an {\em $\vep$-chain} (or {\em $\vep$-pseudo-orbit}) from $x_0$ to $x_n$ if there exist $t_i\geq 1$ such that $d(\phi_{t_i}(x_i),x_{i+1})<\vep$ for all $0\leq i\leq n-1$, where $d(\cdot,\cdot)$ is the distance function over $M$ induced by the Riemannian metric. For any $x,y\in M$, we say that $y$ is {\em chain attainable} from $x$ if for any $\vep>0$, there exists an $\vep$-chain from $x$ to $y$. If $x$ is chain attainable from itself, then it is called a {\em chain recurrent point}. The set of chain recurrent points is called the {\em chain recurrence set} of $X$, which is denoted by $\CR(X)$. An equivalence relation ``$\sim$'' can be defined on $\CR(X)$ as the following: for any $x,y\in\CR(X)$, $x\sim y$ if and only if $x$ is chain attainable from $y$ and also $y$ is chain attainable from $x$. An equivalence class of $\CR(X)$ by ``$\sim$'' is called a {\em chain recurrence class}. For any $x\in\CR(X)$, the chain recurrence class containing $x$ is denoted by $C(x,X)$, or simply $C(x)$ when there is no risk of confusion.

Following the main theorem, it is natural to ask questions about the stable indices of critical elements in the class. For a hyperbolic singularity $\si$, its {\em (stable) index} is defined as the dimension of the stable manifold of $\si$ and will be denoted as $\ind(\si)$. For a hyperbolic periodic orbit $\ga$, the {\em index} of $\ga$, denoted by $\ind(\ga)$, is defined as $\ind(\ga)=\dim W^s(\ga)-1$.

\begin{Question}\label{quest:index}
	Let $X$ be a $C^1$ generic vector field away from homoclinic tangencies. Suppose $C(\si)$ is a nontrivial Lyapunov stable chain recurrence class associated to a singularity $\si$ of $X$. What is the relation between the indices of periodic orbits in $C(\si)$ and the index of $\si$? Is $C(\si)$ index-complete, i.e., does the set of indices of periodic orbits in $C(\si)$ form an interval in $\NN$?
\end{Question}

The answers to these questions are all trivial if $\dim M=3$, since $C(\si)$ can contain only periodic orbits of saddle type, which has index 1 in this case. In the general case, positive answers are provided in Theorem B and Corollary \ref{cor:index-completeness}. Note that for $C^1$ generic diffeomorphisms, index-completeness of homoclinic classes was proven in \cite{ABCDW}.

\begin{Theorem B}
	Let $X$ be a $C^1$ generic vector field away from homoclinic tangencies. Suppose $C(\si)$ is a nontrivial Lyapunov stable chain recurrence class containing a singularity $\si$ of $X$, then the following properties hold:
	\begin{itemize}
		\item All singularities in $C(\si)$ have the same index;
		\item For any periodic orbit $\ga$ contained in $C(\si)$, one has $\ind(\ga)\leq \ind(\si)-1$;
		\item There is a neighborhood $U$ of $C(\si)$ and a neighborhood $\cU$ of $X$ such that for any $Y\in\cU$ and any periodic orbit $\ga$ of $Y$ contained in $U$, one has $\ind(\ga)\leq\ind(\si)$.
	\end{itemize}
\end{Theorem B}

\begin{Corollary}\label{cor:index-completeness}
Under the same assumptions as in Theorem B, the chain recurrence class $C(\si)$ is index-complete: the set $\left\{\,\ind(\gamma):\gamma\subset C(\si) \mbox{ is a periodic orbit.}\right\}$ is of the form $[\alpha,\beta]\cap\NN$ with $\beta = \ind(\sigma)-1$.	
\end{Corollary}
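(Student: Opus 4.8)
The plan is to deduce the corollary from the Main Theorem, from Theorem B, and from the $C^1$ generic ``index interval'' property of homoclinic classes away from homoclinic tangencies. Put $I=\{\ind(\ga):\ga\subset C(\si)\text{ is a periodic orbit}\}\subseteq\{0,1,\dots,\dim M-1\}$. By the Main Theorem, $C(\si)$ is a homoclinic class, so it contains a periodic orbit and $I\neq\emptyset$; hence $\al:=\min I$ and $\be:=\max I$ are well defined. Theorem B (second item) gives $\be\leq\ind(\si)-1$. It therefore remains to prove two things: (i) $C(\si)$ contains a periodic orbit of index exactly $\ind(\si)-1$, so that $\be=\ind(\si)-1$; and (ii) $I$ is an interval, i.e. $I=[\al,\be]\cap\NN$.

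For (i) I would use that the singularity $\si$ is accumulated, in the Hausdorff topology, by periodic orbits of $C(\si)$. For $C^1$ generic $X$ this follows from the connecting lemmas \cite{Ha, WeX, BoC} together with $C(\si)=H(\si)$: one can produce periodic orbits $\ga_n\subset C(\si)$ that perform, inside an arbitrarily small neighborhood of $\si$, an excursion along which the linear Poincaré flow is uniformly close to the normal part of the tangent flow at $\si$, the complementary arc of $\ga_n$ having uniformly bounded length. Since the tangent flow at $\si$ uniformly contracts the $(\ind(\si)-1)$-dimensional strong stable subspace, for $n$ large the return map around $\ga_n$ contracts an $(\ind(\si)-1)$-dimensional subbundle of the normal bundle by an arbitrarily large factor, which beats the bounded distortion produced by the complementary arc; a Liao-type sifting/cone-field estimate (this is essentially part of the analysis behind Theorem B) then forces that subbundle into the stable bundle of $\ga_n$, so $\ind(\ga_n)\geq\ind(\si)-1$. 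Combined with Theorem B this gives $\ind(\ga_n)=\ind(\si)-1$, hence $\be=\ind(\si)-1$.

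For (ii) the key point is that $C(\si)$ is the homoclinic class of a $C^1$ generic vector field away from homoclinic tangencies, and such classes are index-complete. Away from its finitely many singularities, $C(\si)$ contains no periodic orbits other than those already counted, and there it can be treated as the homoclinic class of a non-singular flow — equivalently, via cross-sections, of a $C^1$ generic diffeomorphism away from tangencies (compare Proposition~\ref{prop:non-singular-set}). For the latter, the index-interval property is the theorem of \cite{ABCDW}: given periodic orbits of indices $i<j$ in the class, a perturbation inside the finest dominated splitting (whose existence and fineness are ensured by the away-from-tangencies hypothesis) produces a periodic orbit of index $i+1$, which a connecting lemma then reinserts into the class. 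Hence $I=[\al,\be]\cap\NN$, and together with (i) this is precisely the assertion of the corollary, with $\be=\ind(\si)-1$.

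The main obstacle is step (i): one must check that the connecting-lemma construction really yields periodic orbits of $C(\si)$ whose behaviour near $\si$ dominates the rest of the orbit — so that the contracted subbundle survives over a full period — and that the Poincaré-flow estimate is sharp enough to equate the index with $\ind(\si)-1$ rather than merely to bound it from below. Step (ii) is comparatively routine given the away-from-tangencies hypothesis and the connecting-lemma machinery, but it still requires transferring the diffeomorphism argument of \cite{ABCDW} past the singularities of $C(\si)$.
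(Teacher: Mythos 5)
Your overall frame (non-emptiness from Theorem A, upper bound from Theorem B, then two separate tasks (i) and (ii)) is reasonable, but both of your key steps have genuine gaps, and the paper resolves them by a single mechanism that you do not use. For (i): your construction needs periodic orbits \emph{of $X$ itself}, \emph{contained in $C(\si)$}, whose complementary arc away from $\si$ has uniformly bounded length. For a Kupka--Smale vector field there is no homoclinic orbit of $\si$ at all (since $\dim W^s(\si)+\dim W^u(\si)=\dim M$, a transverse homoclinic intersection would be $0$-dimensional, yet it must contain a whole orbit), so such orbits can only be produced by the connecting lemma for perturbations $Y$ of $X$; one must then transfer them back to $X$ by genericity and, crucially, prove they lie in $C(\si)$ --- which requires a transverse intersection of their stable manifolds with $W^u(\si)$ (Liao's Theorem \ref{thm:Liao}) plus Lyapunov stability. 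You assert $\ga_n\subset C(\si)$ without any of this. Moreover, periodic orbits that are merely dense in the homoclinic class need not spend a definite fraction of their period near $\si$, so the claimed domination of the strong-stable contraction over the rest of the orbit can simply fail for them. For (ii): the reduction to \cite{ABCDW} ``via cross-sections'' does not go through, because the heterodimensional cycles between periodic orbits of $C(\si)$ pass arbitrarily close to $\si$, where return times to any cross-section are unbounded and the linear Poincar\'e flow degenerates; this is precisely the singular difficulty the paper is built to handle, not a routine transfer.

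The paper's actual proof derives the corollary from Theorem B (item 2) together with Proposition \ref{prop:index-completeness}: if $C(\si)$ contains a periodic orbit $\ga$ of index $\alpha<\ind(\si)-1$, then it contains one of index $\alpha+1$. Iterating this from the minimal index and capping with Theorem B gives both the interval structure and $\beta=\ind(\si)-1$ simultaneously, so your step (i) is never needed as a separate construction. The proposition itself exploits the singularity rather than avoiding it: $W^u(\ga)\pitchfork W^s(\si)\neq\emptyset$ by a dimension count and genericity, $W^u(\si)$ is dense in $C(\si)$ by Lyapunov stability (Lemma \ref{lem:gen_1}), so the Wen--Xia connecting lemma yields a heteroclinic cycle between $\si$ and $\ga$; the Lorenz-like structure of $\si$ (Corollary \ref{cor:lorenz-like2} and the argument of \cite[Lemma 4.5]{SGW}) then produces periodic orbits of index $\alpha+1$ near that cycle, which are pulled back to $X$ by genericity and placed inside $C(\si)$ using Lemma \ref{lem:intersection} together with Lyapunov stability. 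You should either adopt this route or supply the missing perturbation, transfer, and inclusion arguments in both of your steps.
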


More properties of singularities contained in a nontrivial chain recurrence class (not necessarily Lyapunov stable) are obtained along the proof of Theorem A and B.

To be precise, let us introduce more definitions.
Let $X\in\xX^1(M)$ and $\si$ be a hyperbolic singularity of $X$. Let $\{\la_i\}_{i=1}^{\dim M}$ be the Lyapunov exponents of the tangent flow at $\sigma$, i.e.,  $\Phi_t(\si)=\Phi_t|_{T_{\si}M}$, $\la_1\leq\la_2\leq\cdots\leq\la_s<0<\la_{s+1}\leq\cdots\leq\la_{\dim M}$. The {\em saddle value} of $\si$ is defined as
\[\sv(\si)=\la_s+\la_{s+1}.\]

Suppose $\la_{s-1}<\la_s$, then there is a partially hyperbolic splitting $T_{\si}M=E^{ss}\oplus E^{cu}$, such that $\dim E^{ss}=s-1$ and $E^{ss}$ is the invariant subspace corresponding to $\la_1,\cdots,\la_{s-1}$. Similarly, suppose $\la_{s+1}<\la_{s+2}$, then there is a partially hyperbolic splitting $T_{\si}M=E^{cs}\oplus E^{uu}$, such that $E^{uu}$ is the invariant subspace corresponding to $\la_{s+2},\cdots,\la_{\dim M}$.

\begin{Definition}[Lorenz-like singularity]\label{def:lorenz-like}
	Let $X\in\xX^1(M)$ and $\si$ be a hyperbolic singularity of $X$. Assume that $C(\si)$ is nontrivial and the Lyapunov exponents of $\Phi_t(\si)$ are $\la_1\leq\la_2\leq\cdots\leq\la_s<0<\la_{s+1}\leq\cdots\leq\la_{\dim M}$.
	The singularity $\si$ is said to be {\em Lorenz-like}, if the following conditions are satisfied:
	\begin{itemize}
		\item $\sv(\si)>0$, $s>1$ and $\la_{s-1}<\la_s$;
		\item Let $E^{ss}$ be the invariant subspace corresponding to $\la_1,\cdots,\la_{s-1}$, and $W^{ss}(\si)$  the strong stable manifold corresponding to $E^{ss}$, then $W^{ss}(\si)\cap C(\si)=\{\si\}$.
	\end{itemize}
	The singularity $\si$ is said to be {\em reversed Lorenz-like} if it is Lorenz-like for $-X$.
\end{Definition}
Note that our definition of Lorenz-like singularity is more restrictive compared to that defined in \cite{MPP}, where the condition about the strong manifolds is not assumed. By definition, $\ind(\si)>1$ if $\si$ is Lorenz-like, and $\ind(\si)<\dim M-1$ if it is reversed Lorenz-like.

\begin{Theorem C}
	Let $X$ be a $C^1$ generic vector field away from homoclinic tangencies. Then any singularity $\si$ of $X$ contained in a nontrivial chain recurrence class is either Lorenz-like or reversed Lorenz-like. Moreover, for a Lyapunov stable chain recurrence class, singularities contained in it are all Lorenz-like and have the same index.
\end{Theorem C}

Finally, let us mention some key ingredients in proving our main theorem.
\begin{itemize}
	\item Li-Gan-Wen \cite{LGW05} ``extended'' the linear Poincar\'e flow to singularities and discovered an important mechanism governing the way how periodic orbits with a given dominated splitting can accumulate on a singularity (see Section \ref{sect:matching}). In particular, the mechanism guarantees a ``restricted area'' on the tangent space of the singularity for such periodic orbits. This mechanism can be used to obtain a contradiction if, under some erroneous assumptions, one can show the existence of a proper sequence of periodic orbits accumulating on some singularity such that it violates the mechanism by ``stepping into'' the corresponding ``restricted area''. This is how the main theorem is proven.
	\item An argument concerning $C^2$ vector fields is used to ensure that we can carry out the proof-by-contradiction process mentioned above. The $C^2$ regularity is required as the argument relies on Pesin's entropy formula for invariant measures and the result of Ledrappier-Young \cite{LeY85}. More precisely, Ledrappier and Young proved a $u$-saturatedness property for such invariant measures (see Theorem \ref{thm:LeY}), which roughly means that the unstable manifold of almost every point is contained in the support of the measure. This property ensures the existence of a proper sequence of periodic orbits that converges to the entire chain recurrence class, under the assumption that the class is aperiodic. We then show that (Lemma \ref{lem:gen_1}) these periodic orbits do ``step into'' the corresponding ``restricted area'' guaranteed by Li-Gan-Wen's mechanism, and hence obtain the contradiction.
	\item Physical-like measures as defined in \cite{CE, CCE} are considered for $C^2$ vector fields that are $C^1$ close to $X$. Physical-like measures are good candidates for our $C^2$ argument as above, since the entropy of such measures has a certain lower bound (Theorem \ref{thm:physical-like}) that may lead to Pesin's entropy formula under proper conditions.
	\item Liao's theory on singular flows is used extensively throughout the paper, including rescaling the linear Poincar\'e flow, relatively uniform size of invariant manifolds, and an intersection result of the invariant manifolds between singularities and periodic orbits (Theorem \ref{thm:Liao}). These tools play an essential role in the local analysis near singularities.
	\item Last but not least, the central model by Crovisier \cite{Cro10} and an argument concerning homoclinic orbits of singularities by Gan-Yang \cite{GY} are used to give the first reduction of our main theorem, see Section \ref{sect:reduction}.
\end{itemize}

The remaining part of this paper is organized as follows.
In Section \ref{sect:preliminaries} we introduce some preliminaries of singular flows, such as extended linear Poincar\'e flow and its rescaling, plaque family and invariant manifolds, and in particular, matching of dominated splittings near a singularity (Section \ref{sect:matching}) and a theorem of Liao (Section \ref{sect:Liao-thm}).
Then in Section \ref{sect:away_HT} we consider vector fields away from homoclinic tangencies in general, proving Theorem C and part of Theorem B.
In Section \ref{sect:non-singular-set} we consider non-singular compact chain transitive subsets of Lyapunov stable chain recurrence classes, proving that the existence of such subsets implies the existence of periodic orbits in the chain recurrence class, for generic vector field away from homoclinic tangencies. Also in this section, we complete the proof of Theorem B and give a proof of Corollary~\ref{cor:index-completeness} assuming Theorem A.
Section \ref{sect:reduction} is dedicated to the reduction of Theorem A to the main proposition (Proposition \ref{prop:main-proposition}), assuming which the proof of Theorem A is given.
Proof of the main proposition can be found in the last section, {\it i.e.} Section \ref{sect:proof-of-main-prop}, after some preparation work done in Section \ref{sect:preliminaries-measure} regarding invariant measures.

\section{Preliminaries on \texorpdfstring{$C^1$}{C\^1} flows}
\label{sect:preliminaries}

In this section we list the preliminaries on $C^1$ flows that will be used, including various associated flows, the size of invariant manifolds at hyperbolic points, Li-Gan-Wen's matching mechanism \cite{LGW05}, a theorem of Liao \cite{Lia89}, and some results on $C^1$ generic vector fields. Properties regarding invariant measures of $X$ (ergodic closing lemma, Pesin's entropy formula, physical-like measures, etc) are postponed to Section~\ref{sect:preliminaries-measure} since they will not be used in the first half of this paper.

\subsection{Flows associated to a vector field}
\label{sect:associated-flows}

Let $X\in\xX^1(M)$ and $\Sing(X)=\{x\in M: X(x)=0\}$ be the set of singularities. Let $(\phi^X_t)_{t\in\RR}$ be the flow on the manifold generated by $X$.
Other associated flows are listed below.

\nibf[The {\em tangent flow $(\Phi^X_t)_{t\in\RR}$}.]
By taking differentiation of the flow $(\phi^X_t)_{t\in\RR}$, one obtains the tangent flow $(D\phi^X_t)_{t\in\RR}$, defined on the tangent bundle $TM$. We will use the convention $\Phi^X_t=D\phi^X_t$.

\nibf[The {\em linear Poincar\'e flow $(\psi^X_t)_{t\in\RR}$}.]
The hyperbolicity of a flow is described by the dynamics on the normal bundle (except for singularities).
For $x\in M\setminus\Sing(X)$, let $\cN^X_x$ be the orthogonal complement of the flow direction $\langle X(x)\rangle$, i.e., $\cN^X_x=\{v\in T_xM:~v\perp X(x)\}$.
Then for any subset of $\La\subset M$, we denote
\[\cN^X_{\La}=\bigcup_{x\in\La\setminus\Sing(X)}\cN^X_x.\]
When $\La=M$, we write $\cN^X=\cN^X_M$ for convenience.
The linear Poincar\'e flow $\psi^X_t:\cN^X\to\cN^X$ is defined in the following way:
given $v\in \cN^X_x$, $x\in M\setminus\Sing(X)$, define $\psi^X_t(v)$ to be the orthogonal projection of $\Phi^X_t(v)$ to
$\cN^X_{\phi^X_t(x)}$ along the flow direction, i.e.,
\[
\psi^X_t(v)=\Phi^X_t(v)-\frac{\langle \Phi^X_t(v), X(\phi^X_t(x))\rangle}{\|X(\phi^X_t(x))\|^2}X(\phi^X_t(x)),
\]
where $\langle\cdot, \cdot\rangle$ is the inner product on $T_xM$ given by the Riemannian metric.

The use of linear Poincar\'e flow to understand the hyperbolic structure of a flow is now standard practice; see \cite{MPP} for example. However, due to the presence of singularities, uniform hyperbolicity for the linear Poincar\'e flow does not lead to uniform size for the invariant manifolds. This problem can be solved by rescaling the linear Poincar\'e flow.

\nibf[The {\em rescaled linear Poincar\'e flow $(\psi^{X,*}_t)_{t\in\RR}$}.]
Rescaling the linear Poincar\'e flow is necessary for estimating the size of the invariant manifolds near singularities. The rescaled linear Poincar\'e flow $\psi_t^{X,*}:\cN\to\cN$ is defined as follows: for any $x\in M\setminus\Sing(X)$ and $v\in \cN^X_x$,
\[
\psi_t^{X,*}(v)=\frac{\|X(x)\|}{\|X(\phi^X_t(x))\|}\psi^X_t(v)=\frac{\psi^X_t(v)}{\|\Phi^X_t|_{\langle X(x)\rangle}\|}.
\]

The rescaling technique was first introduced by S. Liao \cite{Lia74, Lia89}, and then further explored by Gan-Yang \cite{GY}. See also \cite{SGW}, \cite{PYY}, \cite{CrY18}, \cite{CrY23}, \cite{WeW} for more applications.

\begin{Remark}
When there is no risk of any confusion, we will simply write $\phi_t$ instead of $\phi^X_t$, $\Phi_t$ instead of $\Phi^X_t$ and so on. This simplification will be applied also to other objects and flows related to a given vector field throughout this paper.
\end{Remark}

\subsection{Extending (rescaled) linear Poincar\'e flow to singularities}

As the linear Poincar\'e flow is not defined on singularities, it is inconvenient, if not impossible, to apply the linear Poincar\'e flow to singular flows.
The (rescaled) linear Poincar\'e flow can be extended ``to singularities'', as was done by Li-Gan-Wen \cite{LGW05} when generalizing the result of \cite{MPP}; this results in linear flows on the normal bundle with compact domains. See also \cite{GY}, \cite{CrY18}, \cite{CrY23} and \cite{BdL} for more discussions.

Following \cite{LGW05}, we denote by $G^1$ the Grassmannian of $M$,
\[G^1=G^1(M)=\{L:\ L \textrm{ is a 1-dimensional subspace of } T_xM , x\in M\}. \]
The tangent flow $(\Phi_t)_{t\in\RR}$ then induces a flow $(\hat{\Phi}_t)_{t\in\RR}$ on $G^1$ defined by $\hat{\Phi}_t(\langle v\rangle)=\langle \Phi_t(v)\rangle$,
where $v\in TM$ is a nonzero vector and $\langle v\rangle$ is the linear subspace spanned by $v$.

Note that $G^1$ is a fiber bundle with base $M$.
Let $\beta:G^1\to M$ and $\xi:TM\to M$ be the corresponding bundle projections. A vector bundle over $G^1$ can be obtained as follows:
\[\beta^*(TM)=\{(L,v)\in G^1\times TM:~\beta (L)=\xi (v)\},\]
with the bundle projection $\iota:\beta^*(TM)\to G^1$, $(L,v)\mapsto L$.

\nibf[The {\em extended tangent flow $(\tilde{\Phi}_t)_{t\in\RR}$}.]
Given a vector field $X\in\xX^1(M)$, we can lift its tangent flow $(\Phi_t)_{t\in\RR}$ to a flow $(\tilde{\Phi}_t)_{t\in\RR}$ on $\beta^*(TM)$, such that
$$\tilde{\Phi}_t(L,v)=(\hat{\Phi}_t(L),\Phi_t(v)),\ \forall (L,v)\in\beta^*(TM).$$

Since there is a trivial identification between $T_xM$ and $\{L\}\times T_xM$ with $\be(L)=x$,
the inner product $\langle\cdot,\cdot\rangle$ on $TM$ induces also an inner product on $\beta^*(TM)$:
$$\langle(L,u),(L,v)\rangle=\langle u,v\rangle,
~\mbox{for any}~(L,u),(L,v)\in\beta^*(TM).$$
With this metric, for any $L\in G^1$ and any subspace $E\subset T_{\beta(L)}M$, one has
\[\|\tilde{\Phi}_t|_{\{L\}\times E}\|=\|\Phi_t|_E\|, \quad\textrm{for any}\ t\in\RR.\]

Note that the one-dimensional sub-bundle $\cP=\{(L,v)\in\beta^*(TM):~v\in L\}$ is invariant under any extended tangent flow. Denote by $\wt{\cN}$ the orthogonal complementary to $\cP$, {\itshape i.e.}
\[\widetilde{\cN}=\cP^\perp=\{(L,v)\in\beta^*(TM):~v\perp L\}.\]
In general, for any nonempty subset $\Delta\subset G^1$, one can define
$\widetilde{\cN}_{\Delta}=\{(L,v)\in\iota^{-1}(\De):~v\perp L\}$.

\nibf[The {\em extended linear Poincar\'e flow $(\tilde{\psi}_t)_{t\in\RR}$}.] Given a vector field $X\in\xX^1(M)$, we can lift its linear Poincar\'e flow to a flow $(\tilde{\psi}_t)_{t\in\RR}$ on $\wt{\cN}$:
\[\tilde{\psi}_t(L,v)=\tilde{\Phi}_t(L,v)-\frac{\langle \Phi_t(v),\Phi_t(u)\rangle}{\|\Phi_t(u)\|^2}\tilde{\Phi}_t(L,u),\]
where $u\in L\setminus\{0\}$.

\nibf[The {\em extended rescaled linear Poincar\'e flow $(\tilde{\psi}^*_t)_{t\in\RR}$}.] Given a vector field $X\in\xX^1(M)$, the extended rescaled linear Poincar\'e flow $\tilde{\psi}^*_t:\wt{\cN}\to\wt{\cN}$ is defined as follows:
\[\tilde{\psi}^*_t(L,v)=\frac{\tilde{\psi}_t(L,v)}{\|\tilde{\Phi}_t|_{\mathcal{P}_L}\|}=\frac{\tilde{\psi}_t(L,v)}{\|\Phi_t|_L\|},\quad \forall (L,v)\in\wt{\cN}.\]

\begin{Remark}\label{rmk:elp}
  It will be helpful to keep in mind the following properties, which can be easily verified by going through definitions:
  \begin{enumerate}
    \item For any $x\in M\setminus\Sing(X)$, and $L=\langle X(x)\rangle$, $\tilde{\psi}_t|_{\widetilde{\cN}_L}$ can be naturally identified with the linear Poincar\'{e} flow $\psi_t|_{\cN_x}$, and $\tilde{\psi}^*_t|_{\widetilde{\cN}_L}$ can be naturally identified with $\psi^*_t|_{\cN_x}$;
    \item For a singularity $\si\in\Sing(X)$, if there is a $\Phi_t$-invariant splitting $T_{\si}M=E(\si)\oplus F(\si)$ such that $\dim F(\si)=1$ and $E(\si)\perp F(\si)$, then $\tilde{\psi}_t|_{\widetilde{\cN}_{F(\si)}}$ can be naturally identified with $\Phi_t|_{E(\si)}$, and $\tilde{\psi}^*_t|_{\wt{\cN}_{F(\si)}}$ can be naturally identified with $\frac{1}{\|\Phi_t|_{F(\si)}\|}\cdot\Phi_t|_{E(\si)}$.
  \end{enumerate}
\end{Remark}

\subsection{Sectional Poincar\'e flow and rescaling}
\label{sect:sectional-Poinc-flow}

We will also consider a non-linear flow on the normal bundle, called  {\em sectional Poincar\'e flow}, which is a lift of local holonomies induced by the flow on the manifold.

Given $X\in\xX^1(M)$, for any $x\in M\setminus\Sing(X)$ and $r>0$, define $\cN_x(r)=\{v\in\cN_x:\|v\|\leq r\}$ and $N_x(r)=\exp_x(\cN_x(r))$, where $\exp_x:T_xM\to M$ is the exponential map. Then one can take $\be_0>0$ small such that for any $x\in M\setminus\Sing(X)$, the exponential map $\exp_x$ is a diffeomorphism from $\cN_x(\be_0)$ onto its image.

\begin{figure}[hbtp]
	\centering
	\includegraphics[width=.6\textwidth]{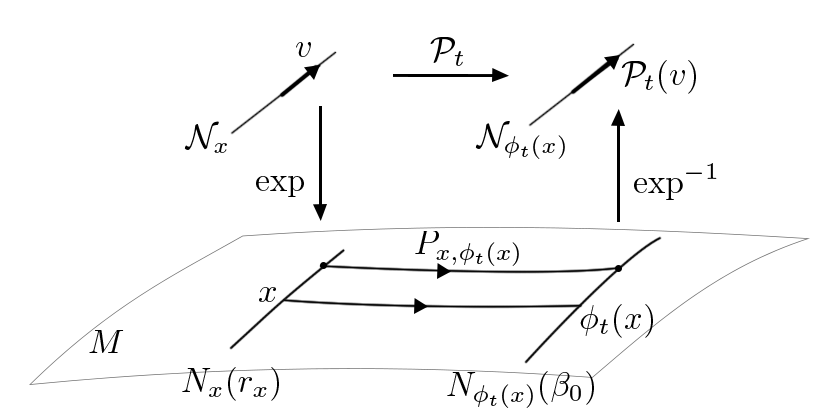}	
	\caption{The sectional Poincar\'e flow}\label{fig:spf}
\end{figure}

Figure \ref{fig:spf} illustrates the sectional Poincar\'e flow $\mathcal{P}_t$ defined as the following. One can refer to \cite{GY} and \cite{CrY18} for more details.

\nibf[The {\em sectional Poincar\'e flow $(\cP_t)_{t\in\RR}$}.]
Given $x\in M\setminus\Sing(X)$, there is $r_x>0$ such that the flow $\phi_t$ induces a local holonomy map $P_{x,\phi_t(x)}$ from $N_x(r_x)$ to $N_{\phi_t(x)}(\be_0)$, for any $t\in[-1,1]$. We denote by $(P_t)_{t\in\RR}$ the family of local holonomy maps, $P_t|_{N_x(r_x)}=P_{x,\phi_t(x)}$.
This allows us to define a local flow $(\cP_t)_{t\in\RR}$, which is called the sectional Poincar\'e flow, in a neighborhood of the 0-section in $\cN=\cup_{x\in M\setminus\Sing(X)}\cN_x$ as follows:
\[\cP_t(v)=\exp^{-1}_{\phi_t(x)}\circ P_{x,\phi_t(x)}\circ \exp_x(v),\]
for any $t\in[-1,1]$ and $v\in\cN_x(r_x)$.

\begin{Remark}\label{rmk:spf-uniform-size}
It is shown in \cite[Lemma 2.3]{GY} that for any $T>0$, there is $\be_1>0$ such that for any $x\in M\setminus\Sing(x)$, $\cP_t$ is well-defined on $\cN_x(\be_1\|X(x)\|)$ for any $t\in[-T,T]$. Moreover, the constant $\be_1$ can be chosen uniformly for vector fields in a $C^1$ neighborhood of $X$.
\end{Remark}

\nibf[The {\em rescaled sectional Poincar\'e flow $(\cP^*_t)_{t\in\RR}$}.]
Similar to the linear Poincar\'e flow, one can rescale the sectional Poincar\'e flow to obtain the rescaled sectional Poincar\'e flow $(\cP^*_t)_{t\in\RR}$:
\[\cP^*_t(v)=\|X(\phi_t(x))\|^{-1}\cdot\cP_t(\|X(x)\|\cdot v),\]
for any $t\in[-T,T]$, $x\in M\setminus\Sing(X)$ and $v\in\cN_x(\be_1)$.

\begin{Remark}\label{rmk:spf}
The differential of the sectional Poincar\'e flow $(\cP_t)_{t\in\RR}$ at the 0-section of $\cN$ is exactly the linear Poincar\'e flow $(\psi_t)_{t\in\RR}$. And the differential of $(\cP^*_t)_{t\in\RR}$ at the 0-section of $\cN$ is exactly the rescaled linear Poincar\'e flow $(\psi^*_t)_{t\in\RR}$.
\end{Remark}

\subsection{Plaque families}\label{sect:plaque-families}

Let $\La$ be any compact $\phi_t$-invariant set. Recall that $\cN_{\La}=\cup_{x\in\La\setminus\Sing(X)}\cN_x$. We assume that there is a {\em dominated splitting} $\cN_{\La}=G^{cs}\oplus G^{cu}$ of {\em index} $i$ with respect to the linear Poincar\'e flow $\psi_t$; more precisely, there exists $T>0$, such that for any $t\geq T$ and for any $x\in\La\setminus\Sing(X)$, $\dim G^{cs}(x)=i$,
\[\|\psi_t|_{G^{cs}(x)}\|\cdot\|\psi_{-t}|_{G^{cu}(\phi_t(x))}\|\leq 1/2.\]
One can immediately check that the inequality above is equivalent to the following:
\[\|\psi^*_t|_{G^{cs}(x)}\|\cdot\|\psi^*_{-t}|_{G^{cu}(\phi_t(x))}\|\leq 1/2.\]
In other words, a dominated splitting with respect to $\psi_t$ is a dominated splitting with respect to $\psi^*_t$, and {\itshape vice versa}. Therefore, one can simply say that $\cN_{\La}=G^{cs}\oplus G^{cu}$ is a dominated splitting. without specifying $\psi_t$ or $\psi^*_t$.

For each $x\in\La\setminus\Sing(X)$, let $G^{cs}(x;\al)$ and $G^{cu}(x;\al)$ be the open ball of radius $\al$ contained in $G^{cs}(x)$ and $G^{cu}(x)$ respectively.
Note that $(\psi^*_t)_{t\in\RR}$ is the differential of the local flow $(\cP^*_t)_{t\in\RR}$ at 0-section (Remark \ref{rmk:spf}), and $\cP^*_t$ is well-defined on $\cN_x(\be_1)$ for any $x\in M\setminus\Sing(X)$ and $t\in[-T,T]$ (Remark \ref{rmk:spf-uniform-size}).

Then one can generalize Hirsch-Pugh-Shub's plaque family theorem
\cite{HPS} to $\cP^*_T$ and obtain two continuous families of $C^1$ embeddings
\[\eta^{cs}_x: G^{cs}(x;\al_0)\to \cN_x,\quad \eta^{cu}_x: G^{cu}(x;\al_0)\to \cN_x,\]
with $x\in\La\setminus\Sing(X)$ and $\al_0>0$ small, such that the following properties hold (\cite[Lemma 2.14]{GY}):
\begin{itemize}
\item For any $x\in\La\setminus\Sing(X)$, one has $\eta^{cs/cu}_x(0_x)=0_x$ and $D\eta^{cs/cu}_x(0_x)=Id_{G^{cs/cu}(x)}$;
\item For any $0<\al\leq \al_0$, let
 \[\cW^{cs,*}_{\al}(x)=\eta^{cs}_x(G^{cs}(x;\al)), \quad\text{and}\ \cW^{cu,*}_{\al}(x)=\eta^{cu}_x(G^{cu}(x;\al)).\]
 Then the two plaque families $(\cW^{cs,*}_{\al_0}(x))_{x\in\La\setminus\Sing(X)}$ and $(\cW^{cu,*}_{\al_0}(x))_{x\in\La\setminus\Sing(X)}$ are locally invariant: for any $\varepsilon>0$ there exists $\de>0$ such that for any $x\in\La\setminus\Sing(X)$ and $t\in[-T,T]$, one has
 \[\cP^*_t(\cW^{cs,*}_{\de}(x))\subset \cW^{cs,*}_{\varepsilon}(\phi_t(x)),\quad\text{and}\ \cP^*_t(\cW^{cu,*}_{\de}(x))\subset \cW^{cu,*}_{\varepsilon}(\phi_t(x)).\]
\end{itemize}
We define for every $x\in\La\setminus\Sing(X)$ and $r\in(0,\al_0\|X(x)\|]$ the following
\[\cW^{cs}_{r}(x)=\|X(x)\|\cW^{cs,*}_{r/\|X(x)\|}(x), \quad \cW^{cu}_{r}(x)=\|X(x)\|\cW^{cu,*}_{r/\|X(x)\|}(x).\]
Then the plaque families $(\cW^{cs/cu}_{r}(x))_{x\in\La\setminus\Sing(X)}$ are locally invariant for $\cP_t$, {\itshape i.e.} for any $\varepsilon>0$, there is $\de>0$ such that
\[\cP_{t}\left(\cW^{cs/cu}_{\de\|X(x)\|}(x)\right)\subset \cW^{cs/cu}_{\varepsilon\|X(\phi_{t}(x))\|}(\phi_{t}(x)),\]
for any $x\in\La\setminus\Sing(X)$ and $t\in[-T,T]$.

\begin{Remark}
	The size of plaque families $(\cW^{cs/cu}_{r}(x))_{x\in\La\setminus\Sing(X)}$ is only uniform after a rescale of size $\|X(x)\|$. This is one of the main differences between singular and non-singular flows (diffeomorphisms).
\end{Remark}

\begin{Remark}\label{rmk:plaque-family}
For any $x\in\La\setminus\Sing(X)$ and $r\in(0,\al_0\|X(x)\|]$, one can define
\[W^{cs/cu}_{r}(x)=\exp_x(\cW^{cs/cu}_{r}(x)).\]
Then these are plaque families on the manifold $M$, and satisfy the following properties:
\begin{itemize}
\item $T_xW^{cs}_{r}(x)=G^{cs}(x)$ and $T_xW^{cu}_{r}(x)=G^{cu}(x)$;
\item $(W^{cs/cu}_{r}(x))_{x\in\La\setminus\Sing(X)}$ are locally invariant for the family of local holonomy maps $P_t$, i.e., for any $\varepsilon>0$, there is $\de>0$ such that
\[P_{t}\left(W^{cs/cu}_{\de\|X(x)\|}(x)\right)\subset W^{cs/cu}_{\varepsilon\|X(\phi_{t}(x))\|}(\phi_{t}(x)),\]
for any $x\in\La\setminus\Sing(X)$ and $t\in[-T,T]$.
\end{itemize}
For convenience, we will denote $W^{cs}(x)=W^{cs}_{\al_0\|X(x)\|}(x)$ and $W^{cu}(x)=W^{cu}_{\al_0\|X(x)\|}(x)$.
\end{Remark}

\subsection{Hyperbolic points and invariant manifolds}
The concept of hyperbolic times is prominent in studying the hyperbolic structure of diffeomorphisms and flows.
See~\cite{Lia89}, \cite{ABV00} and~\cite{PuS} for some of its applications. This section contains some basic properties of hyperbolic times and more importantly, its existence due to the Pliss lemma. Further discussion on this topic can be found in Section~\ref{s.7.2}.
\subsubsection{$\psi^*_t$-contracting/expanding points}

\begin{Definition}[{$\psi^*_t$-contracting/expanding \cite[Definition 2.17]{GY}}]
Let $E\subset\cN_{\La}$ be an invariant subbundle of the linear Poincar\'e flow $\psi_t$. For any $C>0$, $\eta>0$ and $T>0$, a point $x\in\La\setminus\Sing(X)$ is said to be $(C,\eta,T,E)$-$\psi^*_t$-contracting if there is a partition of time $0=t_0<t_1<\cdots<t_n<\cdots$ such that the following conditions hold:
\begin{itemize}
\item $t_{n+1}-t_n\leq T$ for any $n\in\NN$, and $\lim_{n\to\infty}t_n=\infty$;
\item $\prod_{i=0}^{n-1}\|\psi^*_{t_{i+1}-t_i}|_{E(\phi_{t_i}(x))}\|\leq C\e^{-\eta t_n},\quad \forall n\in\NN_+$.
\end{itemize}
The point $x$ is said to be $(C,\eta,T,E)$-$\psi^*_t$-expanding if it is $(C,\eta,T,E)$-$\psi^{-X,*}_t$-contracting for $-X$.
\end{Definition}

Suppose there is a dominated splitting $\cN_{\La}=G^{cs}\oplus G^{cu}$. Then it is well known that for a $(C,\eta,T,G^{cs})$-$\psi^*_t$-contracting point $x\in\La\setminus\Sing(X)$, the stable set of $\orb(x)$ contains an open disk in the plaque $W^{cs}(x)$. Let us recall some definitions. A strictly increasing homeomorphism $\theta:\RR\to\RR$ with $\theta(0)=0$ is called a {\em reparametrization}. For any $x$, we define
\begin{itemize}
\item $W^s(x)=\{y\in M: \exists \txtrm[reparametrization] \theta, \txtrm[s.t.] \limsup\limits_{t\to+\infty}\log d(\phi_{\theta(t)}(y),\phi_t(x))<0\}$;
\item $W^u(x)=\{y\in M: \exists \txtrm[reparametrization] \theta, \txtrm[s.t.] \limsup\limits_{t\to-\infty}\log d(\phi_{\theta(t)}(y),\phi_t(x))<0\}$;
\end{itemize}
where $d(\cdot,\cdot)$ is the distance function over $M$ induced by the Riemannian metric. Denote
\[W^s(\orb(x))=\bigcup_{y\in\orb(x)}W^s(y),\quad W^u(\orb(x))=\bigcup_{y\in\orb(x)}W^u(y).\]

\begin{Lemma}[{\cite[Lemma 2.18]{GY}}]
\label{lem:stable-manifolds}
Assume that $\cN_{\La}$ admits a dominated splitting $\cN_{\La}=G^{cs}\oplus G^{cu}$ of index $i$ with respect to the linear Poincar\'e flow $(\psi_t)_{t\in\RR}$. For any $C>0$, $\eta>0$ and $T>0$, there is $\de_0>0$ such that for any $x\in\La\setminus\Sing(X)$,
\begin{itemize}
\item if it is $(C,\eta,T,G^{cs})$-$\psi^*_t$-contracting, then $W^{cs}_{\de_0\|X(x)\|}(x)\subset W^s(\orb(x))$;
\item if it is $(C,\eta,T,G^{cu})$-$\psi^*_t$-expanding, then $W^{cu}_{\de_0\|X(x)\|}(x)\subset W^u(\orb(x))$.
\end{itemize}
\end{Lemma}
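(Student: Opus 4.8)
Applying the statement to $-X$ swaps $G^{cs}\leftrightarrow G^{cu}$, $\psi^*_t\leftrightarrow\psi^{-X,*}_t$ and $W^{cs},W^s\leftrightarrow W^{cu},W^u$, so it suffices to prove the center–stable half. The plan is to argue entirely at the level of the rescaled sectional Poincaré flow $(\cP^*_t)$ acting on the rescaled plaque family $(\cW^{cs,*}_{\al_0}(y))_{y\in\La\setminus\Sing(X)}$, and then transport the conclusion back through the rescaling $\cW^{cs}_{r}(x)=\|X(x)\|\,\cW^{cs,*}_{r/\|X(x)\|}(x)$, the exponential charts, and the holonomies $P_t$. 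Precisely, I would produce $\de_0\in(0,\al_0]$, depending only on $C,\eta,T$ and the constants attached to the dominated splitting (its domination time, uniform cone widths, the uniform size $\be_1$ of Remark~\ref{rmk:spf-uniform-size}, and uniform $C^1$-bounds for $\cP^*_s$, $|s|\le T$, near the zero section), such that for every $(C,\eta,T,G^{cs})$-$\psi^*_t$-contracting $x$ and every $v\in\cW^{cs,*}_{\de_0}(x)$ the forward $\cP^*$-orbit of $v$ stays in $\bigcup_t\cW^{cs,*}_{\al_0}(\phi_t(x))$ and $\|\cP^*_t(v)\|\to 0$ exponentially as $t\to+\infty$. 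Unrescaling then gives $\|\cP_t(\|X(x)\|v)\|=\|X(\phi_t(x))\|\,\|\cP^*_t(v)\|\to 0$ exponentially (since $\|X(\cdot)\|$ is bounded on $M$), and because $P_t$ conjugates $\cP_t$ to the flow along $\orb(x)$, a point $z=\exp_x(w)$ with $w\in\cW^{cs}_{\de_0\|X(x)\|}(x)$ is carried by the flow, along the reparametrization $\theta$ recording its successive return times to the sections $N_{\phi_t(x)}$, to within distance $\sim\|\cP_t(w)\|$ of $\phi_t(x)$; hence $\limsup_{t\to+\infty}\log d(\phi_{\theta(t)}(z),\phi_t(x))<0$ and $z\in W^s(x)\subset W^s(\orb(x))$.

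The geometric heart is a cone/graph-transform estimate. Using the domination $\|\psi^*_t|_{G^{cs}}\|\cdot\|\psi^*_{-t}|_{G^{cu}}\|\le 1/2$ for $t\ge T$, I would fix a narrow index-$i$ cone field around $G^{cs}$ over $\La\setminus\Sing(X)$. Since $D\cP^*_s(0_y)=\psi^*_s(y)$, since after rescaling $\cP^*_s$ together with its first derivative is uniformly continuous on a uniform-size neighborhood of the zero section for $|s|\le T$ (Remarks~\ref{rmk:spf-uniform-size}, \ref{rmk:spf}), and since the plaques $\cW^{cs,*}_{\al_0}(y)$ are $C^1$ and tangent to $G^{cs}(y)$ at their centers, one may shrink $\al_0$ so that: (a) every plaque $\cW^{cs,*}_{\al_0}(y)$ has all its tangent spaces inside the $G^{cs}$-cone; (b) along any $C^1$ curve inside such a plaque, the norm of $D\cP^*_s$, $s\in[0,T]$, on its tangent vectors is at most $e^{\rho(\al_0)s}\,\|\psi^*_s|_{G^{cs}(\mathrm{base})}\|$ with $\rho(\al_0)\to 0$ as $\al_0\to 0$; (c) $\cP^*_s$ expands distances on $\bigcup_y\cW^{cs,*}_{\al_0}(y)$ by at most a uniform factor $K=K(T)$ for $s\in[0,T]$. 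Estimate (b) is where domination enters essentially: the plaque being forward-invariant, its tangent iterates stay in the (backward-invariant) $G^{cs}$-cone, so the rate felt on the plaque is that of $G^{cs}$ up to a factor governed by the cone width.

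With (a)–(c) I would run the induction along the partition $0=t_0<t_1<\cdots$, $t_{n+1}-t_n\le T$, $t_n\to\infty$, with $\prod_{i=0}^{n-1}\|\psi^*_{t_{i+1}-t_i}|_{G^{cs}(\phi_{t_i}(x))}\|\le Ce^{-\eta t_n}$. Choosing $\de_0$ small enough that $KC\de_0$ lies below the radius threshold in the local-invariance property of the plaque family for target radius $\al_0$, one shows inductively that $\cP^*_{t_n}(v)\in\cW^{cs,*}_{\al_0}(\phi_{t_n}(x))$, and, integrating $D\cP^*_{t_{n+1}-t_n}$ along the radial segment of the plaque from its center to $\cP^*_{t_n}(v)$,
\[
\|\cP^*_{t_{n+1}}(v)\|\le e^{\rho(\al_0)(t_{n+1}-t_n)}\,\|\psi^*_{t_{n+1}-t_n}|_{G^{cs}(\phi_{t_n}(x))}\|\cdot\big(1+Q\|\cP^*_{t_n}(v)\|\big)\,\|\cP^*_{t_n}(v)\|,
\]
the factor $1+Q\|\cdot\|$ absorbing the quadratic nonlinear correction with $Q$ uniform. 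Multiplying and using $\sum_n\|\cP^*_{t_n}(v)\|<\infty$ (a bootstrap: once $\rho(\al_0)<\eta/2$ the partial products already force geometric decay), one obtains $\|\cP^*_{t_n}(v)\|\le C'e^{-\eta t_n/2}\|v\|$ for a uniform $C'$, hence by (c) on each $[t_n,t_{n+1}]$, $\|\cP^*_t(v)\|\le KC'e^{-\eta t_n/2}\|v\|\to 0$. The first paragraph then finishes the proof.

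I expect the main obstacle to be precisely the bookkeeping of (b)–(c) and the induction: making the nonlinear correction uniform in $x$, and controlling that the accumulated errors — cone width, quadratic nonlinearity, and a partition with no lower bound on $t_{n+1}-t_n$ — do not overwhelm the rate $e^{-\eta t_n}$ (this is why (b) must be multiplicative in time rather than additive, and why one shrinks $\rho(\al_0)$ below $\eta/2$). The key technical enabler is that after the Liao rescaling everything near the zero section of $\cN$ — the flow $\cP^*_s$, its derivative, the plaque family, the domination constants and the cone widths — is uniform over $\La\setminus\Sing(X)$, even where $\|X\|$ degenerates near singularities; this is exactly what upgrades the pointwise hyperbolic-times hypothesis to a uniform-size stable plaque of relative radius $\de_0\|X(x)\|$.
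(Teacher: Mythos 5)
Your argument is correct and follows essentially the same route as the paper, which does not prove the lemma itself but cites \cite[Lemma 2.18]{GaY} (in turn modeled on \cite[Corollary 3.3]{PuS}) and records in Remark~\ref{rmk:stable-manifolds} precisely the two estimates your induction produces: exponential contraction of $\mathrm{diam}(P_{t_n}W^{cs}_{\de_0\|X(x)\|}(x))$ at some rate $\eta'<\eta$ together with confinement in $W^{cs}_{\vep\|X(\phi_{t_n}(x))\|}(\phi_{t_n}(x))$. The one bookkeeping point worth flagging is that a partition with arbitrarily short steps should first be coarsened to steps of length comparable to $T$ (submultiplicativity of $\|\psi^*_t\|$ preserves the contracting-at-partition hypothesis), so that the accumulated nonlinear corrections in your product remain summable.
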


\begin{Remark}\label{rmk:stable-manifolds}
The proof of Lemma \ref{lem:stable-manifolds} uses a similar argument as in the proof of \cite[Corollary 3.3]{PuS}. The constant $\de_0$ is determined together with $\vep>0$ and $0<\eta'<\eta$ such that for any $x\in\La\setminus\Sing(X)$, if it is $(1,\eta,T,G^{cs})$-$\psi^*_t$-contracting with the partition of time $0=t_0<t_1<\cdots<t_n<\cdots$, then for all $n\in\NN$,
\[\mathrm{diam}(P_{t_n}W^{cs}_{\de_0\|X(x)\|}(x))\leq e^{-\eta't_n}\mathrm{diam}(W^{cs}_{\de_0\|X(\phi_{t_n}(x))\|}(\phi_{t_n}(x))),\]
and
\[P_{t_n}W^{cs}_{\de_0\|X(x)\|}(x)\subset W^{cs}_{\vep\|X(\phi_{t_n}(x))\|}(\phi_{t_n}(x)).\]
A similar result holds for the plaque family $W^{cu}$.
Moreover, the constant $\de_0$ can be chosen uniformly for vector fields in a $C^1$ neighborhood of $X$.
\end{Remark}
\subsubsection{Periodic orbits and a lemma of Pliss type}

Given a periodic orbit $\gamma$ for a vector field $X$, we denote by $\tau(\gamma)$ the minimal periodic of $\gamma.$ The goal of this subsection is to derive the existence of $\psi_t$-contracting/expanding points from a mild hyperbolic property of periodic orbits.

\begin{Definition}[$\psi_t$-contracting/expanding at period]
\label{def:contracting-at-period}
Given $C>0, \eta>0$ and $T>0$, let $\ga$ be a periodic orbit of $X\in\xX^1(M)$, $E\subset\cN_{\ga}$ be an invariant subbundle with respect to the linear Poincar\'e flow $\psi_t$. Assume that $\tau(\ga)\ge T$. $\ga$ is called $(C,\eta,T,E)$-$\psi_t$-contracting at period, if for any $x\in\ga$ and any partition of time $0=t_0<t_1<\cdots<t_n=\tau(\ga)$ with $t_{i+1}-t_i\geq T$ for all $0\leq i\leq n-1$, one has
\begin{equation}\label{eqn:contracting-at-period}
  \prod_{i=0}^{n-1}\|\psi_{t_{i+1}-t_i}|_{E(\phi_{t_i}(x))}\|\leq C\e^{-\eta \tau(\ga)}.
\end{equation}
The orbit $\ga$ is called $(C,\eta,T,E)$-$\psi_t$-expanding at period if it is $(C,\eta,T,E)$-$\psi_t$-contracting at period for $-X$.
\end{Definition}

\begin{Remark}
Since $\Phi_{\tau(\ga)} X(x) = X(\phi_{\tau(\ga)}(x)) = X(x)$ for all $x\in\ga$, we have
\[\prod_{i=0}^{n-1}\|\psi_{t_{i+1}-t_i}|_{E(\phi_{t_i}(x))}\|\leq C\e^{-\eta \tau(\ga)} \iff \prod_{i=0}^{n-1}\|\psi_{t_{i+1}-t_i}^*|_{E(\phi_{t_i}(x))}\|\leq C\e^{-\eta \tau(\ga)}.\]
	One can define $(C,\eta,T,E)$-$\psi_t^*$-contracting (or expanding) at period simply by replacing $\psi_t$ with $\psi_t^*$ in Definition \ref{def:contracting-at-period}. Then $\ga$ is $(C,\eta,T,E)$-$\psi_t$-contracting (or expanding) at period if and only if it is $(C,\eta,T,E)$-$\psi^*_t$-contracting (or expanding) at period.
	Therefore we may simply say that $\ga$ is $(C,\eta,T,E)$-contracting (or expanding) at period without referring to either $\psi$ or $\psi^*$.
	
\end{Remark}

\begin{Lemma}[{\cite[Lemma 2.22]{GY}}]\label{lem:pliss}
Given $X\in\mathscr{X}^1(M)$, $C>0$, $\eta>0$ and $T>0$. For any $\eta'\in(0,\eta)$, there exists $\tau_0=\tau_0(C,T,\eta,\eta')>0$ such that if $\ga$ is a periodic orbit with period $\tau(\ga)\geq\tau_0$, $E\subset\cN_{\ga}$ is a $\psi_t$-invariant subbundle, and if $\ga$ is $(C,\eta,T,E)$-$\psi_t$-contracting at period, then there exists a $(1,\eta',T,E)$-$\psi^*_t$-contracting point $x\in\ga$.
\end{Lemma}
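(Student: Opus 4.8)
The plan is to reduce the statement to an application of the Pliss lemma to a ``periodic'' partition of $[0,\infty)$ obtained by repeating one period of $\ga$. Before doing that I would record two elementary facts. \emph{(i) A uniform short-time bound.} Since $M$ is compact and $X\in\xX^1(M)$, writing $L=\sup_M\|DX\|$ one has $\|\Phi_s\|\le\e^{L|s|}$ for all $s$; as $\|\psi_s|_{\cN_x}\|\le\|\Phi_s\|$ and $\|X(x)\|/\|X(\phi_s(x))\|=\|\Phi_s|_{\langle X(x)\rangle}\|^{-1}\le\|\Phi_{-s}\|$ for every $x\in M\setminus\Sing(X)$, there is a constant $B=B(X,T)>1$ with $\|\psi_{\pm s}\|,\|\psi^*_{\pm s}\|\le B$ whenever $|s|\le 2T$; crucially this holds uniformly, even near singularities, because the rescaling factor over a bounded time is controlled by $\Phi_{\pm s}$. \emph{(ii) Telescoping of the rescaling over a full period.} For any $p\in\ga$ and any subdivision $0=u_0<u_1<\cdots<u_m=\tau(\ga)$ one has
\[\prod_{i=0}^{m-1}\|\psi^*_{u_{i+1}-u_i}|_{E(\phi_{u_i}(p))}\|=\prod_{i=0}^{m-1}\|\psi_{u_{i+1}-u_i}|_{E(\phi_{u_i}(p))}\|,\]
because the rescaling factors $\|X(\phi_{u_i}(p))\|/\|X(\phi_{u_{i+1}}(p))\|$ multiply to $\|X(p)\|/\|X(\phi_{\tau(\ga)}(p))\|=1$.

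Next I would establish the key estimate: \emph{for a subdivision of one period into gaps all of length $\le T$, the corresponding product of $\|\psi^*_\bullet|_E\|$ is at most $B^3C\,\e^{-\eta\tau(\ga)}$.} Fix $p\in\ga$, set $\tau=\tau(\ga)$, $N=\lfloor\tau/T\rfloor$, $r=\tau-NT\in[0,T)$; we will take $\tau_0\ge T$, so $N\ge1$. Let $(A)$ be the subdivision of $[0,\tau]$ with gaps $T,\dots,T$ ($N$ of them) then $r$ (all $\le T$), and let $(B)$ be the one with gaps $T,\dots,T$ ($N-1$ of them) then $T+r$ (all $\ge T$). Applying the hypothesis ($\ga$ is $(C,\eta,T,E)$-contracting at period, so $\tau\ge T$ suffices) to $p$ and $(B)$ gives that $\Pi_B$, the product of $\|\psi_\bullet|_E\|$ over $(B)$, satisfies $\Pi_B\le C\,\e^{-\eta\tau}$. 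Since $(A)$ refines $(B)$ by inserting the point $NT$, the product of $\|\psi_\bullet|_E\|$ over $(A)$ equals $\Pi_B$ with the factor $\|\psi_{T+r}|_{E(\phi_{(N-1)T}(p))}\|$ replaced by $\|\psi_T|_{E(\phi_{(N-1)T}(p))}\|\cdot\|\psi_r|_{E(\phi_{NT}(p))}\|$, which by (i) (note $T+r\le 2T$) costs at most a factor $B\cdot B\cdot B=B^3$; combining with (ii), the product $\Pi_A$ of $\|\psi^*_\bullet|_E\|$ over $(A)$ obeys $\Pi_A\le B^3\,\Pi_B\le B^3C\,\e^{-\eta\tau}$. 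I would then fix $\tau_0\ge T$ large enough that $B^3C\,\e^{-\eta\tau}<\e^{-\eta'\tau}$ for every $\tau\ge\tau_0$, i.e.\ $\log\Pi_A+\eta'\tau<0$.

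With $\tau_0$ in hand: extend $(A)$ periodically to a partition $0=t_0<t_1<\cdots$ of $[0,\infty)$ (repeat the block $[0,\tau]$), so every gap is $\le T$ and $t_n\to\infty$. Put $a_i=\log\|\psi^*_{t_{i+1}-t_i}|_{E(\phi_{t_i}(p))}\|$, $c_i=\sum_{l<i}a_l$, $d_i=c_i+\eta't_i$. Writing $L_A$ for the number of gaps in one block and using $\phi_\tau(p)=p$, we get $a_{i+L_A}=a_i$, hence $d_{i+L_A}=d_i+(\log\Pi_A+\eta'\tau)$, a fixed negative increment; therefore $d_i\to-\infty$ and, being bounded above with only finitely many terms above any level, $\sup_id_i$ is attained, say $d_{n^*}=\max_id_i$. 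Then $d_m\le d_{n^*}$ for all $m>n^*$, which reads $\prod_{l=n^*}^{m-1}\|\psi^*_{t_{l+1}-t_l}|_{E(\phi_{t_l}(p))}\|\le\e^{-\eta'(t_m-t_{n^*})}$. Taking $x=\phi_{t_{n^*}}(p)\in\ga$ with the reindexed partition $s_j=t_{n^*+j}-t_{n^*}$ (gaps $\le T$, $s_j\to\infty$), and using $\phi_{s_j}(x)=\phi_{t_{n^*+j}}(p)$, this says exactly that $x$ is $(1,\eta',T,E)$-$\psi^*_t$-contracting, as required. (This last step is the elementary ``maximum'' form of the Pliss lemma; one could instead quote Pliss directly, the bound in (i) supplying the required upper bound on the multipliers $\e^{a_i}$.)

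The step I expect to be the main --- indeed the only genuinely nontrivial --- point is the key estimate of the second paragraph, because of the clash between the ``$\ge T$'' built into the definition of ``contracting at period'' and the ``$\le T$'' required of any partition witnessing $\psi^*_t$-contraction: as $\tau(\ga)$ need not be a multiple of $T$, no periodic partition can have all its gaps equal to $T$, which forces the comparison between the two subdivisions $(A)$ and $(B)$ that differ only in how the remainder $r$ is placed. Everything else is bookkeeping once one has the uniform short-time bound (i) and the full-period telescoping (ii); it is worth flagging that on a single short interval the rescaled Poincar\'e flow near a singularity is controlled only \emph{via} this telescoping together with the $\Phi_{\pm s}$-bound, which is the one place where the singular setting genuinely intervenes.
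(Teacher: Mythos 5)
The paper does not prove this lemma; it is quoted verbatim from \cite[Lemma 2.22]{GaY}, so there is no in-text argument to compare against. Your proof is correct and is essentially the standard one: the uniform bound $\|\psi^{*}_{\pm s}\|\le B$ for $|s|\le 2T$, the telescoping of the rescaling factors over a full period (which the paper itself records in the remark following Definition \ref{def:contracting-at-period}), the comparison of the two subdivisions $(A)$ and $(B)$ to reconcile the ``gaps $\ge T$'' hypothesis with the ``gaps $\le T$'' conclusion, and the maximum form of Pliss applied to the periodic sequence $a_i$ are exactly the right ingredients, and each step checks out (the ratio of the $(A)$- and $(B)$-products is in fact at most $B^{2}$ rather than $B^{3}$, but this is immaterial). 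The only cosmetic point worth fixing is the degenerate case $r=\tau(\ga)-NT=0$, where the last ``gap'' of $(A)$ collapses and the point $t_{N+1}=t_N$ should simply be omitted so that the partition remains strictly increasing.
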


\subsection{Fundamental sequence with dominated splitting}

Given $X\in\mathscr{X}^1(M)$ away from homoclinic tangencies, it is known that there is a neighborhood $\cU$ of $X$ with the property that for any $Y\in\cU$, any periodic orbit $\ga$ of $Y$, the normal bundle over $\ga$ admits a dominated splitting, and this domination is uniform in $\cU$ (see Lemma \ref{lem:BasicAwayTang}). Therefore, it is natural to consider a sequence of periodic orbits for vector fields $X_n$ converging to $X$ in the $C^1$ topology. The following definition is given by Liao \cite{Lia81}.

\begin{Definition}[Fundamental sequence]
\label{def:fund-seq}
Let $\{X_n\}$ be a sequence of $C^1$ vector fields on $M$ and suppose there is a periodic orbit $\ga_n$ of $X_n$ for each $n$. The sequence $\{(\ga_n,X_n)\}$ is called a  {\em fundamental sequence of $X$}, if $\|X_n-X\|_{C^1}\to 0$ and there exists a compact invariant set $\La$ of $X$, such that $\ga_n$ converges to $\La$ in the Hausdorff topology. The set $\La$ is called the {\em limit of the fundamental sequence}. Moreover, if $\ind(\ga_n)=i$ for all $n$, we call $\{(\ga_n,X_n)\}$ a {\em fundamental $i$-sequence}.
\end{Definition}

We use $(\ga_n,X_n)\to (\La,X)$ with the meaning that $\{(\ga_n,X_n)\}$ is a fundamental sequence of $X$ with the limit $\La$.

\begin{Definition}[Dominated splitting of fundamental sequence]
\label{def:dom-fund-seq}
Let $\{(\ga_n,X_n)\}$ be a fundamental sequence of $X$. The sequence is said to admit a dominated splitting of index $i$ if there exist $T>0$ and a $\psi^{X_n}_t$-invariant splitting $\cN_{\ga_n}=G^{cs}_n\oplus G^{cu}_n$ for each $n$, such that for $n$ large enough, for any $x\in\ga_n$, $t\geq T$, one has $\dim G^{cs}_n=i$ and
\[\|\psi^{X_n}_t|_{G^{cs}_n(x)}\|\cdot\|\psi^{X_n}_{-t}|_{G^{cu}_n(\phi^{X_n}_t(x))}\|\leq \frac{1}{2}.\]
The constant $T$ is called the {\em domination constant} and we call the splitting {\em $T$-dominated}.
\end{Definition}

\begin{Remark}
\label{rmk:fundamental-sequences}
For any fundamental sequence $\mathscr{F}=\{(\ga_n,X_n)\}$ of $X$, we define
\[\De(\mathscr{F})=\{L\in G^1:\ \exists x_n\in\ga_n, \txtrm[such that] \langle X_n(x_n)\rangle\to L\}.\]
Then $\De(\mathscr{F})$ is a compact $\hat{\Phi}^X_t$-invariant set. One can see that $\{(\ga_n,X_n)\}$ admits a dominated splitting of index $i$ if and only if $\De(\mathscr{F})$ admits a dominated splitting of index $i$ with respect to the extended linear Poincar\'{e} flow $\tilde{\psi}^X_t$. In fact, in view of Remark \ref{rmk:elp}, if $\cN_{\ga_n}=G^{cs}_n\oplus G^{cu}_n$ is a dominated splitting of index $i$, then the dominated splitting $\widetilde{\cN}_{\De(\mathscr{F})}=G^{cs}\oplus G^{cu}$ with respect to $\tilde{\psi}^X_t$ can be obtained by taking limits of the former; the other direction is just because a dominated splitting can be extended to neighborhoods, c.f. \cite{BDV05}.

Hence if $\mathscr{F}=\{(\ga_n,X_n)\}$ admits a dominated splitting of index $i$ and $(\ga_n,X_n)\to (\La,X)$, then since $\be(\De(\fF))=\La$, we have in particular that $\cN_{\La}$ admits a dominated splitting $G^{cs}\oplus G^{cu}$ of index $i$ with respect to the linear Poincar\'{e} flow $\psi^X_t$. Then there exist two continuous plaque families $(W^{cs}_{\al}(x))_{x\in\La\setminus\Sing(X)}$ and $(W^{cu}_{\al}(x))_{x\in\La\setminus\Sing(X)}$ tangent to $G^{cs}$ and $G^{cu}$ respectively, which are locally invariant for the family of local holonomy maps induced by the flow (Remark \ref{rmk:plaque-family}).
\end{Remark}

\subsection{Matching of dominated splittings near a singularity}
\label{sect:matching}

Let $\fF=\{(\ga_n,X_n)\}$ be a fundamental sequence, $(\ga_n,X_n)\to (\La, X)$. Suppose there is a hyperbolic singularity $\si\in\La\cap\Sing(X)$. Define
\[\De_{\si}(\fF)=\{L\in\De(\fF):\be(L)=\si\},\]
where $\De(\fF)$ is defined as in Remark \ref{rmk:fundamental-sequences}.
Then $\De(\fF)$ and $\De_{\si}(\fF)$ are nonempty compact $\hat{\Phi}_t$-invariant sets. As explained in Remark \ref{rmk:fundamental-sequences}, a dominated splitting over the fundamental sequence can be extended to $\De(\fF)$ and, in particular, to $\De_{\si}(\fF)$.

Since $\si$ can be accumulated by periodic orbits, it is a saddle, {\itshape i.e.}, the stable subspace $E^s(\si)$ and unstable subspace $E^u(\si)$ are both nontrivial.
The following lemma is contained in the proof of \cite[Lemma 4.1, 4.2]{LGW05}.

\begin{Lemma}\label{lem:elp-dom}
  Suppose there is a dominated splitting $\widetilde{\cN}_{\De_{\si}(\fF)}=G^{cs}\oplus G^{cu}$ with respect to the extended linear Poincar\'{e} flow, and the index $\dim{G^{cs}}<\ind(\si)$, then there is a dominated splitting $E^s(\si)=E^{ss}(\si)\oplus E^{c}(\si)$ with respect to the tangent flow, such that $\dim{E^{ss}(\si)}=\dim{G^{cs}}$.
  Moreover, suppose $E^{ss}(\si)\perp E^{cu}(\si)$ where $E^{cu}(\si)=E^c(\si)\oplus E^u(\si)$, then
  \begin{itemize}
    \item for any $L\in\De_{\si}(\fF)\cap E^{cu}(\si)$, one has $G^{cs}(L)=\{L\}\times E^{ss}(\si)$, and
    \item for any $L\in\De_{\si}(\fF)\cap E^{ss}(\si)$, one has $G^{cu}(L)\supset \{L\}\times E^u(\si)$.
  \end{itemize}
\end{Lemma}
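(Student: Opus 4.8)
The strategy is to extract the tangent-flow dominated splitting on $E^s(\sigma)$ from the extended linear Poincar\'e flow splitting by restricting to suitable one-dimensional subspaces in $\De_{\si}(\fF)$, and then to upgrade this to the precise description of $G^{cs}$ and $G^{cu}$ on $E^{cu}(\si)$ and $E^{ss}(\si)$ using the orthogonality hypothesis together with Remark \ref{rmk:elp}(2). First I would observe that since $\si$ is a hyperbolic saddle, we may pick any line $L\subset E^s(\si)$ that lies in $\De_{\si}(\fF)$ (such lines exist because periodic orbits $\ga_n$ accumulate on $\si$, and the directions $\langle X_n(x_n)\rangle$ for $x_n$ close to $\si$ accumulate, after passing to a subsequence, on lines in $T_\si M$; in particular on lines inside the local stable/unstable structure — this is exactly the content of the analysis in \cite{LGW05}). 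Along the $\hat{\Phi}_t$-orbit closure of such lines one can read off, via Remark \ref{rmk:elp}(2) (or rather its variant for general $\Phi_t$-invariant complements), that the extended linear Poincar\'e flow restricted to $\widetilde{\cN}_L$ is conjugate to the tangent flow on the orthogonal complement of $L$ in $T_\si M$. A dominated splitting of index $\dim G^{cs} < \ind(\si)$ on the $\widetilde{\cN}$ side therefore forces a dominated splitting, with respect to $\Phi_t$, inside $E^s(\si)$ (the contracting part cannot ``eat'' the whole stable space precisely because the index is strictly smaller than $\ind(\si)$, and the weaker contraction directions of $E^s$ together with $E^u$ form the complementary bundle). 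Call the contracting piece $E^{ss}(\si)$, of dimension $\dim G^{cs}$, and $E^{c}(\si)$ its complement inside $E^s(\si)$.

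Next, assuming $E^{ss}(\si)\perp E^{cu}(\si)$ with $E^{cu}(\si)=E^c(\si)\oplus E^u(\si)$, I would analyze the two families of lines separately. For $L\in\De_{\si}(\fF)\cap E^{cu}(\si)$: since $E^{ss}(\si)$ is then orthogonal to $L$, the subspace $\{L\}\times E^{ss}(\si)$ sits inside $\widetilde{\cN}_L$, and by Remark \ref{rmk:elp}(2) the restriction $\tilde\psi_t|_{\{L\}\times E^{ss}(\si)}$ is identified with $\frac{1}{\|\Phi_t|_L\|}\Phi_t|_{E^{ss}(\si)}$ up to the rescaling normalization; crucially the pair $(E^{ss}(\si), \text{complement})$ is a dominated splitting for the tangent flow, and domination is preserved under this identification and under the rescaling. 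Hence $\{L\}\times E^{ss}(\si)$ is contained in the most contracted bundle of the extended dominated splitting over the $\hat\Phi_t$-orbit closure of $L$; by uniqueness of the dominated splitting of a fixed index (and dimension count $\dim\{L\}\times E^{ss}(\si)=\dim G^{cs}$), we get $G^{cs}(L)=\{L\}\times E^{ss}(\si)$. Symmetrically, for $L\in\De_{\si}(\fF)\cap E^{ss}(\si)$: now $E^u(\si)\subset E^{cu}(\si)$ is orthogonal to $L$, so $\{L\}\times E^u(\si)\subset\widetilde{\cN}_L$, and $\tilde\psi_t$ restricted there is the rescaled $\Phi_t|_{E^u(\si)}$, which is (eventually) expanding; an expanding subbundle must lie inside the least contracted bundle $G^{cu}$ of any dominated splitting, giving $G^{cu}(L)\supset\{L\}\times E^u(\si)$.

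\textbf{Main obstacle.} The delicate point is the very first step: producing the tangent-flow dominated splitting $E^s(\si)=E^{ss}(\si)\oplus E^c(\si)$ of the \emph{correct} dimension and knowing it genuinely comes from the $\widetilde{\cN}$-splitting, rather than merely asserting abstract existence. One must argue that the lines $L$ accumulated by the flow directions of $\ga_n$ near $\si$ are rich enough — in particular that $\De_{\si}(\fF)$ meets $E^s(\si)$ in a way that lets the contraction rates transfer — and that a dominated splitting of index $j<\ind(\si)$ for $\tilde\psi_t$ over $\De_{\si}(\fF)$ cannot be ``all stable'' but must split $E^s(\si)$ with the first $j$ Lyapunov directions forming $E^{ss}(\si)$. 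This is precisely where the hypothesis $\dim G^{cs}<\ind(\si)$ is used and is the technical heart of \cite[Lemmas 4.1, 4.2]{LGW05}; I would invoke that computation rather than reprove it, citing the behavior of the extended linear Poincar\'e flow on $\De_{\si}(\fF)$ and the fact that, for lines $L$ tending to directions inside $E^s(\si)$, the normalized tangent cocycle $\Phi_t|_{(\,\cdot\,)}/\|\Phi_t|_L\|$ inherits the spectral gap of $\Phi_t$ on $T_\si M$. The second and third steps are then comparatively routine, being applications of Remark \ref{rmk:elp} plus uniqueness/invariance of dominated splittings, though one should be careful that the orthogonality assumption is exactly what makes $\{L\}\times E^{ss}(\si)$ and $\{L\}\times E^u(\si)$ legitimate subspaces of $\widetilde{\cN}_L$.
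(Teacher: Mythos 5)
The paper itself offers no proof of this lemma; it simply states that the result ``is contained in the proof of \cite[Lemma 4.1, 4.2]{LGW05}''. Measured against that source, your overall structure is the standard one: use the orthogonality to see that $\{L\}\times E^{ss}(\si)$ (resp.\ $\{L\}\times E^u(\si)$) is a $\tilde{\psi}_t$-invariant subbundle of $\widetilde{\cN}_L$ on which the projection term in the definition of $\tilde{\psi}_t$ vanishes, so that $\tilde{\psi}_t$ acts there as $\Phi_t|_{E^{ss}(\si)}$ (resp.\ $\Phi_t|_{E^u(\si)}$), and then conclude by uniqueness/comparison of dominated splittings. Your treatment of the first bullet is essentially correct.

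Two points as written would not go through. First, in your opening step you extract the splitting of $E^s(\si)$ from a line $L\in\De_{\si}(\fF)$ with $L\subset E^s(\si)$. If $L\subset E^s(\si)$, then $\widetilde{\cN}_L$ only sees $\{L\}\times\bigl(E^s(\si)\cap L^\perp\bigr)$, of dimension $\ind(\si)-1$, so restricting $G^{cs}\oplus G^{cu}$ can at best split $E^s(\si)\cap L^\perp$, not $E^s(\si)$. The correct choice in \cite{LGW05} is $L\in\De_{\si}(\fF)\cap E^u(\si)$, for which $\{L\}\times E^s(\si)$ is an invariant subbundle of $\widetilde{\cN}_L$ carrying the full tangent dynamics of $E^s(\si)$; one must also verify $\De_{\si}(\fF)\cap E^u(\si)\neq\emptyset$ (via $\omega$-limits under $\hat{\Phi}_t$ of lines in $\De_{\si}(\fF)$, which requires knowing $\De_{\si}(\fF)\not\subset E^s(\si)$). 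You defer this step to \cite{LGW05}, which is legitimate since the paper does the same, but the sketch points at the wrong subspace. Second, the principle ``an expanding invariant subbundle must lie inside $G^{cu}$'' is false in general (both bundles of a dominated splitting may be expanding). What the second bullet actually needs is the nesting property of dominated splittings: over the orbit closure of $L\subset E^{ss}(\si)$ one checks that $\bigl(\cdot\times[(E^{ss}(\si)\cap L^\perp)\oplus E^c(\si)]\bigr)\oplus\bigl(\cdot\times E^u(\si)\bigr)$ is a second dominated splitting, of index $\ind(\si)-1\geq\dim G^{cs}$, and two dominated splittings over the same compact invariant set are nested according to their indices, giving $\cdot\times E^u(\si)\subset G^{cu}$. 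Both repairs are routine, so I would classify the proposal as correct in approach with fixable imprecisions rather than genuinely gapped.
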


The following lemma  is essentially contained in \cite[Lemma 4.3]{LGW05}, with similar ideas first appeared in \cite{MPP}.

\begin{Lemma}\label{lem:elp-dom2}
  Under the same assumptions as in Lemma \ref{lem:elp-dom}, one has
\[\De_{\si}(\fF)\cap (E^{ss}(\si)\oplus E^u(\si))\setminus(E^{ss}(\si)\cup E^u(\si))=\emptyset.\]
\end{Lemma}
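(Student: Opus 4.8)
The plan is to argue by contradiction, reducing to an analysis of the projective dynamics of $\hat\Phi_t$ inside $E^{ss}(\si)\oplus E^u(\si)$, in the spirit of \cite[Lemma~4.3]{LGW05} (with ideas going back to \cite{MPP}). Suppose there is $L_0\in\De_\si(\fF)$ with $L_0\subset E^{ss}(\si)\oplus E^u(\si)$ but $L_0\not\subset E^{ss}(\si)$ and $L_0\not\subset E^u(\si)$. Since $E^{ss}(\si)$ and $E^u(\si)$ are $\Phi_t$-invariant, so is their sum, and every $\hat\Phi_t(L_0)$ lies in $E^{ss}(\si)\oplus E^u(\si)$ with both components nonzero. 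Writing $k=\dim G^{cs}=\dim E^{ss}(\si)<\ind(\si)=s$ and using that $\Phi_t|_{E^{ss}(\si)}$ is a uniform contraction while $\Phi_t|_{E^u(\si)}$ is a uniform expansion, I would first check that the $\omega$-limit set (under $\hat\Phi_t$) of $L_0$ is a nonempty compact $\hat\Phi_t$-invariant subset of $\{L:L\subset E^u(\si)\}$, and its $\alpha$-limit set a nonempty compact $\hat\Phi_t$-invariant subset of $\{L:L\subset E^{ss}(\si)\}$. Let $\Gamma$ be the closure of the $\hat\Phi_t$-orbit of $L_0$; it is a compact $\hat\Phi_t$-invariant subset of $\De_\si(\fF)$, so $\widetilde\cN_{\Gamma}=G^{cs}\oplus G^{cu}$ remains a dominated splitting of index $k$. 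Applying Lemma~\ref{lem:elp-dom}: for every $L$ in the $\omega$-limit (which lies in $E^u(\si)\subset E^{cu}(\si)$) we get $G^{cs}(L)=\{L\}\times E^{ss}(\si)$, and for every $L^-$ in the $\alpha$-limit (which lies in $E^{ss}(\si)$) we get $G^{cu}(L^-)\supset\{L^-\}\times E^u(\si)$.

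The core of the argument is to show that $G^{cs}$ avoids the ``center'' directions. Using $E^{ss}(\si)\perp E^{cu}(\si)$, I would build over $\Gamma$ a $\tilde\psi_t$-invariant splitting $\widetilde\cN_L=\tilde H_L\oplus\tilde C_L$, where $\tilde H_L=\{L\}\times\bigl(L^{\perp}\cap(E^{ss}(\si)\oplus E^u(\si))\bigr)$ and $\tilde C_L$ is the orthogonal projection to $L^{\perp}$ of the $E^c(\si)$-directions; then $\dim\tilde C=\dim E^c(\si)=s-k$, the splitting is continuous on $\Gamma$, and — since the angle between $E^c(\si)$ and $L$ is bounded below on the compact set $\Gamma$ (because $E^c(\si)\cap(E^{ss}(\si)\oplus E^u(\si))=\{0\}$) — the cocycle $\tilde\psi_t|_{\tilde C}$ is uniformly conjugate to $\Phi_t|_{E^c(\si)}$. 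Because $k<\ind(\si)$, Lemma~\ref{lem:elp-dom} also furnishes the dominated splitting $E^s(\si)=E^{ss}(\si)\oplus E^c(\si)$, so the exponents at $\si$ satisfy $\la_k<\la_{k+1}$; fix $\vep>0$ with $\la_k+\vep<\la_{k+1}$. Now for $L'$ on the $\hat\Phi_t$-orbit of $L_0$ and $v\in G^{cs}(L')$, the forward iterates $\hat\Phi_t(L')$ enter every neighbourhood of the $\omega$-limit, where $G^{cs}$ equals the $E^{ss}(\si)$-directions and $\tilde\psi_t$ contracts at rate $\le\la_k$; hence $\|\tilde\psi_t(v)\|\le C\,e^{(\la_k+\vep)t}$ for all $t\ge 0$. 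Writing $v=v_H+v_C$ with $v_H\in\tilde H_{L'}$, $v_C\in\tilde C_{L'}$, invariance of the splitting, the uniformly bounded angle between $\tilde H$ and $\tilde C$, and the conorm lower bound for $\tilde\psi_t|_{\tilde C}$ give $\|\tilde\psi_t(v)\|\ge c'\,e^{\la_{k+1}t}\|v_C\|$. Comparing and letting $t\to\infty$ forces $v_C=0$; thus $G^{cs}(L')\subset\tilde H_{L'}$ for every such $L'$, and since this is a closed condition, $G^{cs}(L^-)\subset\tilde H_{L^-}$ for every $L^-$ in the $\alpha$-limit.

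To conclude, fix $L^-$ in the $\alpha$-limit. Over the $\hat\Phi_t$-orbit of $L^-$ (contained in $\{L:L\subset E^{ss}(\si)\}$), the hypothesis $E^{ss}(\si)\perp E^u(\si)$ lets one split $\tilde H$ $\tilde\psi_t$-invariantly as $\{L^-\}\times\bigl((L^-)^{\perp}\cap E^{ss}(\si)\bigr)\ \oplus\ \{L^-\}\times E^u(\si)$, of dimensions $k-1$ and $\dim M-s$; by the previous paragraph the second summand lies inside $G^{cu}(L^-)$. Since $G^{cs}(L^-)\cap G^{cu}(L^-)=\{0\}$ and $G^{cs}(L^-)\subset\tilde H_{L^-}$, the $k$-dimensional space $G^{cs}(L^-)$ injects, by projection along the $E^u(\si)$-factor, into the $(k-1)$-dimensional space $\{L^-\}\times\bigl((L^-)^{\perp}\cap E^{ss}(\si)\bigr)$ — impossible. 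This contradiction proves the lemma.

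I expect the main obstacle to be the ``core'' step, i.e.\ establishing $G^{cs}\subset\tilde H$ along the orbit of $L_0$: this is where the domination of $G^{cs}\oplus G^{cu}$ must be played off against the spectral gap $\la_k<\la_{k+1}$ at $\si$ produced by Lemma~\ref{lem:elp-dom}, and where one has to set up the invariant splitting $\widetilde\cN=\tilde H\oplus\tilde C$ together with its uniform estimates — being careful that the hypothesis $E^{ss}(\si)\perp E^{cu}(\si)$ is available but $E^c(\si)\perp E^u(\si)$ is \emph{not}. Everything else should be routine bookkeeping with Lemma~\ref{lem:elp-dom} and the decomposition $T_\si M=E^{ss}(\si)\oplus E^c(\si)\oplus E^u(\si)$.
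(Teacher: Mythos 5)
Your argument is correct and follows exactly the route the paper itself indicates (and delegates to Li--Gan--Wen): argue by contradiction, observe that $\al(L_0)\subset E^{ss}(\si)$ and $\om(L_0)\subset E^u(\si)$, apply Lemma \ref{lem:elp-dom} at both limit sets, and derive a mismatch of the dominated splitting over the orbit closure. Your explicit mechanism for the mismatch --- the invariant splitting $\wt{\cN}=\tilde H\oplus\tilde C$ over the orbit closure, the rate comparison $\la_k<\la_{k+1}$ forcing $G^{cs}\subset\tilde H$, and the final dimension count at the $\al$-limit --- correctly fills in the details that the paper leaves to the citation of \cite[Lemma 4.3]{LGW05}.
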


\begin{Remark}\label{rmk:elp-dom}
	Symmetric results hold for Lemma \ref{lem:elp-dom} and Lemma \ref{lem:elp-dom2} when $\dim G^{cs}\geq \ind(\si)$, or equivalently, when $\dim G^{cu}<\dim M-\ind(\si)$. In particular, when $\dim G^{cs}=\ind(\si)$, there exists a dominated splitting $E^u(\si)=E^c(\si)\oplus E^{uu}(\si)$ with respect to the tangent flow such that $\dim E^c(\si)=1$ and
	\[\De_{\si}(\fF)\cap(E^{uu}(\si)\oplus E^s(\si))\setminus(E^{uu}(\si)\cup E^s(\si))=\emptyset.\]
\end{Remark}

Usually in practice, the dominated splitting $\widetilde{\cN}_{\De_{\si}(\fF)}=G^{cs}\oplus G^{cu}$ will be induced from that of the fundamental sequence $\fF$. Hence, informally, Lemma \ref{lem:elp-dom} tells how the splittings of periodic orbits should match with that of the singularity $\si$. Then Lemma \ref{lem:elp-dom2} tells how these periodic orbits with insufficient stable index (meaning that $\dim{G^{cs}}<\ind(\si)$) should behave in a small enough neighborhood of the singularity, or more precisely, in what directions these orbits can come in and get out of an arbitrarily small neighborhood of $\si$. One can see the set $(E^{ss}(\si)\oplus E^u(\si))\setminus(E^{ss}(\si)\cup E^u(\si))$ as a ``restricted area'' for the fundamental sequence $\fF$. Similarly, when the stable index is high (i.e., $\dim{G^{cs}}\ge\ind(\si)$), the restricted region is $(E^{uu}(\si)\oplus E^s(\si))\setminus(E^{uu}(\si)\cup E^s(\si))$ with $\dim E^{uu}(\sigma) = \dim G^{cu}$. 

These two lemmas play a key role in the local analysis around singularities.
In particular, Lemma \ref{lem:elp-dom2} will be used to give the critical contradiction for the proof of our main theorem. The general idea goes as the following: once the conclusion of Lemma \ref{lem:elp-dom2} holds, to obtain a contradiction one need only show existence of a fundamental sequence with proper index that steps into the ``restricted area''; this is carried out by a generic analysis under suitable assumptions; see for example, Lemma \ref{lem:gen_1}.

Let us explain briefly the idea behind Lemma \ref{lem:elp-dom2}. Suppose on the contrary that there exists $L\in\De_{\si}(\fF)$ in the ``restricted area'' $(E^{ss}(\si)\oplus E^u(\si))\setminus(E^{ss}(\si)\cup E^u(\si))$. Let $\al(L)$, $\om(L)$ be the $\al$-limit set and $\om$-limit set of $L$ under the extended tangent flow, respectively. It is easy to see that $\al(L)\subset E^{ss}(\si)$ and $\om(L)\subset E^u(\si)$. By Lemma \ref{lem:elp-dom}, for any $L'\in\al(L)$, $G^{cu}(L')\supset \{L'\}\times E^u(\si)$; and for any $L'\in\om(L)$, $G^{cs}(L')=\{L'\}\times E^{ss}(\si)$. Then one can show that any dominated splitting of the normal bundle over the orbit of $L$ will have a mismatch with either $\wt{\cN}_{\al(L)}=G^{cs}\oplus G^{cu}$ or  $\wt{\cN}_{\om(L)}=G^{cs}\oplus G^{cu}$. One can refer to \cite{LGW05} for details.

These results (Lemma \ref{lem:elp-dom}, Lemma \ref{lem:elp-dom2} and Remark \ref{rmk:elp-dom}) can be restated in a slightly different setting as we are going to show below.
Following from \cite{LGW05}, for any set $\La$ of $X$, we define
\[B(\La)=\{L\in G^1: \exists x_n\in\La, \txtrm[such that] \langle X(x_n)\rangle\to L\},\]
which is compact by definition. Moreover, for any singularity $\si\in\La$, the set
\[B_{\si}(\La)=\{L\in B(\La):\, \be(L)=\si\}\]
is a non-empty compact invariant subset of $B(\La)$. Recall that a compact invariant set $\La$ is called {\em chain transitive} if for any $x,y\in\La$, $y$ is chain attainable from $x$ in $\Lambda$, i.e., for any $\vep>0$, there is an $\vep$-chain $x=x_0,x_1,\ldots,x_n=y$ from $x$ to $y$, such that $x_i\in \La$ for all $i=0,\cdots,n$.
Every chain recurrence class is chain transitive.

\begin{Lemma}[\cite{LGW05}]\label{lem:matching}
	Let $\La$ be a non-trivial compact chain transitive set of $X$ that contains a hyperbolic singularity $\si$. Suppose there is a dominated splitting $\widetilde{\cN}_{B_{\si}(\La)}=G^{cs}\oplus G^{cu}$ with respect to the extended linear Poincar\'e flow with $\dim G^{cs}\ge\ind(\si)$, then there is a dominated splitting $E^u(\si)=E^{c}(\si)\oplus E^{uu}(\si)$ with respect to the tangent flow such that $\dim E^{uu}(\si)=\dim G^{cu}$ and
	\begin{equation*}
	B_{\si}(\La)\cap (E^{s}(\si)\oplus E^{uu}(\si))\setminus (E^{s}(\si)\cup E^{uu}(\si))=\emptyset.
	\end{equation*}
	A symmetric result holds when $\dim G^{cs}< \ind(\si)$.
\end{Lemma}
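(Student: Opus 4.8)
The plan is to derive Lemma \ref{lem:matching} from Lemma \ref{lem:elp-dom} and Lemma \ref{lem:elp-dom2} (together with Remark \ref{rmk:elp-dom}) by constructing an auxiliary fundamental sequence whose associated Grassmannian set realizes $B_\si(\La)$. The first step is to produce, from the chain transitive set $\La$, a fundamental sequence $\fF = \{(\ga_n, X_n)\}$ with $(\ga_n, X_n) \to (\La, X)$, and such that $\De_\si(\fF) = B_\si(\La)$, or at least $\De_\si(\fF) \supset B_\si(\La)$ together with the reverse inclusion holding after passing to the relevant invariant pieces. Here one invokes the connecting lemma / closing lemma machinery for chain transitive sets: since $\La$ is chain transitive and nontrivial, it can be approximated in the Hausdorff topology by periodic orbits $\ga_n$ of vector fields $X_n \to X$ in $C^1$ (this is the standard consequence of Bonatti--Crovisier type arguments, or can be quoted from the flow version used elsewhere in the paper). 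One must be a little careful to arrange that these periodic orbits genuinely accumulate on $\si$ in the right way, i.e. that directions $\langle X_n(x_n)\rangle$ with $x_n \to \si$ fill out $B_\si(\La)$; this follows because $\langle X(x)\rangle$ for $x \in \La$ near $\si$ is, by definition, what $B_\si(\La)$ records, and $X_n \to X$, $\ga_n \to \La$ forces the limiting direction set to contain $B_\si(\La)$.

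The second step is to transport the dominated splitting. By hypothesis $\widetilde{\cN}_{B_\si(\La)} = G^{cs} \oplus G^{cu}$ is dominated with respect to $\tilde\psi^X_t$ and $\dim G^{cs} \ge \ind(\si)$. Since domination is an open condition that extends to a neighborhood (Remark \ref{rmk:fundamental-sequences}, cf.\ \cite{BDV05}), and since away from homoclinic tangencies the normal bundles over the periodic orbits $\ga_n$ carry uniformly dominated splittings (Lemma \ref{lem:BasicAwayTang}), one gets that the fundamental sequence $\fF$ admits a dominated splitting of the same index $i = \dim G^{cs}$, and hence $\De_\si(\fF)$ admits a dominated splitting $G^{cs} \oplus G^{cu}$ with respect to $\tilde\psi^X_t$ that restricts to the given one on $B_\si(\La) \subset \De_\si(\fF)$. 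Now apply Remark \ref{rmk:elp-dom} (the symmetric version of Lemma \ref{lem:elp-dom}) with $\dim G^{cs} \ge \ind(\si)$: this yields a dominated splitting $E^u(\si) = E^c(\si) \oplus E^{uu}(\si)$ with respect to the tangent flow with $\dim E^{uu}(\si) = \dim G^{cu}$, and after arranging orthogonality $E^{uu}(\si) \perp E^s(\si) \oplus E^c(\si)$ (replacing the metric by an adapted one near $\si$, which changes neither the domination nor the conclusion, since the statement is about the abstract invariant subspaces), the conclusion of Lemma \ref{lem:elp-dom2} in its symmetric form gives
\[
\De_\si(\fF) \cap (E^s(\si) \oplus E^{uu}(\si)) \setminus (E^s(\si) \cup E^{uu}(\si)) = \emptyset.
\]
Since $B_\si(\La) \subset \De_\si(\fF)$, the same emptiness holds with $B_\si(\La)$ in place of $\De_\si(\fF)$, which is exactly the claimed conclusion. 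The symmetric case $\dim G^{cs} < \ind(\si)$ is handled identically using Lemma \ref{lem:elp-dom} and Lemma \ref{lem:elp-dom2} directly.

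I expect the main obstacle to be the first step: producing the fundamental sequence $\fF$ with $\De_\si(\fF) = B_\si(\La)$ and making sure no extra directions over $\si$ are introduced that would not lie in $B_\si(\La)$ (the reverse inclusion $\De_\si(\fF) \subset B_\si(\La)$ is the delicate one, and in fact is not needed — it suffices to have $B_\si(\La) \subset \De_\si(\fF)$ together with the dominated splitting extending, since the conclusion is an emptiness statement that only gets stronger on larger sets). One must also take care that the periodic orbits $\ga_n$ of $X_n$ do not accidentally have the ``wrong'' index or a splitting of a different index, which is where the ``away from homoclinic tangencies'' hypothesis is essential — it forces uniform domination of a fixed index on periodic orbits in a neighborhood. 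A secondary technical point is the orthogonality normalization needed to invoke the $\perp$-hypothesis in Lemma \ref{lem:elp-dom}/\ref{lem:elp-dom2}; this is routine but should be remarked on, as the raw splittings coming from the Grassmannian limits need not be orthogonal and one adjusts the Riemannian metric near $\si$ accordingly.
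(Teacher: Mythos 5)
Your reduction to Lemma~\ref{lem:elp-dom} and Lemma~\ref{lem:elp-dom2} via an approximating fundamental sequence breaks down exactly where you suspected it might, but for a reason you did not identify: the dominated splitting cannot be transported from $B_\si(\La)$ to $\De_\si(\fF)$. The hypothesis of Lemma~\ref{lem:matching} gives domination over $B_\si(\La)$ only. Hausdorff convergence $\ga_n\to\La$ controls the base points of the periodic orbits but gives no control over the limit directions $\langle X_n(x_n)\rangle$ for $x_n\to\si$: if $y_n\in\La$ is close to $x_n$, then $X_n(x_n)-X(y_n)\to 0$, but both vectors tend to $0$, so the corresponding lines in $G^1$ need not be close at all. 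Hence $\De_\si(\fF)$ can be strictly larger than $B_\si(\La)$, need not be contained in any neighborhood of $B_\si(\La)$ to which the domination extends, and need not carry a dominated splitting of index $\dim G^{cs}$. Your fallback via Lemma~\ref{lem:BasicAwayTang} does not repair this: it imports the hypothesis ``away from homoclinic tangencies,'' which is not assumed in Lemma~\ref{lem:matching}, and even under that hypothesis the index of the splitting on $\ga_n$ is dictated by the Lyapunov exponents of $\ga_n$, not by $\dim G^{cs}$. Controlling which directions over $\si$ a fundamental sequence picks up is precisely the central difficulty of the whole paper (see Sections~\ref{sect:reduction} and~\ref{sect:proof-of-main-prop}), so it cannot be treated as a routine preliminary here.

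The intended argument (the one in \cite{LGW05}, which the paper quotes rather than reproves) avoids fundamental sequences altogether. The proofs of Lemma~\ref{lem:elp-dom} and Lemma~\ref{lem:elp-dom2} only use that $\De_\si(\fF)$ is a nonempty compact $\hat\Phi_t$-invariant subset of $\beta^{-1}(\si)$ carrying a $\tilde\psi_t$-dominated splitting, together with the $\al$-/$\om$-limit analysis sketched after Remark~\ref{rmk:elp-dom}: for $L$ in the restricted region one has $\al(L)$ and $\om(L)$ contained in the invariant subspaces, and matching the splitting $G^{cs}\oplus G^{cu}$ over these limit sets against Lemma~\ref{lem:elp-dom} produces a mismatch with domination along the orbit of $L$. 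The set $B_\si(\La)$ is exactly such a compact invariant set; chain transitivity of $\La$ is what guarantees that $\La\setminus\{\si\}$ accumulates on $\si$ in both time directions, so that $B_\si(\La)\neq\emptyset$ and the limit directions needed to read off the splitting $E^u(\si)=E^{c}(\si)\oplus E^{uu}(\si)$ are present. The argument therefore applies verbatim with $B_\si(\La)$ in place of $\De_\si(\fF)$, with no periodic orbits involved. Your remarks on the orthogonality normalization and on $\dim E^{uu}(\si)=\dim G^{cu}$ are correct, but they sit on top of a reduction that does not go through.
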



\subsection{A theorem of Liao}
\label{sect:Liao-thm}

The following theorem establishes the transverse intersection between the invariant manifold of a singularity $\sigma$ and $\psi_t^*$-contracting points close to $\sigma$.

\begin{Theorem}[Liao \cite{Lia89}. See also~{\cite[Lemma 3.12 and 3.13]{PYY2}}] \label{thm:Liao}
  Let $\{(\ga_n,X_n)\}$ be a fundamental sequence, $(\ga_n,X_n)\to(\La,X)$. Suppose that it admits a dominated splitting $\cN_{\ga_n}=G^{cs}_n\oplus G^{cu}_n$ of index $i$ and there exists a hyperbolic singularity $\si\in\La$ with $\ind(\si)\leq i$. If there are constants $C>0$, $\eta>0, T>0$, and a sequence of points $x_n\in\ga_n$, such that $x_n$ is $(C,\eta,T,G^{cs}_n)$-$\psi^{X_n,*}_t$-contracting and $x_n\to\si$, then
\[W^u(\si_{X_n},X_n)\pitchfork W^s(\ga_n,X_n)\neq\emptyset, \txtrm[for $n$ large.]\]
\end{Theorem}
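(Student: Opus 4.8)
The plan is to combine Liao's rescaling technique with the Li--Gan--Wen matching mechanism of Section~\ref{sect:matching}; write $\si_{X_n}$ for the hyperbolic continuation of $\si$ and $A = DX(\si)$, and take all limits below along suitable subsequences. First, by hypothesis $\cN_{\ga_n} = G^{cs}_n \oplus G^{cu}_n$ is a dominated splitting of index $i$, uniform for vector fields $C^1$-close to $X$, and $x_n$ is $(C,\eta,T,G^{cs}_n)$-$\psi^{X_n,*}_t$-contracting, so Lemma~\ref{lem:stable-manifolds} (with $\de_0$ uniform, Remark~\ref{rmk:stable-manifolds}) gives, for $n$ large, an embedded $C^1$ $i$-disk $D_n := W^{cs}_{\de_0\|X_n(x_n)\|}(x_n) \subset W^s(\orb(x_n)) = W^s(\ga_n)$, with $T_{x_n}D_n = G^{cs}_n(x_n)$ and $\mathrm{diam}(D_n) \asymp \de_0\|X_n(x_n)\|$. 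Since $x_n \to \si$, $X(\si) = 0$ and $\|X_n - X\|_{C^1}\to 0$, we have $\|X_n(x_n)\| \to 0$, with $\|X_n(x_n)\| \asymp d(x_n,\si)$ as $A$ is invertible.

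Next I would slide this plaque along the orbit towards the unstable manifold of $\si_{X_n}$. The flow of $X_n$ near $\si_{X_n}$ is $C^1$-close to the linear hyperbolic flow $e^{tA}$, and $x_n \notin W^s(\si_{X_n})$ because $x_n \in \ga_n$; so, using the local analysis at the hyperbolic singularity, one can choose times $t_n \to \infty$ (a fixed fraction of the sojourn time of the orbit of $x_n$ in a fixed neighborhood of $\si$) so that $y_n := \phi^{X_n}_{t_n}(x_n)$ still tends to $\si$, its direction $\langle\exp_{\si_{X_n}}^{-1}(y_n)\rangle$ accumulates inside the projectivization of $E^{uu}(\si)$ or of $E^c(\si)$ --- depending on whether $x_n$ carries a strong-unstable component at $\si$ --- and the stable component of $y_n$ decays fast enough that $d(y_n, W^u_{\mathrm{loc}}(\si_{X_n})) = o(\mathrm{diam}\,\phi^{X_n}_{t_n}(D_n))$. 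Here $\phi^{X_n}_{t_n}(D_n) \subset W^s(\ga_n)$ by invariance, and by Remark~\ref{rmk:stable-manifolds} it is a $C^1$ $i$-disk near $y_n$, tangent to $G^{cs}_n(y_n)$, whose diameter (after the rescaling by $\|X_n\|$) is contracted by at most $e^{-\eta' t_n}$ and is thus controlled from above and below. In particular $W^s(\ga_n)$ near $y_n$ is a thin ``ribbon'': the flow-saturation of $\phi^{X_n}_{t_n}(D_n)$, of cross-section $\asymp\mathrm{diam}\,\phi^{X_n}_{t_n}(D_n)$ in the $G^{cs}_n(y_n)$-directions and length $\asymp\|X_n(y_n)\|$ along the flow.

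Then I would blow up near $\si$ and invoke matching. Rescaling $\exp_{\si_{X_n}}$ by $\|X_n(y_n)\|^{-1}$: in the limit $W^u_{\mathrm{loc}}(\si_{X_n})$ converges in $C^1$ on compact sets to the linear subspace $E^u(\si)$, the center of the ribbon converges to a point $\widehat y_\infty \in E^u(\si)\setminus\{0\}$ (by the previous estimate), the flow direction converges to $A\widehat y_\infty \in E^u(\si)$, and $G^{cs}_n(y_n)$ converges to some $E^{cs}_\infty$. Since $y_n \in \ga_n$ and $y_n \to \si$, the line $L' := \lim_n\langle X_n(y_n)\rangle = \langle A\widehat y_\infty\rangle$ lies in $\De_\si(\fF)$, which carries the induced dominated splitting $\widetilde{\cN}_{\De_\si(\fF)} = G^{cs}\oplus G^{cu}$ with $\dim G^{cs} = i \geq \ind(\si)$ (Remark~\ref{rmk:fundamental-sequences}), and $L' \subset E^{uu}(\si)$ or $L'\subset E^c(\si)$ by the choice of $t_n$. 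Applying Lemmas~\ref{lem:elp-dom}--\ref{lem:elp-dom2} in the symmetric form (Remark~\ref{rmk:elp-dom}) --- which produces the $A$-invariant splitting $E^u(\si) = E^c(\si)\oplus E^{uu}(\si)$ with $\dim E^{uu}(\si) = \dim G^{cu}$, together with the matching identities $G^{cs}(L') \supseteq \{L'\}\times E^s(\si)$ when $L'\subset E^{uu}(\si)$ and $G^{cu}(L') = \{L'\}\times E^{uu}(\si)$ when $L'\subset E^{cs}(\si)$ --- I get in either case $E^{cs}_\infty + E^u(\si) = T_\si M$, so $E^{cs}_\infty$ is transverse to $E^u(\si)$ with a uniform angle. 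Consequently, for $n$ large the ribbon $W^s(\ga_n)$ --- whose transverse cross-section points in directions uniformly transverse to $E^u(\si) \approx TW^u_{\mathrm{loc}}(\si_{X_n})$, and whose distance to $W^u_{\mathrm{loc}}(\si_{X_n})$ is much smaller than that cross-section --- must cross $W^u_{\mathrm{loc}}(\si_{X_n})$ transversally (a ``needle through a sheet'' argument using the graph representation of $W^u_{\mathrm{loc}}(\si_{X_n})$), giving $W^u(\si_{X_n},X_n)\pitchfork W^s(\ga_n,X_n)\neq\emptyset$.

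The main obstacle is the quantitative bookkeeping in the sliding and blow-up steps above: the flow time $t_n$ must balance three competing scales --- the rescaled diameter of the contracted plaque $\phi^{X_n}_{t_n}(D_n)$, which decays like $e^{-\eta' t_n}$, its distance to $W^u_{\mathrm{loc}}(\si_{X_n})$, and the quadratic error $\|X_n(y_n)\|^2$ coming from the curvature of $W^u_{\mathrm{loc}}(\si_{X_n})$ --- in such a way that the first dominates the other two after rescaling; this forces $\eta'$ (depending only on the eigenvalues of $DX(\si)$) to be small and relies on the hyperbolicity at $\si$ being robust under $C^1$ perturbations. The other, structurally essential, ingredient is the matching identity $E^{cs}_\infty + E^u(\si) = T_\si M$: this is precisely where the hypothesis $\ind(\si)\leq i$ enters, making $\dim G^{cs} = i$ and $\dim E^u(\si) = \dim M - \ind(\si)$ large enough for the dimension inequality $\dim W^u(\si) + \dim W^s(\ga_n) \geq \dim M + 1$ to be realized by an actual, nonempty transverse intersection.
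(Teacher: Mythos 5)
Your proposal gets the two ``easy'' ingredients right --- the stable disk $D_n=W^{cs}_{\de_0\|X_n(x_n)\|}(x_n)\subset W^s(\ga_n)$ from Lemma \ref{lem:stable-manifolds}, and the use of the matching lemmas to make $\lim_n G^{cs}_n$ uniformly transverse to $E^u(\si)$ --- but the step that carries all the difficulty, namely getting the disk close to $W^u_{loc}(\si_{X_n})$ \emph{relative to its own size}, is handled by a forward-sliding argument that does not work as stated. The claim $d(y_n,W^u_{loc}(\si_{X_n}))=o(\mathrm{diam}\,\phi^{X_n}_{t_n}(D_n))$ is asserted, not proved, and it faces a real obstruction. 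First, Remark \ref{rmk:stable-manifolds} only gives an \emph{upper} bound $e^{-\eta' t_n}$ on the contraction of the plaque; your ``needle through a sheet'' crossing needs a \emph{lower} bound on the inner radius of $\phi^{X_n}_{t_n}(D_n)$ in every one of the $i$ transverse directions, and no such bound is available from the cited lemmas. Second, the rates compete the wrong way: near $\si$ the matching identifies $G^{cs}_n$ with (a space containing) $E^{s}(\si)$, so the rescaled holonomy contracts the thinnest direction of the plaque at a rate governed by the \emph{strongest} stable eigenvalue of $DX(\si)$, while the stable component of $y_n$ --- hence $d(y_n,W^u_{loc})$ --- decays only at the \emph{weakest} stable rate. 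The ribbon can therefore become too thin too fast, and shrinking $\eta'$ does not help since $\eta'$ only enters the upper bound.

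The proof the paper points to (and reproduces in time-reversed form in Section \ref{sect:loc-of-hyp-pts}, cf. Lemma \ref{lem:bad-region}) does no sliding at all: it uses the hypothesis that $x_n$ is $(C,\eta,T,G^{cs}_n)$-$\psi^{X_n,*}_t$-contracting a \emph{second} time, to show that such a point near $\si$ must already lie in an arbitrarily thin cone around $W^u_{loc}(\si)$, i.e. $d(x_n,W^u_{loc}(\si_{X_n}))\le \al\, d(x_n,\si)$ with $\al$ arbitrarily small for $n$ large --- roughly because a forward-contracting point whose position (and flow direction) has a non-negligible component along $E^{s}(\si)\oplus E^{c}(\si)$ would violate the contraction estimate for $\psi^{*}_t|_{G^{cs}_n}$ forced by the matching. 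Since $\de_0\|X_n(x_n)\|\asymp \de_0\, d(x_n,\si)\gg \al\, d(x_n,\si)$, the full-size disk $D_n$ at $x_n$ itself, being tangent to directions uniformly transverse to $E^u(\si)$, then crosses $W^u_{loc}(\si_{X_n})$. Your write-up uses the contraction hypothesis only to manufacture $D_n$; this a priori cone localization of the hyperbolic point --- which is precisely the fact the paper highlights as the content of Liao's ``careful local analysis around the singularity'' --- is the missing idea.
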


Note that in the theorem, the size of stable manifold of a periodic point $x_n$ is proportional to $\|X_n(x_n)\|$ (Lemma \ref{lem:stable-manifolds}), which goes to 0 as $n\to\infty$. This makes it difficult to guarantee an intersection between the stable manifold of $\ga_n$ and the unstable manifold of $\si_{X_n}$. The proof of this theorem, which can be found in \cite[Section 3]{PYY2}, exploits the fact that hyperbolic points can only exist in an arbitrarily small cone of the local unstable manifold of $\si$. This involves a careful local analysis around the singularity. See also~\cite{SYY} for some related discussions.

The previous theorem is a generalization of the following result, obtained by Liao.
\begin{Corollary}[Liao \cite{Lia89}]\label{cor:Liao}
  Let $\si$ be a hyperbolic singularity of $X\in\mathscr{X}^1(M)$. If there are $C>0$, $\eta>0$, $T>0$, and a sequence of vector fields $X_n\to X$, each with a periodic orbit $\ga_n$, and a point $x_n\in\ga_n$, such that $x_n$ is $(C,\eta,T,\cN^{X_n})$-$\psi^{X_n,*}_t$-expanding and $x_n\to\si$, then
\[W^s(\si_{X_n},X_n)\pitchfork W^u(\ga_n,X_n)\neq\emptyset, \txtrm[for $n$ large.]\]
\end{Corollary}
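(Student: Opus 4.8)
The statement to prove is Corollary \ref{cor:Liao}, which derives a transverse intersection $W^s(\si_{X_n},X_n)\pitchfork W^u(\ga_n,X_n)\neq\emptyset$ from the hypothesis that the periodic points $x_n\in\ga_n$ are uniformly $\psi^{X_n,*}_t$-expanding on the whole normal bundle and converge to a hyperbolic singularity $\si$.

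The plan is to deduce this directly from Theorem \ref{thm:Liao} by applying that theorem to the reversed vector field $-X$. First I would observe that the hypotheses are exactly the ``time-reversed'' version of those in Theorem \ref{thm:Liao}: a point is $(C,\eta,T,\cN^{X_n})$-$\psi^{X_n,*}_t$-expanding precisely when it is $(C,\eta,T,\cN^{-X_n})$-$\psi^{-X_n,*}_t$-contracting (this is the definition of expanding, applied with $E = \cN$ the full normal bundle). So setting $Y = -X$, $Y_n = -X_n$, the sequence $\{(\ga_n, Y_n)\}$ is a fundamental sequence with $(\ga_n, Y_n)\to(\La, Y)$, and $\si$ is a hyperbolic singularity of $Y$. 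Since the normal bundle $\cN_{\ga_n}^{Y_n} = \cN_{\ga_n}^{X_n}$ is the full normal bundle, it trivially admits the dominated splitting $\cN_{\ga_n} = G^{cs}_n \oplus G^{cu}_n$ with $G^{cs}_n = \cN_{\ga_n}$ and $G^{cu}_n = \{0\}$; this has index $i = \dim M - 1 = \dim \cN$, and the domination inequality is vacuous because $G^{cu}_n$ is trivial. (Alternatively, one uses $i = \dim\cN$ and notes there is nothing to check for a one-term ``splitting.'') Then $\ind_{Y}(\si) = \dim W^s(\si, Y) = \dim W^u(\si, X) \le \dim M - 1 = i$, so the index hypothesis $\ind(\si)\le i$ of Theorem \ref{thm:Liao} holds. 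Finally, each $x_n\in\ga_n$ is $(C,\eta,T,G^{cs}_n)$-$\psi^{Y_n,*}_t$-contracting by the reformulation above, and $x_n\to\si$.

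Applying Theorem \ref{thm:Liao} to this data yields $W^u(\si_{Y_n}, Y_n)\pitchfork W^s(\ga_n, Y_n)\neq\emptyset$ for $n$ large. The last step is to translate this back through the identities $W^u(\cdot, -Z) = W^s(\cdot, Z)$ and $W^s(\cdot, -Z) = W^u(\cdot, Z)$, which hold both for singularities and for periodic orbits and which preserve transversality (the ambient manifold and the invariant submanifolds are unchanged, only the time direction is reversed). This gives $W^s(\si_{X_n}, X_n)\pitchfork W^u(\ga_n, X_n)\neq\emptyset$ for $n$ large, which is the claim.

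I do not expect a genuine obstacle here, since this is precisely the sense in which Corollary \ref{cor:Liao} is described as ``a generalization of'' — really a special/symmetric case of — Theorem \ref{thm:Liao}. The only mild point to be careful about is making sure the degenerate dominated splitting (full normal bundle as $G^{cs}$, trivial $G^{cu}$) is legitimately covered by the statement of Theorem \ref{thm:Liao}; if one is uneasy about the trivial bundle, one can instead invoke the original formulation of Liao's result (which is stated intrinsically for the full normal bundle, as in \cite{Lia89}) rather than routing through the dominated-splitting version. Either way the argument is a direct reduction via time reversal, with no new estimates required.
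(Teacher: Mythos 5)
Your reduction is correct, and it is exactly the relationship the paper intends: the text states that Theorem \ref{thm:Liao} is a generalization of Corollary \ref{cor:Liao} and gives no separate proof, so deriving the corollary as the special case $G^{cs}_n=\cN_{\ga_n}$, $G^{cu}_n=\{0\}$ of the theorem applied to $-X_n$ is the natural route. The time-reversal dictionary you use ($\psi^{X_n,*}_t$-expanding for $X_n$ $\iff$ $\psi^{-X_n,*}_t$-contracting for $-X_n$, $W^{s/u}(\cdot,-Z)=W^{u/s}(\cdot,Z)$) is literally the paper's definition of ``expanding,'' and the degenerate splitting is admissible under the paper's conventions (compare Lemma \ref{lem:BasicAwayTang}, which explicitly allows $G^c=0$); the index hypothesis $\ind_{-X}(\si)=\dim M-\ind_X(\si)\le \dim M-1$ holds because $\si$, being accumulated by regular periodic orbits of nearby flows, must be a saddle. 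Two small points worth making explicit: first, the corollary does not assume $\ga_n$ converges in the Hausdorff topology, so to invoke Theorem \ref{thm:Liao} you should pass to a subsequence with $\ga_n\to\La$ (compact, invariant, containing $\si$) and note that the ``for $n$ large'' conclusion for the full sequence follows since every subsequence admits such a further subsequence; second, if one is scrupulous about the trivial bundle, your fallback of quoting Liao's original full-normal-bundle statement is the honest alternative, since the corollary is attributed to \cite{Lia89} in the first place. Neither point is a gap in substance.
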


\subsection{Generic results}
\label{sect:genericity}

We gather here some results on $C^1$ generic vector fields that will be used throughout the paper. Recall that a $C^1$ vector field $X$ is called {\em weak Kupka-Smale} if every critical element of $X$ is hyperbolic; the vector field $X$ is called {\em Kupka-Smale} if it is weak Kupka-Smale and, moreover, the stable manifold of each critical element intersects the unstable manifold of any other critical element transversely.


\begin{Lemma}\label{lem:gen_0}
For $C^1$ generic vector field $X\in \mathscr{X}^1(M)$, the following properties hold.
\begin{enumerate}
\item $X$ is Kupka-Smale.
\item There is a neighborhood $\cU$ of $X$, such that for any $Y\in\cU$, $Y$ has only finitely many singularities, each of which satisfies $\la+\la'\neq 0$, where $\la$ and $\la'$ are any two Lyapunov exponents of $\Phi^Y_t(\si)$.
\item Every nontrivial compact chain transitive set is the limit of a sequence of periodic orbits in the Hausdorff topology. \label{item:gen_0-chain-transitive-set}
\item If $\La$ is the limit of a fundamental $i$-sequence, then there exists a sequence of periodic orbits $\ga_n$ of $X$, such that $\ind(\ga_n)=i$ and $\ga_n\to\La$ in the Hausdorff topology. \label{item:gen_0-fund-seq}
\item If $C$ is a nontrivial chain recurrence class of $X$ and contains a periodic orbit $P$, then $C$ coincides with the homoclinic class of $P$. \label{item:gen_0-homoclinic-class}
\item For any $\si\in\Sing(X)$, the chain recurrence class $C(\si,X)$ varies continuously at $X$. \label{item:gen_0-continuity-chain-class}
\item For any $\si\in\Sing(X)$, if for any $n\geq 1$, there exists $X_n$ with a periodic orbit $\ga_n$ of $X_n$ such that $\|X_n-X\|_{C^1}<1/n$ and $W^u(\si_{X_n})\pitchfork W^s(\ga_n)\neq\emptyset$, then there is a periodic orbit $\ga$ of $X$, such that $W^u(\si)\pitchfork W^s(\ga)\neq\emptyset$. \label{item:gen_0-transverse-intersection}
\item For any critical elements $c$ and $c'$ of $X$, if $C(c,X)=C(c',X)$, then there is a neighborhood $\cU$ of $X$, such that for any $Y\in\cU$, one has $C(c_Y,Y)=C(c'_Y,Y)$.
\item For any critical element $c$ of $X$, if the chain recurrence class $C(c,X)$ is Lyapunov stable, then there is a neighborhood $\cU$ of $X$ such that for any weak Kupka-Smale vector field $Y\in\cU$, $C(c_Y,Y)$ is a quasi-attractor and, in particular, it is Lyapunov stable.\label{item:gen_0-last}
\end{enumerate}
\end{Lemma}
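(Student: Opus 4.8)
The plan is to assemble the nine assertions from the standard $C^1$ generic machinery: the Kupka--Smale theorem, Conley's theory of chain recurrence \cite{Co78}, the connecting lemmas \cite{Ha,WeX}, and the generic description of chain recurrence classes of \cite{BoC,Cro10} (adapted to singular flows as in \cite{GaY}). The common mechanism is a Baire-category argument, so in each case I would isolate a $C^1$-open-and-dense or residual condition and then intersect countably many of them; since the continuations $c_Y$ of critical elements a priori form an uncountable family, I would fix once and for all a countable dense set of vector fields together with rational radii to parametrize continuations, so that only countably many conditions enter. Items (1) and (2) are then immediate: (1) is the Kupka--Smale theorem; for (2), Kupka--Smale makes every singularity hyperbolic, hence isolated and (by compactness of $M$) finite, and finiteness of hyperbolic singularities is $C^1$-robust, while the non-resonance condition $\la+\la'\neq0$ at each singularity is achieved by an arbitrarily small perturbation supported in disjoint neighborhoods of the finitely many singularities and is a $C^1$-open condition, so it holds on a neighborhood of a generic $X$.

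Items (3), (5) and (4) belong to generic chain-recurrence theory. Statement (3) --- every nontrivial compact chain transitive set is a Hausdorff limit of periodic orbits --- is the flow version of the consequences of Crovisier's connecting lemma for pseudo-orbits, and (5) --- a nontrivial chain recurrence class carrying a periodic orbit $P$ is the homoclinic class of $P$ --- is \cite{BoC}; both are quoted from \cite{BoC,Cro10} and their singular-flow adaptations in \cite{GaY}. For (4) I would refine (3): for each index $i$, consider the map $\Psi_i$ sending $Y\in\xX^1(M)$ to the closure in the Hausdorff metric space $\cK(M)$ of compact subsets of the union of the periodic orbits of $Y$ of index $i$. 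This map is lower semicontinuous on the open set of vector fields that possess such orbits, hence continuous on a residual subset; a generic $X$ is a continuity point. Given a fundamental $i$-sequence $(\ga_n,X_n)\to(\La,X)$, the set $\La$ is a Hausdorff limit of index-$i$ periodic orbits of the $X_n\to X$, so $\La$ lies in the Hausdorff closure of $\bigcup_n\Psi_i(X_n)$; continuity of $\Psi_i$ at $X$ then yields index-$i$ periodic orbits of $X$ itself accumulating on $\La$.

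The robustness statements (6), (8), (9) follow from semicontinuity plus Baire together with the robustness of transverse heteroclinic intersections. For (6), the map $Y\mapsto C(\si_Y,Y)\in\cK(M)$ is semicontinuous in the sense dictated by Conley's attractor--repeller decomposition, hence continuous at a residual set of $Y$; a generic $X$ is such a point (\cite{BoC}). For (8), if $c\sim c'$ for $X$ then, when their common class carries a periodic orbit $P$, all of $c,c',P$ are homoclinically related through transverse heteroclinic connections, which persist under $C^1$ perturbation, so $c_Y\sim c'_Y$ on a whole neighborhood; the remaining case reduces to the same semicontinuity statement as in (6) (again \cite{BoC}). For (9), a Lyapunov stable chain recurrence class is a quasi-attractor \cite{Hur82,BoC}, i.e.\ a nested intersection $\bigcap_k\bigcap_{t\geq0}\phi_t(\overline{U_k})$ of decreasing attracting regions; each $U_k$ is robustly an attracting region, so for $Y$ near $X$ the maximal invariant set in $U_k$ is an attractor, and if $Y$ is weak Kupka--Smale one checks that $C(c_Y,Y)$ is still the nested intersection of these maximal invariant sets, hence a quasi-attractor and in particular Lyapunov stable. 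Item (7) is a connecting-lemma argument dressed as a Baire trick: let $\cH_\si$ be the set of vector fields $Y$ with a transverse intersection $W^u(\si_Y)\pitchfork W^s(\ga)$ for some hyperbolic periodic orbit $\ga$ of $Y$; transversality is a $C^1$-open condition, so $\cH_\si$ is open, its boundary is nowhere dense, and $\cH_\si\cup(\xX^1(M)\setminus\overline{\cH_\si})$ is open and dense. The hypothesis $X_n\to X$ with $X_n\in\cH_\si$ gives $X\in\overline{\cH_\si}$, so a generic $X$, lying in this open dense set and in $\overline{\cH_\si}$, must lie in $\cH_\si$, which is exactly the desired conclusion. (Hayashi's and the Wen--Xia connecting lemmas \cite{Ha,WeX} are what make $\overline{\cH_\si}$ large in applications, but the statement as phrased needs only openness of the conclusion.)

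The main obstacle is the semicontinuity bookkeeping behind (4), (6) and (8): one must set up a single countable family of continuation maps for critical elements (from a countable dense subset of $\xX^1(M)$ and rational data), verify that the relevant $\cK(M)$-valued maps are genuinely semicontinuous --- the delicate case being chain recurrence classes, where mergings of classes under perturbation must be controlled via Conley's decomposition --- and intersect all the resulting residual sets to obtain a single generic set on which every assertion holds. The remaining ingredients are direct appeals to the Kupka--Smale theorem, \cite{BoC,Cro10,GaY,Hur82}, and the connecting lemmas.
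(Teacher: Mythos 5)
Your overall strategy (Baire category plus semicontinuity, with the substantive items quoted from \cite{BoC,Cro,Wen04,GaY}) matches the paper's, which disposes of items (1)--(8) by citation and only writes out item (9). However, three of your arguments have genuine gaps. For item (8), the first branch is false: critical elements lying in the same chain recurrence class of a generic vector field need not be joined by transverse heteroclinic intersections, because the dimension count can forbid them --- in this very paper a periodic orbit $\ga$ in a Lyapunov stable class $C(\si)$ satisfies $\ind(\ga)\le\ind(\si)-1$ (Theorem B), so $\dim W^u(\si)+\dim W^s(\ga)\le\dim M$ and, for a flow, $W^u(\si)\pitchfork W^s(\ga)=\emptyset$. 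The second branch is a non sequitur: continuity of $Y\mapsto C(c_Y,Y)$ and $Y\mapsto C(c'_Y,Y)$ at $X$ only makes both classes Hausdorff-close to $C(c,X)$ for $Y$ near $X$, and two \emph{distinct} chain classes of $Y$ can both be Hausdorff-close to the same compact set. The correct (and simpler) argument is that the set $A=\{Y: C(c_Y,Y)=C(c'_Y,Y)\}$ is closed in the domain where the continuations are defined, because $\vep$-chains with time steps normalized to lie in $[1,2]$ pass to the limit of vector fields; hence $\Int(A)$ together with the complement of $A$ is open and dense there, and a generic $X$ with $C(c,X)=C(c',X)$ lies in $\Int(A)$.

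For item (9), the nested intersection $\bigcap_k\bigcap_{t\geq0}\phi_t(\Cl(U_k))$ does not survive perturbation in the way you claim: for a fixed neighborhood $\cU$ of $X$ only finitely many of the trapping regions $U_k$ remain trapping for every $Y\in\cU$ (the admissible neighborhood of $X$ shrinks with $k$), so your intersection collapses to a single attractor that merely \emph{contains} $C(c_Y,Y)$; you never show that $C(c_Y,Y)$ itself is a decreasing intersection of trapping regions for $Y$. The missing ingredient is exactly what the paper imports from \cite[Lemma 3.14]{GaY}: for every weak Kupka--Smale $Y$ near $X$ one has $C(c_Y,Y)=W^{ch,u}(C(c_Y,Y))=\bigcap_{\varepsilon>0}W^{ch,u}_{\varepsilon}(C(c_Y,Y))$, each chain-unstable neighborhood being an open trapping region --- a nontrivial statement about $Y$, not a robustness property of the trapping regions of $X$. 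Finally, item (4) as you set it up only yields $\La\subset\Cl\bigl(\bigcup\{\ga:\ \ga\ \mbox{index-}i\ \mbox{periodic orbit of}\ X\}\bigr)$, i.e.\ every point of $\La$ is approximated by index-$i$ orbits; to produce a single sequence of orbits converging to $\La$ in the Hausdorff metric you must run the semicontinuity argument for the map $Y\mapsto\Cl(\{\ga:\ \ga\ \mbox{index-}i\ \mbox{periodic orbit of}\ Y\})$ valued in the hyperspace of compact subsets of $\cK(M)$, as in \cite{Wen04}, not for the union-valued map $\Psi_i$.
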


\begin{proof}
The first two items are classical results; item 3 is proven in \cite{Cro}; item 4 is proven in \cite{Wen04}; item 5 and 6 are consequences of the connecting lemma for pseudo-orbits \cite{BoC}; item 7 is standard as a transverse intersection of stable and unstable manifolds is robust under perturbations, for a rigorous proof one may use the same argument as in the proof of \cite[Lemma 3.2]{GW}; 
item 8 is given in \cite[Lemma 3.12]{GY}.
Let us prove the last item, which is a slightly stronger version of \cite[Lemma 3.14]{GY}. In fact, in their proof of \cite[Lemma 3.14]{GY}, it is shown that there is a neighborhood $\cU$ of $X$ such that for any weak Kupka-Smale $Y\in\cU$, one has $C(c_Y,Y)=W^{ch,u}(C(c_Y,Y))$, where $W^{ch,u}(C(c_Y,Y))$ is the chain-unstable set of $C(c_Y,Y)$. By definition, $W^{ch,u}(C(c_Y,Y))=\cap_{\varepsilon>0} W^{ch,u}_{\varepsilon}(C(c_Y,Y))$ and each $W^{ch,u}_{\varepsilon}(C(c_Y,Y))$ is an open neighborhood of $C(c_Y,Y)$. It follows that $C(c_Y,Y)$ is a quasi-attractor since $\phi^Y_1(\Cl(W^{ch,u}_{\varepsilon}(C(c_Y,Y)))\subset W^{ch,u}_{\varepsilon}(C(c_Y,Y))$.
\end{proof}

\begin{Remark}
  Although some of the results (item 3-6) listed in Lemma \ref{lem:gen_0} are originally given for diffeomorphisms, they also hold for flows. This is because in the proof of these results, we either use the connecting lemma for pseudo-orbits {\em outside} a neighborhood of singularities (as in item 3, item 5-6), or rely on the persistency of hyperbolic periodic orbits (as in item 4). Such operations or arguments apply also to flows.
\end{Remark}

Let $\si$ be a hyperbolic singularity of a vector field $X$. Suppose there is a dominated splitting of the unstable subspace $E^u(\si)=E^c(\si)\oplus E^{uu}(\si)$ (possibly as a consequence of Lemma~\ref{lem:matching}). Let $W^{uu}(\si)$ be the strong unstable manifold tangent to $E^{uu}(\si)$ at $\si$. A {\em fundamental domain} of $W^{uu}(\si)$ is a cross-section of $X$ restricted to $W^{uu}(\si)\setminus\{\si\}$, such that it intersects every orbit in $W^{uu}(\si)\setminus\{\si\}$. Note that the subspace $E^c(\si)$ can be trivial, in which case $W^{uu}(\si)$ is just the unstable manifold of $\si$.
Recall that we have defined for any set $\La$ of $X$ the following compact subset of $G^1$:
\[B(\La)=\{L\in G^1: \exists x_n\in\La, \txtrm[such that] \langle X(x_n)\rangle\to L\}.\]


\begin{Lemma}\label{lem:gen_1}
For a $C^1$ generic vector field $X\in\mathscr{X}^1(M)$, let $\si\in\Sing(X)$ be any hyperbolic singularity, $E^u(\si)=E^c(\si)\oplus E^{uu}(\si)$ be any dominated splitting and $W^{uu}(\si)$ be the strong unstable manifold tangent to $E^{uu}(\si)$. Then we have the following properties.
\begin{enumerate}
\item If $D^{uu}(\si)$ is a fundamental domain of $W^{uu}(\si)$, then there exists a residual subset $R$ in $D^{uu}(\si)$ such that $\Cl(\orb^+(x,X))$ is Lyapunov stable for any $x\in R$.
\item If the chain recurrence class $C(\si)$ is nontrivial and $W^{uu}(\si)\cap C(\si)\setminus\{\si\}\neq \emptyset$, then $\Cl(W^{uu}(\si))\supset C(\si)$.
\item If the chain recurrence class $C(\si)$ is nontrivial and Lyapunov stable, then
\[B(C(\si))\cap(E^s(\si)\oplus E^{uu}(\si))\setminus(E^s(\si)\cup E^{uu}(\si))\neq \emptyset.\]
\end{enumerate}
\end{Lemma}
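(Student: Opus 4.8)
The plan is to prove the three items in order, each relying on the previous ones and on the generic properties collected in Lemma \ref{lem:gen_0}. For item (1), I would use the result of \cite{MP02} (quoted after Theorem A) that for $C^1$ generic $X$ the union of basins of Lyapunov stable chain recurrence classes is residual in $M$; restricting this residual set to the fundamental domain $D^{uu}(\si)$ (which is a submanifold, hence a Baire space, and meeting a residual set of $M$ in a residual subset, after a Kupka–Smale transversality argument so that $D^{uu}(\si)$ is not contained in the complement) gives a residual $R\subset D^{uu}(\si)$ of points whose forward orbit closure is a Lyapunov stable chain recurrence class; in particular $\Cl(\orb^+(x,X))$ is Lyapunov stable for $x\in R$. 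One must check the measure-zero/meagre exceptional behaviour does not swallow $D^{uu}(\si)$, which is where genericity of $X$ (absence of nontrivial recurrence inside $W^{uu}(\si)$ itself, Kupka–Smale) enters.

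For item (2), suppose $W^{uu}(\si)\cap C(\si)\setminus\{\si\}\neq\emptyset$, say $y$ lies in this intersection. Pick a point $x$ in the fundamental domain $D^{uu}(\si)$ on the same orbit as (a backward iterate of) $y$; then $\orb^+(x)$ shadows $\orb^+(y)$, so $\Cl(\orb^+(x))\subset C(\si)$ because $C(\si)$ is chain transitive and closed under forward orbit closures of its points (the forward orbit of $y\in C(\si)$ stays chain-attainable from $\si$). Now perturb: by item (1) we may choose a nearby $x'\in R\cap D^{uu}(\si)$ with $\Cl(\orb^+(x'))$ Lyapunov stable; but a Lyapunov stable set containing a point arbitrarily close to $C(\si)$ and chain-attainable from $\si$ must contain $C(\si)$ in its closure, and since $x'$ can be taken in $W^{uu}(\si)$, we get $\Cl(W^{uu}(\si))\supset\Cl(\orb^+(x'))\supset C(\si)$. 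The point to be careful about is that $x'$ must genuinely be on $W^{uu}(\si)$, not merely $C^0$-close; this is fine since $R$ is a subset of $D^{uu}(\si)\subset W^{uu}(\si)$, and $\Cl(\orb^+(x',X))\supset C(\si)$ follows from Lyapunov stability together with the fact that the relation ``chain attainable from $\si$'' is preserved under taking the Lyapunov stable set through a nearby point (using item \ref{item:gen_0-continuity-chain-class} of Lemma \ref{lem:gen_0} and $C(\si)$ being the whole Lyapunov stable class).

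For item (3), which I expect to be the main obstacle, argue by contradiction: suppose $B(C(\si))\cap(E^s(\si)\oplus E^{uu}(\si))\setminus(E^s(\si)\cup E^{uu}(\si))=\emptyset$. Then $B(C(\si))$ avoids the ``restricted area'' associated to the dominated splitting $E^u(\si)=E^c(\si)\oplus E^{uu}(\si)$; combined with Lemma \ref{lem:matching} (in the symmetric form, with $\dim G^{cu}=\dim E^{uu}(\si)$) this says the directions in which orbits of $C(\si)$ approach and leave $\si$ are confined to $E^s(\si)$ and $E^{uu}(\si)$, forcing $W^{uu}(\si)$ to be ``transversally isolated'' inside $C(\si)$ near $\si$, and in particular $W^{uu}(\si)\cap C(\si)\setminus\{\si\}=\emptyset$ would contradict the hypothesis only if that intersection were nonempty — so I instead need to derive a contradiction with Lyapunov stability directly. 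The mechanism is: Lyapunov stability forces the unstable manifold $W^u(\si)$, and hence $W^{uu}(\si)$, to be contained in $C(\si)$ (a standard fact: the unstable manifold of any point of a Lyapunov stable chain recurrence class is contained in the class); thus $W^{uu}(\si)\cap C(\si)\setminus\{\si\}\neq\emptyset$, and by item (2) $\Cl(W^{uu}(\si))\supset C(\si)$, so $C(\si)\subset\Cl(W^{uu}(\si))$. Taking the associated subsets of $G^1$, $B(C(\si))\subset B(\Cl(W^{uu}(\si)))$, and one computes that $B(\Cl(W^{uu}(\si)))$ meets the open set $(E^s(\si)\oplus E^{uu}(\si))\setminus(E^s(\si)\cup E^{uu}(\si))$: indeed points of $W^{uu}(\si)$ flowing toward $\si$ have flow direction tending into $E^{uu}(\si)$, but nearby orbits of $C(\si)$ returning to $\si$ (which exist, since $C(\si)$ is nontrivial and chain transitive containing $\si$, forcing genuine recurrence near $\si$ by item \ref{item:gen_0-chain-transitive-set} of Lemma \ref{lem:gen_0}) have flow directions in a set meeting $E^s(\si)\oplus E^{uu}(\si)$ off the two pieces. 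This final step — producing a concrete limit direction $L\in B(C(\si))$ genuinely inside the restricted area — is the technical heart, and will require the plaque family / dominated-splitting machinery of Section \ref{sect:matching} together with the recurrence supplied by genericity; it is essentially a reversal of the argument sketched after Lemma \ref{lem:elp-dom2}.
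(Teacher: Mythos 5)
Your proposal has genuine gaps in all three items. For item (1), the very first step fails: a residual subset of $M$ need not meet a positive-codimension submanifold such as $D^{uu}(\si)$ in a residual subset of that submanifold --- indeed the complement of $D^{uu}(\si)$ is open and dense, hence residual, in $M$, so the residual set of \cite{MP02} could in principle miss $D^{uu}(\si)$ entirely. No transversality or Kupka--Smale argument repairs this, because the issue is about Baire category, not about intersections of manifolds. The paper instead invokes \cite[Proposition 2.7]{CMP}, whose proof is a Baire argument carried out \emph{intrinsically} on the fundamental domain. For item (2), your chain-attainability runs in the wrong direction: a point $x'\in W^{uu}(\si)$ has backward orbit converging to $\si$, so $x'$ is chain attainable \emph{from} $\si$; Lyapunov stability of $\Cl(\orb^+(x'))$ then only says that this set contains everything chain attainable from $x'$, which gives no reason why $\orb^+(x')$ should accumulate on all of $C(\si)$ (and note $C(\si)$ is \emph{not} assumed Lyapunov stable in item (2), so $x'$ need not even lie in $C(\si)$). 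Also, a fixed $x'$ at positive distance from a point of $C(\si)$ does not inherit $\vep$-chains to points of $C(\si)$ for $\vep$ below that distance. The paper's route is different: assume $X$ is a continuity point of $Y\mapsto\Cl(W^{uu}(\si_Y,Y))$, suppose some $z\in C(\si)\setminus\Cl(W^{uu}(\si))$ exists, use the weak shadowing property to find a true orbit segment from near $W^{uu}(\si)$ to near $z$, and apply the Wen--Xia connecting lemma to create a strong unstable manifold entering a neighborhood of $z$, contradicting continuity.

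For item (3), your argument is essentially a restatement of the goal: the step you defer as ``the technical heart'' --- producing a concrete direction $L\in B(C(\si))$ lying in $(E^s(\si)\oplus E^{uu}(\si))\setminus(E^s(\si)\cup E^{uu}(\si))$ --- \emph{is} the lemma. Note moreover that this ``restricted area'' is not open in $G^1$ (it sits inside the proper subspace $E^s(\si)\oplus E^{uu}(\si)$ whenever $E^c(\si)$ is nontrivial), so no soft argument of the form ``the compact set $B(C(\si))$ must meet this open set'' can succeed; one must exhibit points of $C(\si)$ whose flow directions converge exactly into that subspace while staying off both factors. Invoking Lemma \ref{lem:matching} inside the contradiction is also backwards, since its conclusion is precisely that the restricted area is avoided. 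The paper does not prove item (3) here at all; it cites \cite[Lemma 2.5]{Zhe21}, where the required limit direction is constructed.
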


\begin{proof}
The first item is given in \cite[Proposition 2.7]{CMP} if $E^c(\si)$ is a trivial subspace (see also \cite[Lemma A.1]{ArP}). Their proof can also be applied to the case when $E^c(\si)$ is nontrivial, with obvious modifications. The second item follows from lower semi-continuity of $\Cl(W^{uu}(\si,X))$ and the connecting lemma. To be precise, by semi-continuity, we can assume that $X$ is a continuity point of $\Cl(W^{uu}(\si,X))$. Moreover, we can assume that $X$ has the weak shadowing property for pseudo-orbits \cite{Cro}. Now suppose there is $z\in C(\si,X)\setminus \Cl(W^{uu}(\si,X))$, then there exist a neighborhood $\cU$ of $X$ and a neighborhood $U$ of $z$ such that for any $Y\in\cU$, $\Cl(W^{uu}(\si_Y,Y))\cap U=\emptyset$. Fix a point $x\in W^{uu}(\si,X)\setminus\{\si\}$. Since $z\in C(\si,X)$, by the weak shadowing property for pseudo-orbits, there is an orbit segment $\{\phi_t(y):\ 0\leq t\leq t_0\}$ with $y$ arbitrarily close to $x$ and $\phi_{t_0}(y)$ arbitrarily close to $z$. In particular, $\phi_{t_0}(y)\in U$. Then applying the connecting lemma of Wen-Xia \cite{WeX}, one can connect $x$ to $\phi_{t_0}(y)$ without altering the local strong unstable manifold containing $x$, obtaining a vector field $Y\in\cU$ such that $\phi_{t_0}(y)\in W^{uu}(\si_Y,Y)\cap U$, a contradiction.
Finally, the last item is given in \cite[Lemma 2.5]{Zhe21}.
\end{proof}

\section{Vector fields away from homoclinic tangencies}
\label{sect:away_HT}
%
In this section, we will consider vector fields away from homoclinic tangencies in general, with the main goal of proving Theorem C and also the first two items of Theorem B.

Let us begin by presenting the following lemma (\cite[Lemma 2.9]{GY}), which is a flow version of a result of Wen concerning diffeomorphisms away from homoclinic tangencies \cite{Wen02,Wen04}. Let us denote by $\mathcal{HT}$ the subset of $\xX^1(M)$ having a homoclinic tangency and by $\Cl(\mathcal{HT})$ its closure.

\begin{Lemma}[\cite{Wen02,Wen04,GY}]\label{lem:BasicAwayTang}
For any $X\in\mathscr{X}^1(M)\setminus\Cl(\mathcal{HT})$, there exist a neighborhood $\mathcal{U}$ of $X$ and constants $C>0, \eta>0, \delta>0, T>0$, such that for any periodic orbit $\gamma$ of $Z\in\mathcal{U}$ with $\tau(\gamma)\geq T$, there is an invariant splitting $\mathcal{N}_{\gamma}=G^s\oplus G^c\oplus G^u$ for the linear Poincar\'{e} flow $\psi_t^Z$, satisfying the following properties.
  \begin{enumerate}
  \item {\em Domination}:
    either $G^c=0$, $\lambda^s\leq-\delta<\delta\leq\lambda^u$, and for any $x\in\gamma,
    t\geq T$,
    \begin{equation*}
      \|\psi_t^Z|_{G^s(x)}\|\cdot\|\psi_{-t}^Z|_{G^u(\phi_t^Z(x))}\|\leq\frac{1}{2},
    \end{equation*}
    or $\dim{G^c}=1$, $\lambda^s<-2\delta<-\delta<\lambda^c<\delta<2\delta<\lambda^u$,
    and for any $x\in\gamma, t\geq T$,
    \begin{equation*}
    \begin{aligned}
      &\|\psi_t^Z|_{G^s(x)}\|\cdot\|\psi_{-t}^Z|_{G^c\oplus G^u(\phi_t^Z(x))}\|\leq\frac{1}{2},\\
      &\|\psi_t^Z|_{G^s\oplus G^c(x)}\|\cdot\|\psi_{-t}^Z|_{G^u(\phi_t^Z(x))}\|\leq\frac{1}{2},\\
    \end{aligned}
    \end{equation*}
    where $\la^s$ is the largest Lyapunov exponent for $\psi^Z_t|_{G^s}$, $\la^u$ is the smallest Lyapunov exponent
    for $\psi^Z_t|_{G^u}$, and $\la^c$ is the Lyapunov exponent for $\psi^Z_t|_{G^c}$.
  \item {\em Hyperbolicity at period}:
    for any $x\in\gamma$ and any time partition $0=t_0<t_1<\cdots<t_n=\tau(\gamma)$
    with $t_{i+1}-t_i\geq T$ for each $i\in\{0,1,\cdots, n-1\}$, one has
    \begin{equation*}
    \begin{split}
      &\prod_{i=0}^{n-1}\|\psi^Z_{t_{i+1}-t_i}|_{G^s(\phi^Z_{t_i}x)}\|\leq C\e^{-\eta\tau(\gamma)},\\
      &\prod_{i=0}^{n-1}\|\psi^Z_{t_i-t_{i+1}}|_{G^u(\phi^Z_{t_{i+1}}x)}\|\leq C\e^{-\eta\tau(\gamma)}.\\
    \end{split}
    \end{equation*}
  \end{enumerate}
\end{Lemma}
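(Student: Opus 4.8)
The plan is to reconstruct Wen's dichotomy argument for diffeomorphisms away from homoclinic tangencies and transport it to the linear Poincar\'e flow $\psi_t^Z$. The starting point is that by a classical perturbation lemma of Franks type for flows, if a periodic orbit $\gamma$ of some $Z$ close to $X$ has a ``weak'' eigenvalue of $\psi^Z_{\tau(\gamma)}$ — more precisely, if along $\gamma$ the linear Poincar\'e cocycle fails to have a uniform-at-period dominated splitting with a definite gap $\delta$ — then one can perform a small $C^1$ perturbation, supported near $\gamma$, that creates a homoclinic tangency associated to (a periodic orbit near) $\gamma$. Since $X\notin\Cl(\mathcal{HT})$, there is a neighborhood $\mathcal U$ in which this is impossible; this forces every sufficiently long periodic orbit of every $Z\in\mathcal U$ to carry a dominated splitting of the normal bundle with a uniform domination constant $T$ and uniform gap $\delta$. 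So the first step is to fix $\mathcal U$ via $X\notin\Cl(\mathcal{HT})$, and to state precisely the ``no weak periodic cocycle'' consequence.

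Next I would extract the splitting $\mathcal N_\gamma = G^s\oplus G^c\oplus G^u$ itself. The uniform domination just described gives a dominated decomposition into the ``most contracted'' bundle, a possibly one-dimensional ``center'', and the ``most expanded'' bundle; concretely one applies Pliss-type/ergodic-averaging arguments to the cocycle $\psi^Z_t$ restricted to $\gamma$ to split off the stable part $G^s$ where Lyapunov exponents are $\le -\delta$ and the unstable part $G^u$ where they are $\ge\delta$, with what remains being at most one-dimensional with exponent in $(-\delta,\delta)$ — this is exactly where the away-from-tangency hypothesis forbids a two-dimensional center with eigenvalues straddling zero without domination, because that configuration is the source of tangencies. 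The dimensions can be kept constant by passing to a sub-neighborhood and noting index continuity. The two domination inequalities in item (1) are then just the defining inequalities of the dominated splitting, iterated; one chooses $T$ large enough that the contraction factor beats $1/2$.

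For item (2), hyperbolicity at period, the point is that the ``no weak eigenvalue'' property is quantitative: there is $\eta>0$ (uniform on $\mathcal U$) such that $\|\psi^Z_{\tau(\gamma)}|_{G^s}\|\le e^{-\eta\tau(\gamma)}$ and dually on $G^u$, because otherwise a further small perturbation could be used to make an eigenvalue of the return map cross the unit circle and then create a tangency (this is the standard ``$C^1$-robustly no tangency $\Rightarrow$ uniform hyperbolicity of $G^s,G^u$ along periodic orbits'' mechanism of Wen). Given the bound at the full period $\tau(\gamma)$ together with the $T$-domination, one upgrades to the bound over every coarse partition $0=t_0<\dots<t_n=\tau(\gamma)$ with $t_{i+1}-t_i\ge T$: domination controls how much each block of $\psi^Z|_{G^s}$ can ``expand'' relative to the average, so a telescoping/sub-multiplicativity estimate converts the single-period contraction $e^{-\eta\tau(\gamma)}$ into $C e^{-\eta'\tau(\gamma)}$ for a slightly smaller $\eta'$ (rename it $\eta$), with $C$ absorbing the bounded distortion over blocks of length $\le T$. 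This is the argument of \cite{Wen02,Wen04}, and its flow adaptation is \cite[Lemma 2.9]{GaY}, so the cleanest route is to cite those and only indicate the transfer.

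The main obstacle is the flow-specific bookkeeping: the linear Poincar\'e flow lives on the normal bundle, not on $TM$, and the perturbations that would create a tangency must be realized as genuine $C^1$ vector field perturbations near $\gamma$ whose effect on $\psi_t$ is the desired cocycle perturbation — this requires the flow version of Franks' lemma (available, e.g., from the connecting-lemma circle of tools and already used in \cite{GaY}) and care that the support does not meet singularities, which is automatic for $\tau(\gamma)\ge T$ with $\gamma$ a genuine periodic orbit bounded away from $\Sing(Z)$ on the relevant scale. Since all of this is exactly \cite[Lemma 2.9]{GaY} (itself the flow translation of \cite{Wen02,Wen04}), for the write-up I would give the dichotomy-via-no-tangency skeleton above and defer the perturbation details to those references.
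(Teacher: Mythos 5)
The paper does not prove this lemma at all: it is quoted verbatim as \cite[Lemma 2.9]{GaY}, the flow translation of Wen's results \cite{Wen02,Wen04}, and your proposal ultimately defers to exactly those references, so in that sense the two "proofs" coincide. Your skeleton of the mechanism is also the right one: $X\notin\Cl(\mathcal{HT})$ plus a Franks-type lemma for flows forces every long periodic orbit of every $Z$ in a fixed neighborhood to carry a uniformly dominated splitting of the normal bundle with at most one-dimensional center (a two-dimensional undominated center straddling $0$ is precisely what a perturbation converts into a tangency), and the constants $\mathcal{U},\delta,T$ are uniform because the perturbation size needed to create a tangency is controlled by the failure of domination.

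There is, however, one step that as written would fail. For item (2) you propose to first establish the single-period contraction $\|\psi^Z_{\tau(\gamma)}|_{G^s}\|\le \e^{-\eta\tau(\gamma)}$ and then ``upgrade'' it to the bound over an arbitrary partition $0=t_0<\cdots<t_n=\tau(\gamma)$ with $t_{i+1}-t_i\ge T$ by telescoping and bounded distortion. Submultiplicativity goes the wrong way: one always has $\|\psi^Z_{\tau(\gamma)}|_{G^s(x)}\|\le\prod_i\|\psi^Z_{t_{i+1}-t_i}|_{G^s(\phi^Z_{t_i}x)}\|$, so the partition bound is strictly \emph{stronger} than the single-period bound and cannot be derived from it by soft estimates (a cocycle can contract at the period while the product of block norms over a partition is large, e.g.\ through rotation of the contracted direction). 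In Ma\~n\'e--Wen's argument the partition bound is obtained directly: if it failed along a sequence of periodic orbits, Franks' lemma applied at the partition times realizes the product of block norms as the norm of a perturbed return map restricted to $G^s$, producing nearby systems with periodic orbits whose stable bundle is not contracted at the period, and from there one creates a non-hyperbolic periodic orbit or a tangency. Since you defer the details to \cite{Wen02,Wen04,GaY} anyway this does not sink the write-up, but the stated route for item (2) should be replaced by the direct perturbation argument.
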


\begin{Remark}\label{rmk:BasicAwayTang}
	Note that the constant $\eta>0$ can be chosen arbitrarily small without altering the neighborhood $\cU$ of $X$ and other constants. Since the central bundle $G^c$ is either trivial or one-dimensional, the lemma still holds when the constant $\de>0$ is chosen arbitrarily small (which may cause a change in the splitting) as long as $\eta>0$ is chosen correspondingly small, without altering the neighborhood $\cU$, the constants $C$ and $T$. One may refer to \cite{Wen02, Wen04} for more details on the relation of these constants.
\end{Remark}

The properties revealed by Lemma \ref{lem:BasicAwayTang} are profound for dynamics away from homoclinic tangencies. For non-singular flows (and diffeomorphisms), the result can be used together with Liao's selecting lemma \cite{Lia81, Wen08} to ensure the existence of a partially hyperbolic splitting with one-dimensional center on some minimally non-hyperbolic set, see \cite{Wen04,Cro10,XZ}.

The following result is a direct consequence of Lemma \ref{lem:BasicAwayTang}.

\begin{Corollary}\label{cor:BasicAwayTang}
For any $X\in\xX^1(M)\setminus\Cl(\mathcal{HT})$, there exist $C>0$, $\eta>0$ and $T>0$ with the following property: if $\fF=\{(\ga_n,X_n)\}$ is a fundamental $i$-sequence of $X$, then $\fF$ admits a $T$-dominated splitting $\cN_{\ga_n}=G^{cs}_n\oplus G^{cu}_n$ of index $i$. Moreover,  for every $n$ large, either $\ga_n$ is $(C,\eta,T,G^{cs}_n)$-$\psi^{X_n}_t$-contracting at period or there is a $T$-dominated splitting $G^{cs}_n=G^s_n\oplus G^c_n$ such that $\dim G^c_n=1$ and $\ga_n$ is $(C,\eta,T,G^s_n)$-$\psi^{X_n}_t$-contracting at period.
\end{Corollary}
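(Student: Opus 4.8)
The plan is to deduce Corollary \ref{cor:BasicAwayTang} from Lemma \ref{lem:BasicAwayTang} by applying that lemma along the fundamental $i$-sequence and then taking limits of the resulting splittings. First I would fix $X \in \xX^1(M)\setminus\Cl(\mathcal{HT})$ and let $\cU$, $C>0$, $\eta>0$, $\delta>0$, $T>0$ be the neighborhood and constants provided by Lemma \ref{lem:BasicAwayTang}. Given a fundamental $i$-sequence $\fF=\{(\ga_n,X_n)\}$, for $n$ large we have $X_n\in\cU$, and we may also assume $\tau(\ga_n)\ge T$: indeed, if $\tau(\ga_n)$ stayed bounded, then since $X_n\to X$ in $C^1$ the orbits $\ga_n$ would accumulate on a periodic orbit or singularity of $X$ of period $<T$, which (being a critical element, hence hyperbolic for generic $X$, or by the fact that $\ga_n$ has a fixed index $i$ forcing nontrivial stable and unstable bundles) contradicts the fundamental-sequence structure; alternatively one simply notes that the interesting case is $\tau(\ga_n)\to\infty$ and handles bounded periods trivially since then $\La$ is a critical element. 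For each such $n$, Lemma \ref{lem:BasicAwayTang} yields a $\psi^{X_n}_t$-invariant splitting $\cN_{\ga_n}=G^s_n\oplus G^c_n\oplus G^u_n$ with the stated domination and hyperbolicity-at-period properties, where $\dim G^c_n\in\{0,1\}$.

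Next I would define $G^{cs}_n$ and $G^{cu}_n$ according to where the index $i$ falls relative to $\dim G^s_n$ and $\dim G^s_n + \dim G^c_n$. The splitting $\cN_{\ga_n}=G^s_n\oplus G^c_n\oplus G^u_n$ is the finest one controlled by the lemma; since $\ga_n$ has index $i$ by hypothesis (so $\dim G^s_n\le i\le \dim G^s_n+\dim G^c_n$ by the hyperbolicity-at-period estimates, which force $G^s_n$ to be uniformly contracted and $G^u_n$ uniformly expanded, pinning down the stable dimension of $\ga_n$), we set $G^{cs}_n$ to be the sum of the bundles with total dimension $i$ — that is, either $G^{cs}_n=G^s_n$ (when $i=\dim G^s_n$) or $G^{cs}_n=G^s_n\oplus G^c_n$ (when $i=\dim G^s_n+1$) — and $G^{cu}_n$ the complementary sum. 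In either case the two-block domination estimates of Lemma \ref{lem:BasicAwayTang}(1) give exactly $\|\psi^{X_n}_t|_{G^{cs}_n(x)}\|\cdot\|\psi^{X_n}_{-t}|_{G^{cu}_n(\phi^{X_n}_t(x))}\|\le 1/2$ for $x\in\ga_n$, $t\ge T$, with $\dim G^{cs}_n=i$, which is precisely the $T$-dominated splitting of index $i$ required by Definition \ref{def:dom-fund-seq}.

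For the ``moreover'' part I would argue by cases on the two alternatives in the splitting. If $i=\dim G^s_n+1$, then $G^{cs}_n=G^s_n\oplus G^c_n$ with $\dim G^c_n=1$, the sub-splitting $G^{cs}_n=G^s_n\oplus G^c_n$ is $T$-dominated (again by Lemma \ref{lem:BasicAwayTang}(1), the second displayed inequality restricted appropriately, or by the exponent separation $\lambda^s<-2\delta<-\delta<\lambda^c$), and the first hyperbolicity-at-period estimate of Lemma \ref{lem:BasicAwayTang}(2) says exactly that $\ga_n$ is $(C,\eta,T,G^s_n)$-$\psi^{X_n}_t$-contracting at period in the sense of Definition \ref{def:contracting-at-period}. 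If instead $i=\dim G^s_n$, then $G^{cs}_n=G^s_n$ and the same estimate in Lemma \ref{lem:BasicAwayTang}(2) directly gives that $\ga_n$ is $(C,\eta,T,G^{cs}_n)$-$\psi^{X_n}_t$-contracting at period. This exhausts the two alternatives, giving the dichotomy in the statement. Since the constants $C,\eta,T$ come from Lemma \ref{lem:BasicAwayTang} and depend only on $X$ (not on $\fF$ or $n$), the corollary follows.

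The main obstacle, and the only point requiring genuine care rather than bookkeeping, is justifying that $\tau(\ga_n)\ge T$ for $n$ large (so that Lemma \ref{lem:BasicAwayTang} applies at all) and, relatedly, that the index hypothesis $\ind(\ga_n)=i$ correctly pins down $\dim G^{cs}_n$ relative to the Lemma's splitting $G^s_n\oplus G^c_n\oplus G^u_n$ — i.e. that $i$ must equal either $\dim G^s_n$ or $\dim G^s_n+\dim G^c_n$ and cannot be some other value. This uses the hyperbolicity-at-period property: it forces the Lyapunov exponents of $\psi^{X_n}_t|_{\ga_n}$ along $G^s_n$ to be negative and along $G^u_n$ to be positive, so the stable index of $\ga_n$ is constrained to the window $[\dim G^s_n,\ \dim G^s_n+\dim G^c_n]$, whose two endpoints are the only integers it can be since $\dim G^c_n\le 1$. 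Everything else is a direct transcription of Lemma \ref{lem:BasicAwayTang} into the language of fundamental sequences and of Definition \ref{def:contracting-at-period}.
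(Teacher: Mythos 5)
Your deduction is correct and is precisely what the paper intends: the paper offers no proof beyond calling the corollary a ``direct consequence'' of Lemma~\ref{lem:BasicAwayTang}, and your case analysis --- pinning $i$ to either $\dim G^s_n$ or $\dim G^s_n+\dim G^c_n$ via the hyperbolicity-at-period estimates and then grouping the blocks accordingly --- is the natural way to fill that in. The bounded-period worry is moot in practice, since the contracting-at-period notion of Definition~\ref{def:contracting-at-period} already presupposes $\tau(\ga_n)\ge T$ and every application of the corollary in the paper has $\tau(\ga_n)\to\infty$.
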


\subsection{Singularities are Lorenz-like}
In this section, we prove the following theorem, which is the main part of Theorem C. Recall that $C^1$ generically we have $\sv(\si)\ne0$ for all $\si\in\Sing(X)$, thanks to Lemma~\ref{lem:gen_0}.

\begin{Theorem}\label{thm:lorenz-like}
For any $C^1$ generic $X\in\mathscr{X}^1(M)\setminus\Cl(\mathcal{HT})$ and any $\si\in\Sing(X)$, suppose the chain recurrence class $C(\si)$ is nontrivial, then
\begin{itemize}
	\item if $\sv(\si)>0$, then $\ind(\si)>1$ and $\si$ is Lorenz-like;
	\item if $\sv(\si)<0$, then $\ind(\si)<\dim M-1$ and $\si$ is reversed Lorenz-like.
\end{itemize}
\end{Theorem}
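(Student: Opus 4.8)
By symmetry (replacing $X$ by $-X$), it suffices to treat the case $\sv(\si)>0$; the reversed statement follows by applying the result to $-X$ and noting that $\si$ is Lorenz-like for $-X$ iff it is reversed Lorenz-like for $X$, and $\ind_{-X}(\si)=\dim M-\ind_X(\si)$. So assume $\sv(\si)=\la_s+\la_{s+1}>0$ where $\ind(\si)=s$. The first goal is to show $s>1$. Since $C(\si)$ is nontrivial, by Lemma~\ref{lem:gen_0}\eqref{item:gen_0-chain-transitive-set} it is the Hausdorff limit of periodic orbits, giving a fundamental $i$-sequence $\fF=\{(\ga_n,X_n)\}$ with limit $\La\supset C(\si)$ (after passing to a subsequence, $\ga_n$ is a periodic orbit of $X_n\to X$ in $C^1$, and by Lemma~\ref{lem:gen_0}\eqref{item:gen_0-fund-seq} we can even take $X_n=X$). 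By Corollary~\ref{cor:BasicAwayTang}, $\fF$ admits a $T$-dominated splitting $\cN_{\ga_n}=G^{cs}_n\oplus G^{cu}_n$ of some index $i$, which induces (Remark~\ref{rmk:fundamental-sequences}) a dominated splitting of $\wt{\cN}_{\De(\fF)}$, and in particular of $\wt{\cN}_{\De_\si(\fF)}$, of the same index $i$ with respect to the extended linear Poincaré flow. I expect that $i\neq s$ forces a contradiction via the matching lemmas: if $i<s$, Lemma~\ref{lem:elp-dom} gives a dominated splitting $E^s(\si)=E^{ss}\oplus E^c$ with $\dim E^{ss}=i$, and Lemma~\ref{lem:elp-dom2} confines $\De_\si(\fF)$ away from the restricted area $(E^{ss}\oplus E^u)\setminus(E^{ss}\cup E^u)$; combining this with Liao's Theorem~\ref{thm:Liao} (using Corollary~\ref{cor:BasicAwayTang} to produce $\psi^*_t$-contracting points $x_n\in\ga_n$ with $x_n\to\si$, which exist because $\si\in\La=\lim\ga_n$) yields $W^u(\si_{X_n})\pitchfork W^s(\ga_n)\neq\emptyset$, hence by Lemma~\ref{lem:gen_0}\eqref{item:gen_0-transverse-intersection} a real homoclinic connection, making $C(\si)$ contain a periodic orbit — but this periodic orbit's index must match the matching data in a way incompatible with $\sv(\si)>0$ unless $s>1$. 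The cleanest route is likely: show that if $s=1$ then $\sv(\si)=\la_1+\la_2$ with $\la_1<0<\la_2$, and any fundamental sequence accumulating on $\si$ must have its $G^{cs}_n$ of dimension $0$ (impossible, as periodic orbits have nontrivial stable bundles by the hyperbolicity at period in Lemma~\ref{lem:BasicAwayTang}) or be forced into a restricted area, contradiction.

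\textbf{Establishing $s>1$ and the first Lorenz-like condition.} Having a fundamental sequence $\fF$ with limit $\supset C(\si)$ and a $T$-dominated splitting of index $i$: the relevant indices $i$ that can occur are constrained. If every such $i$ satisfied $i\ge s$, then applying the symmetric part of Lemma~\ref{lem:elp-dom}/Remark~\ref{rmk:elp-dom} with $\dim G^{cs}=i\ge s=\ind(\si)$ gives a dominated splitting $E^u(\si)=E^c(\si)\oplus E^{uu}(\si)$ with $\dim E^c=i-s+1\ge 1$ and the restricted-area statement for $(E^{uu}\oplus E^s)\setminus(E^{uu}\cup E^s)$; using Corollary~\ref{cor:Liao} (producing $\psi^*_t$-expanding points via the ``expanding at period'' half of Corollary~\ref{cor:BasicAwayTang} for $-X$, or directly) one gets $W^s(\si_{X_n})\pitchfork W^u(\ga_n)\neq\emptyset$ and hence a homoclinic connection, so $C(\si)$ contains a periodic orbit $\ga$. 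Then $C(\si)$ is the homoclinic class of $\ga$ by Lemma~\ref{lem:gen_0}\eqref{item:gen_0-homoclinic-class}, and I would derive a contradiction with $\sv(\si)>0$ by examining how $\ga$ accumulates on $\si$: the linear-Poincaré-flow analysis near $\si$ (Lemma~\ref{lem:elp-dom}, Remark~\ref{rmk:elp-dom}) combined with $\la_s+\la_{s+1}>0$ forces the central exponent relations to be violated. Concretely, $\sv(\si)>0$ means the weakest-contracting eigenvalue is dominated by the weakest-expanding one, which classically forces $\dim E^{cu}(\si)\ge 2$ after the matching, i.e. $\dim E^c(\si)\ge 1$, but then the index count $i=s-1$ is the only consistent value — giving $s-1\ge 1$, i.e. $s>1$. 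I'd then verify the strong-stable condition $W^{ss}(\si)\cap C(\si)=\{\si\}$: with $E^{ss}(\si)$ the $(s-1)$-dimensional strong stable space (available since $\la_{s-1}<\la_s$ generically, which I must check follows from $\sv(\si)>0$ plus the matching forcing $\dim E^{ss}=s-1$), if $W^{ss}(\si)\cap C(\si)\ne\{\si\}$ then Lemma~\ref{lem:gen_1}(2) gives $\Cl(W^{ss}(\si))\supset C(\si)$; but $\dim W^{ss}(\si)=s-1$ is too small — more precisely this would force a dominated splitting of index $s-1$ over all of $C(\si)$ that contradicts the presence of periodic orbits of higher stable index, which exist in the homoclinic class by index-completeness arguments or by the symmetric matching.

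\textbf{The main obstacle.} The crux — and where I expect to spend the most effort — is the \emph{index matching}: pinning down that the only index $i$ a fundamental sequence accumulating on $\si$ can carry is exactly $i=\ind(\si)-1$ (when $\sv(\si)>0$), and that this is forced by the saddle-value sign rather than merely permitted. This requires carefully combining: (a) Lemma~\ref{lem:BasicAwayTang}'s trichotomy on $G^s\oplus G^c\oplus G^u$ for the approximating periodic orbits, so that contracting/expanding points exist (Corollary~\ref{cor:BasicAwayTang}, Lemma~\ref{lem:pliss}); (b) Liao's intersection theorems (Theorem~\ref{thm:Liao}, Corollary~\ref{cor:Liao}) to convert these into genuine transverse connections and hence, via Lemma~\ref{lem:gen_0}, into periodic orbits in $C(\si)$ — but one must be careful that the \emph{stable index} needed in Theorem~\ref{thm:Liao} ($\ind(\si)\le i$) is compatible with having a $G^{cs}_n$-contracting point; and (c) the matching Lemmas~\ref{lem:elp-dom}, \ref{lem:elp-dom2} and \ref{lem:matching} to translate the existence of a dominated splitting of a ``wrong'' index into a forbidden configuration in $\De_\si(\fF)$ or $B_\si(C(\si))$, yielding a contradiction. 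The delicate bookkeeping is that $\sv(\si)>0$ must be used to rule out $i=\ind(\si)$ itself (which would correspond to a ``reversed Lorenz-like'' configuration), since a priori both could occur; the sign of the saddle value is precisely what breaks this symmetry, via the observation that $\la_s+\la_{s+1}>0$ makes the splitting $E^s\oplus E^u$ at $\si$ one where the center-unstable part must absorb the weak stable direction after rescaling by $\|\Phi_t|_L\|$ in the extended rescaled Poincaré flow. I would organize the proof as a clean proof-by-contradiction: assume $s=1$ or $\si$ not Lorenz-like, extract the fundamental sequence, run the matching + Liao machinery to get an offending periodic orbit or an offending $L\in B_\si(C(\si))$ in a restricted area, and close with Lemma~\ref{lem:matching} / Lemma~\ref{lem:gen_1}(3).
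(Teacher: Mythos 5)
Your toolbox is the right one, but two essential steps are missing or wrong, so the proposal does not close. The first and most serious gap is the index-pinning step, which you yourself identify as "the main obstacle" but never resolve. The saddle value $\sv(\si)>0$ is a condition only at $\si$, and the mechanism that converts it into the statement $\dim G^{cs}_n=\ind(\si)-1$ is a second-exterior-power (sectional contraction vs.\ sectional expansion) estimate along the approximating orbits; this estimate only produces a sectionally contracting subspace \emph{at $\si$} if the orbits spend asymptotically all their time near $\si$. For a fundamental sequence converging to the whole class $C(\si)$, as you propose, the periodic measures need not concentrate at $\si$ and the argument breaks down. The paper's proof avoids this by first applying the connecting lemma of Wen--Xia to produce a nearby vector field $Y$ (equal to $X$ near $\si$) with a \emph{homoclinic orbit} $\Ga$ of $\si$, so that $\Cl(\Ga)=\Ga\cup\{\si\}$ carries $\de_\si$ as its unique ergodic measure; only then does the Pliss/exterior-power argument (Lemma \ref{lem:lorenz-like} and its Claim) force $\dim G^{cs}=\ind(\si)-1$, and the conclusion is transported back to $X$ because $X=Y$ near $\si$. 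This reduction to a homoclinic loop is the key structural idea your plan lacks.

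Second, your proposed route to $\ind(\si)>1$ rests on a false claim: you say a fundamental sequence cannot have $\dim G^{cs}_n=0$ because "periodic orbits have nontrivial stable bundles by the hyperbolicity at period in Lemma \ref{lem:BasicAwayTang}." Lemma \ref{lem:BasicAwayTang} allows $G^s=0$, and indeed when $\ind(\si)=1$ and $\sv(\si)>0$ the orbits approximating the homoclinic loop \emph{are} periodic sources (Corollary \ref{cor:lorenz-like2}). The actual contradiction is dynamical, not linear-algebraic: Lemma \ref{lem:pliss} and Corollary \ref{cor:Liao} give $W^s(\si_{Y_n})\pitchfork W^u(\ga_n,Y_n)\neq\emptyset$, and then Lemma \ref{lem:gen_1}(1) applied to $-Y_n$ forces $C(\si,Y_n)=\{\si\}$, contradicting the continuity of $Z\mapsto C(\si_Z,Z)$ at the generic point $X$ (Lemma \ref{lem:gen_0}, item 6). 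Finally, your verification of $W^{ss}(\si)\cap C(\si)=\{\si\}$ invokes Lemma \ref{lem:gen_1}(2), which concerns strong \emph{unstable} manifolds, and the ensuing "dimension too small" argument is not a proof; the paper instead shows that $\Ga\subset W^{ss}(\si)$ would let one build a fundamental sequence whose flow directions enter the restricted area $(E^{ss}(\si)\oplus E^u(\si))\setminus(E^{ss}(\si)\cup E^u(\si))$, contradicting Lemma \ref{lem:elp-dom2}, and then passes from the homoclinic orbit to the whole class by choosing $\Ga$ appropriately in the connecting-lemma step.
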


For the proof, let us begin with a simpler case.
A regular orbit $\Ga$ is called a {\em homoclinic orbit} related to a hyperbolic singularity $\si$ if $\Ga\subset W^s(\si)\cap W^u(\si)$. Note that if $\Ga$ is a homoclinic orbit related to $\si$, then $\Cl(\Ga)=\Ga\cup\{\si\}$.

\begin{Lemma}\label{lem:lorenz-like}
Let $X\in\xX^1(M)\setminus\Cl(\mathcal{HT})$ and $\si$ be a hyperbolic singularity of $X$. Suppose there exists a homoclinic orbit $\Ga$ related to $\si$ and $\sv(\si)>0$. Then there is a dominated splitting $E^s(\si)=E^{ss}(\si)\oplus E^{c}(\si)$ with $\dim E^{c}(\si)=1$ and $W^{ss}(\si)\cap \Ga=\emptyset$, where $W^{ss}(\si)$ is the strong stable manifold of $\si$ tangent to $E^{ss}(\si)$. A symmetric result holds for the case $\sv(\si)<0$.
\end{Lemma}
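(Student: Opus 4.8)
\textbf{Proof proposal for Lemma~\ref{lem:lorenz-like}.}

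The plan is to work with the fundamental sequence generated by the homoclinic loop and to feed it into the matching machinery of Section~\ref{sect:matching}. Since $\Ga$ is a homoclinic orbit related to $\si$, the set $\La=\Cl(\Ga)=\Ga\cup\{\si\}$ is a nontrivial compact chain transitive set of $X$ (one can chain-connect along $\Ga$ in both directions through $\si$). The saddle value hypothesis $\sv(\si)=\la_s+\la_{s+1}>0$, where $\la_1\le\cdots\le\la_s<0<\la_{s+1}\le\cdots$, is what forces a dominated splitting of the \emph{stable} subspace. First I would use the standard fact that along a regular orbit accumulating on $\si$, the rescaled linear Poincar\'e flow sees the eigenvalues $\la_i$ of the tangent flow at $\si$, with the flow direction contributing $\la_{s+1}$ (the weakest unstable one) near the unstable branch and $\la_s$ near the stable branch; since $X\notin\Cl(\mathcal{HT})$, Lemma~\ref{lem:BasicAwayTang} (or rather its consequence Corollary~\ref{cor:BasicAwayTang}, after approximating $\La$ by periodic orbits via Lemma~\ref{lem:gen_0}(3), or more directly Liao's estimates for the loop) gives a dominated splitting of $\cN_\La$ — and hence of $\wt{\cN}_{B_\si(\La)}$ — whose index I must pin down.

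The key computation is the index count at the singularity. On the normal bundle near the stable branch of $\Ga$ the relevant exponents are $\{\la_1,\dots,\la_{s-1}\}$ together with the ``center'' value coming from $\la_s$ relative to the flow-direction value $\la_{s+1}$, which by $\sv(\si)>0$ is $\la_s-\la_{s+1}<0$... wait, I should instead invoke the $B_\si(\La)$ version directly: the dominated splitting $\wt{\cN}_{B_\si(\La)}=G^{cs}\oplus G^{cu}$ produced will have $\dim G^{cs}=s-1=\ind(\si)-1<\ind(\si)$. Then Lemma~\ref{lem:matching} (the symmetric case $\dim G^{cs}<\ind(\si)$) applies and yields a dominated splitting $E^s(\si)=E^{ss}(\si)\oplus E^c(\si)$ of the tangent flow with $\dim E^{ss}(\si)=\dim G^{cs}=s-1$, hence $\dim E^c(\si)=1$ exactly as claimed, and moreover the restricted-area conclusion
\[
B_\si(\La)\cap\big(E^{ss}(\si)\oplus E^u(\si)\big)\setminus\big(E^{ss}(\si)\cup E^u(\si)\big)=\emptyset .
\]
To get $W^{ss}(\si)\cap\Ga=\emptyset$ I then argue by contradiction: if some point of $\Ga$ lies on $W^{ss}(\si)$, then $\Ga$ enters every small neighborhood of $\si$ asymptotic to $E^{ss}(\si)$ in backward time while leaving asymptotic to $E^u(\si)$ (it is a homoclinic orbit), so the directions $\langle X(x_n)\rangle$ along $\Ga$ produce a point $L\in B_\si(\La)$ sitting in $\big(E^{ss}(\si)\oplus E^u(\si)\big)\setminus\big(E^{ss}(\si)\cup E^u(\si)\big)$ — because the tangent vector to a true orbit transiting between the $E^{ss}$ and $E^u$ directions cannot stay inside either subspace — contradicting the displayed emptiness. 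The symmetric statement for $\sv(\si)<0$ follows by applying everything to $-X$.

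The main obstacle I anticipate is the index bookkeeping at the singularity together with verifying that the normal-bundle dominated splitting over $\La$ really does have index $\ind(\si)-1$ and not something else. This requires understanding precisely how the flow direction $\langle X\rangle$ is absorbed into the splitting near each branch of $\Ga$: near the unstable branch the flow direction carries exponent $\la_{s+1}$, so the normal bundle there sees $\la_1,\dots,\la_s,\la_{s+2},\dots$; near the stable branch it carries $\la_s$, so the normal bundle sees $\la_1,\dots,\la_{s-1},\la_{s+1},\dots$. Domination for the linear Poincar\'e flow (guaranteed away from tangencies) must then be compatible at the splitting level $s-1$, and the inequality $\sv(\si)>0$, i.e. $\la_s+\la_{s+1}>0$, is exactly what makes the pair $(\la_s,\la_{s+1})$ fall on the ``center'' side — preventing the split at level $s$ and forcing it at level $s-1$. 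Pinning this down carefully (perhaps by passing to a fundamental sequence $(\ga_n,X_n)\to(\La,X)$ via Lemma~\ref{lem:gen_0}(3) and applying Corollary~\ref{cor:BasicAwayTang}) is where the real work lies; once the index is correct, Lemma~\ref{lem:matching} does the rest almost mechanically. A secondary technical point is checking that $\ind(\si)>1$, equivalently $s\ge 2$, which is needed for $E^{ss}(\si)$ to be nontrivial: this also comes out of $\sv(\si)>0$ combined with $\la_s<0$, since if $s=1$ then $\la_1=\la_s<0$ and $\la_{s+1}=\la_2>0$ with $\la_1+\la_2>0$ is possible, so I will need the homoclinic orbit hypothesis (domination away from tangencies along $\Ga$) to exclude $s=1$ — concretely, a homoclinic orbit at a singularity of index $1$ whose normal dynamics admits no nontrivial dominated splitting would typically create a tangency, contradicting $X\notin\Cl(\mathcal{HT})$.
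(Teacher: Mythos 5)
Your overall strategy (build a dominated splitting of index $\ind(\si)-1$ over the loop, feed it into the Li--Gan--Wen matching lemmas to split $E^s(\si)$, then use the ``restricted area'' to rule out $\Ga\subset W^{ss}(\si)$) is the same as the paper's, but two steps do not go through as written. The decisive gap is in your proof that $W^{ss}(\si)\cap\Ga=\emptyset$. You claim that if $\Ga\subset W^{ss}(\si)$ then the flow directions $\langle X(x_n)\rangle$ along $\Ga$ itself produce a limit $L\in B_\si(\La)$ lying in $(E^{ss}(\si)\oplus E^u(\si))\setminus(E^{ss}(\si)\cup E^u(\si))$. This is false: the only way points of $\Ga$ accumulate on $\si$ is along the forward tail (contained in $W^{ss}(\si)$, whose tangent spaces converge to $E^{ss}(\si)$, so those limit directions lie \emph{in} $E^{ss}(\si)$) or along the backward tail (contained in $W^u(\si)$, so those limit directions lie \emph{in} $E^u(\si)$). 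Hence $B_\si(\Cl(\Ga))\subset E^{ss}(\si)\cup E^u(\si)$ and never meets the restricted area, so no contradiction arises from $\Ga$ alone. This is precisely why the paper's proof does something more elaborate: after a perturbation making $X$ linearizable near $\si$, it manufactures a \emph{new} fundamental sequence $(\ga'_n,X'_n)$ whose orbit segments travel inside the plane $W^{ss}_\vep(\si)\oplus W^u_\vep(\si)$ from near $x'\in W^{ss}_\vep(\si)\cap\Ga$ to near $y'\in W^u_\vep(\si)\cap\Ga$; the intermediate points $p_n$ of these \emph{perturbed} orbits (not of $\Ga$) have directions converging into the restricted area, contradicting Lemma~\ref{lem:elp-dom2} applied to that sequence.

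A second, smaller issue: you explicitly defer the index computation ($\dim G^{cs}=\ind(\si)-1$) as ``where the real work lies,'' but this is the other substantive half of the proof. Corollary~\ref{cor:BasicAwayTang} alone does not pin down the index, and Lemma~\ref{lem:gen_0}(3) is a generic statement whereas the lemma is for arbitrary $X\notin\Cl(\mathcal{HT})$ (the paper instead closes the homoclinic loop by explicit perturbations near two points of $\Ga$). The paper's actual argument takes the splitting $G^s_n\oplus G^c_n\oplus G^u_n$ from Lemma~\ref{lem:BasicAwayTang}, shows via a second-exterior-power and Pliss-lemma argument that $G^s_n\oplus\langle X_n\rangle$ produces a sectionally contracting $(s+1)$-dimensional subspace at $\si$, and then uses $\sv(\si)>0$ (which supplies a sectionally expanding subspace of dimension $\dim M-\ind(\si)+1$) to force $s\le\ind(\si)-1$ and, symmetrically, $u\le\dim M-\ind(\si)$, whence the index is exactly $\ind(\si)-1$ after absorbing the one-dimensional center appropriately. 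Your heuristic about which exponent the flow direction carries on each branch points in the right direction but is not a proof. Finally, your worry about excluding $s=1$ is not needed for this lemma (if $s=1$ the statement is degenerate but true, since $W^{ss}(\si)=\{\si\}$); the case $\ind(\si)=1$ is only excluded later, in Theorem~\ref{thm:lorenz-like}, by a separate argument using Corollary~\ref{cor:Liao} and genericity.
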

\begin{proof}
We will only prove the case when $\sv(\si)>0$, as the case $\sv(\si)<0$ can be proven symmetrically.
By changing the Riemannian metric in a neighborhood of $\si$, we can assume that $E^s(\si)\perp E^u(\si)$.

Fix two points $x,y\in\Ga$ on the local stable and local unstable manifolds of $\si$ respectively. With a $C^1$-small perturbation in arbitrarily small neighborhoods of $x$ and $y$, one can obtain a sequence of vector fields $X_n$ having a periodic orbit $\ga_n$, such that  $X_n\to X$ in $C^1$ topology and $\ga_n\to \Cl(\Ga)$ in Hausdorff topology. Moreover, $\tau(\ga_n)\to \infty$ as $n\to\infty$. In other words, we obtain a fundamental sequence $\fF=\{(\ga_n,X_n)\}$ such that $(\ga_n,X_n)\to(\Cl(\Ga),X)$.

\nibf[Claim.]
There is a dominated splitting $\wt{\cN}_{\De(\fF)}=G^{cs}\oplus G^{cu}$ with $\dim G^{cs}=\ind(\si)-1$.

\begin{proof}[Proof of Claim.]

By Lemma \ref{lem:BasicAwayTang}, there exist positive constants $C,\eta,\de$ and $T$ such that for any $n$ large enough, the normal bundle $\cN_{\ga_n}$ admits a dominated splitting $\cN_{\ga_n}=G^s_n\oplus G^c_n\oplus G^u_n$ for the linear Poincar\'e flow $\psi^{X_n}_t$ satisfying the properties listed there. By choosing a subsequence, we may assume that the splittings converge as $n\to\infty$; in particular, there are integers $s,c$ and $u$, such that $\dim G^s_n=s$, $\dim G^c_n=c$, and $\dim G^u_n=u$, for all $n$ large. Moreover, Lemma \ref{lem:BasicAwayTang} implies either $c=0$ or $c=1$.

One can show that either $s=\ind(\si)-1$ or $s+c=\ind(\si)-1$ by using a similar argument as in \cite[Lemma 4.1]{SGW}. We only sketch their proof.
Let $A:E\to E'$ be any linear map between finite dimensional vector spaces $E$ and $E'$. Denote by $\wedge^2 E$ (resp. $\wedge^2 E'$) the second exterior power of $E$ (resp. of $E'$). Then the map $A$ induces a linear map $\wedge^2 A:\wedge^2 E\to \wedge^2 E'$.
For each $n$ large enough, let $E_n=G^s_n\oplus\langle X_n(\ga_n)\rangle$. Then $E_n$ is a $\Phi^{X_n}_t$-invariant subbundle and $\dim E_n=s+1$.
Following from the second item of Lemma \ref{lem:BasicAwayTang}, one can show that for every $n$ large enough and for any $x\in\ga_n$,
\[\prod_{i=0}^{[\tau(\ga_n)/T]-1}\|\wedge^2\Phi^{X_n}_T|_{E_n(\phi^{X_n}_{iT}(x))}\|\leq \e^{-\eta\tau(\ga_n)/2}.\]
Then by Pliss lemma \cite[Lemma 11.8]{Man87}, there exists $x_n\in\ga_n$, $k_n\in \NN$ with $k_n\to \infty$ such that
\[\prod_{i=0}^{j-1}\|\wedge^2\Phi^{X_n}_T|_{E_n(\phi^{X_n}_{iT}(x_n))}\|\leq \e^{-j\eta T/4},\quad \forall j=1,\ldots,k_n.\]
By taking a subsequence, we may assume that $x_n\to z\in\Cl(\Ga)$, and $E_n(x_n)\to E(z)$. Noting that $\phi^X_t(z)\to \si$ as $t\to+\infty$, one can further obtain a subspace $E(\si)\subset T_{\si}M$ with $\dim E(\si)=s+1$, such that
\[\prod_{i=0}^{j-1}\|\wedge^2\Phi^{X}_T|_{\Phi^X_{iT}E(\si)}\|\leq \e^{-j\eta T/4 },\quad \forall j=1,2,\ldots.\]
This implies that $E(\si)$ is sectionally contracting. Since $\sv(\si)>0$, there is a sectionally expanding subspace $F(\si)\subset T_{\si}M$ with $\dim F(\si)=\dim M-\ind(\si)+1$. Hence
\[1\geq \dim E(\si)\cap F(\si)\geq\dim E(\si)+\dim F(\si)-\dim M=s+2-\ind(\si).\]
It follows that $s\leq \ind(\si)-1$. Symmetrically, an argument as above applied to the subbundles $G^u_n\oplus\langle X_n(\ga_n)\rangle$ shows that $u\leq \dim M-\ind(\si)$. This implies $s\geq\ind(\si)-1-c$ since we have $s+c+u=\dim M-1$. As $c=0$ or $1$, one deduces that either $s=\ind(\si)-1$ or $s+c=\ind(\si)-1$.

Then for each $n$ large enough, we define a splitting $\cN_{\ga_n}=F^{cs}_n\oplus F^{cu}_n$ of index $(\ind(\si)-1)$ in the following way: if $s=\ind(\si)-1$, let $F^{cs}_n=G^s_n$ and $F^{cu}_n=G^c_n\oplus G^u_n$; if $s+c=\ind(\si)-1$, let $F^{cs}_n=G^s_n\oplus G^c_n$ and $F^{cu}_n=G^u_n$. By Lemma \ref{lem:BasicAwayTang}, the splitting $\cN_{\ga_n}=F^{cs}_n\oplus F^{cu}_n$ is dominated (uniformly in $n$) with domination constant $T$, hence induces a dominated splitting to its limit (Remark \ref{rmk:fundamental-sequences}): there is a dominated splitting $\wt{\cN}_{\De(\fF)}=G^{cs}\oplus G^{cu}$ with $\dim G^{cs}=\ind(\si)-1$.
\end{proof}

We continue the proof of Lemma \ref{lem:lorenz-like}.
Define $\De^u_{\si}=\{L\in\De(\fF):L\subset E^u(\si)\}$. Then $\De^u_{\si}$ is a nonempty compact $\hat{\Phi}^X_t$-invariant set. Restricted to $\De^u_{\si}$, the dominated splitting $\widetilde{\cN}_{\De}=G^{cs}\oplus G^{cu}$ gives a dominated splitting $\widetilde{\cN}_{\De^u_{\si}}=G^{cs}\oplus G^{cu}$ such that $\dim{G^{cs}}=\ind(\si)-1$. Then by Lemma \ref{lem:elp-dom}, there is a dominated splitting $E^s(\si)=E^{ss}(\si)\oplus E^{c}(\si)$ with $\dim{E^{c}(\si)}=1$.

Let $W^{ss}(\si)$ be the strong stable manifold tangent to $E^{ss}(\si)$. We prove $W^{ss}(\si)\cap\Ga=\emptyset$ by contradiction. Suppose $W^{ss}(\si)\cap\Ga\neq\emptyset$, i.e., $\Ga\subset W^{ss}(\si)$. With possibly a small perturbation, we can assume that $X$ is smooth enough and is $C^1$-linearizable around $\si$. Then fixing a linearizable neighborhood $U_{\vep}(\si)$ of $\si$, we can assume that $X$ is linear in this neighborhood and hence $W^{ss}_{\vep}(\si),W^u_{\vep}(\si)$ coincide with $E^{ss}(\si)$ and $E^u(\si)$ respectively. Fix $x'\in W^{ss}_{\vep}(\si)\cap\Ga$, $y'\in W^u_{\vep}(\si)\cap\Ga$. Let $x_n\to x'$, $y_n\to y'$ be two sequences of points such that for some $t_n>0$, $\phi_{t_n}(x_n)=y_n$ and the orbit segments $\{\phi_t(x_n):~0\leq t\leq t_n\}\subset W^{ss}_{\vep}(\si)\oplus W^u_{\vep}(\si)$. Connecting $x_n$ to $x'$ and $y_n$ to $y'$ by small pushing in neighborhoods of $x'$ and $y'$ respectively, we obtain a fundamental sequence $(\ga'_n,X'_n)\to (\Cl(\Ga),X)$ with $x_n\in\ga'_n$. Choose $p_n\in\{\phi_t(x_n):~0\leq t\leq t_n\}$ such that $p_n\to\si$. By construction, the limit of any convergent subsequence of $\{\langle X'_n(p_n)\rangle\}$ is contained in $E^{ss}(\si)\oplus E^u(\si)$. Since $X$ is linear in $U_{\vep}(\si)$, choose carefully the sequence $\{p_n\}$ and a convergent subsequence $\{p_{n_k}\}$, we can assume that $\langle X'_{n_k}(p_{n_k})\rangle\to L\in(E^{ss}(\si)\oplus E^u(\si))\setminus(E^{ss}(\si)\cup E^u(\si))$. But this will lead to a contradiction since one can make exactly the same analysis for $(\ga'_n,X'_n)_{n\in\NN}$ as for $\{(\ga_n,X_n)\}$, and find that Lemma \ref{lem:elp-dom2} holds.
\end{proof}

\begin{Corollary}\label{cor:lorenz-like}
For any $X\in\mathscr{X}^1(M)\setminus\Cl(\mathcal{HT})$, suppose there is a homoclinic orbit $\Ga$ related to a hyperbolic singularity $\si\in\Sing(X)$ and $\sv(\si)>0$. Then $\Cl(\Ga)=\Ga\cup\{\si\}$ is singular-hyperbolic: there is a dominated splitting $T_{\Cl(\Ga)}M=E^{ss}\oplus E^{cu}$, $\dim E^{ss}=\ind(\si)-1$, such that $E^{ss}$ is contracting and $E^{cu}$ is sectionally expanding. A symmetric result holds for the case $\sv(\si)<0$.
\end{Corollary}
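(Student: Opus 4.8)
The plan is to upgrade the dominated splitting already produced in Lemma~\ref{lem:lorenz-like} into a genuine singular-hyperbolic structure on the compact invariant set $\Cl(\Ga)=\Ga\cup\{\si\}$. First I would recall what Lemma~\ref{lem:lorenz-like} gives: a dominated splitting $E^s(\si)=E^{ss}(\si)\oplus E^c(\si)$ with $\dim E^c(\si)=1$, together with $W^{ss}(\si)\cap\Ga=\emptyset$. Setting $E^{cu}(\si)=E^c(\si)\oplus E^u(\si)$ at the singularity, and defining $E^{ss}$ along $\Ga$ by pushing forward/backward a suitable subbundle (or, more precisely, by taking the continuous extension along $\Cl(\Ga)$ of the bundle $E^{ss}(\si)$ under the tangent flow — this works because $\Cl(\Ga)\setminus\{\si\}$ is a single orbit that converges to $\si$ in both time directions and $W^{ss}(\si)\cap\Ga=\emptyset$ guarantees the limit bundle of $T_x\Ga$'s ``stable part'' avoids $E^{ss}(\si)$), one obtains a continuous $\Phi_t$-invariant splitting $T_{\Cl(\Ga)}M=E^{ss}\oplus E^{cu}$ of index $\ind(\si)-1$.

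Next I would verify the three required properties. Domination of $E^{ss}\oplus E^{cu}$ along $\Cl(\Ga)$ follows from domination at $\si$ (which holds because $E^{ss}(\si)$ consists of the most-contracted $\ind(\si)-1$ exponents, strictly dominating the rest of $T_\si M$) together with the fact that domination is a closed condition that extends from an invariant set to its closure, combined with the observation that along the regular orbit $\Ga$ the required inequality is inherited from the hyperbolic saddle structure; one can also re-derive it directly from the fundamental-sequence picture, since the splitting $\wt{\cN}_{\De(\fF)}=G^{cs}\oplus G^{cu}$ is dominated and projects appropriately. For contraction of $E^{ss}$: along $\Ga$ the flow spends all but a bounded amount of time near $\si$, where $E^{ss}(\si)$ is uniformly exponentially contracted by the tangent flow; a standard argument (uniform contraction near the singularity plus boundedness of the ``transition time'' through the complement of a small neighborhood of $\si$) then yields uniform exponential contraction of $E^{ss}$ over $\Cl(\Ga)$. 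For sectional expansion of $E^{cu}$: at $\si$, the hypothesis $\sv(\si)>0$ means $\la_s+\la_{s+1}>0$, so any $2$-plane in $E^{cu}(\si)=E^c(\si)\oplus E^u(\si)$ — whose exponents are $\la_s<\la_{s+1}\le\cdots\le\la_{\dim M}$ — is expanded by $\wedge^2\Phi_t$; the same ``mostly near $\si$'' argument upgrades this to uniform sectional expansion of $\wedge^2\Phi_t|_{E^{cu}}$ over $\Cl(\Ga)$.

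The main obstacle I expect is the careful treatment near the singularity: one must make precise the claim that orbit segments of $\Ga$ spend an unbounded proportion of their time in a linearizing neighborhood of $\si$, and then translate the clean linear estimates at $\si$ (uniform contraction on $E^{ss}(\si)$, uniform sectional expansion on $E^{cu}(\si)$) into uniform estimates along the whole of $\Cl(\Ga)$ without losing the exponential rates. Here the condition $W^{ss}(\si)\cap\Ga=\emptyset$ from Lemma~\ref{lem:lorenz-like} is essential: it is exactly what guarantees that the ``center-stable'' direction $E^c\oplus E^u$ stays transverse to the strong-stable direction along $\Ga$, so that the extended splitting is continuous and the index is $\ind(\si)-1$ rather than collapsing. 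The symmetric case $\sv(\si)<0$ follows by applying the already-proven case to $-X$, noting that $\Ga$ is still a homoclinic orbit related to $\si$ for $-X$, that $\sv_{-X}(\si)=-\sv_X(\si)>0$, and that singular-hyperbolicity for $-X$ with index $\ind_{-X}(\si)-1=(\dim M-\ind_X(\si))-1$ is precisely the ``reversed'' conclusion; since $\mathscr{X}^1(M)\setminus\Cl(\mathcal{HT})$ is invariant under $X\mapsto -X$, Lemma~\ref{lem:lorenz-like} applies to $-X$ directly.
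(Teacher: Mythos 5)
The second half of your plan (uniform contraction of $E^{ss}$ and sectional expansion of $E^{cu}$ via the fact that the orbit spends an asymptotically full proportion of its time near $\si$) is essentially the paper's argument in disguise: the paper phrases it as ``the Dirac measure $\de_{\si}$ is the only ergodic measure supported on $\Cl(\Ga)$,'' so all relevant Birkhoff averages are controlled by the exponents at $\si$, and $\sv(\si)>0$ then gives singular hyperbolicity. Your treatment of the symmetric case via $-X$ is also fine.

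The genuine gap is in the first step, the construction of the invariant dominated splitting $T_{\Cl(\Ga)}M=E^{ss}\oplus E^{cu}$ itself. ``Taking the continuous extension of $E^{ss}(\si)$ under the tangent flow'' is not a construction: to define $E^{ss}(x)$ at a point $x\in\Ga$ you must choose an $(\ind(\si)-1)$-dimensional subspace whose forward \emph{and} backward iterates both converge to $E^{ss}(\si)$, and the existence of such a coherent invariant choice is precisely what has to be proven. Likewise, ``domination at $\si$ plus closedness'' runs in the wrong direction (closedness lets you pass from $\Ga$ to $\Cl(\Ga)$ once you already have a dominated splitting on $\Ga$ with uniform constants; it gives nothing on $\Ga$ from data at the single point $\si$), and there is no a priori ``hyperbolic saddle structure'' along the regular orbit $\Ga$ to inherit from. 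The missing ingredient is the passage from the normal bundle to the tangent bundle: the paper first observes, using Lemma \ref{lem:elp-dom} (which gives $G^{cs}(L)=\{L\}\times E^{ss}(\si)$ for $L\in\De_{\si}(\fF)\subset E^{cu}(\si)$) together with the unique ergodicity of $\de_{\si}$, that the bundle $(G^{cs},\tilde{\psi}_t)$ is \emph{dominated by the flow direction} $(\cP,\wt{\Phi}_t)$ over $\De(\fF)$; only with this extra domination can one invoke the reconstruction argument of \cite[Lemmas 5.5, 5.6]{LGW05} to produce the tangent-bundle splitting $T_{\Cl(\Ga)}M=E^{ss}\oplus E^{cu}$ from the extended-linear-Poincar\'e-flow splitting $\wt{\cN}_{\De(\fF)}=G^{cs}\oplus G^{cu}$. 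You gesture at the fundamental-sequence route as an alternative, but without this step (or an explicit invariant-cone argument across the excursion of $\Ga$ away from $\si$) the splitting whose hyperbolicity you go on to verify has not actually been produced.
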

\begin{proof}
As in the proof of Lemma \ref{lem:lorenz-like}, there is a fundamental sequence $\fF=\{(\ga_n,X_n)\}$ such that $(\ga_n,X_n)\to (\Cl(\Ga),X)$ and  it admits a dominated splitting $\wt{\cN}_{\De(\fF)}=G^{cs}\oplus G^{cu}$ of index $(\ind(\si)-1)$. Moreover, there is a partially hyperbolic splitting $T_{\si}M=E^{ss}(\si)\oplus E^{cu}(\si)$ such that $\dim E^{ss}(\si)=\ind(\si)-1$ and $W^{ss}(\si)\cap \Cl(\Ga)=\{\si\}$, which implies $\De_{\si}(\fF) \subset E^{cu}(\si)$.

By changing the Riemannian metric around $\si$, we may assume that $E^{ss}(\si)\perp E^{cu}(\si)$. By Lemma \ref{lem:elp-dom}, for any $L\in\De_{\si}(\fF)$, one has $G^{cs}(L)=\{L\}\times E^{ss}(\si)$. Then as the Dirac measure $\de_{\si}$ is the only ergodic measure supported on $\Cl(\Ga)$, this implies that $(G^{cs},\tilde{\psi}_t)$ is dominated by $(\cP,\wt{\Phi}_t)$ on $\De(\fF)$, i.e., there exists $T>0$ such that
\[\|\tilde{\psi}_t|_{G^{cs}}\|\cdot m(\wt{\Phi}_{t}|_{\cP})^{-1}\leq\frac{1}{2},\quad \forall t\geq T.\]
Then a standard argument as in \cite[Lemma 5.5, 5.6]{LGW05} gives a dominated splitting $T_{\Cl(\Ga)}M = E^{ss}\oplus E^{cu}$, $\dim E^{ss}=\ind(\si)-1$. Finally, since the Dirac measure $\de_{\si}$ is the only ergodic measure supported on $\Cl(\Ga)$ and $\sv(\si)>0$, the splitting $T_{\Cl(\Ga)}M = E^{ss}\oplus E^{cu}$ is singular-hyperbolic.
\end{proof}

As a direct consequence of Corollary \ref{cor:lorenz-like}, one has the following result.

\begin{Corollary}\label{cor:lorenz-like2}
For any $X\in\mathscr{X}^1(M)\setminus\Cl(\mathcal{HT})$, suppose there is a homoclinic orbit $\Ga$ related to a hyperbolic singularity $\si\in\Sing(X)$ and $\sv(\si)>0$. Then for any fundamental sequence $(\ga_n,X_n)\to (\Cl(\Ga), X)$, $\ga_n$ is a hyperbolic periodic orbit of index $(\ind(\si)-1)$ for $n$ large enough. Moreover, these periodic orbits admit a partially hyperbolic splitting for the linear Poincar\'e flow, $\cN_{\ga_n}=N^{ss}_n\oplus N^{cu}_n$, such that $N^{ss}_n$ is contracting, and $N^{cu}_n$ is expanding at period, uniformly in $n$. A symmetric result holds for the case $\sv(\si)<0$.
\end{Corollary}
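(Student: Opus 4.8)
The plan is to deduce Corollary~\ref{cor:lorenz-like2} from Corollary~\ref{cor:lorenz-like} together with Liao's selecting/rescaling machinery already assembled in Section~\ref{sect:preliminaries}. By Corollary~\ref{cor:lorenz-like} we already have a singular-hyperbolic dominated splitting $T_{\Cl(\Ga)}M=E^{ss}\oplus E^{cu}$ with $\dim E^{ss}=\ind(\si)-1$, $E^{ss}$ contracting and $E^{cu}$ sectionally expanding. The first step is to transfer this to the normal bundles of the approximating periodic orbits: since $(\ga_n,X_n)\to(\Cl(\Ga),X)$ and the dominated splitting on $\Cl(\Ga)$ for the tangent flow extends to a neighborhood (c.f.\ \cite{BDV05}), for $n$ large the orbit $\ga_n$ lies in that neighborhood and inherits a $\Phi^{X_n}_t$-dominated splitting; projecting to $\cN_{\ga_n}$ along the flow direction (equivalently, using Remark~\ref{rmk:elp} to pass through the extended linear Poincar\'e flow on $\De(\fF)$) yields $\cN_{\ga_n}=N^{ss}_n\oplus N^{cu}_n$ with $\dim N^{ss}_n=\ind(\si)-1$, dominated uniformly in $n$. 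In particular $N^{ss}_n$ is exactly the $G^s_n$ piece (or $G^s_n\oplus G^c_n$) of the away-from-tangencies splitting of Lemma~\ref{lem:BasicAwayTang}, matched to $\si$ as in the Claim inside the proof of Lemma~\ref{lem:lorenz-like}, so the contraction/expansion-at-period estimates of Lemma~\ref{lem:BasicAwayTang}(2) apply and we may also use Corollary~\ref{cor:BasicAwayTang}.

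The second step is to upgrade domination to genuine uniform (at-period) contraction of $N^{ss}_n$ and expansion of $N^{cu}_n$. For this I would argue by contradiction in the usual Liao style: if the contraction of $N^{ss}_n$ at period were not uniform, one could (after passing to a subsequence and using the rescaled linear Poincar\'e flow $\psi^{X_n,*}_t$ together with Remark~\ref{rmk:spf}) produce a sequence of orbit segments whose $N^{ss}$-norms fail to decay, whose limit would be an invariant measure on $\Cl(\Ga)$ on which $E^{ss}$ is not contracted. But the Dirac mass $\de_\si$ is the \emph{only} invariant measure on $\Cl(\Ga)$ (since $\Cl(\Ga)=\Ga\cup\{\si\}$ with $\Ga$ a homoclinic orbit), and $E^{ss}(\si)$ is uniformly contracted by the tangent flow at $\si$; this contradiction gives the uniform exponential contraction of $N^{ss}_n$ at period, with constants independent of $n$. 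The same argument applied to $-X$, using that $E^{cu}(\si)$ is sectionally expanding and hence $E^{cu}(\si)$ contains the unstable subspace $E^u(\si)$ which is uniformly expanded, gives uniform expansion of $N^{cu}_n$ at period. (Alternatively, one can quote Corollary~\ref{cor:BasicAwayTang} directly to get $(C,\eta,T,N^{ss}_n)$-contraction at period once the index has been pinned down to $\ind(\si)-1$, and argue symmetrically for $N^{cu}_n$ using that the central bundle $G^c_n$ must be absorbed into $N^{cu}_n$ — this is forced because $\dim N^{ss}_n=\ind(\si)-1$ and the matching with $\si$ through Lemma~\ref{lem:elp-dom}.)

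The third step is hyperbolicity of $\ga_n$ itself for $n$ large. Uniform contraction of $N^{ss}_n$ at period and uniform expansion of $N^{cu}_n$ at period give, via the Pliss-type Lemma~\ref{lem:pliss} (or directly, since at-period estimates for a periodic orbit immediately yield spectral information on the Poincar\'e return map), that all Floquet multipliers of $\ga_n$ on $N^{ss}_n$ are inside the unit circle and all those on $N^{cu}_n$ are outside; hence $\ga_n$ is a hyperbolic periodic orbit, and its index (as a periodic orbit of a flow, $\dim W^s(\ga_n)-1$) equals $\dim N^{ss}_n=\ind(\si)-1$. The ``symmetric result'' for $\sv(\si)<0$ follows by applying everything to $-X$, for which $\si$ is reversed Lorenz-like and the roles of $E^{ss}$ and the strong unstable bundle are exchanged.

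The main obstacle I anticipate is the passage from the dominated splitting over the \emph{singular} limit set $\Cl(\Ga)$ to uniform (i.e.\ $n$-independent) contraction/expansion rates on the normal bundles $\cN_{\ga_n}$: naively the plaques and hyperbolicity estimates near $\si$ degenerate like $\|X_n(x)\|\to0$, which is precisely why the rescaled linear Poincar\'e flow $\psi^{X,*}_t$ and Liao's rescaling (Section~\ref{sect:associated-flows}, Remark~\ref{rmk:spf-uniform-size}) are needed. Concretely, one must ensure the ``one ergodic measure'' compactness argument is carried out for the \emph{rescaled} cocycle so that the limiting measure is forced to be $\de_\si$ and the contradiction is with the genuine hyperbolicity of $\si$; getting the quantifiers uniform in $n$ here is the delicate point, but it is exactly the content of the Liao-type estimates already cited (Lemma~\ref{lem:BasicAwayTang}, Corollary~\ref{cor:BasicAwayTang}, Theorem~\ref{thm:Liao}) and of the argument in \cite[Lemma 5.5, 5.6]{LGW05} invoked in the proof of Corollary~\ref{cor:lorenz-like}.
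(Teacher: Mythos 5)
Your proposal is correct and follows the route the paper intends: the paper states this corollary as a direct consequence of Corollary~\ref{cor:lorenz-like}, and your argument is exactly the fleshed-out version of that — extend the singular-hyperbolic splitting $T_{\Cl(\Ga)}M=E^{ss}\oplus E^{cu}$ robustly to a neighborhood, let $\ga_n$ inherit it, project to $\cN_{\ga_n}$, and use that $\de_\si$ is the unique invariant measure on $\Cl(\Ga)$ (equivalently, sectional expansion plus the vanishing of the flow-direction factor at period) to get uniform contraction of $N^{ss}_n$ and uniform expansion at period of $N^{cu}_n$, hence hyperbolicity of index $\ind(\si)-1$. The only cosmetic slip is the justification ``$E^{cu}(\si)$ contains $E^u(\si)$ which is uniformly expanded'': what one actually uses for $N^{cu}_n$ is sectional expansion of $E^{cu}$ together with $\Phi_{\tau(\ga_n)}X(x)=X(x)$, as you correctly indicate elsewhere.
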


Now let us prove Theorem \ref{thm:lorenz-like}.

\begin{proof}[Proof of Theorem \ref{thm:lorenz-like}.]
Consider the case $\sv(\si)>0$. Since $X$ is generic, by item 5 of Lemma \ref{lem:gen_0} we can assume that $X$ is a continuity point of $C(\si,X)$.
Since $C(\si,X)$ is nontrivial, by item 3 of Lemma \ref{lem:gen_0} we can assume that there is a sequence of periodic orbits converging to $C(\si,X)$ in the Hausdorff topology. Applying the connecting lemma of Wen-Xia \cite{WeX}, there is $Y$ arbitrarily close to $X$ and a neighborhood $V$ of $\si$ such that $Y=X$ on $V$ and $Y$ has a homoclinic orbit $\Ga$ related to $\si$. We may assume that $Y$ is away from homoclinic tangencies since $X$ is away from homoclinic tangencies. Perturbing $Y$, we obtain a fundamental sequence $(\ga_n,Y_n)\to(\Cl(\Ga),Y)$.

Suppose on the contrary that $\ind(\si)=1$. By Corollary \ref{cor:lorenz-like2}, the orbit $\ga_n$ is a periodic source for $n$ large enough. Then one can apply Lemma \ref{lem:pliss} and Corollary \ref{cor:Liao} to obtain a transverse intersection between $W^s(\si,Y_n)$ and $W^u(\ga_n,Y_n)$. Since this intersection is transverse, we can further assume that $Y_n$ is generic. Then the first item of Lemma \ref{lem:gen_1} (applied to $-Y_n$) implies that $C(\si,Y_n)=\{\si\}$. This is a contradiction to the fact that $C(\si,X)$ varies continuously at $X$, since $Y_n\to Y$ and $Y$ can be chosen arbitrarily close to $X$. Therefore, we have $\ind(\si)>1$.

By Lemma \ref{lem:lorenz-like}, there is a dominated splitting $E^s(\si,Y)=E^{ss}(\si,Y)\oplus E^{c}(\si,Y)$, where $\dim E^{c}(\si,Y)=1$, and $W^{ss}(\si,Y)\cap\Ga=\emptyset$. Since $X=Y$ in $V$, we have the same dominated splitting $E^s(\si,X)=E^{ss}(\si,X)\oplus E^{c}(\si,X)$ for $X$. Moreover, one must have $W^{ss}(\si,X)\cap C(\si,X)\setminus\{\si\}=\emptyset$, since otherwise the homoclinic orbit $\Ga$ of $Y$ can be obtained such that $\Ga\cap W^{ss}(\si,Y)\neq\emptyset$ in the first place (see the proof of Theorem D of \cite{WeX} for more details).

Hence we have shown that $\si$ is Lorenz-like and $\ind(\si)>1$ if $\sv(\si)>0$. A symmetric argument as above gives a proof for the case $\sv(\si)<0$.
\end{proof}

\subsection{Lyapunov stable chain recurrence classes}

In this section, we will further assume that $C(\sigma)$ is Lyapunov stable and obtain results on the indices of critical elements contained in the class.
\subsubsection{Indices of singularities}

\begin{Proposition}\label{prop:homogeneous-index}
Given a $C^1$ generic $X\in\mathscr{X}^1(M)\setminus\Cl(\mathcal{HT})$ and $\si\in\Sing(X)$, if $C(\si)$ is a nontrivial Lyapunov stable chain recurrence class, then for any $\rho\in\Sing(X)\cap C(\si)$, it satisfies that $\rho$ is Lorenz-like and $\ind(\rho)=\ind(\si)$.
\end{Proposition}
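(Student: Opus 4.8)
Since $\rho\in C(\si)$ we have $C(\rho)=C(\si)=:C$, a nontrivial Lyapunov stable chain recurrence class. Throughout I use the observation that Lyapunov stability forces $W^u(\tau)\subseteq C$ for every hyperbolic singularity $\tau\in C$: if $y\in W^u(\tau)$ then $\phi_{-t}(y)\to\tau\in C$, so for each neighborhood $U$ of $C$ and its associated forward‑invariant neighborhood $V$ (with $\phi_s(V)\subseteq U$ for all $s\ge 0$) the point $\phi_{-t}(y)$ eventually lies in $V$; hence $y=\phi_t(\phi_{-t}(y))\in U$, and since $C$ is closed, $y\in C$. To see that $\rho$ (and, taking $\rho=\si$, $\si$ itself) is Lorenz‑like: generically $\sv(\rho)\ne 0$ (Lemma \ref{lem:gen_0}(2)) and $C(\rho)$ is nontrivial, so by Theorem \ref{thm:lorenz-like} the singularity $\rho$ is Lorenz‑like or reversed Lorenz‑like. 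If it were reversed Lorenz‑like, then $\ind(\rho)<\dim M-1$, and the strong stable manifold of $\rho$ for $-X$ — equivalently the strong unstable manifold of $X$ tangent to the $(\dim M-\ind(\rho)-1)$‑dimensional strongest unstable subspace, which has positive dimension — would be contained in $W^u(\rho)\subseteq C$; but reversed Lorenz‑likeness means exactly that this manifold meets $C(\rho,-X)=C(\rho,X)=C$ only at $\rho$, a contradiction. Hence $\rho$ is Lorenz‑like (and $\ind(\rho)>1$); this also yields the ``moreover'' part of Theorem C.

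\textbf{Reduction for the index equality.} Suppose $\ind(\si)\ne\ind(\rho)$. By the symmetry of the roles of $\si$ and $\rho$ (one has $C=C(\si)=C(\rho)$) we may assume $\ind(\rho)>\ind(\si)$, so $i_0:=\ind(\rho)-1\ge\ind(\si)$. I claim it suffices to produce a dominated splitting $\wt{\cN}_{B_\si(C)}=G^{cs}\oplus G^{cu}$ of index $i_0$ for the extended linear Poincar\'e flow. Granting this, Lemma \ref{lem:matching} (the case $\dim G^{cs}\ge\ind(\si)$) gives a dominated splitting $E^u(\si)=E^c(\si)\oplus E^{uu}(\si)$ with $\dim E^{uu}(\si)=\dim G^{cu}$ and
\[B_\si(C)\cap\bigl(E^s(\si)\oplus E^{uu}(\si)\bigr)\setminus\bigl(E^s(\si)\cup E^{uu}(\si)\bigr)=\emptyset .\]
On the other hand $C=C(\si)$ is nontrivial and Lyapunov stable, so Lemma \ref{lem:gen_1}(3), applied to this very dominated splitting of $E^u(\si)$, gives
\[B(C)\cap\bigl(E^s(\si)\oplus E^{uu}(\si)\bigr)\setminus\bigl(E^s(\si)\cup E^{uu}(\si)\bigr)\ne\emptyset ;\]
since the latter set consists of lines in $T_\si M$, this intersection equals the intersection of $B_\si(C)$ with the same set, contradicting the previous display. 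Therefore $\ind(\si)=\ind(\rho)$, which together with the first part proves the Proposition.

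\textbf{Producing the dominated splitting on $B_\si(C)$.} Since $\rho$ is Lorenz‑like and $C$ is chain transitive, the connecting lemma — used exactly as in the proof of Theorem \ref{thm:lorenz-like}, but now applied to an $\vep$‑chain from $\rho$ to $\rho$ that is $\vep$‑dense in $C$ — produces vector fields $Y_m\to X$, away from $\Cl(\mathcal{HT})$ and equal to $X$ near $\rho$, each having a homoclinic orbit $\Ga_m$ at $\rho$ with $\overline{\Ga_m}\to C$ in the Hausdorff topology. Because $\sv(\rho)>0$, Corollary \ref{cor:lorenz-like2} provides, for each $m$, a fundamental sequence of periodic orbits of index $i_0=\ind(\rho)-1$ converging to $\overline{\Ga_m}$; a diagonal argument, combined with Lemma \ref{lem:gen_0}(4) (and the continuity of $C(\si,\cdot)$, Lemma \ref{lem:gen_0}(6), to identify the limit set with $C$), then yields a fundamental $i_0$‑sequence $\mathscr{F}$ of $X$ with limit $C$. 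By Remark \ref{rmk:fundamental-sequences} this induces a dominated splitting of index $i_0$ on $\De(\mathscr{F})$, hence on $\De_\si(\mathscr{F})$; and since the periodic orbits of $\mathscr{F}$ accumulate all of $C$, one has $B_\si(C)\subseteq\De_\si(\mathscr{F})$, so $B_\si(C)$ carries a dominated splitting of index $i_0$, as needed.

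\textbf{Main obstacle.} The delicate point is the construction just sketched: upgrading the index‑$(\ind(\rho)-1)$ periodic orbits — which a priori live only near a homoclinic loop of $\rho$ for perturbed vector fields — to a genuine fundamental $(\ind(\rho)-1)$‑sequence of $X$ itself whose limit is the entire class $C$, so that the Li--Gan--Wen matching mechanism, which directly controls only $\De_\si(\mathscr{F})$, actually controls $B_\si(C)$ and can be confronted with the Lyapunov‑stable restricted‑area statement of Lemma \ref{lem:gen_1}(3). One must therefore make the connecting‑lemma orbits increasingly dense in $C$ while keeping uniform the dominated splitting furnished by Corollary \ref{cor:lorenz-like2}; it is here that the genericity of $X$ and the robustness and continuity of the chain recurrence class are used essentially.
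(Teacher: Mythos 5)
Your first half is correct and is exactly the paper's argument: Lyapunov stability forces $W^u(\rho)\subset C(\si)$, and a reversed Lorenz-like singularity cannot contain its positive-dimensional strong unstable manifold in its own class, so Theorem \ref{thm:lorenz-like} leaves only the Lorenz-like alternative.

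The index equality is where there is a genuine gap. You reduce everything to producing a fundamental $(\ind(\rho)-1)$-sequence of $X$ whose Hausdorff limit is the \emph{entire} class $C(\si)$, so that the matching mechanism controls all of $B_\si(C(\si))$ and can be confronted with Lemma \ref{lem:gen_1}(3). But the construction you sketch for this step --- homoclinic orbits $\Ga_m$ of $\rho$ for perturbed fields with $\Cl(\Ga_m)\to C(\si)$, obtained by ``applying the connecting lemma to an $\vep$-dense $\vep$-chain from $\rho$ to $\rho$'' --- is not delivered by the connecting lemmas as they are used elsewhere in the paper. The Wen--Xia connecting lemma and the generic weak shadowing property produce an orbit from a neighborhood of one point to a neighborhood of another; they do not force the resulting orbit to pass near every point of a prescribed finite dense set, so the inclusion $\liminf_m \Cl(\Ga_m)\supset C(\si)$ does not follow from what you cite. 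Producing approximating objects of controlled index whose limit is the \emph{whole} class is precisely the central difficulty of the paper (it is what the $C^2$/physical-like-measure machinery of Sections 7--8 exists to overcome), so it cannot be treated as routine here; you correctly flag it as the main obstacle, but you then assert it rather than prove it.

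The paper's own proof sidesteps the density requirement entirely by a local argument. It builds a heteroclinic cycle $\Ga$ between $\si$ and $\rho$ in which the connecting orbit $\Upsilon_1$ from $\si$ to $\rho$ lies inside the strong unstable manifold $W^{uu}(\si,Y)$ (possible because $W^{uu}(\si)\subset C(\si)$ by Lyapunov stability, so $W^{uu}(\si)$ accumulates on $W^s(\rho)$), and then takes a fundamental sequence $(\ga_n,Y_n)\to(\Ga,Y)$ of index $\ind(\rho)-1\ge\ind(\si)$ via Corollary \ref{cor:lorenz-like2}. Since these periodic orbits shadow $\Upsilon_1\subset W^{uu}(\si,Y)$, their flow directions at $\si$ enter the restricted area $(E^s(\si)\oplus E^{uu}(\si))\setminus(E^s(\si)\cup E^{uu}(\si))$, contradicting Lemma \ref{lem:elp-dom2} applied to $\De(\fF)$ itself --- no Hausdorff convergence to $C(\si)$ and no appeal to Lemma \ref{lem:gen_1}(3) are needed. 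To repair your proof, either establish the density statement you need, or replace your construction step by this heteroclinic-cycle argument.
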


\begin{proof}
Let $\rho$ be any singularity in $C(\si)$. Since $C(\si)$ is nontrivial, it follows from Theorem \ref{thm:lorenz-like} that the singularity $\rho$ is either Lorenz-like or reversed Lorenz-like. As $C(\si)$ is Lyapunov stable, it contains the whole unstable manifold of $\rho$. This implies that $\rho$ is Lorenz-like, since a reversed Lorenz-like singularity does not contain its strong unstable manifold (see Definition \ref{def:lorenz-like}).
In the following we prove by contradiction that $\ind(\rho)=\ind(\si)$. Without loss of generality, let us suppose on the contrary that $\ind(\rho)>\ind(\si)$.

A topological cycle $\Ga$ on the manifold is called a {\em heteroclinic cycle} related to $\si$ and $\rho$ if $\si,\rho\in\Ga$ and $\Ga\setminus\{\si,\rho\}$ is composed of two regular orbits $\Upsilon_1$ and $\Upsilon_2$, with $\Upsilon_1\subset W^u(\si)\cap W^s(\rho)$ and $\Upsilon_2\subset W^u(\rho)\cap W^s(\si)$.
Following the argument in \cite[Sublemma 4.6]{SGW}, there is a vector field $Y$ arbitrarily $C^1$ close to $X$ with a heteroclinic cycle $\Ga$ related to $\si_Y$ and $\rho_Y$.
Then by Corollary \ref{cor:lorenz-like2} and following the argument in \cite[Lemma 4.5]{SGW}, there is a fundamental sequence $(\ga_n, Y_n)\to (\Ga, Y)$ with dominated splitting $\cN_{\ga_n}=G^s_n\oplus G^{cu}_n$, and $\dim G^{s}_n=\ind(\rho)-1\geq\ind(\si)$.
As a consequence of Lemma~\ref{lem:elp-dom}, there is a dominated splitting $E^u(\si)=E^c(\si)\oplus E^{uu}(\si)$ such that $\dim E^c(\si)=\ind(\rho)-\ind(\si)$.
Furthermore, denoting $\fF=(\ga_n,Y_n)_{n\in\NN}$,  Lemma~\ref{lem:elp-dom2} gives
\begin{equation}\label{e.p.4.7}
	\De(\fF)\cap (E^{s}(\si)\oplus E^{uu}(\si))\setminus (E^{s}(\si)\cup E^{uu}(\si))=\emptyset.
\end{equation}

On the other hand, since $C(\si)$ is Lyapunov stable, $W^{uu}(\si)\subset C(\si)$. Therefore, by connecting the strong unstable manifold of $\si$ to the stable manifold of $\rho$ in the first place when applying the argument in \cite[Sublemma 4.6]{SGW}, we can assume that the heteroclinic cycle $\Ga$ has a part $\Upsilon_1$ contained in the strong unstable manifold $W^{uu}(\si,Y)$. Then as in the proof of \cite[Lemma 4.5]{SGW}, a fundamental sequence $(\ga_n,Y_n)\to (\Ga,Y)$ can be chosen such that $\De(\fF)\cap (E^{s}(\si)\oplus E^{uu}(\si))\setminus(E^{s}(\si)\cup E^{uu}(\si))\neq\emptyset$.
This is, however, a contradiction to \eqref{e.p.4.7}.
\end{proof}
\subsubsection{Indices of periodic orbits}

\begin{Proposition}\label{prop:indices-of-periodic-orbits-0}
For any $C^1$ generic $X\in\mathscr{X}^1(M)\setminus\Cl(\mathcal{HT})$, $\si\in\Sing(X)$, suppose $C(\si)$ is a Lyapunov stable chain recurrence class and $\ga$ is a periodic orbit contained in $C(\si)$, then $\ind(\ga)\leq\ind(\si)-1$.
\end{Proposition}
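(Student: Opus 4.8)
The plan is to argue by contradiction, mirroring the structure of Proposition~\ref{prop:homogeneous-index}: assume there is a periodic orbit $\ga\subset C(\si)$ with $\ind(\ga)\geq\ind(\si)$, and produce a fundamental sequence whose extended linear Poincar\'e flow violates the matching mechanism of Lemma~\ref{lem:elp-dom2} (equivalently Lemma~\ref{lem:matching}). First I would use genericity to arrange that $X$ is a continuity point of $C(\si,X)$, that $C(\si)$ is the homoclinic class of every periodic orbit it contains (item~5 of Lemma~\ref{lem:gen_0}), and that $C(\si)$ remains Lyapunov stable (hence a quasi-attractor, containing $W^{uu}(\si)$ and $W^u(\ga)$) for nearby weak Kupka--Smale fields (item~9 of Lemma~\ref{lem:gen_0}). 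By Theorem~\ref{thm:lorenz-like} and Proposition~\ref{prop:homogeneous-index}, $\si$ is Lorenz-like with $\ind(\si)=\ind(\rho)$ for every $\rho\in\Sing(X)\cap C(\si)$, and $W^{ss}(\si)\cap C(\si)=\{\si\}$.

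Next, since $\ga$ and $\si$ lie in the same (Lyapunov stable, hence chain transitive) class, I would invoke the connecting lemma of Wen--Xia \cite{WeX} to create, by an arbitrarily small perturbation supported away from a neighborhood of $\si$, a vector field $Y$ near $X$ possessing a heteroclinic cycle $\Ga$ related to $\si_Y$ and $\ga_Y$ — with one branch $\Upsilon_1\subset W^u(\si_Y)\cap W^s(\ga_Y)$ and the other $\Upsilon_2\subset W^u(\ga_Y)\cap W^s(\si_Y)$ — keeping $Y$ away from homoclinic tangencies. Crucially, because $W^{uu}(\si)\subset C(\si)$ by Lyapunov stability, I can arrange (as in \cite[Sublemma 4.6]{SGW}, connecting from the strong unstable manifold in the first place) that $\Upsilon_1\subset W^{uu}(\si_Y)$. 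Perturbing $Y$ near $\ga_Y$ and along $\Ga$ (using Corollary~\ref{cor:lorenz-like2} to control the partially hyperbolic structure near the singular part), I obtain a fundamental sequence $(\ga_n,Y_n)\to(\Ga,Y)$ admitting a $T$-dominated splitting $\cN_{\ga_n}=G^{cs}_n\oplus G^{cu}_n$ from Corollary~\ref{cor:BasicAwayTang}, where the index can be taken to be $\dim G^{cs}_n=\ind(\ga)\geq\ind(\si)$ (the index of $\ga$ survives, since along the singular part the splitting is forced by $\si$ being Lorenz-like with $\ind(\si)\le\ind(\ga)$). Setting $\fF=(\ga_n,Y_n)$, Lemma~\ref{lem:elp-dom} (in the form of Remark~\ref{rmk:elp-dom}, $\dim G^{cs}\ge\ind(\si)$) produces a dominated splitting $E^u(\si)=E^c(\si)\oplus E^{uu}(\si)$ with $\dim E^{uu}(\si)=\dim G^{cu}_n$, and Lemma~\ref{lem:elp-dom2} / Lemma~\ref{lem:matching} then forces
\[
\De(\fF)\cap\big(E^s(\si)\oplus E^{uu}(\si)\big)\setminus\big(E^s(\si)\cup E^{uu}(\si)\big)=\emptyset.
\]
On the other hand, precisely because $\Upsilon_1\subset W^{uu}(\si_Y)$ and the other branch enters through $E^s(\si)$, the same construction as in \cite[Lemma 4.5]{SGW} lets me choose the $\ga_n$ so that some $L=\lim\langle Y_n(p_n)\rangle$ with $p_n\to\si$ lies in $(E^s(\si)\oplus E^{uu}(\si))\setminus(E^s(\si)\cup E^{uu}(\si))$ — a point in the forbidden region — contradicting the displayed identity. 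This contradiction shows $\ind(\ga)\le\ind(\si)-1$.

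The main obstacle I anticipate is the bookkeeping needed to guarantee simultaneously that (i) the dominated splitting of the fundamental sequence has index exactly $\ind(\ga)\ge\ind(\si)$ — this requires combining the away-from-tangency domination over $\ga_n$ with the partially hyperbolic structure forced near the Lorenz-like singularity $\si$ along the heteroclinic cycle, so that no index drop occurs in passing near $\si$ — and (ii) the heteroclinic connection $\Upsilon_1$ can be placed inside $W^{uu}(\si_Y)$ while still closing up into periodic orbits whose Poincar\'e direction at points near $\si$ accumulates strictly inside the restricted region $(E^s(\si)\oplus E^{uu}(\si))\setminus(E^s(\si)\cup E^{uu}(\si))$ rather than on its boundary. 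The linearizability of $X$ near $\si$ after a small perturbation (as used in the proof of Lemma~\ref{lem:lorenz-like}) should make the latter a matter of choosing the entering and exiting points appropriately; the former is handled by Corollary~\ref{cor:lorenz-like2} together with the matching lemma, exactly as in \cite[Lemma 4.5, Sublemma 4.6]{SGW}. A secondary subtlety is ensuring $Y$ and all the $Y_n$ stay away from homoclinic tangencies and remain generic/weak Kupka--Smale where needed, which follows since away-from-tangencies is a $C^1$-open condition and the relevant genericity can be recovered after perturbation.
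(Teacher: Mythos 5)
Your strategy is not the one the paper uses for this statement; it is the strategy the paper uses for the companion results (Proposition \ref{prop:homogeneous-index} and Proposition \ref{prop:index-completeness}), transplanted to a singularity--periodic-orbit heteroclinic cycle. The paper's actual proof is much shorter and entirely global: generically $C(\si)=H(\ga)$, so by the main result of \cite{ABCDW} periodic orbits of index $\ind(\ga)$ are dense in $C(\si)$; Corollary \ref{cor:BasicAwayTang} and Remark \ref{rmk:fundamental-sequences} then give a dominated splitting of $\wt{\cN}_{B(C(\si))}$ of index $\ind(\ga)\ge\ind(\si)$ over the \emph{whole} class, Lemma \ref{lem:matching} forces $B(C(\si))\cap(E^s(\si)\oplus E^{uu}(\si))\setminus(E^s(\si)\cup E^{uu}(\si))=\emptyset$, and this contradicts item 3 of Lemma \ref{lem:gen_1}, the off-the-shelf generic fact that a Lyapunov stable class always meets this restricted area. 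In effect, the two things you reconstruct by hand via a heteroclinic cycle --- a fundamental sequence of index at least $\ind(\si)$ whose directions see $\si$, and a limit direction inside the restricted region --- are exactly what \cite{ABCDW} and Lemma \ref{lem:gen_1}(3) hand you for free.

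The step of your construction that is genuinely delicate, and that you assert rather than prove, is that the shadowing orbits $\ga_n$ form a fundamental $i$-sequence with $i\ge\ind(\si)$, so that Corollary \ref{cor:BasicAwayTang} yields a dominated splitting of index at least $\ind(\si)$. Without this, Remark \ref{rmk:elp-dom} produces the wrong restricted region, namely $(E^{ss}(\si)\oplus E^u(\si))\setminus(E^{ss}(\si)\cup E^u(\si))$, and your orbits, which enter along $W^s(\si)$ rather than $W^{ss}(\si)$, need not violate it. The index of orbits shadowing a cycle is decided by a competition between the time spent near $\ga$ and near $\si$ --- in \cite[Lemma 4.5]{SGW} and in Proposition \ref{prop:index-completeness} the outcome is $\ind(\rho)-1$, respectively $\alpha+1$, not the index of either constituent --- so ``the index of $\ga$ survives'' is not automatic: you must arrange $\de_{\ga_n}\to\de_{\ga}$ (winding number around $\ga$ large compared with the transit time near $\si$) and then run an argument as in Lemma \ref{lem:le-away-tang} to conclude $\ind(\ga_n)=\ind(\ga)$. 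There is also a minor circularity to repair: $W^{uu}(\si)$ is only defined once the splitting $E^u(\si)=E^c(\si)\oplus E^{uu}(\si)$ has been extracted from a first fundamental sequence, so the construction must be carried out in two passes, exactly as in the proof of Proposition \ref{prop:homogeneous-index}. With these points supplied the argument should close, but the paper's route avoids them entirely.
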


\begin{proof}
	Suppose on the contrary that there exists a periodic orbit $\ga\subset C(\si)$ with $\ind(\ga)\geq\ind(\si)$. By genericity, $C(\si)$ is the homoclinic class of $\ga$ (item \ref{item:gen_0-homoclinic-class} of Lemma \ref{lem:gen_0}). From the main result of \cite{ABCDW}, the set of periodic orbits with index equal to $\ind(\ga)$ is dense in $C(\si)$. Let us denote
	\[B(C(\si))=\{L\in G^1: \exists x_n\in C(\si), \txtrm[such that] \langle X(x_n)\rangle\to L\}.\]
	By Corollary \ref{cor:BasicAwayTang} and Remark \ref{rmk:fundamental-sequences}, there is a dominated splitting $\wt{\cN}_{B(C(\si))}=G^{cs}\oplus G^{cu}$ with respect to the extended linear Poincare flow, such that $\dim G^{cs}=\ind(\ga)$.
	By Lemma \ref{lem:matching}, there is a nontrivial dominated splitting of the unstable subspace $E^u(\si)=E^c(\si)\oplus E^{uu}(\si)$. Furthermore, one has
	\[B(C(\si))\cap (E^s(\si)\oplus E^{uu}(\si))\setminus(E^s(\si)\cup E^{uu}(\si))= \emptyset.\]
	On the other hand, since $C(\si)$ is Lyapunov stable, item 3 of Lemma \ref{lem:gen_1} ensures that
	\[B(C(\si))\cap (E^s(\si)\oplus E^{uu}(\si))\setminus(E^s(\si)\cup E^{uu}(\si))\neq \emptyset.\]
	A contradiction.
\end{proof}

As a concluding remark, Theorem C is now proven as it is contained in Theorem \ref{thm:lorenz-like} and Proposition \ref{prop:homogeneous-index}. Also, we have proven the first two items of Theorem B in Proposition \ref{prop:homogeneous-index} and Proposition \ref{prop:indices-of-periodic-orbits-0}.

\section{Non-singular chain transitive set}
\label{sect:non-singular-set}

This section contributes mainly to Proposition \ref{prop:non-singular-set}, which is a special case of Theorem A. This section also contains the proof of Theorem~B and it ends with a proof of Corollary~\ref{cor:index-completeness} assuming that Theorem~A holds. The proof of Theorem~A occupies Section~6 and onward.

\begin{Proposition}\label{prop:non-singular-set}
For a $C^1$ generic vector field $X\in\xX^1(M)\setminus\Cl(\mathcal{HT})$, let $\La$ be a Lyapunov stable chain recurrence class of $X$. If $\La$ contains a non-singular compact invariant subset $\La_0$, then $\La$ is a homoclinic class.
\end{Proposition}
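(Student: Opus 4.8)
The plan is to exhibit a periodic orbit inside $\La$ by producing a non-singular compact chain transitive set inside $\La$, applying the generic result that such a set is a Hausdorff limit of periodic orbits, and then upgrading one of these periodic orbits to a genuine member of the class via the Lyapunov stability and the connecting lemma. First I would reduce to the situation where $\La_0$ is chain transitive: by Zorn's lemma (or a standard argument with Conley theory) the non-singular compact invariant $\La_0$ contains a minimal, hence chain transitive, compact invariant subset, which is automatically non-singular. So without loss of generality assume $\La_0$ is a non-trivial non-singular compact chain transitive set; if $\La_0$ were reduced to a periodic orbit $\ga$, then $\ga\subset\La$ and by item \ref{item:gen_0-homoclinic-class} of Lemma \ref{lem:gen_0} the chain recurrence class $\La=C(\ga)$ is the homoclinic class of $\ga$, and we are done. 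So assume $\La_0$ is non-trivial.

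Next I would apply item \ref{item:gen_0-chain-transitive-set} of Lemma \ref{lem:gen_0} to the non-trivial compact chain transitive set $\La_0$: there is a sequence of periodic orbits $\ga_n$ of $X$ with $\ga_n\to\La_0$ in the Hausdorff topology. Since $\La_0\subset\La$ and $\La$ is non-singular near $\La_0$, for $n$ large the orbits $\ga_n$ lie in a small non-singular neighborhood of $\La_0$; because $X$ is away from homoclinic tangencies, Corollary \ref{cor:BasicAwayTang} (applied to the fundamental sequence $\{(\ga_n,X)\}$, all with the same vector field) gives a uniform $T$-dominated splitting $\cN_{\ga_n}=G^{cs}_n\oplus G^{cu}_n$ together with the hyperbolicity-at-period dichotomy. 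This is exactly the setting in which the non-singular analogue of \cite{Yan07} (alluded to in the introduction right before Corollary \ref{cor:dense-stable-manifold}) applies: one runs Liao's selecting lemma on a minimally non-hyperbolic piece to rule out a robustly non-hyperbolic central behavior, or more directly, since the $\ga_n$ are non-singular and carry a uniform dominated splitting with hyperbolicity at period on the extremal bundles, one uses Pliss (Lemma \ref{lem:pliss}) to extract $\psi^*_t$-contracting/expanding points and then the plaque-family invariant manifolds (Lemma \ref{lem:stable-manifolds}) — the absence of singularities means these manifolds have uniform size, so a standard closing/shadowing argument near the chain transitive set $\La_0$ produces, after a $C^1$-small perturbation, a vector field with a homoclinic intersection and hence (by item \ref{item:gen_0-transverse-intersection}-type robustness) a periodic orbit $\ga$ of $X$ with $\ga$ arbitrarily close to $\La_0$ and Hausdorff-limit of homoclinically-related orbits; alternatively one invokes that $\La_0$, being non-singular chain transitive for a generic away-from-tangencies flow, is contained in a homoclinic class by the flow version of Crovisier's central model argument. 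Either way one obtains a periodic orbit $\ga$ contained in $\La$.

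Finally, once a periodic orbit $\ga\subset\La$ is found, genericity closes the argument: by item \ref{item:gen_0-homoclinic-class} of Lemma \ref{lem:gen_0}, the chain recurrence class of $\ga$ coincides with its homoclinic class, and since $\ga\subset\La=C(\ga)$, the class $\La$ is itself a homoclinic class. The main obstacle is the middle step — showing that a non-singular compact chain transitive subset of a generic away-from-tangencies flow actually meets a homoclinic class, i.e.\ that the approximating periodic orbits $\ga_n$ can be made to genuinely enter the chain recurrence class rather than merely accumulate on $\La_0$ from outside. This is where Lyapunov stability of $\La$ is essential: it guarantees that once a periodic orbit is chain-attainable from $\La_0$ (which it is, being Hausdorff-close and $\La$ being a single chain class), the connecting lemma for pseudo-orbits \cite{BoC} can be used to absorb $\ga$ into $C(\si)=\La$ without the orbit escaping, and the uniform size of invariant manifolds (no singularities in a neighborhood of $\La_0$) is what makes the connecting/closing perturbations controllable. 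I expect the non-singular, away-from-tangencies structure theory (essentially the content of \cite{Yan07} transported to flows) to carry this through with only routine modifications, the point being precisely that singularities — the source of all the difficulty in the rest of the paper — are absent here.
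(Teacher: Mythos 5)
The overall skeleton you propose (reduce to a chain transitive piece, approximate by periodic orbits with a uniform dominated splitting, run the central-model/selecting-lemma machinery, then use Lyapunov stability and item \ref{item:gen_0-homoclinic-class} of Lemma \ref{lem:gen_0} to absorb a periodic orbit into $\La$) matches the paper's strategy, and your first and last steps are fine. But the middle step hides a genuine gap, and your closing claim that ``singularities \dots are absent here'' is the precise point where the argument breaks. Only the subset $\La_0$ is assumed non-singular; the ambient class $\La$ may well contain singularities, and more importantly the auxiliary periodic orbits one must construct during the central-model analysis do not stay in a neighborhood of $\La_0$. In the paper's subcase (3), the technique lemma produces periodic orbits $Q_n$ accumulating on a point $y$ of a central curve together with $K_0$, and their Hausdorff limit $K_1$ is a chain transitive set inside $\La$ that can perfectly well contain singularities of $X$. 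Establishing the needed intersection $W^u(K_1)\pitchfork W^s(Q_n)\neq\emptyset$ when the candidate bundle $H^s_n$ (or $H^{cs}_n$) fails to be uniformly contracting then forces one to analyze $\psi^*_t$-contracting points converging to a singularity; this is the content of Lemma \ref{lem:intersection}, whose proof needs the Lorenz-like structure of the singularities (Theorem C / Proposition \ref{prop:homogeneous-index}), the matching mechanism of Lemma \ref{lem:elp-dom}, and Liao's Theorem \ref{thm:Liao} to get the transverse intersection despite the stable plaques of $Q_n$ shrinking like $\|X(x_n)\|$. None of this is a ``routine modification'' of \cite{Yan07}; the paper flags exactly this in Remark \ref{r.5.1} and again in the remark inside subcase (3).

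A second, related problem is your global-index shortcut. You suggest either closing up a homoclinic intersection directly from the uniform dominated splitting on the $\ga_n$, or ``invoking'' that a non-singular chain transitive set of a generic flow away from tangencies meets a homoclinic class. The first is essentially the statement to be proven and cannot be obtained from Pliss plus plaque families alone (the dichotomy of Lemma \ref{lem:central-model} and the case analysis over trapping/expanding strips is unavoidable); the second is not an available black box for singular flows, precisely because the argument in \cite{Yan07} starts from a minimally non-hyperbolic set realizing the minimal index of fundamental sequences --- a global choice that may force singularities into the set and is therefore unavailable here (Remark \ref{r.5.1}). So the proposal is incomplete: it omits the new intersection lemma and the singularity analysis that constitute the actual mathematical content of the flow version of this proposition.
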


\begin{Remark}
	In \cite{Yan07} it was proven that any Lyapunov stable chain recurrence class of a $C^1$ generic diffeomorphism away from homoclinic tangencies is a homoclinic class.  Proposition~\ref{prop:non-singular-set} can be seen as a generalization of this result to non-singular flows; however, the proof requires significant modification comparing to~\citealp{Yan07} as we will see in Remark~\ref{r.5.1}.
\end{Remark}

The proof of Proposition \ref{prop:non-singular-set} will make use of the central model of Crovisier \cite{Cro10}. Central model for non-singular flows has been studied in \cite{XZ}, in their proof of the $C^1$ Palis conjecture for non-singular flows.

The following lemma acts as a starting point for building a central model. Recall that an invariant set $\Gamma$ is said to be minimally non-hyperbolic, if $\Gamma$ is non-hyperbolic and yet every invariant subset of $\Gamma$ is hyperbolic.
\begin{Lemma}\label{lem:non-singular-set}
Suppose that the assumptions of Proposition \ref{prop:non-singular-set} hold, and let $\Lambda_0$ be a non-singular compact invariant subset of $\Lambda$. Then one has
\begin{itemize}
\item either $\La_0$ intersects a nontrivial homoclinic class, which is $\La$ in this case,
\item or $\La_0$ contains a minimally non-hyperbolic (non-singular) set $K_0$ that admits a partially hyperbolic splitting $\cN_{K_0}=G^{ss}\oplus G^c\oplus G^{uu}$ with $\dim G^c=1$.
\end{itemize}
\end{Lemma}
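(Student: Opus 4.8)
The starting point is the hypothesis that $\La$ is Lyapunov stable for a $C^1$ generic $X$ away from homoclinic tangencies, so by Lemma~\ref{lem:gen_0}\eqref{item:gen_0-last} we may work with a weak Kupka--Smale representative and use freely that $\La$ is a quasi-attractor. The dichotomy we want to establish is: either $\La_0$ meets a nontrivial homoclinic class (which must then equal $\La$, since $\La$ is a chain recurrence class and any homoclinic class meeting it is contained in it, hence equals it by maximality of chain recurrence classes), or $\La_0$ carries a minimally non-hyperbolic non-singular subset with a one-dimensional center. The plan is to argue by contradiction with the first alternative: \emph{assume no nontrivial homoclinic class meets $\La_0$}. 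Under this assumption I want to extract, inside $\La_0$, a minimally non-hyperbolic set and then pin down the structure of its (extended) linear Poincar\'e flow.

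First I would apply Zorn's lemma to the family of nonempty compact invariant subsets of $\La_0$, ordered by reverse inclusion: since $\La_0$ is non-singular and compact, every chain has a nonempty compact invariant lower bound (a nested intersection), so there is a minimal nonempty compact invariant set, and then — by taking a minimal element among \emph{non-hyperbolic} invariant subsets if $\La_0$ itself is non-hyperbolic — one obtains a minimally non-hyperbolic compact invariant set $K_0\subset\La_0$; it is non-singular because $\La_0$ is. (If $\La_0$ turns out to be hyperbolic, then it is a nontrivial hyperbolic set, hence by the shadowing/spectral decomposition it contains a periodic orbit whose homoclinic class is nontrivial and meets $\La_0$ — contradicting our standing assumption; so we may assume $\La_0$ is non-hyperbolic and $K_0$ exists.) The next step is to analyze the linear Poincar\'e flow over $K_0$. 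Since $X$ is away from homoclinic tangencies and $K_0$ is non-singular, Corollary~\ref{cor:BasicAwayTang} together with Lemma~\ref{lem:gen_0}\eqref{item:gen_0-chain-transitive-set} (every nontrivial compact chain transitive set, in particular $K_0$, is a Hausdorff limit of periodic orbits) produces a dominated splitting $\cN_{K_0}=G^{cs}\oplus G^{cu}$ with the hyperbolicity-at-period estimates passing to the limit; and because $K_0$ is non-singular the extended linear Poincar\'e flow is just the ordinary $\psi_t$.

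The heart of the argument is then to show that over $K_0$ one actually gets a \emph{partially hyperbolic} splitting $\cN_{K_0}=G^{ss}\oplus G^c\oplus G^{uu}$ with $\dim G^c=1$. Here is where minimal non-hyperbolicity is used: every proper compact invariant subset of $K_0$ is hyperbolic, and by the uniformity in Lemma~\ref{lem:BasicAwayTang} these hyperbolic structures have uniform constants; this is exactly the setup for Liao's selecting lemma (as invoked right after Lemma~\ref{lem:BasicAwayTang} in the excerpt, in the style of \cite{Wen04,Cro10,XZ}). Applying the selecting lemma to the subbundles $G^{cs}$ and $G^{cu}$ one shows that the extremal sub-splittings $G^{ss}\subset G^{cs}$ (uniformly contracted) and $G^{uu}\subset G^{cu}$ (uniformly expanded) exist on all of $K_0$, leaving a center $G^c$; the dimension count $\dim G^c=1$ comes from Lemma~\ref{lem:BasicAwayTang}'s dichotomy ($G^c$ is trivial or one-dimensional over periodic orbits) together with the fact that $K_0$ is non-hyperbolic, which rules out $\dim G^c=0$. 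The main obstacle — and the place that will need the most care — is precisely this invocation of Liao's selecting lemma: in the flow setting one must be careful that $K_0$ is non-singular so that the selecting lemma applies to $\psi_t$ over $K_0$ without the complications singularities would introduce, and one must verify that minimal non-hyperbolicity of $K_0$ (every \emph{proper} invariant subset hyperbolic, with uniform constants) gives exactly the hypotheses the selecting lemma requires to conclude a global partially hyperbolic splitting rather than merely a dominated one. Once $K_0$ with its partially hyperbolic splitting and one-dimensional center is in hand, the second alternative of the dichotomy holds and we are done; conversely, if at any stage we found a nontrivial homoclinic class meeting $\La_0$, the first alternative holds and equals $\La$ as explained above.
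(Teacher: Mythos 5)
Your overall route is the one the paper takes: Zorn's lemma to extract a minimally non-hyperbolic set $K_0\subset\La_0$, a dominated splitting on $\cN_{K_0}$ obtained from Corollary~\ref{cor:BasicAwayTang} via periodic orbits accumulating on $K_0$ (item 3 of Lemma~\ref{lem:gen_0}), and Liao's selecting lemma to upgrade the extremal bundles to uniform ones. However, two points in your sketch would not survive being written out.

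First, you misstate what the selecting lemma delivers. It does not ``extract'' uniformly hyperbolic sub-bundles $G^{ss}\subset G^{cs}$ and $G^{uu}\subset G^{cu}$, nor does it conclude a splitting at all; it is used contrapositively. The finer splitting already comes from Lemma~\ref{lem:BasicAwayTang}: the approximating periodic orbits carry $\cN_{\ga_n}=G^s_n\oplus G^c_n\oplus G^u_n$ with $\dim G^c_n\in\{0,1\}$, and this passes to the limit over $K_0$. The selecting lemma then says: if the extremal bundle ($G^s$, or later $G^{uu}$) is \emph{not} uniform, one can select orbits and produce periodic orbits with uniform-size stable and unstable manifolds accumulating on $K_0$, forcing transverse homoclinic intersections — i.e.\ the \emph{first} alternative of the dichotomy. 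Your standing assumption absorbs this branch, so the logic can be repaired, but as written the mechanism is inverted. Second, and more seriously, your dimension count is wrong: you rule out $\dim G^c=0$ ``because $K_0$ is non-hyperbolic.'' But when the periodic orbits have trivial center, Lemma~\ref{lem:BasicAwayTang} only gives contraction/expansion \emph{at the period}, and hyperbolicity at the period of the $\ga_n$ does not pass to uniform hyperbolicity of the Hausdorff limit $K_0$; so non-hyperbolicity of $K_0$ does not exclude this case. The paper handles it separately: if $G^{cu}_n=G^u_n$ is expanding at period, Pliss's lemma (Lemma~\ref{lem:pliss}) produces $\psi^*_t$-contracting and $\psi^*_t$-expanding points on $\ga_n$, whose invariant manifolds have uniform relative size (Lemma~\ref{lem:stable-manifolds}) and yield transverse homoclinic intersections, so again $K_0$ meets a nontrivial homoclinic class. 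Without this Pliss step the dichotomy is not closed in the case of trivial center.
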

\begin{proof}
This result is well-known, so we only sketch the proof here as one can refer to \cite[Corollary 4.4]{Cro10} or \cite[Theorem 2.1]{XZ} for details. Assume that the first item is not satisfied, then $\La_0$ can not be hyperbolic. By Zorn's lemma, $\La_0$ contains a transitive minimally non-hyperbolic set $K_0$. By the connecting lemma of pseudo-orbits and genericity, $K_0$ can be accumulated by periodic orbits (item \ref{item:gen_0-chain-transitive-set} of Lemma \ref{lem:gen_0}).
Since $X$ is away from homoclinic tangencies, there is a fundamental sequence $\ga_n$ that admits a dominated splitting $\cN_{\ga_n}=G^{cs}_n\oplus G^{cu}_n$ satisfying Corollary \ref{cor:BasicAwayTang} and further induces a dominated splitting of the normal bundle over $K_0$, namely $\cN_{K_0}=G^{cs}\oplus G^{cu}$. Liao's selecting lemma \cite{Lia81, Wen08} can be used to show that either $K_0$ intersects a nontrivial homoclinic class, or one of the subbundles is uniform, see \cite[Proposition 4.3]{Cro10}. The first case can not happen here as we have assumed that $\La_0\supset K_0$ does not intersects a nontrivial homoclinic class. We thus assume without loss of generality that $G^{cu}$ is uniformly expanding. Consider the fundamental sequence $\ga_n$, by Corollary \ref{cor:BasicAwayTang} there are two cases for the subbundle $G^{cs}_n$: either $G^{cs}_n$ is contracting at period, or there is a further splitting $G^{cs}_n=G^{ss}_n\oplus G^{c}_n$ with $\dim G^{c}=1$. If the first case holds, one can apply Pliss' lemma (to $G^{cs}_n$) to show that $K_0$ intersects a nontrivial homoclinic class, obtaining a contradiction. So we are left with the second case, in which one obtains a dominated splitting $\cN_{K_0}=G^{ss}\oplus (G^c\oplus G^{cu})$ by taking limits of the splitting $\cN_{\ga_n}=G^{ss}_n\oplus (G^c_n\oplus G^{cu}_n)$. If $(G^c\oplus G^{cu})$ is uniformly expanding, then one can apply again Pliss's lemma (to $G^{ss}_n$) to show that $K_0$ intersects a nontrivial homoclinic class, a contradiction. Therefore $G^c\oplus G^{cu}$ is not uniformly expanding. As $K_0$ cannot intersects a nontrivial homoclinic class, one uses Liao's selecting lemma again to show that $G^{ss}$ is uniformly contracting. This gives the second item of the lemma.
\end{proof}

\begin{Remark}\label{r.5.1}
	In \cite{Yan07}, for building a central model the author started by choosing a minimally non-hyperbolic set with a further global condition: it is the limit of a fundamental sequence with the smallest possible index. This global condition is due to the observation that any fundamental sequence with limit in the chain recurrence class has a uniform contracting stable bundle of dimension at least as large as the minimal possible index. This condition was proven to be convenient.
	
	However, this global condition will not be possible for general singular flows as such a set may contain singularities. In our case, one could try to use a local version of the aforementioned condition by requiring that the fundamental sequences to be considered all have their limit in $\La_0$. But then this will not be of much help for our proof.
\end{Remark}

\subsection{Central model}

Recall that a {\em central model} is a pair $(\hat{K},\hat{f})$, where $\hat{K}$ is a compact metric space, $\hat{f}$ is a continuous map from $\hat{K}\times[0,1]$ to $\hat{K}\times[0,+\infty)$, such that
\begin{itemize}
	\item $\hat{f}(\hat{K}\times\{0\})=\hat{K}\times\{0\}$;
	\item $\hat{f}$ is a local homeomorphism in a small neighborhood of $\hat{K}\times\{0\}$;
	\item $\hat{f}$ is a skew product, {\itshape i.e.} there exist $\hat{f}_1:\hat{K}\to\hat{K}$ and $\hat{f}_2:\hat{K}\times[0,1]\to[0,+\infty)$ such that
	\[\hat{f}(x,t)=(\hat{f}_1(x),\hat{f}_2(x,t)),\quad \forall (x,t)\in\hat{K}\times[0,1].\]
\end{itemize}

As $\hat{f}$ is a skew product, the set $\hat{K}$ (naturally identified with $\hat{K}\times\{0\}$) is called the {\em base} of the central model. The central model $(\hat{K},\hat{f})$ has a {\em chain-recurrent central segment} if it contains a nontrivial segment $I=\{x\}\times[0,a]$ that is contained in a chain-transitive set of $\hat{f}$. A chain-recurrent central segment is contained in the maximal invariant set in $\hat{K}\times[0,1]$. A {\em trapping strip} for $\hat{f}$ is an open set $S\subset \hat{K}\times [0,1]$ satisfying $\hat{f}(\Cl(S))\subset S$, and moreover, $S\cap(\{x\}\times[0,+\infty))$ is a nontrivial interval containing $(x,0)$ for any $x\in \hat{K}$. As $\hat{f}$ is a local homeomorphism, its inverse $\hat{f}^{-1}$ is well-defined in a neighborhood of $\hat{K}\times\{0\}$. A trapping strip for $\hat{f}^{-1}$ is called an {\em expanding strip} for $\hat{f}$. Accordingly, we say that $(\hat{K},\hat{f})$ has a trapping (resp. expanding) strip $S$ if the set $S$ is a trapping (resp. expanding) strip for $\hat{f}$. The following lemma establishes a dichotomy between existence of chain-recurrent central segments and existence of arbitrarily small trapping (or expanding) strips.

\begin{Lemma}[{\cite[Proposition 2.5]{Cro10}}]\label{lem:central-model}
	Let $(\hat{K},\hat{f})$ be a central model with a chain-transitive base, then
	\begin{itemize}
		\item either there exists a chain-recurrent central segment,
		\item or there exist some trapping strips $S$ in $\hat{K}\times[0,1]$ either for $\hat{f}$ or for $\hat{f}^{-1}$ in arbitrarily small neighborhoods of $\hat{K}\times\{0\}$.
	\end{itemize}
\end{Lemma}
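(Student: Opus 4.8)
The plan is to localize near the base, reduce to a genuine homeomorphism, apply Conley's theory (\cite{Co78}), and translate the resulting attractor/repeller structure back into strips using the skew-product form. First I would fix a small $\vep_0>0$ so that $\hat f$ restricts to an injective local homeomorphism on $D_0:=\hat K\times[0,\vep_0]$; then $\hat f_1$ is a homeomorphism of $\hat K$, each fibre $\{x\}\times[0,\vep_0]$ is carried \emph{onto an initial segment} of the fibre over $\hat f_1(x)$, and $t\mapsto\hat f_2(x,t)$ is a strictly increasing homeomorphism fixing $0$. This fibrewise monotone normal form is the one structural fact I will lean on throughout. For $0<\vep\le\vep_0$ let $K_\vep$ be the maximal $\hat f$-invariant subset of $\hat K\times[0,\vep]$: it contains $\hat K\times\{0\}$, and $\hat f|_{K_\vep}$ is a homeomorphism, to which Conley's fundamental theorem applies, producing a complete Lyapunov function $V\colon K_\vep\to\RR$ which is constant on each chain-recurrence class and strictly decreasing along non-chain-recurrent orbits; write $c_0$ for its value on the (chain-transitive) base $\hat K\times\{0\}$.

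The dichotomy then turns on whether, for arbitrarily small $\vep$, the chain-recurrence set of $\hat f|_{K_\vep}$ contains points other than the base arbitrarily close to the base. Suppose first that it does for some small $\vep$: chain-recurrent points $y_n\to\hat K\times\{0\}$ of positive height. Here I would extract a chain-recurrent central segment. The mechanism is that the fibre is one-dimensional: chain-transitive subsets of $\{x\}\times[0,\vep]$ near $0$ are intervals or points, and between two consecutive chain-recurrent heights the dynamics is ``gradient''. Transporting these accumulating chain-recurrent heights consistently along $\hat f_1$-orbits and passing to a limit (using chain transitivity of the base to glue fibres together), one obtains a nondegenerate interval $\{x\}\times[0,a]$ lying inside a single chain-transitive set, i.e., a chain-recurrent central segment. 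If instead, for every small $\vep$, the base is isolated in the chain-recurrence set of $\hat f|_{K_\vep}$, then — being isolated and sitting in an essentially one-dimensional central direction — it is a local attractor or a local repeller of $\hat f|_{K_\vep}$; in Lyapunov-function terms, the base is a strict local minimum or local maximum of $V$.

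In the attractor case, for small $\delta>0$ the set $N:=\{\,y\in K_\vep:V(y)\le c_0+\delta\,\}$ is a compact neighbourhood of the base in $\hat K\times[0,\vep]$ with $\hat f(N)\subset\operatorname{int}_{K_\vep}N$ (no chain recurrence meets the level $c_0+\delta$, so $V$ decreases strictly there). I would then upgrade $N$ to a strip: set $b(x):=\sup\{\,t:\{x\}\times[0,t]\subset\operatorname{int}N\,\}$ and $S:=\bigcup_{x\in\hat K}\{x\}\times[0,b(x))$. Then $S$ is open, meets each fibre in a nontrivial interval containing $(x,0)$, and because $\hat f$ sends initial fibre segments to initial fibre segments (fibrewise monotonicity) while $\hat f(\operatorname{int}N)\subset\operatorname{int}N$, one gets $\hat f(\Cl(S))\subset S$; thus $S$ is a trapping strip for $\hat f$, and letting $\vep\to0$ gives such strips in arbitrarily small neighbourhoods of $\hat K\times\{0\}$. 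If the base is a local maximum of $V$ instead, the identical construction applied to $\hat f^{-1}$ (also fibrewise monotone near $0$) yields trapping strips for $\hat f^{-1}$, that is, expanding strips for $\hat f$.

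The main obstacle is the first alternative: converting ``chain recurrence accumulates on the base'' into an honest chain-recurrent central segment, and, dually, the claim used in the second alternative that an isolated piece of chain recurrence in the central direction must be a genuine local attractor or repeller rather than something mixed. Both rest entirely on the skew-product structure together with the fibrewise-monotone normal form — the analogous statement is simply false for arbitrary local homeomorphisms of $\hat K\times[0,1]$ — and this is where a careful limiting/connectedness argument along $\hat f_1$-orbits is required. A secondary nuisance is that $\hat f$ is only a \emph{local} homeomorphism, so all Conley-theoretic arguments must be performed on the invariant sets $K_\vep$, where $\hat f|_{K_\vep}$ is genuinely invertible, and one checks that the uniformity of the normal form near the base lets us shrink $\vep$ freely without destroying the structure.
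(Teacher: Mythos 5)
The paper does not prove this lemma---it is quoted from \cite[Proposition 2.5]{Cro10}---so your proposal has to stand on its own, and unfortunately its case division does not match the dichotomy being proved: the first branch asserts something false. You split according to whether the chain-recurrence set of $\hat f|_{K_\vep}$ accumulates on the base from positive heights, and in that branch you claim to extract a chain-recurrent central segment by passing to a limit of the accumulating chain-recurrent heights. Take the central model over a single fixed point $\hat K=\{p\}$ with fibre map $g=\hat f_2(p,\cdot)$ satisfying $g(0)=0$, $g(1/n)=1/n$ for every $n$, and $g(t)<t$ on each gap $(1/(n+1),1/n)$ with $\max_{t}(t-g(t))=\delta_n>0$ there. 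Chain-recurrent points (the fixed points $1/n$) accumulate on the base, so your first branch applies; yet there is \emph{no} chain-recurrent central segment: an $\vep$-chain with $\vep<\delta_n$ cannot climb across the $n$-th gap, and every $a>0$ lies above infinitely many such barriers, so no nondegenerate segment $\{p\}\times[0,a]$ is contained in a chain-transitive set. (The lemma still holds here, but via the \emph{second} alternative: any $b$ with $g(b)<b$ bounds a trapping strip, and such $b$ exist at arbitrarily small scales.) The correct pivot is not whether chain recurrence accumulates on the base but whether the chain-recurrence \emph{class of the base} contains a point of positive height; the substantive step (Crovisier's ``pushing down'' argument) is that this class meets each fibre in an initial segment $[0,a(x)]$, which is what actually produces the central segment, and it uses the fibrewise-monotone structure to lower pseudo-orbits rather than a limit of unrelated chain classes.

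Your second branch then starts from too strong a hypothesis (the base isolated in the chain-recurrence set) and so cannot absorb the cases your first branch fails on: in the example above (make the gaps alternately increasing and decreasing) the base's chain class is trivial but it is neither a local attractor nor a local repeller of $\hat f|_{K_\vep}$, and the trapping and expanding strips one must produce have their upper boundaries inside the gaps of the chain-recurrent set, not on a sublevel set of a Lyapunov function around the base. Two further points even within that branch: the claim that an isolated chain class transverse to a one-dimensional central direction must be an attractor or a repeller is itself unproved (isolated chain classes are in general neither), and $N=\{V\le c_0+\delta\}$ is only trapping inside the maximal invariant set $K_\vep$, so promoting it to an open strip $S$ with $\hat f(\Cl(S))\subset S$ for the full skew product requires controlling the points of $S$ that do not lie in $K_\vep$. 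I would follow Crovisier's original argument rather than try to repair this route.
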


Given a non-singular compact $\phi_t$-invariant set $K$ admitting a partially hyperbolic splitting $\cN_K=G^{s}\oplus G^c\oplus G^{u}$ with respect to the linear Poincar\'e flow, with center dimension $\dim G^c=1$, let $T>0$ be a domination constant of the splitting and $P_T$ the local holonomy map defined by $P_T|_{N_x(r_x)}=P_{x,\phi_T(x)}$ for each $x\in M\setminus\Sing(X)$ (Section \ref{sect:sectional-Poinc-flow}). Note that since $K$ is non-singular and compact, there is $\de_0>0$ such that $P_T$ is well-defined for all $y\in N_x(\de_0)$ with $x\in K$. A central model $(\hat{K},\hat{f})$ is called a {\em central model for $(K,P_T)$} if there exists a continuous map $\pi:\hat{K}\times[0,+\infty)\to M$ such that
\begin{itemize}
	\item $\pi$ semi-conjugates $\hat{f}$ and $P_T$: $P_T\circ\pi=\pi\circ\hat{f}$ on $\hat{K}\times[0,1]$;
	\item $\pi(\hat{K}\times\{0\})=K$;
	\item the collection of maps $t\mapsto \pi(\hat{x},t)$ is a continuous family of $C^1$-embeddings of $[0,+\infty)$ into $M$, parameterized by $\hat{x}\in\hat{K}$;
	\item for any $\hat{x}\in\hat{K}$, the curve $\pi(\hat{x},[0,+\infty))$ is tangent to $G^c(x)$ at the point $x=\pi(\hat{x},0)$, and $\pi(\hat{x},[0,+\infty))\subset N_x(\de_0)$.
\end{itemize}
We sometimes write $(\hat{K},\hat{f};\pi)$ as the central model for $(K,P_T)$, to emphasize the role played by the map $\pi$.
By rescaling, one can require that a central model for $(K,P_T)$ is ``small'' in the sense that for any prefixed constant $\de\in(0,\de_0]$ and small cone field $Cone(K)$ of the center bundle $G^c$, for any $\hat{x}\in\hat{K}$, one has $\pi(\hat{x},[0,1])\subset N_{\pi(\hat{x})}(\de)$, and it is tangent to $Cone(K)$.

The construction of central models for $(K,P_T)$ depends on whether $P_T$ preserves an orientation of the center bundle $G^c$ or not. One needs to consider the following two cases:
\begin{itemize}
	\item The {\em orientable} case: the bundle $G^c$ is orientable; 
	\item The {\em non-orientable} case: the bundle $G^c$ is not orientable;
\end{itemize}
Note that in the orientable case, $P_T$ preserves an orientation of $G^c$ as it is homotopic to the identity.

\begin{Theorem}[{\cite[Section 3.2]{Cro10}, \cite[Section 5.2]{XZ}}]\label{thm:central-model}
	Suppose there is a non-singular compact $\phi_t$-invariant set $K$ admitting a partially hyperbolic splitting $\cN_K=G^{ss}\oplus G^c\oplus G^{uu}$ with respect to the linear Poincar\'e flow such that $\dim G^c=1$. Let $T>0$ be the domination constant for the splitting, then there exist a central model for $(K,P_T)$. Moreover,
	\begin{itemize}
		\item In the orientable case, one can obtain two central models for $(K, P_T)$, which are denoted by $(\hat{K}_+,\hat{f}_+;\pi_+)$ and $(\hat{K}_-,\hat{f}_-;\pi_-)$, such that
		\begin{itemize}
			\item the maps $\pi_{\iota}:\hat{K}_{\iota}\times \{0\}\to K$ are both homeomorphisms, for $\iota=+,-$;
			\item for any point $x\in K$, $\iota\in\{+,-\}$, let $\hat{x}^{\iota}=\pi^{-1}_{\iota}|_{\hat{K}_{\iota}\times \{0\}}(x)$, then $x$ is contained in (the interior of) a central curve which is the union of the two half central curves $\pi_{\iota}(\hat{x}^{\iota},[0,+\infty))$, for $\iota=+,-$.
		\end{itemize}
		
		\item In the non-orientable case, the central model $(\hat{K},\hat{f};\pi)$ for $(K,P_T)$ satisfies the following:
		\begin{itemize}
			\item $\pi:\hat{K}\times\{0\}\to K$ is two-to-one: any point $x\in K$ has exactly two preimages $\hat{x}^+$ and $\hat{x}^-$ in $\hat{K}$;
			\item any point $x\in K$ is contained in (the interior of) a central curve which is the union of the two half central curves $\pi(\hat{x}^{\iota},[0,+\infty))$, for $\iota=+,-$.
		\end{itemize}
		
		\item If $K$ is chain-transitive (resp. minimal), then in both cases and for any of the central models, the base is also chain-transitive (resp. minimal).
	\end{itemize}
\end{Theorem}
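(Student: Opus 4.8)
The plan is to adapt Crovisier's construction of central models \cite{Cro10} to the normal bundle of a non-singular flow, following \cite{XZ}, with the local holonomy maps $P_t$ of the sectional Poincar\'e flow (Section~\ref{sect:sectional-Poinc-flow}) playing the role of the diffeomorphism in Crovisier's setting. First I would produce a one-dimensional \emph{center} plaque family tangent to $G^c$: since $\cN_K=G^{ss}\oplus G^c\oplus G^{uu}$ is dominated, applying the plaque family theorem \cite{HPS} (in the form used in Section~\ref{sect:plaque-families}) to the dominated splittings $(G^{ss}\oplus G^c)\oplus G^{uu}$ and $G^{ss}\oplus(G^c\oplus G^{uu})$ yields $P_t$-locally invariant plaque families $W^{cs}$ and $W^{cu}$, tangent to $G^{ss}\oplus G^c$ and $G^c\oplus G^{uu}$ respectively; since these are transverse along $K$, their intersection is a continuous family $(W^c(x))_{x\in K}$ of $C^1$ curves through the points of $K$, tangent to $G^c$ and locally invariant under $P_t$ for $|t|\le T$. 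Because $K$ is non-singular and compact, $\|X\|$ is bounded away from $0$ and $\infty$ on $K$, so the plaque sizes and the domain of $P_T$ are uniform and $P_T$ is a $C^1$ local diffeomorphism of a uniform neighborhood $N_x(\delta_0)$ of each $x\in K$; this uniformity, which would fail near singularities, is what makes the construction go through.

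Next I would take $\hat K$ to be the orientation double cover of the line bundle $G^c\to K$, i.e.\ the space of unit vectors of $G^c$, with the free involution $J\colon v\mapsto -v$; after normalizing, the linear Poincar\'e flow $\psi_T$ acts on $G^c$ and induces a homeomorphism $\hat f_1\colon\hat K\to\hat K$ that commutes with $J$ and covers $\phi_T|_K$. Since $(\psi_t)_t$ is a flow with $\psi_0=\mathrm{Id}$, any orientation of $G^c$ is automatically $\psi_T$-invariant, so the orientable case is exactly the case where $G^c$ is orientable, where $\hat K=\hat K_+\sqcup\hat K_-$ with each piece $\hat f_1$-invariant and mapped homeomorphically onto $K$; otherwise $G^c$ is non-orientable and $\hat K\to K$ is a genuine $2$-to-$1$ cover. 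For $\hat x=(x,v)\in\hat K$ I would let $\pi(\hat x,\cdot)$ parametrize the half of the curve $W^c(x)$ issuing from $x$ in the direction $v$, defined on $[0,+\infty)$; after reparametrizing the domain and rescaling so that $\pi(\hat x,[0,1])\subset N_x(\delta)$ and stays inside the prescribed cone field, this is a continuous family of $C^1$-embeddings with $\pi(\hat K\times\{0\})=K$, one-to-one on $\hat K_+\times\{0\}$ and on $\hat K_-\times\{0\}$ in the orientable case and two-to-one on $\hat K\times\{0\}$ in the non-orientable case, and the full curve $W^c(x)$ is recovered as the union $\pi(\hat x^+,[0,+\infty))\cup\pi(\hat x^-,[0,+\infty))$ of the two half-curves through $x$. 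Local invariance of $W^c$ under $P_T$ forces $P_T(\pi(\hat x,[0,1]))$ to lie on the half-curve indexed by $\hat f_1(\hat x)$, so there is a unique $\hat f_2$ with $P_T\circ\pi=\pi\circ\hat f$ for $\hat f(\hat x,t)=(\hat f_1(\hat x),\hat f_2(\hat x,t))$; since $P_T$ is a local diffeomorphism fixing the zero section $K$, the skew product $\hat f$ is a local homeomorphism near $\hat K\times\{0\}$ with $\hat f(\hat K\times\{0\})=\hat K\times\{0\}$. This yields the central model(s) $(\hat K,\hat f;\pi)$, resp.\ $(\hat K_\pm,\hat f_\pm;\pi_\pm)$, with the stated properties.

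For the last assertion, $\pi|_{\hat K\times\{0\}}$ is an at most two-sheeted covering semi-conjugating $\hat f_1$ to $\phi_T|_K$. In the orientable case $\pi_+$ restricts to a topological conjugacy $\hat K_+\to K$, so chain-transitivity (resp.\ minimality) of $K$ transfers verbatim. In the non-orientable case one lifts $\vep$-pseudo-orbits of $\phi_T$ through the covering and uses that the deck involution $J$ commutes with $\hat f_1$: the lift from $\hat x$ of an $\vep$-chain from $\pi\hat x$ to $\pi\hat y$ ends at $\hat y$ or at $J\hat y$, and concatenating with (the $J$-image of) a further lift produces in either case an $\vep$-chain from $\hat x$ to $\hat y$; minimality is handled by the same device, noting that a proper closed $\hat f_1$-invariant subset of $\hat K$ either projects to a proper closed $\phi_T$-invariant subset of $K$ or else provides an $\hat f_1$-invariant continuous orientation of $G^c$, contradicting non-orientability. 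These are the arguments of \cite{Cro10}, carried out in the present setting in \cite{XZ}.

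The step I expect to be the main obstacle is upgrading the locally invariant center plaque family to a bona fide semi-conjugacy $\pi$ with the stated smallness and $C^1$-embedding properties, uniformly in $x\in K$: one must choose the reparametrization of $[0,+\infty)$ and control the curves $W^c(x)$ through the (rescaled) holonomies $P_t$, $\cP_t$ so that the family is simultaneously continuous, locally invariant, confined to an arbitrarily thin tube around $K$, and tangent to the given cone field. Once $\pi$ and the skew-product $\hat f$ are in place, the orientable/non-orientable dichotomy and the lifting of recurrence to the base are essentially formal, and the only genuinely new point compared with the diffeomorphism case \cite{Cro10} is that one works with the sectional Poincar\'e flow and its holonomy maps, exactly as in \cite{XZ}.
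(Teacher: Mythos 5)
The paper does not actually prove this theorem but cites it from \cite{Cro10,XZ}, and your reconstruction --- center plaques obtained from the dominated splitting, the unit bundle of $G^c$ as an (at most) two-sheeted base, the lift of $P_T$ to a skew product $\hat f=(\hat f_1,\hat f_2)$ via local invariance of the plaques, and the transfer of chain transitivity/minimality through the covering using the deck involution and the ``invariant section gives an orientation'' dichotomy --- is precisely the construction of those references adapted to the sectional Poincar\'e flow, so I regard it as the same approach. Your side remark that for a flow the time-$T$ map automatically preserves any orientation of $G^c$ (by continuity in $t$ from $\psi_0=\mathrm{Id}$) is correct and harmlessly collapses the ``orientable but no orientation preserved'' subcase of the paper's definition.
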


\subsection{An intersection lemma}

The following lemma plays a central role in the proof of Proposition \ref{prop:non-singular-set}.

\begin{Lemma}\label{lem:intersection}
	Let $\{\ga_n\}$ be a sequence of periodic orbits of $X\in\xX^1(M)$, with Hausdorff limit $\Ga$
	satisfying the following properties:
\begin{enumerate}
	\item[(1)] All singularities in $\Ga$ are Lorenz-like;
	\item[(2)] There exist constants $C,\eta,T>0$ and a $T$-dominated splitting $\cN_{\ga_n}=G^{cs}_n\oplus G^{cu}_n$ such that $\ga_n$ is $(C,\eta,T,G^{cs}_n)$-$\psi_t$-contracting at period for all $n$.
\end{enumerate}	
	Then either $G^{cs}_n$ is uniformly contracting with constants uniform in $n$, or one has $W^u(\Ga)\pitchfork W^s(\ga_n)\neq\emptyset$, for $n$ large.
\end{Lemma}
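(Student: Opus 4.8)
The plan is to run a dichotomy argument driven by the behavior of the center-stable bundle $G^{cs}_n$ near the singularities of $\Ga$. Assume the first alternative fails, i.e. $G^{cs}_n$ is \emph{not} uniformly contracting (with constants uniform in $n$). The hypothesis (2) gives the contracting-at-period property, so by the Pliss-type Lemma \ref{lem:pliss} (with any $\eta'\in(0,\eta)$), for each $n$ large there is a point $x_n\in\ga_n$ which is $(1,\eta',T,G^{cs}_n)$-$\psi^*_t$-contracting. If, passing to a subsequence, the orbits $\ga_n$ stay a definite distance away from $\Sing(X)$, then $\Ga$ is a non-singular compact invariant set carrying a dominated splitting $\cN_\Ga=G^{cs}\oplus G^{cu}$ (the Hausdorff limit of the $G^{cs/cu}_n$), and the uniform contracting-at-period estimate passes to every invariant measure on $\Ga$; a standard argument then forces $G^{cs}$ to be uniformly contracting on $\Ga$, which (again uniformly in $n$, since dominated splittings extend to a neighborhood, cf.\ Remark \ref{rmk:fundamental-sequences}) contradicts the failure of the first alternative. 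So we may assume $\Ga$ meets $\Sing(X)$; pick a singularity $\si\in\Ga$, which by hypothesis (1) is Lorenz-like, so $\ind(\si)>1$, and let $i=\dim G^{cs}_n$.

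The next step is a case split on the size of $i$ relative to $\ind(\si)$.

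\textit{Case $i\ge\ind(\si)$.} Here I would apply Theorem \ref{thm:Liao} (Liao's intersection theorem) directly: we have a fundamental sequence $\{(\ga_n,X_n)\}$ with $X_n=X$, a $T$-dominated splitting $\cN_{\ga_n}=G^{cs}_n\oplus G^{cu}_n$ of index $i$, a hyperbolic singularity $\si\in\Ga$ with $\ind(\si)\le i$, and — after choosing a subsequence with $x_n\to\si$ if such a choice is possible — a sequence of $(C,\eta,T,G^{cs}_n)$-$\psi^*_t$-contracting points converging to $\si$. This yields $W^u(\si_{X},X)\pitchfork W^s(\ga_n,X)\neq\emptyset$ for $n$ large, hence $W^u(\Ga)\pitchfork W^s(\ga_n)\neq\emptyset$, which is the desired conclusion.

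\textit{Case $i<\ind(\si)$ (or the contracting-at-period Pliss points do not converge to $\si$).} This is where the matching mechanism enters and, I expect, the main obstacle. Using that $G^{cs}_n$ is not uniformly contracting, one wants to produce, for each $\si\in\Ga$, a direction $L\in B_\si(\Ga)$ lying in the ``restricted area''; but Lemma \ref{lem:matching} (with the induced dominated splitting $\widetilde{\cN}_{B_\si(\Ga)}=G^{cs}\oplus G^{cu}$ of index $i<\ind(\si)$, using the symmetric statement of that lemma) asserts $B_\si(\Ga)\cap(E^{ss}(\si)\oplus E^u(\si))\setminus(E^{ss}(\si)\cup E^u(\si))=\emptyset$, where $\dim E^{ss}(\si)=i$. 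The plan is to show that if $G^{cs}_n$ is not uniformly contracting, then the failure of contraction must be detected precisely by the orbits $\ga_n$ entering a neighborhood of some $\si\in\Ga$ through a direction in that forbidden set — a local analysis near $\si$ combining the rescaled linear Poincaré flow, the uniform size of the plaque families $W^{cs/cu}_{r}(x)$ (after the $\|X(x)\|$-rescale, Remark \ref{rmk:plaque-family}), and the fact that a Lorenz-like $\si$ has $W^{ss}(\si)\cap C(\si)=\{\si\}$ so the $G^{ss}$-direction cannot be accumulated nontrivially. If the orbits genuinely avoid the restricted area near every singularity, one argues that the center-stable contraction rates along $\ga_n$, estimated away from the singularities via the contracting-at-period hypothesis and near the singularities via the matching (which pins $G^{cs}(L)=\{L\}\times E^{ss}(\si)$ for $L\in B_\si(\Ga)\cap E^{cu}(\si)$, a uniformly contracted direction by $\sv(\si)>0$), are in fact uniform — contradicting the failure of the first alternative. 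The delicate point, and the one I would spend the most care on, is making this last ``near-singularity'' estimate quantitative and uniform in $n$: one must control the time spent by $\ga_n$ inside a small linearizing neighborhood of each $\si$ and glue the hyperbolic estimates across the passage, which is exactly the kind of Liao-theory bookkeeping (rescaling, relatively uniform manifold sizes) that Theorem \ref{thm:Liao} and Corollary \ref{cor:Liao} are built to handle, so I would try to reduce this case to an application of those results as well rather than redoing the estimates by hand.
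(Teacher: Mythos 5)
There is a genuine gap. The engine of the paper's proof is a specific selection of points that your proposal never makes: assuming $G^{cs}_n$ is not uniformly contracting, one picks Pliss points $x_n\in H_n(\eta')$ whose \emph{backward} orbits fail to be $\eta'$-contracting along $G^{cs}_n$ for times $l_nT$ with $l_n\to\infty$. The $T$-domination then converts this failure of contraction into backward expansion of $G^{cu}_n$ along those stretches, i.e.\ the $x_n$ are uniformly $\psi^*_t$-expanding along $G^{cu}_n$. Passing to a limit $x_n\to x\in\Ga$, either $x$ is regular — and then $x$ carries an unstable plaque $W^{cu}_{\de\|X(x)\|}(x)\subset W^u(\orb(x))$ which transversally meets the stable plaques $W^{cs}_{n,\de'\|X(x_n)\|}\subset W^s(\ga_n)$, giving the second alternative directly from a \emph{regular} point of $\Ga$ — or $x$ is a (Lorenz-like) singularity, in which case the non-contraction inequality along the backward orbit of $x_n$ is played against Lemma~\ref{lem:elp-dom}: if $\ind(x)>i$ the matching forces $G^{cs}_n$ near $x$ into the strongly contracted $E^{ss}(x)$-direction, contradicting that inequality; hence $\ind(x)\le i$ and Theorem~\ref{thm:Liao} applies. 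Your proposal has neither the selection of these "bad" Pliss points nor the observation that the conclusion can be realized by the unstable manifold of a regular point of $\Ga$.

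Concretely, both of your cases have holes. In your case $i\ge\ind(\si)$ you need $(C,\eta,T,G^{cs}_n)$-$\psi^*_t$-contracting points converging to $\si$ in order to invoke Theorem~\ref{thm:Liao}, and you only hedge ("if such a choice is possible"): the Pliss points produced by Lemma~\ref{lem:pliss} need not accumulate on any singularity, and when they do not, the intersection must instead come from a regular limit point as above. In your case $i<\ind(\si)$, the plan to conclude that the contraction rates are "in fact uniform" fails because $(C,\eta,T,G^{cs}_n)$-contracting at period is only an averaged (Birkhoff-type) condition over the whole period; it gives no pointwise control of $\|\psi_T|_{G^{cs}_n}\|$ away from the singularities, so avoiding the restricted area near each $\si$ cannot by itself yield uniform contraction. (Also, the statement allows the failure of uniformity to occur even when $\Ga$ is non-singular and non-hyperbolic; the paper's regular-limit-point branch is what handles exactly that situation by producing the transverse intersection rather than a contradiction.)
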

\begin{proof}
	Let $\cN_{\Ga}=G^{cs}\oplus G^{cu}$ be the dominated splitting induced by the dominated splitting $\cN_{\ga_n}=G^{cs}_n\oplus G^{cu}_n$, see Remark \ref{rmk:fundamental-sequences}. Denote $i$ as the index of the dominated splitting, {\itshape i.e.} $i=\dim G^{cs}_n$.

	Let $0<\eta'\leq\min\{\eta/2, \frac{1}{T}\log\frac{3}{2}\}$, and $H_n(\eta')$ be the set of $(1,\eta',T,G^{cs}_n)$-$\psi^{*}_t$-contracting points on $\ga_n$. By assumption (2) and Lemma \ref{lem:pliss}, $H_n(\eta')$ is not empty for every $n$ large. Now suppose that $G^{cs}_n$ is not uniformly contracting, or that the constants are not uniform in $n$. In both cases, there exist $x_n\in H_n(\eta')$ and $l_n\in\NN$ for $n$ large enough, such that the following properties hold:
	\begin{itemize}
		\item $\phi_{-iT}(x_n)\notin H_n(\eta')$, for all $i=1,2,\ldots,l_n$;
		\item $l_n\to\infty$ as $n\to\infty$.
	\end{itemize}
	Since $x_n\in H_n(\eta')$, the first item implies that
	\begin{equation}\label{eqn:intersection-1}
		\prod_{i=1}^{j}\|\psi^{*}_T|_{G^{cs}_n(\phi_{-iT}(x_n))}\|>\e^{-j\eta'T},\quad \forall 1\leq j\leq l_n.
	\end{equation}
	Since $G^{cs}_n\oplus G^{cu}_n$ is $T$-dominated, one has
	\begin{equation}\label{eqn:intersection-2}
		\prod_{i=0}^{j-1}\|\psi^{*}_{-T}|_{G^{cu}_n(\phi_{-iT}(x_n))}\|\leq \e^{j\eta'T}/2^j\leq \left(\frac{3}{4}\right)^j,\quad\forall 0\leq j\leq l_n-1.
	\end{equation}
	By choosing a subsequence, we can assume that $x_n\to x\in \Ga$.
		
	If $x$ is a regular point, by inequality \eqref{eqn:intersection-2}, $x$ is $(1,\eta'',T,G^{cu})$-$\psi^*_t$-expanding, where $\eta''=(\log 4-\log 3)/T$. By Lemma \ref{lem:stable-manifolds}, there is $\de>0$ such that $W^{cu}_{\de\|X(x)\|}(x)\subset W^u(\orb(x))$. Again, by Lemma \ref{lem:stable-manifolds}, since $x_n\in H_n(\eta')$, there is $\de'>0$ such that $W^{cs}_{n,\de'\|X(x_n)\|}\subset W^s(\ga_n)$. As $\cN_{\ga_n}=G^{cs}_n\oplus G^{cu}_n$ converges to $\cN_{\Ga}=G^{cs}\oplus G^{cu}$, $W^{cu}_{\de\|X(x)\|}(x)\pitchfork W^{cs}_{n,\de'\|X(x_n)\|}\neq\emptyset$, i.e., $W^u(\orb(x))\pitchfork W^s(\ga_n)\neq\emptyset$, for $n$ large enough.

	If $x$ is a singularity, then by assumption (1) it is Lorenz-like. We have the following claim.
	\begin{Claim}
		If $\eta'>0$ is small enough, then $\ind(x)\leq i$.
	\end{Claim}
	\begin{proof}[Proof of the claim]
		To prove the claim, suppose on the contrary that $\ind(x)>i$.  By Lemma \ref{lem:elp-dom} there is a dominated splitting of the stable subspace $E^s(x)=E^{ss}(x)\oplus E^c(x)$ such that $\dim E^{ss}(x)=i$. By changing the Riemannian metric around these singularities, we can assume that $E^{ss}(x)\perp E^{cu}(x)$ for any such $\si$, where $E^{cu}(x)=E^c(x)\oplus E^u(x)$. By domination, there exists $\eta_0>0$ such that
		\begin{equation}\label{eqn:intersection-3}
			\|\Phi_T|_{E^{ss}(x)}\|\cdot m(\Phi_T|_{E^{cu}(x)})^{-1}\leq e^{-\eta_0 T},
		\end{equation}
		where $m(\cdot)$ stands for the minimal norm. Since there are only finitely many singularities in $\Ga$, we can assume that the constant $\eta_0$ is uniform for all singularity $\si\in\Ga$ with $\ind(\si)>i$.
		
		On the other hand, note that $\Ga$ is a chain transitive set; consequently, we have $\Ga\subset C(x)$. It follows that $\Ga\cap W^{ss}(x)\setminus\{x\}=\emptyset$ as $x$ is Lorenz-like. Applying the argument as in the proof of \cite[Lemma 4.4]{LGW05}, one can show that $\De_{x}\subset E^{cu}(x)$, where $E^{cu}(x)=E^c(x)\oplus E^u(x)$ and
		\[\De_x=\{L\in G^1\cap \be^{-1}(x):\ \exists x_n\in\ga_n, \txtrm[such that] \langle X_n(x_n)\rangle\to L\}.\]
		Then by Lemma \ref{lem:elp-dom}, for any $L\in \De_x$,  one has $G^{cs}(L)=\{L\}\times E^{ss}(x)$. This implies that there is a neighborhood $U$ of the singularity $x$, such that for any point $z_n\in\ga_n\cap U$, $\langle X(z_n)\rangle$ is so close to $E^{cu}(x)$ and $G^{cs}_n(z_n)$ is so close to $E^{ss}(x)$ that by~\eqref{eqn:intersection-3},
		\[\|\psi^*_T|_{G^{cs}_n(z_n)}\|
		\leq e^{\eta_0T/2}\cdot\|\Phi_T|_{E^{ss}(x)}\|\cdot m(\Phi_T|_{E^{cu}(x)})^{-1}
		\leq e^{-\eta_0T/2}.\]
		If $\eta'\leq\eta_0/2$, we obtain a contradiction to inequality \eqref{eqn:intersection-1}.
	\end{proof}
	
	Since $\eta'>0$ can be arbitrarily small, we can assume that $\eta'$ has been chosen small enough such that by the previous claim it holds $\ind(x)\leq i$. Then one can apply Theorem \ref{thm:Liao} to show that $W^u(x)\pitchfork W^s(\ga_n)\neq\emptyset$ for $n$ large. The proof is now complete.
\end{proof}

\subsection{Proof of Proposition \ref{prop:non-singular-set}}

Suppose that $\La$ is not a homoclinic class, which is equivalent to being aperiodic by genericity (item \ref{item:gen_0-homoclinic-class} of Lemma \ref{lem:gen_0}). By Lemma \ref{lem:non-singular-set}, there is a minimally non-hyperbolic set $K_0\subset\La_0$ that admits a partially hyperbolic splitting $\cN_{K_0}=G^{ss}\oplus G^c\oplus G^{uu}$ with $\dim G^c=1$. Let $T>0$ be the domination constant for the splitting, and $P_T$ the local holonomy map between normal manifolds $N_x(\de_0)$ for $x\in K_0$. It is easy to see that $K_0$ is chain-transitive, and in fact, is transitive. We remark that the domination constant $T$ can be fixed by Lemma \ref{lem:BasicAwayTang}, since by genericity $K_0$ is accumulated by periodic orbits (item \ref{item:gen_0-chain-transitive-set} of Lemma \ref{lem:gen_0}).

Let us first consider the orientable case.

One fixes a small neighborhood $U$ of $K_0$ so that the maximal invariant set in $U$, which we denote by $K$, also admits a partially hyperbolic splitting $\cN_{K}=G^{ss}\oplus G^c\oplus G^{uu}$ with one-dimensional center and the center bundle $G^c$ has an orientation preserved by $P_T$. By Lemma \ref{lem:gen_0} (item \ref{item:gen_0-chain-transitive-set}), $K$ contains periodic orbits arbitrarily close to $K_0$. By Theorem \ref{thm:central-model}, one can obtain two central models $(\hat{K}_+,\hat{f}_+;\pi_+)$ and $(\hat{K}_-,\hat{f}_-;\pi_-)$ for $(K,P_T)$, with properties listed there. By considering the subset of the base $\hat{K}_+$ that projects by $\pi_+$ to $K_0$, one obtains a central model $(\hat{K}_{0,+},\hat{f}_+;\pi_+)$ for $(K_0,P_T)$. Similarly, one can also obtain a central model $(\hat{K}_{0,-},\hat{f}_-;\pi_-)$ for $(K_0,P_T)$. By Theorem \ref{thm:central-model}, since $K_0$ is chain-transitive, the central models $(\hat{K}_{0,\iota},\hat{f}_{\iota};\pi_{\iota})$ ($\iota\in\{+,-\}$) are both chain-transitive.

Then according to Lemma \ref{lem:central-model}, we have the following subcases regarding the two central models $(\hat{K}_{0,\iota},\hat{f}_{\iota};\pi_{\iota})$ ($\iota\in\{+,-\}$):
\begin{enumerate}[(1)]
\item One of the central models has a chain-recurrent central segment;
\item Both of the central models have arbitrarily small trapping strips;
\item One has arbitrarily small trapping strips, and the other has arbitrarily small expanding strips;
\item Both of the central models have arbitrarily small expanding strips.
\end{enumerate}

\nibf[Subcase (1).]
We assume without loss of generality that the central model $(\hat{K}_{0,+},\hat{f}_+;\pi_+)$ has a chain-recurrent central segment $I=\{x\}\times[0,a]$. Then the center curve $\ga_x=\pi_+(I)$ is contained in a chain-transitive subset of $\La$. By Lemma \ref{lem:central-model}, the existence of chain-recurrent central segment is a local property of $\hat{K}_{0,+}$. In other words, $I$ can be assumed small enough so that $\ga_x$ is contained in the maximal invariant set of an arbitrarily small neighborhood of $K_0$. Hence, we can assume that $\ga_x$ is contained in a chain-transitive subset $K'\subset K\cap\La$.
By Lemma \ref{lem:gen_0} (item \ref{item:gen_0-chain-transitive-set}), $K$ contains periodic orbits arbitrarily close to an interior point of $\ga_x$. Since each orbit in $K$ has a stable manifold (resp. unstable manifold) with uniform size, tangent to $G^{ss}\oplus\langle X\rangle$ (resp. $G^{uu}\oplus\langle X\rangle$), the  stable manifold of some periodic orbit $Q$ will intersect with the union of unstable manifolds of points in $\ga_x$. As $\La$ is Lyapunov stable and $\ga_x\subset\La$, it contains the periodic orbit $Q$, a contradiction.

\nibf[Subcase (2).]
Let $S^{\iota}_0$ be arbitrarily small trapping strips for $(\hat{K}_{0,\iota},\hat{f}_{\iota})$ for $\iota\in\{+,-\}$. By continuity of $\hat{f}_{\iota}$, we can assume that $(\hat{K}_{\iota},\hat{f}_{\iota})$ also has a trapping strip $S^{\iota}$, by letting $U$ to be small enough (so that $\hat{K}_{\iota}$ is close enough to $\hat{K}_{0,\iota}$).

For any $x\in K$, $\iota\in\{+,-\}$, let $\hat{x}^{\iota}$ be the point in $\hat{K}_{\iota}$ such that $\pi_{\iota}(\hat{x}^{\iota},0)=x$, and denote by
\[\ga^{\iota}_x=\pi_{\iota}(S^{\iota}\cap (\{\hat{x}^{\iota}\}\times [0,1])).\]
Then $\ga_x=\ga^+_x\cup\ga^-_x$ is a center curve with $x$ contained in its interior.
For any periodic orbit $Q\subset K$ and a point $q\in Q$, it can be shown that $\ga_q$ is contained in the union of stable manifolds of finitely many hyperbolic periodic points in $\ga_q$, see \cite[Proposition 5.4]{XZ}. Moreover, restricted to $N_q(\de_0)$, the union of stable manifolds of the periodic points in $\ga_q$ forms an $(s+1)$-dimensional $C^1$ disk $D_q$ tangent to $G^{ss}\oplus G^c$ at $q$ and with $q$ contained in its interior, where $s=\dim G^{ss}$. Let $Q_n\subset K$ be a sequence of periodic orbits with its limit in $K_0$, and suppose $q_n\in Q_n$ are points such that $q_n\to x\in K_0$, as $n\to\infty$. Then the unstable manifold $W^u(\orb(x))$ (tangent to $G^{uu}\oplus\langle X\rangle$) will intersect with $D_{q_n}$ for $n$ large, which implies that $W^u(\orb(x))$ will intersect with the stable manifold of some periodic orbit. Since $x\in K_0\subset \La$ and $\La$ is Lyapunov stable, the last statement further implies that $\La$ contains periodic orbits, a contradiction.

\nibf[Subcase (3).]
Suppose $(\hat{K}_{0,+},\hat{f}_+)$ has an expanding strip $S^+_0$ and $(\hat{K}_{0,-},\hat{f}_-)$ has a trapping strip $S^-_0$. By continuity of $\hat{f}_+$ and $\hat{f}_-$, we can also assume that $(\hat{K}_+,\hat{f}_+)$ has an expanding strip $S^+$ and $(\hat{K}_-,\hat{f}_-)$ has a trapping strip $S^-$. Let us denote
\[S^+_{\infty}=\bigcap_{i=0}^{\infty}\hat{f}_+^{-i}(S^+).\]
For any $x\in K$, let $\hat{x}^+$ be the point in $\hat{K}_+$ such that $\pi_+(\hat{x}^+,0)=x$, and
\[\ga^+_x=\pi_+(S^+\cap(\{\hat{x}^+\}\times[0,1])),\quad \Ga^+_x=\pi_+(S^+_{\infty}\cap(\{\hat{x}^+\}\times[0,1])).\]
Let $p_n\in K$ be a sequence of periodic points such that $p_n\to x_0\in K_0$. Let $len(\Ga^+_{p_n})$ be the length of $\Ga^+_{p_n}$. Then as $n\to\infty$, we must have $len(\Ga^+_{p_n})\to 0$, since otherwise a similar argument as for subcase (2) would imply that $\La$ contains periodic orbits, leading to a contradiction. This fact implies that $\ga^+_{x_0}$ is contained in the chain-unstable set of $K_0$ and hence contained in $\La$, see \cite[Remark 6.2]{Yan07} and the claim right before it.
In fact, one can show that $\ga^+_{x_0}\subset W^u(K_0)$, see the claim right after Remark 6.2 in \cite{Yan07}. Then we can assume that $\{P_{-nT}(\ga^+_{x_0})\}_{n\geq 0}$ is contained in an arbitrarily small neighborhood of $K_0$, so that every point in $\ga^+_{x_0}$ has a strong unstable manifold of dimension equal to $\dim G^{uu}$ (tangent to a well-defined bundle $G^{uu}$), and has a uniform size.

\begin{Remark}
	Up to this point, the proof is largely similar to the proof of the main lemma in \cite[Section 6]{Yan07}. In the rest of the proof, some new arguments will be used to overcome the difficulty caused by singularities.
\end{Remark}

Fix a point $y\in \ga^+_{x_0}\setminus\{x_0\}$. By Lemma 3.2 (the technique lemma) in \cite{Yan07} (see also \cite[Proposition 1.1]{Wang}), there exists a sequence of periodic points $(q_n)_{n\in\NN}$ such that
\begin{itemize}
	\item $q_n\to y$ as $n\to\infty$,
	\item $\{y\}\cup K_0$ is contained in the Hausdorff limit of the orbits $Q_n=\orb(q_n)$, which is denoted by $K_1$, and
	\item moreover, let $\mu_n$ be the uniform distribution over the periodic orbit $Q_n$, then $\mu_n$ converges in the weak* topology to an invariant measure $\mu$ with support in $K_0$.
\end{itemize}

Let $C,\eta,\de,T$ be the constants given by Lemma \ref{lem:BasicAwayTang} for $X$. Then we can assume that the sequence $Q_n$ admits a dominated splitting $\cN_{Q_n}=H^s_n\oplus H^c_n\oplus H^u_n$ with respect to the linear Poincar\'e flow, satisfying the properties listed as in Lemma \ref{lem:BasicAwayTang}. By choosing a subsequence, we assume that $s'=\dim H^s_n$ for all $n$. Let $s=\dim G^{ss}$.

\begin{Claim}
One has $s'\geq s-1$. Moreover, if $s'=s-1$, then there exist $C',\eta'>0$ such that $Q_n$ is $(C',\eta',T,H^{cs}_n)$-$\psi_t$-contracting at period, where $H^{cs}_n=H^s_n\oplus H^c_n$ and $\dim H^{cs}_n=s$, for $n$ large.
\end{Claim}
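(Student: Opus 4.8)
The plan is to read off both assertions from the way the dominated splitting of the $Q_n$ furnished by Lemma~\ref{lem:BasicAwayTang} interacts, in the limit, with the partially hyperbolic splitting $\cN_{K_0}=G^{ss}\oplus G^c\oplus G^{uu}$ of the limit set, together with the fact that $\mu_n\to\mu$ with $\supp\mu\subseteq K_0$. Since the Hausdorff limit $K_1$ of the $Q_n$ contains $K_0$, which is non-singular and compact invariant, the splittings $H^s_n\oplus(H^c_n\oplus H^u_n)$ (of index $s'$) and $(H^s_n\oplus H^c_n)\oplus H^u_n$ (of index $s'+c$, where $c=\dim H^c_n\in\{0,1\}$) pass to the limit and restrict to dominated splittings of $\cN_{K_0}$ of the same indices, exactly as in Remark~\ref{rmk:fundamental-sequences}.

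\textbf{Part (a).} Suppose $s'\le s-2$. Dominated splittings of a prescribed index are unique and nested, so the induced index-$s'$ (resp.\ index-$(s'+c)$) dominated splitting of $\cN_{K_0}$ refines the index-$s$ one $G^{ss}\oplus(G^c\oplus G^{uu})$; hence its stable bundle is a subbundle of $G^{ss}$ of dimension $\le s-1$, and its unstable bundle contains $G^c\oplus G^{uu}$, in particular contains $G^{uu}$. A dimension count then produces a nonzero $\psi_t$-invariant subbundle of $\cN_{K_0}$ that lies both inside $G^{ss}$ and inside the limit of $H^u_n$; since $G^{ss}$ is uniformly contracted over $K_0$ while the latter — being the limit of the uniformly expanding-at-period bundles $H^u_n$ — is uniformly expanded over $K_0$, this is impossible. (Equivalently one may run the exterior-power/Pliss argument from the proof of Lemma~\ref{lem:lorenz-like} and \cite{SGW}, which shows that periodic orbits that are ``too expanding'' cannot Hausdorff-converge onto a set carrying an $s$-dimensional uniformly contracted subbundle of its normal bundle.) Hence $s'\ge s-1$. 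Running the same uniqueness argument when $s'=s-1$ forces $c=1$ (if $c=0$ one gets $s'+c=s-1<s$ and the contradiction again) and identifies the limit of $(H^s_n\oplus H^c_n)\oplus H^u_n$ with $G^{ss}\oplus(G^c\oplus G^{uu})$; thus $H^c_n$ converges over $K_0$ to a line field $\hat E^c$ with $\hat E^s\oplus\hat E^c=G^{ss}$, so $\hat E^c\subseteq G^{ss}$.

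\textbf{Part (b).} Assume $s'=s-1$, so $\dim H^c_n=1$ and, by Part (a), $H^c_n\to\hat E^c\subseteq G^{ss}$. As $H^c_n$ is one-dimensional, $t\mapsto\log\|\psi_t|_{H^c_n(\cdot)}\|$ is additive along orbits, whence
\[
\frac{1}{\tau(Q_n)}\log\big\|\psi_{\tau(Q_n)}|_{H^c_n(q_n)}\big\|=\int g_n\,d\mu_n,\qquad g_n(x)=\tfrac{d}{ds}\big|_{s=0}\log\|\psi_s|_{H^c_n(x)}\|.
\]
The $g_n$ are uniformly bounded and, on a fixed neighborhood of $K_0$, converge uniformly to the corresponding function $g$ built from $\hat E^c$; with $\mu_n\to\mu$ and $\supp\mu\subseteq K_0$ this gives $\int g_n\,d\mu_n\to\int g\,d\mu$, the $\mu$-Lyapunov exponent of $\hat E^c\subseteq G^{ss}$, hence $\le-\delta_0<0$ for some $\delta_0>0$. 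So $\|\psi_{\tau(Q_n)}|_{H^c_n(q_n)}\|\le e^{-\delta_0\tau(Q_n)/2}$ for $n$ large. By Lemma~\ref{lem:BasicAwayTang} the splitting is $T$-dominated, so $\|\psi_t|_{H^s_n(x)}\|\le\tfrac12\|\psi_t|_{H^c_n(x)}\|$ for $t\ge T$, and the angle between $H^s_n$ and $H^c_n$ is bounded below by some $\theta_0>0$ uniformly in $n$; therefore $\|\psi_t|_{H^{cs}_n(x)}\|\le C_2\|\psi_t|_{H^c_n(x)}\|$ for $t\ge T$ with $C_2=\tfrac{3}{2\sin\theta_0}$. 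For any partition $0=t_0<\cdots<t_m=\tau(Q_n)$ with $t_{i+1}-t_i\ge T$ and any $q\in Q_n$,
\[
\prod_{i=0}^{m-1}\big\|\psi_{t_{i+1}-t_i}|_{H^{cs}_n(\phi_{t_i}q)}\big\|\le C_2^{\,m}\prod_{i=0}^{m-1}\big\|\psi_{t_{i+1}-t_i}|_{H^c_n(\phi_{t_i}q)}\big\|=C_2^{\,m}\big\|\psi_{\tau(Q_n)}|_{H^c_n(q)}\big\|\le C_2^{\,\tau(Q_n)/T}e^{-\delta_0\tau(Q_n)/2},
\]
using $m\le\tau(Q_n)/T$ and one-dimensionality of $H^c_n$ for the middle equality. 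If the domination constant $T$ is taken large enough that $\tfrac1T\log C_2<\tfrac{\delta_0}{2}$ — harmless, since Lemma~\ref{lem:BasicAwayTang} stays valid for any larger $T$ — the right-hand side is $\le e^{-\eta'\tau(Q_n)}$ with $\eta'=\tfrac{\delta_0}{2}-\tfrac1T\log C_2>0$, i.e.\ $Q_n$ is $(1,\eta',T,H^{cs}_n)$-$\psi_t$-contracting at period, which by the remark following Definition~\ref{def:contracting-at-period} is equivalent to the statement of the claim.

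\textbf{Main obstacle.} Part (a) is essentially bookkeeping once uniqueness and nestedness of dominated splittings (cf.\ \cite{BDV05}) are invoked; the one point needing justification is that the limit of the $H^u_n$ over $K_0$ is genuinely uniformly expanded, which follows from uniform expansion at period in Lemma~\ref{lem:BasicAwayTang} together with genericity (every invariant measure on $K_0$ is approximated by periodic orbits). The true difficulty is in Part (b): reconciling the per-interval loss $C_2^{\,m}$, which arises because $H^s_n\oplus H^c_n$ need not be orthogonal, with the exponential gain $e^{-\delta_0\tau/2}$ coming from the (limit of the) center bundle. I resolve this by choosing the domination constant $T$ large at the very start of the proof of Proposition~\ref{prop:non-singular-set}; one should double-check either that the contraction rate $\delta_0$ of $G^{ss}$ over $K_0$ is controlled in terms of data fixed before $K_0$ is produced, or simply arrange to extract $K_0$ only after fixing a sufficiently large $T$, and that the weak-$*$ convergence $\mu_n\to\mu$ genuinely absorbs the part of $Q_n$ that wanders away from $K_0$.
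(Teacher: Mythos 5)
Your proposal follows the same strategy as the paper's proof: push the splitting $H^s_n\oplus H^c_n\oplus H^u_n$ of Lemma~\ref{lem:BasicAwayTang} to the Hausdorff limit, compare it over $K_0$ with $G^{ss}\oplus G^c\oplus G^{uu}$ via a dimension count, and exploit $\mu_n\to\mu$ with $\supp(\mu)\subset K_0$. The one step that does not stand as written is the primary justification of Part (a): the assertion that the limit of the $H^u_n$ is \emph{uniformly expanded} over $K_0$ does not follow from expansion at period of the $H^u_n$ together with genericity --- expansion at period of periodic orbits accumulating on $K_0$ only yields a definite density of Pliss points on each $Q_n$, not uniform expansion of the limit bundle at every point of $K_0$. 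The correct version is exactly your parenthetical alternative, and it is what the paper does: by \cite[Proposition 3.5]{ABV00} (cf.\ Lemma~\ref{lem:pliss-for-measure} and Remark~\ref{rmk:pliss-for-measure}) the set $H_n$ of $(1,\eta/2,T,H^u_n)$-$\psi^*_t$-expanding points satisfies $\mu_n(H_n)\ge\theta>0$, so weak$^*$ convergence produces a single $(1,\eta/2,T,H^u)$-$\psi^*_t$-expanding point in $\supp(\mu)\subset K_0$, where $H^u\cap G^{ss}\neq 0$ by your dimension count and $G^{ss}$ is uniformly contracted --- the contradiction. You should promote that parenthesis to the main argument; with it, Part (a), including the case $s'=s-1$, $c=0$, is correct.

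In Part (b) you are in fact more careful than the paper, which passes from $(C',\eta',T,H^c_n)$-contraction at period to $(C',\eta',T,H^{cs}_n)$-contraction at period by invoking domination in one sentence. Your observation that the naive estimate costs a factor $C_2^{\,m}$ over an $m$-interval partition (because $H^s_n$ and $H^c_n$ need not be orthogonal) is a genuine point, and your fix is admissible: Lemma~\ref{lem:BasicAwayTang} remains valid for any larger domination constant, and both the Claim and its use in Lemma~\ref{lem:intersection} only require \emph{some} $T$, so one may re-fix $T$ once $K_0$ and the contraction rate $\delta_0$ of $G^{ss}$ are known. Alternatively, one can avoid the transfer altogether by running the weak$^*$ argument directly on the Birkhoff sums of $\log\|\psi_T|_{H^{cs}_n}\|$, using that over $K_0$ the index-$s$ dominated splitting is unique, hence $H^{cs}_n\to G^{ss}$ there; this yields contraction at period over the uniform partition, which is all that the downstream Pliss argument (Lemma~\ref{lem:pliss}) consumes.
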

\begin{proof}[Proof of the Claim.]
Suppose on the contrary that $s'<s-1$. Since $K_1$ is the Hausdorff limit of $Q_n$, the dominated splitting $\cN_{Q_n}=H^s_n\oplus H^c_n\oplus H^u_n$ induces a dominated splitting of the normal bundle over $K_1$, $\cN_{K_1}=H^s\oplus H^c\oplus H^u$. Then $\dim H^s=s'<s-1$ and, by Lemma \ref{lem:BasicAwayTang}, $\dim H^c\in\{0,1\}$. Let $H_n$ be the set of $(1,\eta/2,T,H^u_n)$-$\psi^*_t$-expanding points on $Q_n$, and let $H=\cap_{n=1}^{\infty}\Cl(\cup_{k\geq n}H_k)$. Then $H\subset K_1$, and any point in $H\setminus\Sing(X)$ is $(1,\eta/2,T,H^u)$-$\psi^*_t$-expanding. By~\cite[Proposition 3.5]{ABV00} (see also Lemma \ref{lem:pliss-for-measure} and Remark \ref{rmk:pliss-for-measure}), there is $\theta>0$ such that $\mu_n(H_n)\geq \theta$, for $n$ large. Then
\[\mu(H)=\lim_{n\to\infty}\mu(\Cl(\cup_{k\geq n}H_k))\geq\lim_{n\to\infty}\limsup_j\mu_j(\Cl(\cup_{k\geq n}H_k))\geq \theta>0.\]
In particular, there exists a $(1,\eta/2,T,H^u)$-$\psi^*_t$-expanding point $x\in H\cap\supp(\mu)\subset K_0$. This leads to a contradiction since $H^u\cap G^{ss}$ is nontrivial restricted to the set $K_0$, and $G^{ss}$ is uniformly contracting. Hence we have shown $s'\geq s-1$.

If $s'=s-1$, the same argument as above shows that $H^c_n$ is nontrivial for $n$ large, which implies $H^c$ is nontrivial. Also, restricted to $K_0$, we have $H^c\subset G^{ss}$. Then the fact that $G^{ss}$ is contracting and $\mu_n$ converges to $\mu$ with $\supp(\mu)\subset K_0$, implies that there exists $C',\eta'>0$ such that $Q_n$ is $(C',\eta',T,H^c_n)$-$\psi_t$-contracting at period. As $H^s_n$ is dominated by $H^c_n$, one concludes that $Q_n$ is also $(C',\eta',T,H^{cs}_n)$-$\psi_t$-contracting at period, with $H^{cs}_n=H^s_n\oplus H^c_n$.
\end{proof}

By the previous claim, either $s'\geq s$ and $Q_n$ is $(C,\eta,T,H^s_n)$-$\psi_t$-contracting at period, or $\dim H^{cs}_n=s$ and $Q_n$ is $(C',\eta',T,H^{cs}_n)$-$\psi_t$-contracting at period. We will consider only the first case, as the other case can be handled in a similar way.

Assuming $s'\geq s$. If $H^s_n$ is uniformly contracting, then the stable manifolds of $Q_n$ will intersect with the union of strong unstable manifolds of points in $\ga^+_{x_0}$, for $n$ large. Since $\La$ is Lyapunov stable and $\ga^+_{x_0}\subset\La$, one would conclude that $\La$ contains periodic orbits, leading to a contradiction. If $H^s_n$ is not uniformly contracting, one can apply Lemma \ref{lem:intersection} and Proposition \ref{prop:homogeneous-index} to show that $W^u(K_1)\pitchfork W^s(Q_n)\neq\emptyset$ for $n$ large. Note that $K_1$ is a chain-transitive set containing $K_0$, we have $K_1\subset\La$. Hence by Lyapunov stability of $\La$, it contains periodic orbits, a contraction again. This finishes the proof for subcase (3).

\nibf[Subcase (4).] This case can be easily reduced to Subcase (3); see \cite[Subcase A.3]{Yan07} for details.

\vskip 1.5em

Finally, when there is no orientation of $G^c$ preserved by $P_T$, we have essentially one central model $(\widehat{K},\widehat{f})$. One need only consider three subcases: (a) with a chain-recurrent central segment; (b) with arbitrarily small trapping strips for $\hat{f}$; and (c) with arbitrarily small trapping strips for $\hat{f}^{-1}$. Each subcase can be proven similarly as the corresponding one for the orientation preserving case. The proof of Proposition \ref{prop:non-singular-set} is now complete.

\begin{Remark}\label{rmk:non-singular-set}
	The proof also gives information about the indices of periodic orbits in the homoclinic class. Precisely, under the assumptions of Proposition \ref{prop:non-singular-set}, suppose there exists a minimally non-hyperbolic non-singular set $K_0\subset\La_0\subset \La$ admitting a partially hyperbolic splitting $\cN_{K_0}=G^{ss}\oplus G^c\oplus G^{uu}$ with one-dimensional center bundle. Then $\La$ contains periodic orbits of index $i\geq\dim G^{ss}$.
\end{Remark}

\subsection{Proof of Theorem~B}

The first and the second items of Theorem~B have been proven in Proposition \ref{prop:homogeneous-index} and Proposition \ref{prop:indices-of-periodic-orbits-0} respectively.
As for the last item, we have the following result.
\begin{Proposition}\label{prop:indices-of-periodic-orbits-1}
For any $C^1$ generic $X\in\mathscr{X}^1(M)\setminus\Cl(\mathcal{HT})$, $\si\in\Sing(X)$, suppose $C(\si)$ is a Lyapunov stable chain recurrence class. Then there exist a neighborhood $\cU$ of $X$ and a neighborhood $U$ of $C(\si)$ such that for any $Y\in\cU$ and any periodic orbit $\ga\subset U$ of $Y$, one has $\ind(\ga)\leq \ind(\si)$.
\end{Proposition}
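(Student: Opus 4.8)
The plan is to argue by contradiction and, as in the proof of Proposition~\ref{prop:indices-of-periodic-orbits-0}, to run the matching mechanism at a Lorenz-like singularity of the class; the new feature is that the periodic orbits producing the contradiction come from a \emph{fundamental sequence} approximating $X$ rather than from $C(\si)$ itself. Suppose the statement fails. Fixing a decreasing neighborhood basis $\{U_n\}$ of $C(\si)$ (so $\bigcap_n\Cl(U_n)=C(\si)$) and a decreasing neighborhood basis of $X$, we obtain $Y_n\to X$ in $C^1$ and periodic orbits $\ga_n$ of $Y_n$ with $\ga_n\subset U_n$ and $\ind(\ga_n)\ge\ind(\si)+1$. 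Passing to a subsequence, $\ind(\ga_n)=i$ is constant and $\ga_n\to\La$ in the Hausdorff topology for a compact invariant set $\La$ of $X$ with $\La\subset C(\si)$. After a preliminary $C^1$-small (Franks-type, then generic) perturbation we may assume each $Y_n$ is $C^1$-generic and $\ga_n$ is hyperbolic of index $i$; by Lemma~\ref{lem:gen_0} (items~\ref{item:gen_0-continuity-chain-class} and~\ref{item:gen_0-last}) the class $C(\si_{Y_n},Y_n)$ is then Lyapunov stable and converges to $C(\si)$, and shrinking the neighborhoods once more $Y_n\notin\Cl(\mathcal{HT})$ with the constants of Lemma~\ref{lem:BasicAwayTang} uniform in $n$ (Remark~\ref{rmk:BasicAwayTang}). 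Thus $\fF=\{(\ga_n,Y_n)\}$ is a fundamental $i$-sequence of $X$ with limit $\La\subset C(\si)$; by Corollary~\ref{cor:BasicAwayTang} it carries a $T$-dominated splitting $\cN_{\ga_n}=G^{cs}_n\oplus G^{cu}_n$ of index $i$ with $G^{cs}_n$ contracting at period, and by Remark~\ref{rmk:fundamental-sequences} this induces a dominated splitting $\widetilde{\cN}_{\De(\fF)}=G^{cs}\oplus G^{cu}$ of index $i$ with $\be(\De(\fF))=\La$. I would then split according to whether $\La$ is non-singular.

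If $\La$ is non-singular, it is a non-singular compact invariant subset of the Lyapunov stable class $C(\si)$, so Proposition~\ref{prop:non-singular-set} applies. Running the arguments of Section~\ref{sect:non-singular-set} on $\fF$ (the contracting-at-period property from Corollary~\ref{cor:BasicAwayTang}, together with the fundamental-sequence form of the intersection argument of Lemma~\ref{lem:intersection}, whose Lorenz-like hypothesis is vacuous here, and Theorem~\ref{thm:Liao}) produces, for $n$ large, either that $G^{cs}_n$ is uniformly contracting or a transverse homoclinic-type intersection of $W^u(\La)$ with $W^s(\ga_n,Y_n)$. Since $W^u(\La)\subset C(\si)$ by Lyapunov stability and $C(\si_{Y_n},Y_n)$ is a Lyapunov stable class close to $C(\si)$, in either case $\ga_n$ is absorbed into $C(\si_{Y_n},Y_n)$ (one argues symmetrically with $-Y_n$ in the uniformly contracting alternative). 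But then Proposition~\ref{prop:indices-of-periodic-orbits-0} applied to the generic $Y_n$ gives $i=\ind(\ga_n)\le\ind(\si_{Y_n})-1=\ind(\si)-1$, a contradiction.

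If instead $\rho\in\La\cap\Sing(X)$, then $\rho\in C(\si)$, so by Proposition~\ref{prop:homogeneous-index} $\rho$ is Lorenz-like with $\ind(\rho)=\ind(\si)<i$. Restricting the splitting to $\De_{\rho}(\fF)$ and applying the matching lemmas (Lemma~\ref{lem:elp-dom}, Lemma~\ref{lem:elp-dom2}, Remark~\ref{rmk:elp-dom}) in the regime $\dim G^{cs}=i>\ind(\rho)$, we obtain a dominated splitting $E^u(\rho)=E^{uu}(\rho)\oplus E^{c}(\rho)$ of the tangent flow with $\dim E^{uu}(\rho)=\dim G^{cu}$ and the restricted-area constraint
\[\De_{\rho}(\fF)\cap\bigl(E^{s}(\rho)\oplus E^{uu}(\rho)\bigr)\setminus\bigl(E^{s}(\rho)\cup E^{uu}(\rho)\bigr)=\emptyset.\]
To contradict this I would show that the fundamental $i$-sequence can be chosen to ``step into'' the restricted area. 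Lyapunov stability gives $W^{uu}(\rho)\subset W^u(\rho)\subset C(\si)$; chain-transitivity of $C(\si)$ provides pseudo-orbits in $C(\si)$ leaving $\rho$ along $W^{uu}(\rho)$ and returning arbitrarily close to $\rho$, and since $\rho$ is Lorenz-like, $C(\si)$ avoids a neighborhood of $W^{ss}(\rho)\setminus\{\rho\}$, so the return direction lies in the weak-stable direction $E^{cs}(\rho)$. Using the connecting lemma of Wen and Xia together with Lemma~\ref{lem:gen_1} (items 1 and 2), I would realize these as genuine periodic orbits $\ga_n$ that pass through a small neighborhood of $\rho$, entering along $E^{cs}(\rho)$ and leaving along $W^{uu}(\rho)$: near their closest approach the velocity direction is then a genuinely diagonal line in $E^{cs}(\rho)\oplus E^{uu}(\rho)\subset E^{s}(\rho)\oplus E^{uu}(\rho)$, yielding an element of $\De_{\rho}(\fF)$ in the restricted area — the required contradiction. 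This is the flow/perturbative counterpart of item~3 of Lemma~\ref{lem:gen_1}.

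The main obstacle is this last step in the singular case: the connecting-lemma construction must simultaneously realize the diagonal limit direction at $\rho$ \emph{and} preserve the property that the approximating periodic orbits are contracting at period in a bundle of dimension exactly $i$, i.e.\ that the modified sequence is still a fundamental $i$-sequence — for it is precisely the inequality $\dim G^{cs}=i>\ind(\rho)$ that drives the matching mechanism and forbids the restricted area. Controlling the index along the perturbation, combining the Lyapunov stability of $C(\si)$ with the uniform domination furnished by Lemma~\ref{lem:BasicAwayTang}, is the delicate point; the non-singular case, by contrast, is a comparatively routine adaptation of Section~\ref{sect:non-singular-set}.
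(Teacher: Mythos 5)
There are genuine gaps, and the overall route diverges from the paper's at the two points where your sketch is weakest. First, the paper begins by invoking item~\ref{item:gen_0-fund-seq} of Lemma~\ref{lem:gen_0} to replace the fundamental $i$-sequence $\{(\ga_n,Y_n)\}$ by hyperbolic periodic orbits of index $i$ of $X$ \emph{itself} converging to $\La_0$; this is what makes Lemma~\ref{lem:intersection} (which is stated for periodic orbits of a single vector field) directly applicable, whereas your ``fundamental-sequence form'' of that lemma is an unproved extension across varying vector fields. Second, your treatment of the uniformly contracting alternative is wrong: arguing ``symmetrically with $-Y_n$'' would at best produce an intersection $W^s(\La)\cap W^u(\ga_n)$, which shows orbits of $\ga_n$'s unstable manifold fall into $C(\si)$ but does \emph{not} absorb $\ga_n$ into the class --- Lyapunov stability absorbs $\ga_n$ only when some $W^u$ of a point of $C(\si)$ meets $W^s(\ga_n)$. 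The paper instead observes that uniform contraction of $G^{cs}_n$ forces a partially hyperbolic splitting $\cN_{\La_0}=G^s\oplus G^{cu}$ with $\dim G^s=i'\ge\ind(\si)$, and then either hyperbolicity of $\La_0$ or Proposition~\ref{prop:non-singular-set} together with Remark~\ref{rmk:non-singular-set} yields periodic orbits in $C(\si)$ of index at least $i'$, contradicting Proposition~\ref{prop:indices-of-periodic-orbits-0}.

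The singular case is where your proposal fails most seriously. You route it through the matching mechanism and try to force the sequence into the restricted area $(E^s(\rho)\oplus E^{uu}(\rho))\setminus(E^s(\rho)\cup E^{uu}(\rho))$ by a connecting-lemma construction. As you yourself concede, you cannot simultaneously realize the diagonal limit direction and keep an index-$i$ dominated splitting; this is precisely the obstruction the authors identify in Section~\ref{sect:first-reduction}, and overcoming it for a sequence converging only to a subset $\La_0\subsetneq C(\si)$ is the content of the entire $C^2$/Gibbs-state machinery of Sections~\ref{s.7}--\ref{sect:proof-of-main-prop} --- it is not available at this stage and is not needed here. The paper's actual argument in the singular case is short: once the Claim (via Lemma~\ref{lem:intersection} and Lyapunov stability) shows $G^{cs}_n$ is uniformly contracting with constants uniform in $n$, and $\dim G^{cs}_n=i'\ge\ind(\rho)=\ind(\si)$, Theorem~\ref{thm:Liao} applied at points $x_n\in\ga_n$ converging to $\rho$ gives $W^u(\rho)\pitchfork W^s(\ga_n)\ne\emptyset$, so $\ga_n\subset C(\si)$ by Lyapunov stability, again contradicting Proposition~\ref{prop:indices-of-periodic-orbits-0}. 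You should replace the restricted-area argument by this application of Liao's theorem; with that and the corrected uniformly contracting alternative, the proof closes.
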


\begin{proof}
	Suppose on the contrary that there exist $i>\ind(\si)$ and a fundamental $i$-sequence with the limit $\La_0\subset C(\si)$. Since $X$ is $C^1$ generic, by item \ref{item:gen_0-fund-seq} of Lemma \ref{lem:gen_0} there is a sequence of periodic orbits $\ga_n$ of $X$, such that $\ind(\ga_n)=i$ and $\ga_n\to\La_0$ in the Hausdorff topology. By Corollary \ref{cor:BasicAwayTang}, there is a dominated splitting $\cN_{\ga_n}=G^{cs}_n\oplus G^{cu}_n$  such that its index $i'=i$ or $i-1$, and $\ga_n$ is $(C,\eta,T,G^{cs}_n)$-$\psi_t$-contracting at period, where the constants $C,\eta,T>0$ are given by Lemma \ref{lem:BasicAwayTang}.
	
	We claim that $G^{cs}_n$ is uniformly contracting with constants uniform in $n$. Otherwise, suppose $G^{cs}_n$ is not uniformly contracting with constants uniform in $n$, Lemma \ref{lem:intersection} ensures that $W^u(\La_0)\pitchfork W^s(\ga_n)\neq\emptyset$ for $n$ large. As $C(\si)$ is Lyapunov stable, $\ga_n\subset C(\si)$ for $n$ large. This is a contradiction to Proposition \ref{prop:indices-of-periodic-orbits-0}.
	
	If $\La_0$ contains a singularity $\rho$, we have $\ind(\rho)=\ind(\si)$ by Proposition \ref{prop:homogeneous-index}. Then as is shown that $G^{cs}_n$ is uniformly contracting with constant uniform in $n$ and $\dim G^{cs}_n=i'\geq\ind(\rho)$, it follows from Theorem~\ref{thm:Liao} that $W^u(\rho)\pitchfork W^s(\ga_n)\neq\emptyset$ for $n$ large. Again, this is a contradiction to Proposition \ref{prop:indices-of-periodic-orbits-0}.
	
	Therefore, we must have $\La_0\cap\Sing(X)=\emptyset$. Since $G^{cs}_n$ is uniformly contracting with constants uniform in $n$, the dominated splitting $\cN_{\ga_n}=G^{cs}_n\oplus G^{cu}_n$ converges to a partially hyperbolic splitting $\cN_{\La_0}=G^s\oplus G^{cu}$, with $G^s$ uniformly contracting and $\dim G^s=i'\geq\ind(\si)$.
	
	If $\La_0$ is a hyperbolic set, it is easy to see that $C(\si)$ contains periodic orbits of index at least as large as $i'$. If $\La_0$ is not hyperbolic, Proposition \ref{prop:non-singular-set} shows that $C(\si)$ also contains periodic orbits, and in fact, it contains periodic orbits of index at least as large as $i'$ (Remark \ref{rmk:non-singular-set}). Since $i'\geq\ind(\si)$, we have shown in both cases that $C(\si)$ contains a periodic orbit of index at least $\ind(\si)$, which contradicts Proposition \ref{prop:indices-of-periodic-orbits-0}.
\end{proof}

The proof of Theorem~B is now complete.

\subsection{Proof of Corollary \ref{cor:index-completeness} assuming Theorem~A}
Assuming that Theorem~A holds, Corollary~\ref{cor:index-completeness} is then a simple consequence of Theorem~B (item 2) and the following result.

\begin{Proposition}\label{prop:index-completeness}
Under the same assumption as in Proposition \ref{prop:indices-of-periodic-orbits-1} and suppose $C(\si)$ contains a periodic orbit of index $\alpha<\ind(\si)-1$, then it contains a periodic orbit of index $\alpha+1$.
\end{Proposition}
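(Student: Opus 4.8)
The plan is to manufacture, near the singularity $\si$, a fundamental $(\alpha+1)$-sequence of $X$ whose Hausdorff limit lies inside $C(\si)$, and then to conclude by the generic lemmas. Assume $C(\si)$ contains a periodic orbit $P$ with $\ind(P)=\alpha<\ind(\si)-1$; then $C(\si)$ is nontrivial, so by item~\ref{item:gen_0-homoclinic-class} of Lemma~\ref{lem:gen_0} it equals the homoclinic class $H(P)$, by Theorem~\ref{thm:lorenz-like} and Proposition~\ref{prop:homogeneous-index} the singularity $\si$ is Lorenz-like, and since $C(\si)$ is Lyapunov stable one has $W^u(\si)\subset C(\si)$; write $k=\ind(\si)-\alpha\ge 2$. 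If I can build a fundamental $(\alpha+1)$-sequence $\{(\ga_n,X_n)\}$ with limit $\La\subset C(\si)$, then item~\ref{item:gen_0-fund-seq} of Lemma~\ref{lem:gen_0} provides periodic orbits of $X$ of index $\alpha+1$ accumulating on $\La$, which lie in $C(\si)$ for $n$ large by Lyapunov stability, and this is exactly the desired conclusion. Note that, unlike the arguments proving Theorem~B, this one must be \emph{constructive}: it has to produce a genuinely new index rather than contradict an assumed one, so the role of $\si$ (whose "virtual index'' $\ind(\si)>\alpha+1$) is essential, a periodic orbit of index $\alpha$ alone being insufficient.

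To build the sequence I would first use the connecting lemmas \cite{BoC,WeX} together with the facts $P,\si\in C(\si)$ and $W^u(\si)\subset C(\si)=H(P)$ (so that points of $W^u(\si)$ are limits of transverse homoclinic points of $P$): for each $n$ this yields a vector field $X_n$ with $\|X_n-X\|_{C^1}<1/n$ carrying a transverse connection $W^u(P_{X_n})\pitchfork W^s(\si_{X_n})$ and, in a disjoint region, a quasi-transverse connection $W^u(\si_{X_n})\cap W^s(P_{X_n})\ne\emptyset$ — that is, a heterodimensional cycle of coindex $k$ between $P_{X_n}$ and $\si_{X_n}$; I would also spend perturbations of size $<1/n$ to make the negative Lyapunov exponents $\la_1<\cdots<\la_{\ind(\si)}$ of $\si_{X_n}$ simple and those of $\psi_t|_{\cN_{P_{X_n}}}$ nonzero. (The return connection has negative expected dimension, so it is fragile, but this is harmless since only a sequence $X_n\to X$ is needed.) Then I would unfold this cycle into a periodic orbit $\ga_n$ of $X_n$ that shadows $\orb(P_{X_n})$ for a long time $L_n\to\infty$, performs an excursion of length $\ell_n\to\infty$ entering the $1/n$-ball around $\si_{X_n}$, and returns — the free parameter being $\ell_n/L_n$.

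The index of $\ga_n$ would be controlled by Liao's theory (the rescaled linear Poincar\'e flow and the relative-uniform size of invariant manifolds; Section~\ref{sect:Liao-thm}, Theorem~\ref{thm:Liao}) together with the matching mechanism of Section~\ref{sect:matching}. On the $P$-part the linear Poincar\'e flow already contracts the $\alpha$ stable normal directions strongly; during the excursion, the rescaling near $\si$ flips the normal eigendirections of $\si$ to contracting \emph{one at a time}, in the order $\la_1,\la_2,\dots$, as $\ell_n$ grows past successive thresholds — here the simplicity of the stable spectrum is what keeps the thresholds apart. Applying Lemmas~\ref{lem:elp-dom} and~\ref{lem:elp-dom2} with splitting index $\alpha+1<\ind(\si)$ produces a dominated splitting $E^s(\si)=E^{ss}(\si)\oplus E^c(\si)$ with $\dim E^{ss}(\si)=\alpha+1$, and lets me steer the excursion so its entry direction is close to $E^{ss}(\si)$ and its exit direction close to the weak unstable eigendirection. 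Tuning $\ell_n/L_n$ into the window strictly between the first and the second threshold, the rescaled cocycle over $\ga_n$ should have exactly its $(\alpha+1)$ weakest normal exponents negative and the rest positive, uniformly in $n$; hence $\cN_{\ga_n}=G^{cs}_n\oplus G^{cu}_n$ is $T$-dominated with $\dim G^{cs}_n=\alpha+1$, $\ga_n$ being contracting along $G^{cs}_n$ and expanding along $G^{cu}_n$ at period, so $\ind(\ga_n)=\alpha+1$. Finally $\ga_n$ accumulates on the heteroclinic cycle through $\si$, which lies in $C(\si_{X_n},X_n)$; since the chain recurrence class varies continuously at $X$ (item~\ref{item:gen_0-continuity-chain-class} of Lemma~\ref{lem:gen_0}), the limit $\La$ of $\{\ga_n\}$ is contained in $C(\si)$, which finishes the construction.

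The hard part will be the last two steps and the quantitative index bookkeeping: arranging an excursion near $\si$ that is simultaneously long enough to turn the $(\alpha+1)$-st normal exponent of $\ga_n$ negative, short enough (relative to the $P$-part) that the $(\alpha+2)$-nd stays positive, and genuinely closeable into a periodic orbit through the fragile connection, with all constants independent of $n$. This is where the Lorenz-like structure of $\si$ ($\sv(\si)>0$, the gap $\la_{\ind(\si)-1}<\la_{\ind(\si)}$, and $W^{ss}(\si)\cap C(\si)=\{\si\}$), the spectral simplification, and Liao's rescaling must be fused with the matching lemmas in a quantitative way. As a sanity check, the extreme case $\alpha+1=\ind(\si)-1$ is already handled cleanly, with no delicate tuning: a homoclinic orbit of $\si$ produced by the connecting lemma, combined with Corollary~\ref{cor:lorenz-like2} and item~\ref{item:gen_0-fund-seq} of Lemma~\ref{lem:gen_0}, directly yields periodic orbits of index $\ind(\si)-1$ inside $C(\si)$.
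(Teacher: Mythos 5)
Your construction of the index-$(\alpha+1)$ orbits follows essentially the paper's route: use the connecting lemmas to create a heteroclinic cycle between $\ga$ and the Lorenz-like singularity $\si$, unfold the cycle by tuning the time spent near $\si$ to produce periodic orbits of index $\alpha+1$ for vector fields $C^1$-close to $X$ (the paper does not redo this bookkeeping but cites Corollary \ref{cor:lorenz-like2} together with \cite[Lemma 4.5]{SGW}, which is exactly the excursion/rescaling mechanism you sketch by hand), and then pull the sequence back to $X$ itself by genericity. Up to that point the outline is sound.

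The genuine gap is your final step: you assert that the periodic orbits $\ga_n$ of $X$ of index $\alpha+1$ accumulating on $\La\subset C(\si)$ ``lie in $C(\si)$ for $n$ large by Lyapunov stability.'' That inference is false. Lyapunov stability (even the quasi-attractor structure) only controls forward orbits of nearby points; it does not place a periodic orbit that is merely Hausdorff-close to a subset of the class inside the class --- indeed, by item \ref{item:gen_0-chain-transitive-set} of Lemma \ref{lem:gen_0} every nontrivial chain transitive set, including an aperiodic one, is accumulated by periodic orbits which by definition do not belong to it. What Lyapunov stability actually yields is $W^u(\La)\subset C(\si)$; to conclude $\ga_n\subset C(\si)$ one must therefore exhibit a point of $W^u(\La)$ whose forward orbit converges to $\ga_n$, i.e.\ an intersection $W^u(\La)\pitchfork W^s(\ga_n)\neq\emptyset$. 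This is the nontrivial part of the proof, because $\La$ may contain singularities and the stable manifolds along $\ga_n$ have size only of order $\|X(x_n)\|$, which degenerates during the excursions near $\si$. The paper closes it by a dichotomy furnished by Lemma \ref{lem:intersection}: either $G^{cs}_n$ fails to be uniformly contracted, in which case that lemma (resting on Liao's Theorem \ref{thm:Liao} and the Lorenz-like structure of the singularities) produces the intersection directly, or $G^{cs}_n$ is uniformly contracted, in which case one takes $x_n\in\ga_n$ converging to a point of the \emph{regular} orbit $\ga\subset\La$, where $W^s(x_n)$ has uniform size, and uses the count $\dim W^u(\ga)+\dim W^s(\ga_n)=\dim M+1$. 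Your proposal contains none of this, and the same omission affects your ``sanity check'' for the case $\alpha+1=\ind(\si)-1$.
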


\begin{proof}
Let $\ga$ be a periodic orbit in $C(\si)$ with index $\alpha<\ind(\si)-1$. Note that we have $\dim W^u(\ga)+\dim W^s(\si)>\dim M$, hence $W^u(\ga)\pitchfork W^s(\si)\neq \emptyset$ by genericity. Moreover, by Lyapunov stability of $C(\si)$ and Lemma \ref{lem:gen_1}, the unstable manifold $W^u(\si)$ is dense in $C(\si)$. In particular, $W^u(\si)$ accumulates on the stable manifold of $\ga$. This allows us to apply the connecting lemma of Wen-Xia \cite{WeX} in an arbitrarily small neighborhood of $\ga$, so that we can obtain a vector field $Y$ arbitrarily $C^1$ close to $X$ having a heteroclinic cycle related to $\si$ and $\ga$. Note that $\ind(\ga)=\alpha<\ind(\si)-1$. By Corollary \ref{cor:lorenz-like2} and following the argument in \cite[Lemma 4.5]{SGW}, there exists a vector $Z$ arbitrarily $C^1$ close to $Y$ (and hence arbitrarily close to $X$) with a periodic orbit of index $\alpha+1$ arbitrarily close to the heteroclinic cycle in the Hausdorff topology. 
Thus there exists a fundamental $(\alpha+1)$-sequence of $X$ with $\ga$ contained in its limit $\Lambda\subset C(\si)$. Since $X$ is $C^1$ generic, by \cite[Lemma 3.5]{Wen04} there exists a sequence of hyperbolic periodic orbits $\ga_n$ of index $\alpha+1$ of $X$ itself that converges to $\Lambda$ in the Hausdorff topology.

By Corollary \ref{cor:BasicAwayTang}, there exists a dominated splitting $\cN_{\ga_n}=G^{cs}_n\oplus G^{cu}_n$ such that its index $i=\alpha$ or $\alpha+1$, and $\ga_n$ is $(C,\eta,T,G^{cs}_n)$-$\psi_t$-contracting at period, where the constants $C,\eta,T>0$ are given by Lemma \ref{lem:BasicAwayTang}. By Lemma \ref{lem:intersection}, one has either $W^u(\Lambda)\pitchfork W^s(\ga_n)\neq\emptyset$ for $n$ large or $G^{cs}_n$ is uniformly contracting with constants uniform in $n$. Note that $\ga\subset \Lambda$ and $\ind(\ga)=\alpha$. If $G^{cs}_n$ is uniformly contracting with constants uniform in $n$, we take $x_n\in\gamma_n$ that converge to a point $x\in\gamma$. Note that $W^s(x_n)$ has size at least $\left(\delta_0\liminf_n\|X(x_n)\|\right)$ for some $\delta_0>0$ which is uniform in $n$, and $\dim W^u(\gamma) + \dim W^s(\gamma_n) = \dim M+1$. It follows that $W^u(\gamma)\pitchfork W^s(\gamma_n)\ne\emptyset$. Since $C(\si)$ is Lyapunov stable, one concludes in both cases that $\ga_n\subset C(\si)$ for $n$ large.
\end{proof}

\section{A reduction of the main theorem}
\label{sect:reduction}

The aim of this section is to reduce our main theorem (Theorem A) to Proposition \ref{prop:main-proposition}. This reduction can be viewed as a direct generalization of the main steps in Gan-Yang's proof \cite{GY} of the Weak Palis Conjecture for $C^1$ singular flows on 3-manifolds.

Throughout this section, $X$ is a $C^1$ generic vector field away from tangencies, and $C(\si)$ is a non-trivial Lyapunov stable chain recurrent class associated to a singularity $\si$.
\subsection{The difficulty and general ideas for the proof of Theorem A}
\label{sect:first-reduction}

We shall give some general ideas as how the main theorem will be proven. In particular, we will show where the main difficulty lies when dealing with general dimensions and how it will be overcome.

Proving by contradiction, let us assume that $C(\si)$ is aperiodic.
Then one can show that the chain recurrence class admits a partially hyperbolic splitting $T_{C(\si)}M=E^{ss}\oplus F$ with respect to the tangent flow, where $E^{ss}$ is contracting and $\dim E^{ss}=\ind(\si)-1$ (Proposition \ref{prop:ph-or-horseshoe}). Note that the flow direction is contained in the bundle $F$, {\itshape i.e.,} $X(x)\in F(x)$ for any $x\in C(\si)$, and one must have $\dim F\geq 2$. The core of our main theorem then lies in the following proposition.

\begin{Proposition}[The Main Proposition]\label{prop:main-proposition}
For a $C^1$ generic vector field $X\in\mathscr{X}^1(M)\setminus\Cl(\mathcal{HT})$ and $\si\in\Sing(X)$, if $C(\si)$ is a nontrivial Lyapunov stable chain recurrence class with a partially hyperbolic splitting $T_{C(\si)}M=E^{ss}\oplus F$, such that $\Phi_t|_{E^{ss}}$ is  uniformly contracting and $\dim E^{ss}=\ind(\si)-1$, then $C(\si)$ contains a periodic orbit.
\end{Proposition}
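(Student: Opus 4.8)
The plan is to argue by contradiction: assume $C(\si)$ is aperiodic, so by genericity (item \ref{item:gen_0-homoclinic-class} of Lemma \ref{lem:gen_0}) it contains no periodic orbit at all. We already have the partially hyperbolic splitting $T_{C(\si)}M = E^{ss}\oplus F$ with $\dim E^{ss} = \ind(\si)-1$. The goal is to produce a fundamental sequence of periodic orbits whose limit is the entire class $C(\si)$ and whose "true" stable index is exactly $\ind(\si)-1$, and then to show that this sequence must "step into" the restricted area $(E^{ss}(\si)\oplus E^u(\si))\setminus(E^{ss}(\si)\cup E^u(\si))$ in $B_\si(C(\si))$ guaranteed forbidden by Lemma \ref{lem:matching} (equivalently Lemma \ref{lem:elp-dom2}). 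This is the contradiction.

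First I would pass to $C^2$ vector fields $X_n\to X$ in the $C^1$ topology, and for each $X_n$ consider a physical-like measure $\mu_n$ supported near $C(\si_{X_n})$; the entropy lower bound for physical-like measures (Theorem \ref{thm:physical-like}) together with Pesin's entropy formula and the Ledrappier–Young $u$-saturatedness (Theorem \ref{thm:LeY}) should force the supports of (a limit of) these measures to spread over a large $u$-saturated set. The $C^2$ regularity is essential precisely here: it is what allows us to invoke Pesin's formula and Ledrappier–Young, and the Gibbs $F$-state machinery (as advertised in the abstract) is what lets the measures converge to a Gibbs $F$-state of $X$. The key output of this analysis — this should be where most of the technical work sits — is a \emph{fundamental sequence} $\fF=\{(\ga_n,X_n)\}$ with $(\ga_n, X_n)\to (C(\si), X)$, i.e.\ whose limit is the \emph{whole} chain recurrence class, and which admits a dominated splitting of index $\ind(\si)-1$ refining the $E^{ss}$ direction. (By density of periodic orbits in a homoclinic class one cannot do this directly since the class is aperiodic; it is the measure-theoretic argument, via saturatedness, that gives periodic orbits accumulating on all of $C(\si)$.)

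Next, since $C(\si)$ is Lyapunov stable, item 3 of Lemma \ref{lem:gen_1} gives a subspace $L$ in the forbidden set: $B(C(\si)) \cap (E^s(\si)\oplus E^{uu}(\si))\setminus(E^s(\si)\cup E^{uu}(\si))\neq\emptyset$, where here one uses the matching splitting $E^u(\si)=E^c(\si)\oplus E^{uu}(\si)$ coming from the index-$(\ind(\si)-1)$ dominated splitting over $B_\si(C(\si))$ via Lemma \ref{lem:matching} (the symmetric "$\dim G^{cs}<\ind(\si)$" case, with $\dim E^{uu}(\si)=\dim G^{cu}$; note $\dim G^{cs}=\ind(\si)-1 = \ind(\si)-1 < \ind(\si)$). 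Because $\ga_n\to C(\si)$, the set $\De(\fF)$ contains $B(C(\si))$, so this same $L$ lies in $\De_\si(\fF)$ and exhibits the forbidden configuration for the fundamental sequence — contradicting Lemma \ref{lem:elp-dom2} (or Lemma \ref{lem:matching}) applied to $\fF$. The remaining bookkeeping is to check that the dominated splitting over $\fF$ has exactly the right index so that Lemma \ref{lem:elp-dom}/\ref{lem:elp-dom2} apply with $E^{ss}(\si)$ of dimension $\ind(\si)-1$; this is forced by the partial hyperbolicity assumption $\dim E^{ss}=\ind(\si)-1$ together with the fact that $\si$ is Lorenz-like (Theorem \ref{thm:lorenz-like}, Proposition \ref{prop:homogeneous-index}), so $W^{ss}(\si)\cap C(\si)=\{\si\}$ and hence $B_\si(C(\si))\subset E^{cu}(\si)$, giving the matching $G^{cs}(L)=\{L\}\times E^{ss}(\si)$.

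I expect the main obstacle to be the $C^2$ approximation step and the construction of the full-class fundamental sequence: one must control the physical-like/Gibbs $F$-state measures of $X_n$ so that (a) they have enough entropy to trigger Pesin's formula, (b) the Ledrappier–Young unstable-saturatedness survives the $C^1$-limit and yields a limit measure of full support in $C(\si)$ (here aperiodicity is used to rule out the alternative where the measures concentrate on a subclass containing a periodic orbit), and (c) the associated periodic orbits (produced by an ergodic closing lemma argument at the $C^2$ level, or by Lemma \ref{lem:gen_0} item \ref{item:gen_0-fund-seq}) genuinely accumulate on all of $C(\si)$ with the correct index. Everything after that is the Li–Gan–Wen matching mechanism, which is purely $C^1$ and already packaged in Lemmas \ref{lem:elp-dom}, \ref{lem:elp-dom2}, \ref{lem:matching} and \ref{lem:gen_1}.
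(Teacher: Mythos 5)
Your overall architecture does match the paper's: assume aperiodicity, approximate by $C^2$ vector fields, extract physical-like measures/Gibbs $F$-states, use Pesin's formula and Ledrappier--Young $u$-saturation to force the Hausdorff limit of the supports to be all of $C(\si)$, and then play item 3 of Lemma \ref{lem:gen_1} against the Li--Gan--Wen matching mechanism. However, there is a genuine gap in your index bookkeeping at the final step, and it is not cosmetic. To invoke Lemma \ref{lem:gen_1}(3) you need a dominated splitting $E^u(\si)=E^c(\si)\oplus E^{uu}(\si)$ of the \emph{unstable} subspace, and for the resulting nonempty intersection to contradict Lemma \ref{lem:matching} the forbidden region must be $(E^s(\si)\oplus E^{uu}(\si))\setminus(E^s(\si)\cup E^{uu}(\si))$ --- which is the conclusion of Lemma \ref{lem:matching} only in the branch $\dim G^{cs}\ge\ind(\si)$. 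A dominated splitting of index $\ind(\si)-1$, which is all that the partial hyperbolicity hypothesis (or your proposed fundamental sequence) provides, falls into the \emph{other} branch: Lemma \ref{lem:elp-dom} then splits the \emph{stable} subspace, $E^s(\si)=E^{ss}(\si)\oplus E^c(\si)$, and the forbidden region becomes $(E^{ss}(\si)\oplus E^u(\si))\setminus(E^{ss}(\si)\cup E^u(\si))$, which is automatically avoided: since $\si$ is Lorenz-like, $W^{ss}(\si)\cap C(\si)=\{\si\}$ forces $B_{\si}(C(\si))\subset E^c(\si)\oplus E^u(\si)$, so no contradiction arises. You have in effect swapped the two branches of the lemma. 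The entire purpose of the central-exponent analysis in the paper (Lemmas \ref{lem:ergodic-measure} and \ref{lem:invariant-measure} together with Corollary \ref{cor:le-away-tang}) is to show that the Gibbs states have a one-dimensional center bundle with exponent tending to zero, so that the normal bundle over $\La=\lim\supp(\mu'_n)$ carries a dominated splitting $(G^s\oplus G^c)\oplus G^u$ of index $\ind(\si)$ --- one more dimension of domination than partial hyperbolicity gives. That extra dimension is the crux of the proof, not ``remaining bookkeeping.''

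Two further points. First, your strategy is vacuous when $\dim F=2$: then $\dim E^u(\si)=1$, there is no nontrivial splitting $E^u(\si)=E^c(\si)\oplus E^{uu}(\si)$, and Lemma \ref{lem:gen_1}(3) cannot be applied; the paper treats this case by a completely different cross-section argument (Section \ref{sect:codim-2}). Second, the passage from $u$-saturation of the measures to $\La=C(\si)$ is not automatic: the unstable plaques supplied by Lemma \ref{lem:stable-manifolds} have size proportional to $\|X(x)\|$, which degenerates near singularities, so one must first show that the set of hyperbolic points accumulates on a singularity and then run the local cone analysis of Section \ref{sect:contradiction} to produce a uniform-size open disk of $W^u_{loc}(\si)$ or $W^{uu}_{loc}(\si)$ inside $\La$, before Lemma \ref{lem:gen_1}(1)--(2) give $\La\supset C(\si)$. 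Your sketch treats this as an immediate consequence of saturatedness surviving the $C^1$ limit, which it is not.
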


In the case when $\dim F=2$, Proposition~\ref{prop:main-proposition} can be proved by generalizing a result of \cite{GY}, see Theorem \ref{thm.gxyz}. However, their approach relies essentially on the codimension-two condition, hence cannot be used to the case when $\dim F\geq 3$. A new approach is needed here.

As we discussed in Section~\ref{sect:preliminaries}, Li-Gan-Wen \cite{LGW05} discovered a useful mechanism (Lemma \ref{lem:matching}) governing the way how periodic orbits with a given dominated splitting can accumulate on a singularity. One would like to show that, assuming $C(\si)$ contains no periodic orbits, then there exists a sequence of periodic orbits accumulating on a singularity in $C(\si)$ that violates the mechanism mentioned above.

Let us be more precise.
Let $\{\ga_n\}$ be a sequence of periodic orbits of $X$, such that $\ga_n\to \La$ in the Hausdorff topology, where $\La\subset C(\si)$ is a compact invariant set of $X$. Suppose the sequence $\{\ga_n\}$ can be obtained such that it admits a dominated splitting of index $\ind(\si)$ with respect to the linear Poincar\'e flow, and $\si\in \La$. The mechanism in \cite{LGW05} (Lemma \ref{lem:matching}) then asserts that there is a dominated splitting of the unstable subspace $E^u(\si)=E^{c}(\si)\oplus E^{uu}(\si)$ such that $\dim E^{c}=1$; moreover, the flow direction of any sequence of points $x_n\in \ga_n$ can not approximate a 1-dimensional subspace in the ``restricted area'' $(E^s(\si)\oplus E^{uu}(\si))\setminus (E^s(\si)\cup E^{uu}(\si))$, where $E^s(\si)$ is the stable subspace of $T_{\si}M$. However, this is not a strong restriction to periodic orbits accumulating $\si$, as the sequence of periodic orbits may just bypass the ``restricted area'' and still admits a dominated splitting of index $i=\ind(\si)$.

Therefore, the main difficulty is to show the existence of a sequence of periodic orbits that admits a dominated splitting of some appropriate index ({\itshape i.e.,} $\ind(\si)$) and, at the same time, ``travels into'' the corresponding ``restricted area''. Note that this is not a problem when $\dim F=2$ as in such a case one would have $\dim E^u(\si)=1$ (recall that $\sigma$ must be Lorenz-like, see Proposition~\ref{prop:homogeneous-index}).

Since $C(\si)$ is Lyapunov stable, the unstable manifold of any singularity is contained in the class. Thus, one may get the wrong impression that if there is a dominated splitting $E^u(\si)=E^{c}(\si)\oplus E^{uu}(\si)$ with $\dim E^{c}=1$ and somehow one can obtain a sequence of periodic orbits $\{\gamma_n\}$ admitting an index $i=\ind(\si)$ dominated splitting w.r.t. the linear Poincar\'e flow, such that $\{\gamma_n\}$ accumulates on certain part of the strong unstable manifold $W^{uu}(\si)$ tangent to $E^{uu}(\si)$, then $\{\gamma_n\}$ will naturally ``travel into'' the ``restricted area'' $(E^s(\si)\oplus E^{uu}(\si))\setminus (E^s(\si)\cup E^{uu}(\si))$. This may not be true in general, even if the vector field can be linearized in a neighborhood of $\si$.
Nonetheless, as $X$ is $C^1$ generic, one can show that if $\{\gamma_n\}$ approximates the entire class $C(\si)$, then it will ``travel into'' the ``restricted area'' (see Lemma \ref{lem:gen_1}).

We therefore try to obtain a sequence $\{\gamma_n\}$ that admits an index $i=\ind(\si)$ dominated splitting and, at the same time, approximates the whole class $C(\si)$. Note that we may consider periodic orbits of vector fields arbitrarily $C^1$ close to $X$, as long as they admit a uniform index $i$ dominated splitting. For such a purpose, we will consider a special sequence of $C^2$ vector fields $X_n$ that converges to $X$ in the $C^1$ topology, and locate for each $X_n$ a special invariant measure $\mu_n$ (physical-like measure, see Section \ref{sect:physical-like-measure}). Then we will show that $\{\supp(\mu_n)\}$ converges in the Hausdorff topology to $C(\si)$. Moreover, the support of each $\mu_n$ can be approximated by periodic orbits of $X_n$ admitting a dominated splitting of index $i=\ind(\si)$ with respect to the linear Poincar\'e flow\footnote{In fact, we will consider a measure $\mu'_n$ that is the restriction of $\mu_n$ to a $\phi^{X_n}_t$-invariant set $\La_n$ with $\mu_n(\La_n)\to 1$ as $n\to\infty$. See Section \ref{sect:hyperbolic-pts}.}, which is uniform in $n$.

The reason $C^2$ vector fields are considered is that for such a vector field, an ergodic measure satisfying Pesin's entropy formula is {\em $u$-saturated} in the sense that its support contains the unstable manifold of almost every points~\cite{LeY85}.
This constitutes one of the primitive components of our proof.
We will come back to this in Section \ref{sect:proof-of-main-prop}.

\subsection{The reduction and proof of Theorem A}\label{sect:reduction-main}

Our main theorem (Theorem A) is a simple consequence of Proposition~\ref{prop:main-proposition} and the following proposition.

\begin{Proposition}\label{prop:ph-or-horseshoe}
Given $X$ and $C(\sigma)$ as stated in the beginning of this section, one of the following properties holds for $C(\si)$:
\begin{itemize}
\item there is a partially hyperbolic splitting $T_{C(\si)}M=E^{ss}\oplus F$ w.r.t. the tangent flow $\Phi^X_t$, s.t. $E^{ss}$ is contracting and $\dim E^{ss}=\ind(\si)-1$;
\item $C(\si)$ is a homoclinic class.
\end{itemize}
\end{Proposition}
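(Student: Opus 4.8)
If $C(\si)$ contains a periodic orbit, then by Lemma~\ref{lem:gen_0}(\ref{item:gen_0-homoclinic-class}) it coincides with the homoclinic class of that orbit and the second alternative holds; so assume $C(\si)$ is aperiodic, equivalently not a homoclinic class. Then Proposition~\ref{prop:non-singular-set} forces $C(\si)$ to contain \emph{no} non-singular compact invariant subset, and a standard argument then gives that every ergodic measure supported on $C(\si)$ is a Dirac mass $\de_\rho$ at a singularity $\rho\in\Sing(X)\cap C(\si)$. By Theorem~C (Proposition~\ref{prop:homogeneous-index}) each such $\rho$ is Lorenz-like with $\ind(\rho)=\ind(\si)=:d_s>1$; in particular $\sv(\rho)>0$, there is a dominated splitting $E^s(\rho)=E^{ss}(\rho)\oplus E^c(\rho)$ with $\dim E^{ss}(\rho)=d_s-1$, and $W^{ss}(\rho)\cap C(\si)=\{\rho\}$. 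Also $d_s\le\dim M-1$, since $C(\si)$ is nontrivial. The task is to produce a partially hyperbolic splitting $T_{C(\si)}M=E^{ss}\oplus F$ with $\dim E^{ss}=d_s-1$, $X(x)\in F(x)$, and $\Phi_t\mid E^{ss}$ uniformly contracting.

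\emph{Step 1: a fundamental sequence carrying an index $(d_s-1)$ dominated splitting.} By Lemma~\ref{lem:gen_0}(\ref{item:gen_0-chain-transitive-set}) there are periodic orbits $\ga_n$ of $X$ with $\ga_n\to C(\si)$ in the Hausdorff topology; put $\fF=\{(\ga_n,X)\}$. As $X$ is away from homoclinic tangencies, Lemma~\ref{lem:BasicAwayTang} gives, for $n$ large, a $T$-dominated splitting $\cN_{\ga_n}=G^s_n\oplus G^c_n\oplus G^u_n$ for the linear Poincar\'e flow with $\dim G^c_n\le 1$, $G^s_n$ contracting and $G^u_n$ expanding at period. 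Because $\ga_n$ accumulates a Lorenz-like singularity of $C(\si)$ (all of which satisfy $\sv>0$), the sectional ($\wedge^2$) argument already used in Lemma~\ref{lem:lorenz-like} and Corollary~\ref{cor:lorenz-like2} — applied to $G^s_n\oplus\langle X\rangle$ and to $G^u_n\oplus\langle X\rangle$, extracting Pliss points accumulating on a singularity that is charged by a weak-$*$ limit of the length measures of $\ga_n$ — yields $\dim G^s_n\le d_s-1$ and $\dim G^u_n\le\dim M-d_s$. Combined with $\dim G^s_n+\dim G^c_n+\dim G^u_n=\dim M-1$, this gives $\dim G^s_n\le d_s-1\le\dim G^s_n+\dim G^c_n$, so exactly one of $G^s_n$, $G^s_n\oplus G^c_n$ has dimension $d_s-1$; calling it $F^{cs}_n$ and its complement $F^{cu}_n$, we obtain a $T$-dominated splitting $\cN_{\ga_n}=F^{cs}_n\oplus F^{cu}_n$ of index $d_s-1$. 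Thus $\fF$ admits a $T$-dominated splitting of index $d_s-1$; passing to a subsequence and taking limits, this induces (Remark~\ref{rmk:fundamental-sequences}) a dominated splitting $\cN_{C(\si)}=F^{cs}\oplus F^{cu}$ of index $d_s-1$, equivalently $\widetilde{\cN}_{B(C(\si))}=F^{cs}\oplus F^{cu}$ for the extended linear Poincar\'e flow, with $\be(B(C(\si)))=C(\si)$.

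\emph{Step 2: uniform contraction of $F^{cs}$, and the passage to $TM$.} Since $F^{cs}$ is an invariant subbundle dominated by $F^{cu}$, to conclude it is uniformly contracting it suffices to check that its exponents are negative at every ergodic measure, that is, at each $\de_\rho$. For such $\rho$, $W^{ss}(\rho)\cap C(\si)=\{\rho\}$ gives $B_\rho(C(\si))\subset E^{cu}(\rho)$; choosing the Riemannian metric near $\rho$ so that $E^{ss}(\rho)\perp E^{cu}(\rho)$ and applying Lemma~\ref{lem:elp-dom} with $\dim F^{cs}=d_s-1<\ind(\rho)$, one gets $F^{cs}(L)=\{L\}\times E^{ss}(\rho)$ for every $L\in B_\rho(C(\si))$, on which $\tilde{\psi}^*_t$ acts as $\|\Phi_t\!\mid_L\|^{-1}\,\Phi_t\!\mid_{E^{ss}(\rho)}$; as $L\subset E^{cu}(\rho)$ has $\Phi_t$-growth rate at least $\la_s$ while $\Phi_t\!\mid_{E^{ss}(\rho)}$ contracts at rate at most $\la_{s-1}<\la_s$ (the Lorenz-like gap), this is uniformly contracting. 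Hence $F^{cs}$ is uniformly $\psi^*_t$-contracting on $C(\si)$. Finally, $\sv(\rho)>0$ for all the singularities yields that $(F^{cs},\tilde{\psi}_t)$ is dominated by $(\cP,\tilde{\Phi}_t)$ over $B(C(\si))$, so the argument of \cite[Lemma~5.5, 5.6]{LGW05} (exactly as in the proof of Corollary~\ref{cor:lorenz-like}) produces a dominated splitting $T_{C(\si)}M=E^{ss}\oplus F$ of the tangent bundle with $\dim E^{ss}=\dim F^{cs}=d_s-1$ and $X(x)\in F(x)$; being a dominated invariant bundle with negative exponents at every (Dirac) ergodic measure, $E^{ss}$ is uniformly $\Phi_t$-contracting, and the first alternative holds.

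\textbf{Main obstacle.} The two delicate points are, in Step~1, pinning $\dim G^s_n$ through the sectional argument near the singularities — which requires Pliss points converging to a singularity, and hence a control on where the length measures of $\ga_n$ concentrate, ultimately resting on the absence of non-singular compact invariant subsets — and, in Step~2, the passage from the (extended, rescaled) linear Poincar\'e flow on the normal bundle to a genuine partially hyperbolic splitting of $TM$ of the \emph{exact} dimension $d_s-1$, which is precisely where the hypothesis $\sv(\rho)>0$ and the Liao--Gan--Wen matching machinery are indispensable.
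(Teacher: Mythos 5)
There is a genuine gap at the foundation of your argument: the claim that once $C(\si)$ is aperiodic and contains no non-singular compact invariant subset, ``every ergodic measure supported on $C(\si)$ is a Dirac mass $\de_\rho$ at a singularity.'' The absence of non-singular compact invariant subsets only says that the \emph{support} of every ergodic measure (and the $\om$-limit set of every orbit) meets $\Sing(X)$; it does not exclude nontrivial ergodic measures, which give zero mass to $\Sing(X)$ while having a singularity in their support. What aperiodicity actually yields (via Proposition \ref{prop:hyp-measure}) is that every nontrivial ergodic measure on $C(\si)$ is non-hyperbolic; such measures are precisely what Sections \ref{sect:preliminaries-measure}--\ref{sect:proof-of-main-prop} of the paper are built to handle, so they cannot be assumed away. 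This false reduction is used at both critical junctures. In Step 1, the $\wedge^2$-argument of Lemma \ref{lem:lorenz-like} is proved for homoclinic loops, where the Pliss point $z$ automatically satisfies $\om(z)=\{\si\}$ and the sectional contraction along $\orb^+(z)$ localizes at the singularity; for a single sequence $\ga_n\to C(\si)$ the limit Pliss point may be regular with $\om(z)$ merely \emph{containing} a singularity, and the bound $\dim G^s_n\le\ind(\si)-1$ does not follow from the cited argument. In Step 2, uniform contraction of $F^{cs}$ and its domination by the flow direction are checked only at the measures $\de_\rho$, leaving the nontrivial (non-hyperbolic) ergodic measures unverified --- and this is exactly where the dichotomy can fail: when $F^{cs}$ is not uniformly contracting one does not obtain partial hyperbolicity but instead periodic orbits with nearly neutral stable bundle and hence homoclinic intersections.

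The paper's route avoids both problems. The index-$(\ind(\si)-1)$ dominated splitting of $\cN_{C(\si)}$ is obtained pointwise (Lemma \ref{lem:lorenz-like3}): the Wen--Xia connecting lemma produces, through any $x\in C(\si)$, a homoclinic orbit of $\si$ for a nearby field, reducing to the loop setting of Corollary \ref{cor:lorenz-like2} where the sectional argument is valid. The upgrade to a splitting of $TM$ with $E^{ss}$ uniformly contracting is delegated to Lemma \ref{lem:mixingdominated} (\cite[Proposition 4.9]{GaY}), whose hypotheses are essentially the ones you verified and whose dichotomy --- partial hyperbolicity, or periodic orbits of the right index with weak stable contraction whose homoclinic classes meet $C(\si)$ --- is the honest substitute for your Step 2; the case where some $x\in C(\si)$ has $\om(x)\cap\Sing(X)=\emptyset$ is handled separately by Proposition \ref{prop:non-singular-set}. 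To keep your self-contained Step 2 you would have to control the $F^{cs}$-exponents of all nontrivial ergodic measures, which is essentially the content of that lemma.
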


Assuming Proposition~\ref{prop:main-proposition} and Proposition~\ref{prop:ph-or-horseshoe}, now let us prove the main theorem.

\begin{proof}[Proof of Theorem A]
Let $\La$ be any nontrivial Lyapunov stable chain recurrence class of $X$. If $\La\cap\Sing(X)=\emptyset$, then Proposition \ref{prop:non-singular-set} implies that $\La$ is a homoclinic class. If $\La$ contains a singularity $\si$ of $X$, then by Proposition \ref{prop:ph-or-horseshoe} either it is a homoclinic class, or there is a partially hyperbolic splitting $T_{\La}M=E^{ss}\oplus F$ with $E^{ss}$ contracting and $\dim E^{ss}=\ind(\si)-1$. In the latter case, we apply Proposition~\ref{prop:main-proposition} which shows that $\Lambda$ is a homoclinic class.
\end{proof}

The proof of Proposition~\ref{prop:ph-or-horseshoe} occupies the next subsection.

\subsection{Partial hyperbolicity or homoclinic intersections}
\label{sect:ph-or-horseshoe}
In this subsection, we prove Proposition \ref{prop:ph-or-horseshoe}. We rely on the following result by \cite{GY}.

\begin{Lemma}[{\cite[Proposition 4.9]{GY}}]\label{lem:mixingdominated}
	Let $\Lambda$ be a compact invariant set of $X\in \xX^1(M)$ verifying the following properties:
	\begin{itemize}
		\item $\Lambda\setminus{\rm Sing}(X)$ admits a dominated splitting
			${\cal N}_{\Lambda}=G^{cs}\oplus G^{cu}$ in the normal bundle w.r.t. the linear Poincar\'e flow $\psi_t$, with index $i$.
		\item Every singularity $\sigma\in\Lambda$ is hyperbolic and $\ind(\sigma)>i$. Moreover, $T_\sigma M$ admits a partially hyperbolic splitting $T_{\sigma}M=E^{ss}\oplus F$ with respect to the tangent flow, where $\dim E^{ss}=i$ and for the corresponding strong stable manifolds $W^{ss}(\sigma)$, one has $W^{ss}(\sigma)\cap \Lambda=\{\sigma\}$.
		\item For every $x\in\Lambda$, one has $\omega(x)\cap {\rm Sing}(X)\neq\emptyset$.
	\end{itemize}
	Then one has
	\begin{itemize}
		\item either $\Lambda$ admits a partially hyperbolic splitting $T_\Lambda M=E^{ss}\oplus F$ with respect to the tangent flow $\Phi_t$, where $\dim E^{ss}=i$,
		\item or, there is a sequence of hyperbolic periodic orbits $\{\gamma_n\}_{n\in\NN}$ of index $i$ such that
		\begin{itemize}
			\item $\tau(\gamma_n)\to\infty$, as $n\to\infty$,
			\item $H(\gamma_n)\cap\Lambda\neq\emptyset$ for each $n\in\NN$,
			\item There is $T>0$ such that for $x_n\in\gamma_n$,
				\[\lim_{n\to\infty}\frac{1}{\lfloor\tau(\gamma_n)/T\rfloor}\sum_{i=0}^{\lfloor\tau(\gamma_n)/T\rfloor-1}\log\|\psi_T|_{G^s(x_n)}\|=0,\]
				where $G^s(\ga_n)$ is the stable subbundle of $\cN_{\ga_n}$.
		\end{itemize}
	\end{itemize}
\end{Lemma}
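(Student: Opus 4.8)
The plan is to produce a candidate strong-stable subbundle $E^{ss}\subset T_\Lambda M$ of dimension $i$, prove it is continuous and $\Phi_t$-invariant, and then dichotomize according to whether the rescaled linear Poincar\'e flow contracts $G^{cs}$ uniformly. Over $\Lambda\setminus\Sing(X)$ the bundle $G^{cs}\oplus\langle X\rangle$ is $\Phi_t$-invariant of dimension $i+1$, and the hypothesis that $\mathcal N_\Lambda=G^{cs}\oplus G^{cu}$ is dominated, read through the rescaled linear Poincar\'e flow, says exactly that inside $G^{cs}\oplus\langle X\rangle$ the flow direction $\langle X\rangle$ dominates $G^{cs}$ with respect to $\Phi_t$. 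A graph transform in the affine bundle of $i$-planes transverse to $\langle X\rangle$ then yields an $i$-dimensional $\Phi_t$-invariant subbundle $E^{ss}$, which at each regular point is a graph of uniformly bounded slope over $G^{cs}$; at each $\sigma\in\Sing(X)\cap\Lambda$ we set $E^{ss}(\sigma)$ equal to the hypothesised strong-stable subspace, which has dimension $i$, is uniformly $\Phi_t$-contracted and is dominated by $F(\sigma)$.

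The step I expect to be the main obstacle is the continuity of $E^{ss}$ at the singularities, and it is here that all three hypotheses are used together. One must show $E^{ss}(x_n)\to E^{ss}(\sigma)$ whenever $x_n\in\Lambda\setminus\Sing(X)$ with $x_n\to\sigma$; since $E^{ss}(x_n)$ is a small-slope graph over $G^{cs}(x_n)$, it suffices to prove $G^{cs}(x_n)\to E^{ss}(\sigma)$ together with the fact that the limiting flow directions $B_\sigma(\Lambda)$ stay off $E^{ss}(\sigma)$. For this I would invoke Li--Gan--Wen's matching mechanism in the form of Lemma~\ref{lem:elp-dom} and Lemma~\ref{lem:matching} (in the case $\dim G^{cs}=i<\ind(\sigma)$): it identifies $G^{cs}$ with $\{L\}\times E^{ss}(\sigma)$ over those $L\in B_\sigma(\Lambda)$ lying in $E^{cu}(\sigma)$, and it confines $B_\sigma(\Lambda)$ away from the ``restricted area'' $(E^{ss}(\sigma)\oplus E^u(\sigma))\setminus(E^{ss}(\sigma)\cup E^u(\sigma))$. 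The hypothesis $W^{ss}(\sigma)\cap\Lambda=\{\sigma\}$ rules out the remaining degenerate possibility, namely an orbit of $\Lambda$ entering or leaving a small neighbourhood of $\sigma$ tangentially to $E^{ss}(\sigma)$, while the hypothesis $\omega(x)\cap\Sing(X)\ne\emptyset$ for all $x\in\Lambda$ forces every orbit to keep returning near the singularities, which is what propagates this local picture to all of $\Lambda$. Granting continuity, invariance of $E^{ss}$ is immediate, and $E^{ss}$ together with its $\Phi_t$-invariant complement $F$ (with $X(x)\in F(x)$ at regular points) gives a dominated splitting $T_\Lambda M=E^{ss}\oplus F$ of index $i$, exactly as in the flow construction of \cite[\S5]{LGW05} carried out in the proof of Corollary~\ref{cor:lorenz-like}.

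It remains to run the dichotomy. Suppose first that $\psi^*_t|_{G^{cs}}$ is uniformly exponentially contracting on $\Lambda\setminus\Sing(X)$; equivalently, every invariant measure on $\Lambda$ has negative Lyapunov exponent for $\psi^*_t$ along $G^{cs}$ (the Dirac mass $\delta_\sigma$ has such a negative exponent precisely because $E^{ss}(\sigma)$ is dominated by $F(\sigma)$, and the equivalence with uniform contraction is the usual compactness-plus-subadditivity argument). Then $E^{ss}$ is uniformly $\Phi_t$-contracted by the argument of \cite[Lemmas~5.5 and 5.6]{LGW05}, the rescaling factor $\|X(x)\|^{-1}$ being harmless since it is large only near the singularities where the strong contraction of $E^{ss}(\sigma)$ dominates; this is the first alternative. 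If instead $\psi^*_t|_{G^{cs}}$ is not uniformly contracting, then applying the Pliss lemma (Lemma~\ref{lem:pliss}) to this failure produces regular points $x_k\in\Lambda$ and times $t_k\to\infty$ along which the orbit segments $\{\phi_t(x_k):0\le t\le t_k\}$ are $\psi^*_t$-quasi-hyperbolic for $G^{cs}\oplus G^{cu}$ with contraction rate $\eta_k\to0$ and with nearly periodic endpoints; Liao's closing lemma for the linear Poincar\'e flow \cite{Lia89} then closes these segments into genuine hyperbolic periodic orbits $\gamma_k$ of $X$, of index $i$ (their stable bundle $G^s(\gamma_k)$ inheriting $G^{cs}$), with $\tau(\gamma_k)\to\infty$, lying in arbitrarily small neighbourhoods of $\Lambda$ (so that $H(\gamma_k)\cap\Lambda\ne\emptyset$), and whose time-average of $\log\|\psi_T|_{G^s(\gamma_k)}\|$ over a period tends to $0$ because $\eta_k\to0$. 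This is the second alternative, and completes the proof plan.
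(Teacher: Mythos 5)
The paper gives no proof of this lemma; it is quoted verbatim from \cite[Proposition 4.9]{GaY}, so I can only judge your plan on its own terms. The dichotomy you run in your final paragraph (uniform contraction of $\psi^*_t|_{G^{cs}}$ versus Liao's selecting/shadowing machinery producing periodic orbits with vanishing average contraction on the stable bundle) is the correct skeleton. But the first two paragraphs rest on a false claim. The domination of $\cN_{\La}=G^{cs}\oplus G^{cu}$ with respect to $\psi_t$ is equivalent to the same domination with respect to $\psi^*_t$ (the rescaling factor cancels between the two factors, as noted in Section \ref{sect:plaque-families}); it says \emph{nothing} about the relation between $G^{cs}$ and the flow direction under $\Phi_t$. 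The condition ``$\langle X\rangle$ dominates $G^{cs}$ inside $G^{cs}\oplus\langle X\rangle$'' means $\|\psi_t|_{G^{cs}(x)}\|\le\tfrac12\,\|\Phi_t|_{\langle X(x)\rangle}\|$ for $t\ge T$, i.e.\ $\|\psi^*_t|_{G^{cs}(x)}\|\le\tfrac12$ — which is precisely the uniform contraction of $\psi^*_t|_{G^{cs}}$ that your dichotomy is supposed to decide afterwards. So the unconditional graph-transform construction of a continuous, $\Phi_t$-invariant, dominated $i$-dimensional bundle $E^{ss}\subset T_{\La}M$ is circular: such a bundle exists exactly in the first branch of the dichotomy. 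Compare Corollary \ref{cor:lorenz-like}, where the domination of $(G^{cs},\tilde\psi_t)$ by the flow direction has to be \emph{derived} from the matching at the singularity and from the structure of the ergodic measures before \cite[Lemmas 5.5, 5.6]{LGW05} can be invoked; the construction of $E^{ss}$ must be deferred to the branch where that contraction holds, and it is there that the hypotheses $W^{ss}(\si)\cap\La=\{\si\}$ and $\om(x)\cap\Sing(X)\neq\emptyset$ do their work.

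A second, smaller gap: in the other branch, producing periodic orbits ``lying in arbitrarily small neighbourhoods of $\La$'' does not yield $H(\ga_n)\cap\La\neq\emptyset$. That conclusion requires transverse intersections between $W^{s/u}(\ga_n)$ and the invariant manifolds of points of $\La$, which come from the uniform (rescaled) size of the invariant manifolds at the hyperbolic times of the quasi-hyperbolic strings selected by Liao's lemma (cf.\ Lemma \ref{lem:stable-manifolds} and Theorem \ref{thm:Liao}); since those manifolds shrink like $\|X(x)\|$ near the singularities, this step needs a genuine argument rather than mere proximity of $\ga_n$ to $\La$.
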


We will need another lemma for the proof of Proposition \ref{prop:ph-or-horseshoe}.
\begin{Lemma}\label{lem:lorenz-like3}
Let $X\in\mathscr{X}^1(M)\setminus\Cl(\mathcal{HT})$ be a $C^1$ generic vector field and $\si\in\Sing(X)$ a hyperbolic singularity with $\sv(\si)>0$. Assume that the chain recurrence class $C(\si)$ is nontrivial, then there is a dominated splitting $\cN_{C(\si)}=G^{cs}\oplus G^{cu}$, such that $\dim G^{cs}=\ind(\si)-1$.
\end{Lemma}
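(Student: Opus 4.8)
The plan is to produce a fundamental sequence whose limit is the whole class $C(\si)$ and which carries a dominated splitting of index $\ind(\si)-1$, then push that splitting down to $\cN_{C(\si)}$. First I would invoke genericity: by item \ref{item:gen_0-continuity-chain-class} of Lemma \ref{lem:gen_0} we may assume $X$ is a continuity point of $C(\si,X)$, and by Theorem \ref{thm:lorenz-like} (applicable since $\sv(\si)>0$ and $C(\si)$ is nontrivial) the singularity $\si$ is Lorenz-like, so in particular $\ind(\si)>1$ and there is already a dominated splitting $E^s(\si)=E^{ss}(\si)\oplus E^c(\si)$ with $\dim E^c(\si)=1$ and $W^{ss}(\si)\cap C(\si)=\{\si\}$. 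Next, since $C(\si)$ is nontrivial and chain transitive, item \ref{item:gen_0-chain-transitive-set} of Lemma \ref{lem:gen_0} gives a sequence of periodic orbits $\ga_n$ with $\ga_n\to C(\si)$ in the Hausdorff topology; by item \ref{item:gen_0-fund-seq} we may even take these to be periodic orbits of $X$ itself. Thus $\fF=\{(\ga_n,X)\}$ is a fundamental sequence with limit $C(\si)$.

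The core step is to show $\fF$ admits a dominated splitting of index exactly $\ind(\si)-1$. Here I would re-run the argument from the Claim inside the proof of Lemma \ref{lem:lorenz-like}: apply Lemma \ref{lem:BasicAwayTang} to get, for $n$ large, the splitting $\cN_{\ga_n}=G^s_n\oplus G^c_n\oplus G^u_n$ with $\dim G^c_n\in\{0,1\}$ and uniform domination and hyperbolicity-at-period constants. Passing to a subsequence so that the dimensions $s,c,u$ stabilize, I would use the exterior-power/Pliss argument (as in \cite[Lemma 4.1]{SGW}) applied to $E_n=G^s_n\oplus\langle X(\ga_n)\rangle$ to extract a sectionally contracting subspace $E(\si)\subset T_\si M$ of dimension $s+1$; since $\sv(\si)>0$ forces a sectionally expanding complement of dimension $\dim M-\ind(\si)+1$, a dimension count gives $s\le\ind(\si)-1$, and symmetrically (working with $G^u_n\oplus\langle X(\ga_n)\rangle$) $u\le\dim M-\ind(\si)$, hence $s+c=\ind(\si)-1$ or $s=\ind(\si)-1$. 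In either case, regrouping the three bundles into $F^{cs}_n\oplus F^{cu}_n$ with $\dim F^{cs}_n=\ind(\si)-1$ yields a $T$-dominated splitting of the fundamental sequence; by Remark \ref{rmk:fundamental-sequences} this descends to a dominated splitting $\cN_{C(\si)}=G^{cs}\oplus G^{cu}$ of index $\ind(\si)-1$, since $\be(\De(\fF))=C(\si)$.

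The main obstacle I anticipate is ensuring that the sequence of periodic orbits $\ga_n$ can be chosen with limit equal to the \emph{entire} class $C(\si)$ (not merely some invariant subset) while simultaneously retaining the uniform structure from Lemma \ref{lem:BasicAwayTang}; this is exactly what items \ref{item:gen_0-chain-transitive-set} and \ref{item:gen_0-fund-seq} of Lemma \ref{lem:gen_0} are designed to handle, so the genericity hypotheses do the heavy lifting. A secondary technical point is that the exterior-power estimate only directly controls $s$ and $u$, leaving the one-dimensional center $G^c_n$ ambiguous as to which side of the index $\ind(\si)-1$ it falls on; but since $c\le 1$ the two possibilities $s=\ind(\si)-1$ and $s+c=\ind(\si)-1$ together with domination of the regrouped splitting suffice, so no finer control of $G^c_n$ is needed. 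Everything else is bookkeeping: subsequence extraction for convergence of the bundles, and the standard fact that a dominated splitting over a fundamental sequence extends to the Hausdorff limit.
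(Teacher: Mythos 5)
There is a genuine gap at the core step. The exterior-power/Pliss argument from the Claim inside Lemma \ref{lem:lorenz-like} cannot simply be ``re-run'' for a fundamental sequence whose limit is all of $C(\si)$. In that Claim, after Pliss's lemma produces points $x_n\in\ga_n$ with long forward stretches of sectional contraction, one passes to a limit point $z$ and then \emph{transports the sectionally contracting subspace to $T_\si M$ using the fact that $\phi^X_t(z)\to\si$ as $t\to+\infty$}; only then does the dimension count against the sectionally expanding subspace coming from $\sv(\si)>0$ yield $s\le\ind(\si)-1$ (and symmetrically $u\le\dim M-\ind(\si)$). That convergence of the forward orbit to $\si$ is automatic when the limit of the fundamental sequence is $\Cl(\Ga)=\Ga\cup\{\si\}$ for a homoclinic orbit $\Ga$, but for your sequence $\ga_n\to C(\si)$ the Pliss limit point $z$ is an arbitrary point of $C(\si)$ whose $\omega$-limit set need not meet $\Sing(X)$ at all. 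Without landing at $\si$, you get no sectionally contracting subspace of $T_\si M$ and hence no bound on $s$ or $u$; chain transitivity of $C(\si)$ does not rescue this, since it says nothing about individual orbits accumulating on $\si$. Note also that the conclusion of the lemma does not retroactively force $s\le\ind(\si)-1$ for the approximating orbits, since $G^s_n$ could admit finer dominated subsplittings.

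The paper's proof avoids the global sequence entirely and instead localizes: by Corollary \ref{cor:BasicAwayTang} and Remark \ref{rmk:fundamental-sequences} it suffices to show that \emph{every point} $x\in C(\si)$ lies in the limit of \emph{some} fundamental $(\ind(\si)-1)$-sequence (the splittings patch because the domination constant $T$ and the index are uniform). For each $x$, the connecting lemma for pseudo-orbits gives $C(\si)\subset\Cl(W^u(\si))\cap\Cl(W^s(\si))$, and the Wen--Xia connecting lemma then produces $Y$ close to $X$ with a homoclinic orbit of $\si_Y$ passing near $x$; Corollary \ref{cor:lorenz-like2} — which is exactly where the exterior-power argument legitimately applies, because there the limit \emph{is} the closure of a homoclinic orbit — supplies the fundamental $(\ind(\si)-1)$-sequence through a neighborhood of $x$. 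If you want to keep your outline, you must replace the global exterior-power step by this pointwise construction (or otherwise prove that the Pliss points can be chosen with forward orbits asymptotic to $\si$, which you have not done).
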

\begin{proof}
By Corollary \ref{cor:BasicAwayTang} and Remark \ref{rmk:fundamental-sequences}, we need only show that every point of $C(\si)$ is contained in the limit of a fundamental $(\ind(\si)-1)$-sequence. Using the connecting lemma for pseudo-orbits \cite{BoC}, we can show that $C(\si)\subset \Cl(W^u(\si))\cap\Cl(W^s(\si))$ (see item 2 of Lemma \ref{lem:gen_1}). Now for any $x\in C(\si)$, applying the connecting lemma of Wen-Xia \cite{WeX}, there is a vector field $Y$ arbitrarily $C^1$ close to $X$ such that $\si_Y$ has a homoclinic orbit passing by an arbitrarily small neighborhood of $x$. Then in view of Corollary \ref{cor:lorenz-like2}, one can obtain a fundamental $(\ind(\si)-1)$-sequence with $x$ contained in its limit.
\end{proof}

\begin{proof}[Proof of Proposition \ref{prop:ph-or-horseshoe}]
Let us check that the conditions in Lemma \ref{lem:mixingdominated} are satisfied by $C(\si)$:
\begin{itemize}
\item there is a dominated splitting $\cN_{C(\si)}=G^{cs}\oplus G^{cu}$, $\dim G^{cs}=\ind(\si)-1$: this is true by Proposition \ref{prop:homogeneous-index} and Lemma \ref{lem:lorenz-like3};
\item every singularity $\rho\in C(\si)$ is Lorenz-like and $\ind(\rho)=\ind(\si)$: by Proposition \ref{prop:homogeneous-index}.
\end{itemize}

Then either there exists $x\in C(\si)$ such that $\omega(x)\cap\Sing(X)=\emptyset$ and $C(\si)$ is a homoclinic class by Proposition \ref{prop:non-singular-set}, or Lemma \ref{lem:mixingdominated} can be applied and the conclusion also follows.
\end{proof}

\section{Preliminaries on invariant measures}\label{s.7}
\label{sect:preliminaries-measure}
In this section we give some preliminaries regarding invariant measures that will be used in the proof of the general case of  main proposition (Proposition \ref{prop:main-proposition}).
We will consider Borel probability measures on $M$, the set of which is denoted by $\mathfrak{M}$ and endowed with the weak* topology.

\subsection{Ergodic closing lemma and Lyapunov exponents}

Given $X\in\xX^1(M)$, an ergodic measure $\mu$ for $\phi^X_t$ is called {\em nontrivial} if it is not supported on a critical element.
A point $x\in M\setminus\Sing(X)$ is called {\em strongly closable} if for any $C^1$ neighborhood $\cU$ of $X$ and for any $\vep>0$, there is a vector field $Y\in\cU$, a point $y\in M$ and constants $\tau>0$, $L>0$, such that
\begin{itemize}
	\item $\phi^Y_{\tau}(y)=y$;
	\item $d(\phi^X_t(x),\phi^Y_t(y))<\vep$, for all $0\leq t\leq \tau$;
	\item $Y(z)=X(z)$, for all $z\in M\setminus B_{\vep}(\phi^X_{[-L,0]}(x))$, where $\phi^X_{[-L,0]}(x)=\{\phi^X_t(x):-L\leq t\leq 0\}$.
\end{itemize}
The set of strongly closable points of $X$ is denoted by $\Si(X)$. The following lemma is the flow version of Ma\~n\'e's Ergodic Closing Lemma \cite{Man82}.

\begin{Lemma}[\cite{Wen96}] \label{lem:ergodic-closing}
	For any $T>0$ and any $\phi^X_T$-invariant Borel probability measure $\mu$ over $M$, one has $\mu(\Si(X)\cup\Sing(X))=1$.
\end{Lemma}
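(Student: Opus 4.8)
The plan is to follow the classical strategy of Mañé \cite{Man82} for diffeomorphisms, adapting it to the flow setting. The statement to prove is that for any $T>0$ and any $\phi^X_T$-invariant Borel probability measure $\mu$, one has $\mu(\Si(X)\cup\Sing(X))=1$; equivalently, that the set $\cB = M\setminus(\Si(X)\cup\Sing(X))$ of \emph{non-closable regular points} has $\mu(\cB)=0$ for every such measure. By the ergodic decomposition it suffices to treat ergodic $\mu$; and if $\mu$ is supported on $\Sing(X)$ there is nothing to prove, so we may assume $\mu$ is a nontrivial ergodic measure (for $\phi^X_T$, hence for the flow after averaging over $[0,T]$) and aim to show $\mu(\cB)=0$. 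The key tool is a ``covering by good returns'' argument: since $\phi^X_1$-almost every point is recurrent (Poincar\'e recurrence), most orbit segments close up approximately, and one must promote an approximate return into an exact periodic orbit of a nearby vector field by a local perturbation supported in a small tube around a backward segment of the orbit. The connecting device is the standard $C^1$ closing perturbation (Pugh's closing lemma technology, in its flow incarnation): given a point $x$ whose forward orbit returns $\vep$-close to $x$ at time $\tau$, one produces $Y$ $C^1$-close to $X$, equal to $X$ outside a small neighborhood of a bounded backward segment $\phi^X_{[-L,0]}(x)$, having a periodic orbit through some $y$ near $x$ with period near $\tau$, shadowing $\phi^X_{[0,\tau]}(x)$ within $\vep$. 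This is exactly the definition of strong closability, so the content is purely \emph{measure-theoretic}: one must show the \emph{measure} of points that fail this is zero, which is where the real work lies.

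First I would fix an ergodic nontrivial $\mu$ and, towards a contradiction, suppose $\mu(\cB)>0$. Choose a compact set $K\subset\cB$ with $\mu(K)>0$ on which the relevant quantities (the norm $\|X(x)\|$, bounded away from $0$ since $K$ is disjoint from $\Sing(X)$; local flow boxes; injectivity radii) are uniformly controlled. By Birkhoff's ergodic theorem applied to $\phi^X_1$, for $\mu$-a.e.\ $x\in K$ the orbit spends a definite fraction of time in $K$ and returns to $K$ with positive frequency. The heart of the argument is the following dichotomy, exactly as in Mañ\'e: either the return times of $\mu$-generic points to $K$ are, with positive probability, realized by orbit segments that \emph{stay in a small neighborhood of $K$ until the return} — in which case a single localized closing perturbation near a short backward segment yields a periodic orbit of a nearby vector field shadowing a long piece of the orbit, contradicting $x\in\cB$ — or else the generic orbit wanders far before returning, and one uses a covering/selection argument (choosing a subsequence of returns and a Vitali-type covering of $K$ by flow boxes of comparable size) to extract a return of the first type anyway. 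Quantitatively, one selects $N$ large, partitions the ``good'' portion of a long orbit segment $\phi^X_{[0,N]}(x)$ into blocks, and by a pigeonhole/measure argument finds a block that begins and ends in the same small flow box of $K$ while the intermediate excursion is controlled; this block is then closed up by a perturbation whose $C^1$-size is governed by the ratio (tube radius)/(block length), which can be made arbitrarily small.

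The main obstacle — and the technical core inherited from Mañ\'e — is controlling the \emph{size of the supporting set of the perturbation} relative to the shadowing accuracy: one needs the perturbation to be localized near a \emph{short} backward segment $\phi^X_{[-L,0]}(x)$ (of length $L$ independent of how long a piece one shadows) while keeping the $C^1$-norm of $Y-X$ small, and simultaneously ensuring that the newly created periodic orbit genuinely passes $\vep$-close to $x$. In the flow setting there are two extra complications relative to diffeomorphisms: (i) one must work with the flow direction and make the closing perturbation transversally, i.e.\ along a cross-section, so that the periodic orbit's \emph{period} is also controlled, not just its geometry — this is handled by using local flow boxes and closing inside a transverse disk via the Poincar\'e return map, then reconstituting the flow; and (ii) singularities must be avoided, which is automatic on $K\subset\cB$ since $\|X\|$ is bounded below there, but requires care in choosing the flow-box sizes uniformly. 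Once the localized closing is in place, the measure estimate is the same bookkeeping as in \cite{Man82}: the set of $x$ for which no such good return block exists in $\phi^X_{[0,N]}(x)$ for all large $N$ has, by the ergodic theorem and the covering lemma, measure zero inside $K$, contradicting $\mu(K)>0$. I would structure the write-up as: (1) reduction to ergodic nontrivial $\mu$ and to a compact $K\subset\cB$ of positive measure with uniform constants; (2) the local closing lemma along a cross-section, producing strong closability from a controlled return; (3) the measure-theoretic covering argument showing $\mu$-a.e.\ point of $K$ admits such a controlled return, hence lies in $\Si(X)$, contradicting $K\subset\cB$.
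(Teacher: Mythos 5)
The paper does not prove this lemma at all: it is quoted verbatim from Wen \cite{Wen96} as the flow version of Ma\~n\'e's Ergodic Closing Lemma, so there is no internal proof to compare against. Your outline is a faithful description of the standard Ma\~n\'e--Wen strategy (reduction to an ergodic nontrivial measure, a compact positive-measure set $K$ of hypothetical non-closable points, a localized closing perturbation supported near a bounded backward segment, and a measure-theoretic selection of a good return), and nothing in it points in a wrong direction.

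That said, as a proof it has a genuine gap: the two load-bearing components are named but not established. First, the perturbation lemma you invoke is not ``Pugh's closing lemma technology'' --- Ma\~n\'e's argument deliberately uses only an elementary local push in a single flow box, and the entire difficulty is shifted to the selection of \emph{which} return to close; moreover the definition of strong closability requires $d(\phi^X_t(x),\phi^Y_t(y))<\vep$ for \emph{all} $0\le t\le\tau$, i.e.\ the created periodic orbit must shadow the whole orbit segment, which a single naive closing does not give. Second, the ``pigeonhole/covering'' step is precisely Ma\~n\'e's delicate selection of two return times $m<n$ for which the cumulative displacement of all intermediate points under the perturbation is summable and small; this is the combinatorial heart of \cite{Man82} and of Wen's adaptation, and your plan defers it entirely. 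In the flow setting one must additionally run this selection on a transverse cross-section so that the period of the closed orbit, and not only its trace, is controlled. Since the paper treats the lemma as an external citation, deferring these points to \cite{Wen96} is legitimate; but as a self-contained argument your text is a roadmap rather than a proof.
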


This lemma enables us to approximate a nontrivial ergodic measure by periodic measures of nearby systems. In fact, for any nontrivial ergodic measure $\mu$ of $X$, one can choose $T>0$ such that $\mu$ is ergodic for the time-$T$ map $\phi_T$ (\cite{PuSh}). Fixing such a $T$, let $B(\mu)$ be the set of generic points of $\mu$, $\mu(B(\mu))=1$. Here, $x$ is called a {\em generic point} of $\mu$ if the sequence
\[\frac{1}{nT}\sum_{i=0}^{n-1}\int_0^T(\phi_t)_*\de_{iT}(x)dt\]
converges to $\mu$ in the weak* topology, as $n\to\infty$.
Since $\mu$ is not the Dirac measure supported on a singularity, Lemma \ref{lem:ergodic-closing} implies that $\mu(\supp(\mu)\cap B(\mu)\cap\Si(X))=1$. Then one can take a point $x\in \supp(\mu)\cap B(\mu)\cap\Si(X)$, and by definition of $B(\mu)$ and $\Si(X)$, there is a fundamental sequence $\{(\ga_n,X_n)\}$ such that $(\ga_n,X_n)\to (\supp(\mu), X)$ and $\de_{\ga_n}\to\mu$ in the weak* topology.

One would like to know the connections between Lyapunov exponents of $\gamma_n$ and that of $\mu$.
Let $\la_1\leq\la_2\leq\cdots\leq\la_{\dim M}$ be the Lyapunov exponents (counting multiplicity) of $\mu$. Note that there is always a zero exponent for $\mu$ corresponding to the flow direction. We define the {\em index} of $\mu$ as the number of negative exponents, counting multiplicity.
If $\mu$ has no zero Lyapunov exponent other than the one corresponding to flow direction, then $\mu$ is called {\em hyperbolic}; otherwise, it is called {\em non-hyperbolic}.

\begin{Remark}
There is a result for $C^1$ generic diffeomorphisms \cite{ABC} stating that an ergodic measure $\mu$ can be approximated by measures $\de_{\ga_n}$ supported on periodic orbits $\ga_n$ in the weak* topology such that the set of Lyapunov exponents of $\de_{\ga_n}$ converges to that of $\mu$. In particular, the index of $\ga_n$ is strongly related to $\mu$. It is possible that this result can be generalized to $C^1$ generic flows. We will not do so, however, as we will deal with vector fields that are not $C^1$ generic (namely $C^2$).
\end{Remark}

Before proceeding to the technical results of this subsection, we remark that similar discussions for diffeomorphisms can be found in \cite[Section 4]{LVY}.

\begin{Lemma}\label{lem:le-away-tang}
Let $X\in\xX^1(M)\setminus\Cl(\mathcal{HT})$, $\mu$ be a nontrivial ergodic measure for $X$. Then there is a fundamental $\ind(\mu)$-sequence $\{(\ga_n,X_n)\}$ such that $(\ga_n,X_n)\to(\supp(\mu),X)$ and $\de_{\ga_n}\to \mu$ in the weak* topology. Moreover, if $\mu$ is non-hyperbolic, then $\mu$ has exactly two zero Lyapunov exponents, and one can find a fundamental $(\ind(\mu)+1)$-sequence $\{\hat\gamma_n\}$ with $(\hat\ga_n,X_n)\to(\supp(\mu),X)$ and $\de_{\hat\ga_n}\to \mu$.
\end{Lemma}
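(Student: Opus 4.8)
The plan is to combine Liao's ergodic closing lemma (Lemma~\ref{lem:ergodic-closing}) with the away-from-tangencies dominated splitting (Lemma~\ref{lem:BasicAwayTang}) and its consequence for fundamental sequences (Corollary~\ref{cor:BasicAwayTang}), together with a Pliss-type selection to control the index of the approximating periodic orbits. First I would fix $T>0$ so that $\mu$ is ergodic for $\phi_T$, and use the discussion already in the text: since $\mu$ is nontrivial, $\mu(\supp(\mu)\cap B(\mu)\cap\Si(X))=1$, so one can pick a generic, strongly closable point $x$ and produce a fundamental sequence $\{(\ga_n,X_n)\}$ with $(\ga_n,X_n)\to(\supp(\mu),X)$ and $\de_{\ga_n}\to\mu$ in the weak* topology, where moreover $\tau(\ga_n)\to\infty$. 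The content to be established is that (a subsequence of) the $\ga_n$ has index exactly $\ind(\mu)$, and that in the non-hyperbolic case one can also arrange index $\ind(\mu)+1$ and that $\mu$ has exactly two zero exponents.

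The key step is to identify the index. Since $X$ is away from tangencies, by Lemma~\ref{lem:BasicAwayTang} (uniform in a neighborhood $\cU$ of $X$, which contains all $X_n$ for $n$ large) each $\cN_{\ga_n}$ carries a splitting $G^s_n\oplus G^c_n\oplus G^u_n$ with $\dim G^c_n\in\{0,1\}$, uniform domination constants $C,\eta,\de,T$, uniform hyperbolicity of $G^s_n$ and $G^u_n$ at period, and uniform spectral gaps $\la^s\le-\de$, $\la^u\ge\de$ (and $|\la^c|<\de$ when $G^c_n$ is present). Passing to a subsequence, $\dim G^s_n=s$, $\dim G^c_n=c$, $\dim G^u_n=u$ are constant, and the splitting converges to a dominated splitting $\cN_{\supp(\mu)}=G^s\oplus G^c\oplus G^u$ over $\supp(\mu)\setminus\Sing(X)$ with uniform exponential estimates at period. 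Because $\de_{\ga_n}\to\mu$, the integrated Lyapunov exponents converge: the exponents of $\psi^{X_n}_t$ along $G^s_n$ stay $\le-\de<0$ and those along $G^u_n$ stay $\ge\de>0$ in the limit, so $\mu$ has at least $s$ negative and at least $u$ positive linear-Poincar\'e exponents, i.e. $\ind(\mu)\ge s$ and $(\dim M-1)-\ind(\mu)\ge u$ (the flow direction contributes the one distinguished zero exponent). Hence $s\le\ind(\mu)\le s+c$. If $c=0$ we already get $\ind(\mu)=s=\ind(\ga_n)$. If $c=1$, I would split into two cases according to whether $\la^c(\mu):=\int \log\|\psi_T|_{G^c}\|/T\,d\mu$ is $<0$, $>0$, or $=0$; in the first case regroup $G^s_n\oplus G^c_n$ as the stable bundle and apply a Pliss-lemma/selecting argument (Lemma~\ref{lem:pliss} style, or directly Corollary~\ref{cor:BasicAwayTang}) to the $\ga_n$ to extract periodic orbits whose linear Poincar\'e flow is genuinely contracting on $G^s_n\oplus G^c_n$ at period, giving $\ind(\ga_n)=s+1=\ind(\mu)$; symmetrically in the second case one gets $\ind(\ga_n)=s=\ind(\mu)$ with $G^c_n$ absorbed into the unstable bundle. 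The remaining case $\la^c(\mu)=0$ is exactly the non-hyperbolic situation: here $c$ must be $1$ (otherwise $\mu$ has no Poincar\'e zero exponent and is hyperbolic), so $\mu$ has precisely the flow-direction zero exponent plus the single $G^c$-exponent equal to zero, i.e. exactly two zero Lyapunov exponents; then one builds \emph{two} fundamental sequences from the same data, one with the $G^c_n$ grouped into the stable side (index $s+1$) and one with $G^c_n$ grouped into the unstable side (index $s$), both still converging to $\supp(\mu)$ with measures converging to $\mu$ --- this yields the fundamental $(\ind(\mu)+1)$-sequence $\{\hat\ga_n\}$. The fact that regrouping does not destroy the convergence $\de_{\ga_n}\to\mu$ or the Hausdorff convergence is automatic because the underlying orbits are unchanged; only the bookkeeping of which subbundle is ``stable'' changes.

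The main obstacle I anticipate is making the case $\la^c(\mu)=0$ (equivalently, that a non-hyperbolic $\mu$ away from tangencies has \emph{exactly} two zero exponents, not more) completely rigorous: one must rule out extra zero exponents, which follows because $G^s$ has all exponents $\le-\de$ and $G^u$ all exponents $\ge\de$ uniformly, so the \emph{only} place a zero Poincar\'e exponent can occur is the one-dimensional $G^c$ --- hence at most one, plus the flow direction. A secondary technical point is the Pliss-type extraction: one needs that when the center exponent of the limit measure is strictly negative, \emph{most} of the periodic orbit (in the $\de_{\ga_n}$-measure sense) behaves contractingly on $G^c_n$, so that after passing to a further subsequence the whole orbit $\ga_n$ is contracting at period on $G^s_n\oplus G^c_n$; this is where one invokes that $\de_{\ga_n}\to\mu$ together with uniform domination, exactly in the spirit of Corollary~\ref{cor:BasicAwayTang}. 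Apart from these points the argument is routine bookkeeping with dominated splittings and weak* convergence.
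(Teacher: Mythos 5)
Your first part follows the paper's route: ergodic closing lemma to get the fundamental sequence with $\de_{\ga_n}\to\mu$, Lemma~\ref{lem:BasicAwayTang} for the uniform splitting $G^s_n\oplus G^c_n\oplus G^u_n$, and passage to the limit of the exponents to pin down the index. Two remarks there. First, when $c=1$ and $\la^c(\mu)\neq 0$ you do not need any Pliss extraction or regrouping: since $\la^c(\mu)=\lim_n\la^c_n$, the sign of $\la^c_n$ is eventually that of $\la^c(\mu)$, which already forces $\ind(\ga_n)=s+1$ (resp.\ $s$) for large $n$. Second, the step you dismiss as routine --- ``because $\de_{\ga_n}\to\mu$ the integrated exponents converge'' --- is where most of the paper's actual work lies: $\xi^s_n=\log\|\psi^{X_n}_T|_{G^s_n}\|$ is only defined off $\Sing(X_n)$ and lives on different orbits, so one must cut out a small neighborhood $W$ of $\Sing(X)$ with $\mu(\Cl(W))$ small (possible only because $\mu$ is nontrivial), use uniform boundedness of $\xi^s_n$ there, and uniform convergence $\xi^s_n\to\xi^s$ on the complement. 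This is fixable but not automatic.

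The genuine gap is in the ``moreover'' part. The index $\ind(\ga_n)=\dim W^s(\ga_n)-1$ is an intrinsic property of the periodic orbit (the number of contracting eigenvalues of its Poincar\'e return map); it is not a choice of which subbundle you declare to be ``stable''. Regrouping $G^c_n$ into the stable or unstable side changes nothing: if, say, $\la^c_n>0$ for all $n$, then every $\ga_n$ has index $s$, and no bookkeeping produces a fundamental $(s+1)$-sequence. Worse, when $\la^c(\mu)=0$ some $\ga_n$ may themselves be non-hyperbolic, so $\ind(\ga_n)$ need not even be defined. To actually obtain both a fundamental $\ind(\mu)$-sequence and a fundamental $(\ind(\mu)+1)$-sequence one must \emph{perturb the vector fields}: since $\la^c_n\to 0$, an arbitrarily $C^1$-small perturbation along $\ga_n$ (Franks' lemma for flows) flips the sign of the central exponent at will while keeping the orbit, the domination, the Hausdorff limit and the weak* limit unchanged. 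This is exactly how the paper concludes, and it is the missing ingredient in your argument. Your observation that a non-hyperbolic $\mu$ has exactly two zero exponents (the flow direction plus the single one-dimensional $G^c$, since $G^s$ and $G^u$ have exponents uniformly bounded away from $0$) is correct and matches the paper.
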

\begin{proof}

By Lemma \ref{lem:ergodic-closing} and the argument followed, there exists a fundamental sequence $\fF=\{(\ga_n,X_n)\}$ such that $(\ga_n,X_n)\to(\supp(\mu),X)$ and $\de_{\ga_n}\to \mu$ in the weak* topology. Since $\mu$ is nontrivial, $\tau(\ga_n)\to\infty$ as $n\to\infty$. By Lemma \ref{lem:BasicAwayTang}, there exist $C>0$, $\de>0$, $\eta>0$ and $T>0$ such that the fundamental sequence $\fF$ admits a $T$-dominated splitting $\cN_{\ga_n}=G^s_n\oplus G^c_n\oplus G^u_n$, where Lyapunov exponents corresponding to $G^s_n$ (resp. $G^u_n$) are smaller than $-\de$ (resp. larger than $\de$), and $\dim G^c_n\leq1$. Moreover, $\ga_n$ is $(C,\eta,T,G^s_n)$-$\psi^{X_n}_t$-contracting at period and $(C,\eta,T,G^u_n)$-$\psi^{X_n}_t$-expanding at period, for $n$ large.

The dominated splitting over $\fF$ induces a $T$-dominated splitting $\cN_{\supp(\mu)}=G^s\oplus G^c\oplus G^u$ (Remark \ref{rmk:fundamental-sequences}). Since $\ga_n$ is $(C,\eta,T,G^s_n)$-$\psi^{X_n}_t$-contracting at period, one has
\[\frac{1}{k_n}\sum_{i=0}^{k_n-1}\log\|\psi^{X_n}_{T}|_{G^s_n(\phi^{X_n}_{iT}(x))}\|\leq -\eta T+\frac{C'}{\tau_n}, \quad \forall x\in\ga_n,\]
where $\tau_n=\tau(\ga_n)$, $k_n=\lfloor\tau_n/T\rfloor$ and $C'>0$ is a constant that is uniform for all $n$. In other words, if we denote
\[\mu_{n,x}=\frac{1}{k_n}\sum_{i=0}^{k_n-1}\de_{\phi^{X_n}_{iT}(x)},\quad \xi_n^s=\log\|\psi^{X_n}_T|_{G^s_n}\|,\]
then one has $\mu_{n,x}(\xi_n^s)\leq -\eta T+O(1/\tau_n)$ for any $x\in\ga_n$. Letting $\mu_n=\frac{1}{T}\int_0^T(\phi^{X_n}_t)_*(\mu_{n,x})dt$ with $x\in\ga_n$, then
\[\int\xi_n^s d\mu_n=\frac{1}{T}\int_0^T(\phi^{X_n}_t)_*\mu_{n,x}(\xi_n^s)dt=\frac{1}{T}\int_0^T\mu_{n,\phi^{X_n}_t(x)}(\xi_n^s)dt\leq -\eta T+O(1/\tau_n).\]

Since $\|\psi^X_T(x)\|$ is uniformly bounded away from 0 and $\infty$, $\xi^s=\log\|\psi^X_T|_{G^s}\|$ is uniformly bounded, i.e., there exists $K_0>0$, such that $|\xi^s(x)|<K_0$ whenever it is defined. As $\|X_n-X\|_{C^1}\to 0$ and $G^s_n\to G^s$ as $n\to\infty$, we can assume that $|\xi^s_n(x)|=|\log\|\psi^{X_n}_T|_{G^s_n(x)}\||<K_0$ for all $n$ and for any $x\in\ga_n$.
It is easy to see that
\[\de_{\ga_n}=\frac{k_nT}{\tau_n}\mu_n+\frac{1}{\tau_n}\int_{-\vep_n}^0(\phi^{X_n}_t)_*\de_xdt,\]
where $\vep_n=\tau_n-k_nT<T$. It follows that
\begin{equation}\label{eqn:le-gs}
\int\xi^s_nd\de_{\ga_n}\leq -\eta T+O(1/\tau_n).
\end{equation}

Since $\mu$ is nontrivial, for any $\vep>0$, there is an open neighborhood $W$ of $\Sing(X)$, such that $\mu(\Cl(W))<\vep$. One can extend $\cN_{\supp(\mu)}=G^s\oplus G^c\oplus G^u$ in a continuous way to a neighborhood $U$ of $\supp(\mu)\setminus W$, so that $\xi^s$ is well-defined and continuous on $U$. Then $\xi^s_n$ converges uniformly to $\xi^s$ on $U$, and in particular, there is $N_0>0$ such that for any $n\geq N_0$, we have $|\xi^s_n(x)-\xi^s(x)|<\vep$ for all $x\in\ga_n\cap U$. Therefore,
\begin{align*}
\left|\int\xi^s_nd\de_{\ga_n}-\int\xi^sd\mu\right| &\leq\left|\int_{W}\xi^s_nd\de_{\ga_n}-\int_{W}\xi^sd\mu\right|+\left|\int_{U}\xi^s_nd\de_{\ga_n}-\int_{U}\xi^sd\mu\right|\\
&\leq K_0(\de_{\ga_n}(W)+\mu(W))+\left|\int_{U}(\xi^s_n-\xi^s)d\de_{\ga_n}\right|+\left|\int_{U}\xi^sd(\de_{\ga_n}-\mu)\right|\\
&\leq K_0(\de_{\ga_n}(W)+\vep)+\vep +\left|\int_{U}\xi^sd(\de_{\ga_n}-\mu)\right|.
\end{align*}

As $\de_{\ga_n}\to\mu$, there is $N_1\geq N_0$, such that for any $n\geq N_1$, one has $\de_{\ga_n}(W)< 2\vep$ and $|\int_{U}\xi^sd(\de_{\ga_n}-\mu)|<\vep$. Then for any $n\geq N_1$,
\[\left|\int\xi^s_nd\de_{\ga_n}-\int\xi^sd\mu\right|<(3K_0+2)\vep.\]
In view of inequality \eqref{eqn:le-gs}, one has
\[\int\xi^sd\mu=\lim_{n\to\infty}\int\xi^s_nd\de_{\ga_n}\leq-\eta T.\]
Let $\la^s$ be the largest Lyapunov exponent of $\mu$ corresponding to $G^s$. By definition of $\xi^s$, $\la^s\leq \frac{1}{T}\int\xi^sd\mu\leq -\eta$. Similarly, let $\la^u$ be the smallest Lyapunov exponent of $\mu$ corresponding to $G^u$. Then $\la^u\geq\eta$. Moreover, if $G^c$ is nontrivial, letting $\la^c$ be the Lyapunov exponent of $\mu$ corresponding to $G^c$, then $\la^c=\lim_{n}\la^c_n$, where $\la^c_n$ is the Lyapunov exponent of $\de_{\ga_n}$ corresponding to $G^c_n$. This concludes the first part of the lemma. Now if $\mu$ is non-hyperbolic, one can apply Franks' lemma (see {\it e.g.} \cite[Theorem A.1]{BGV}) to the fundamental sequence $\fF$ to obtain a fundamental $\ind(\mu)$-sequence and also a fundamental $(\ind(\mu)+1)$-sequence, with all other properties unchanged.
\end{proof}

Note that the constants $\eta$ and $T$ in the proof above are uniform for a neighborhood of $X$. From the proof of Lemma \ref{lem:le-away-tang}, one has the following corollary.

\begin{Corollary}\label{cor:le-away-tang}
For any $X\in\xX^1(M)\setminus\Cl(\mathcal{HT})$, let the neighborhood $\cU$ of $X$ and the constants $\eta>0$, $\de>0$, $T>0$ be given by Lemma \ref{lem:BasicAwayTang}. Then for any nontrivial ergodic measure $\mu$ of $Z\in\cU$,
\begin{itemize}
\item there is a fundamental sequence $(\ga_n,X_n)\to(\supp(\mu),X)$ admitting a $T$-dominated splitting $\cN_{\ga_n}=G^s_n\oplus G^c_n\oplus G^u_n$, 	which induces a $T$-dominated splitting $\cN_{\supp(\mu)}=G^s\oplus G^c\oplus G^u$ with respect to the linear Poincar\'e flow $\psi^Z_t$ such that $\dim G^c\leq 1$;
\item $\int\log\|\psi^Z_T|_{G^s}\|d\mu\leq -\eta T$ and $\int\log\|\psi^Z_{-T}|_{G^u}\|d\mu\leq -\eta T$;
\item let $\la^s$ (resp. $\la^u$) be the largest (resp. smallest) Lyapunov exponent of $\mu$ corresponding to $G^s$ (resp. $G^u$), then $\la^s\leq -\eta<\eta\leq\la^u$; consequently, $G^c$ is nontrivial if $\mu$ has a central Lyapunov exponent $\la^c\in(-\eta,\eta)$ other than the zero exponent corresponding to the flow direction, in which case, $\la^c$ is the Lyapunov exponent corresponding to $G^c$;
\item $G^c$ is trivial if $\mu$ has no Lyapunov exponent in $[-\de,\de]$ other than the zero exponent corresponding to the flow direction.
\end{itemize}
\end{Corollary}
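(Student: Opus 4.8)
The plan is to run the proof of Lemma~\ref{lem:le-away-tang} with $Z$ in the role of $X$, the one subtlety being that the constants there are those furnished by Lemma~\ref{lem:BasicAwayTang} \emph{for the fixed vector field $X$}, hence are uniform over $\cU$. First I would shrink $\cU$ (using that $\xX^1(M)\setminus\Cl(\mathcal{HT})$ is open) so that $\cU\subset\xX^1(M)\setminus\Cl(\mathcal{HT})$ and $\cU$ is precisely the neighborhood of $X$ from Lemma~\ref{lem:BasicAwayTang}, with associated constants $C,\eta,\de,T$. Fixing $Z\in\cU$ and a nontrivial ergodic measure $\mu$ of $Z$, I would invoke the ergodic closing lemma (Lemma~\ref{lem:ergodic-closing}) and the discussion following it, applied to $Z$: since $\mu$ is not carried by a critical element, there is a fundamental sequence $\{(\ga_n,X_n)\}$ with $\|X_n-Z\|_{C^1}\to0$, $\ga_n\to\supp(\mu)$ in the Hausdorff topology, $\de_{\ga_n}\to\mu$ weakly, and $\tau(\ga_n)\to\infty$. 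Since $\cU$ is open, $X_n\in\cU$ for $n$ large, so Lemma~\ref{lem:BasicAwayTang} applies to each $\ga_n$ with the \emph{same} constants: one gets a $T$-dominated splitting $\cN_{\ga_n}=G^s_n\oplus G^c_n\oplus G^u_n$, $\dim G^c_n\le1$, with $\ga_n$ being $(C,\eta,T,G^s_n)$-$\psi_t$-contracting at period and $(C,\eta,T,G^u_n)$-$\psi_t$-expanding at period, and with $\la^s_n\le-\de$, $\la^u_n\ge\de$, and $-\de<\la^c_n<\de$ when $G^c_n\ne0$. Passing to a subsequence along which the dimensions stabilize and the bundles converge, this induces, as in Remark~\ref{rmk:fundamental-sequences}, a $T$-dominated splitting $\cN_{\supp(\mu)}=G^s\oplus G^c\oplus G^u$ with respect to $\psi^Z_t$ with $\dim G^c\le1$, which is the first assertion.

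Next I would reproduce the averaging estimate from the proof of Lemma~\ref{lem:le-away-tang}. Writing $\xi^s_n=\log\|\psi^{X_n}_T|_{G^s_n}\|$, the contracting-at-period property yields $\int\xi^s_n\,d\de_{\ga_n}\le-\eta T+O(1/\tau(\ga_n))$. Since $\xi^s=\log\|\psi^Z_T|_{G^s}\|$ is bounded wherever it is defined and $\de_{\ga_n}\to\mu$, the truncation-near-singularities argument of that proof (which uses that $\mu$ is nontrivial, hence gives arbitrarily small mass to a suitable neighborhood of $\Sing(X)$) lets one pass to the limit and obtain $\int\log\|\psi^Z_T|_{G^s}\|\,d\mu\le-\eta T$; applying it to $-Z$ gives $\int\log\|\psi^Z_{-T}|_{G^u}\|\,d\mu\le-\eta T$, the second assertion. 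By the subadditive ergodic theorem the largest Lyapunov exponent of $\mu$ along $G^s$ obeys $\la^s\le\tfrac1T\int\log\|\psi^Z_T|_{G^s}\|\,d\mu\le-\eta$, and symmetrically the smallest exponent along $G^u$ obeys $\la^u\ge\eta$, which is the first half of the third assertion.

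For the central exponent I would argue that the Lyapunov exponents of $\mu$ other than the flow-direction zero are exactly those of $\psi^Z_t$, partitioned among $G^s,G^c,G^u$; an exponent in $(-\eta,\eta)$ cannot lie in $G^s$ (all exponents $\le-\eta$) nor in $G^u$ (all $\ge\eta$), so it must lie in $G^c$, forcing $\dim G^c=1$ and identifying the exponent with $\la^c$ — this completes the third assertion. Finally, when $G^c$ is nontrivial it is one-dimensional with exponent $\la^c=\lim_n\la^c_n\in[-\de,\de]$ (the convergence $\la^c=\lim_n\la^c_n$ being part of the proof of Lemma~\ref{lem:le-away-tang}), so if $\mu$ has no Lyapunov exponent in $[-\de,\de]$ besides the flow-direction zero, then $G^c$ is forced to be trivial, which is the last assertion. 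I do not expect a genuine obstacle here: the argument is essentially bookkeeping, and the only points needing attention are the uniformity claim — that the constants attached to $X$ via Lemma~\ref{lem:BasicAwayTang} also govern periodic orbits of $Z$ and of $X_n$ once these lie in $\cU$ — and the standard truncation near $\Sing(X)$ allowing the integral estimate to pass to the limit despite $\xi^s$ failing to be defined there; both are already carried out inside the proof of Lemma~\ref{lem:le-away-tang}.
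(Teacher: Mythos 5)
Your proposal is correct and follows exactly the route the paper intends: the paper derives this corollary directly from the proof of Lemma~\ref{lem:le-away-tang}, observing (as you do) that the constants from Lemma~\ref{lem:BasicAwayTang} are uniform over $\cU$, so the ergodic closing lemma, the contracting/expanding-at-period estimates, and the truncation-near-singularities limit argument all go through verbatim with $Z$ in place of $X$. The only cosmetic point is that the statement's ``$(\ga_n,X_n)\to(\supp(\mu),X)$'' should be read as a fundamental sequence for $Z$ (consistent with the splitting being for $\psi^Z_t$), which is precisely how you read it.
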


\begin{Remark}\label{rmk:le-away-tang}
	Note that the neighborhood $\cU$ and the constants are given by Lemma \ref{lem:BasicAwayTang}. As remarked before (see Remark \ref{rmk:BasicAwayTang}), $\eta$ can be chosen arbitrarily small and $\de$ can also be chosen arbitrarily small (as long as $\eta$ is small enough).
\end{Remark}

\subsection{A lemma of Pliss type for ergodic measures}\label{s.7.2}
Let $X$ and $\mu$ be as in the previous section. Then there is a fundamental sequence $\{(\ga_n,X_n)\}$ with a dominated splitting $\cN_{\ga_n}=G^s_n\oplus G^c_n\oplus G^u_n$ that converges to a dominated splitting $\cN_{\supp(\mu)}=G^s\oplus G^c\oplus G^u$ over $\supp(\mu)$, as given by Corollary \ref{cor:le-away-tang}. Then one can consider hyperbolic points on $\ga_n$ with respect to $G^{s/u}_n$, and also hyperbolic points on $\supp(\mu)$ with respect to $G^{s/u}$. Following~\cite{ABV00}, we have the following lemma of Pliss type.

\begin{Lemma}\label{lem:pliss-for-measure}
For any $X\in\xX^1(M)\setminus\Cl(\mathcal{HT})$, there is a neighborhood $\cU$ of $X$ and positive numbers $\eta>0$, $T>0$ satisfying the following property: for any $\eta'\in (0,\eta)$, there is $\theta>0$ such that, for any nontrivial ergodic measure $\mu$ of $Z\in\cU$, $\mu(H^s(\eta'))\geq \theta$ and $\mu(H^u(\eta'))\geq \theta$ whenever the corresponding subbundle is nontrivial, where
\begin{itemize}
\item $\cN_{\supp(\mu)}=G^s\oplus G^c\oplus G^u$ is the dominated splitting given by Corollary \ref{cor:le-away-tang},
\item $H^s(\eta')$ is the set of $(1,\eta',T,G^s)$-$\psi^{Z,*}_t$-contracting points, and
\item $H^u(\eta')$ is the set of $(1,\eta',T,G^u)$-$\psi^{Z,*}_t$-expanding points.
\end{itemize}
\end{Lemma}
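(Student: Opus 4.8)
The plan is to derive this uniform Pliss-type estimate by combining the uniform integral bounds from Corollary~\ref{cor:le-away-tang} with the abstract Pliss argument of~\cite{ABV00}. First I would invoke Corollary~\ref{cor:le-away-tang}: given the neighborhood $\cU$ of $X$ and the constants $\eta>0$, $\delta>0$, $T>0$ from Lemma~\ref{lem:BasicAwayTang}, for any $Z\in\cU$ and any nontrivial ergodic measure $\mu$ of $Z$, there is a $T$-dominated splitting $\cN_{\supp(\mu)}=G^s\oplus G^c\oplus G^u$ with respect to $\psi^Z_t$, together with the crucial integral inequalities $\int\log\|\psi^Z_T|_{G^s}\|\,d\mu\leq-\eta T$ and $\int\log\|\psi^Z_{-T}|_{G^u}\|\,d\mu\leq-\eta T$ whenever $G^s$, resp. $G^u$, is nontrivial. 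These are exactly the hypotheses needed to feed into a Pliss/hyperbolic-times estimate.

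Next I would set up the discrete cocycle: for $\mu$-a.e. point $x\in\supp(\mu)\setminus\Sing(Z)$ consider the orbit segments under $\phi^Z_T$ and the function $\xi^s(x)=\log\|\psi^{Z,*}_T|_{G^s(x)}\|$; since rescaling does not affect domination and $X$-norms along an ergodic measure contribute a zero-average term, one has $\int\xi^s\,d\mu=\int\log\|\psi^Z_T|_{G^s}\|\,d\mu\leq-\eta T$, and similarly for $G^u$ with $\psi^{Z,-X,*}$. Fix $\eta'\in(0,\eta)$. The key point is that $\xi^s$ is uniformly bounded above and below by a constant $K_0$ depending only on the $C^1$-size of $\cU$ (as in the proof of Lemma~\ref{lem:le-away-tang}, using that $\|\psi^Z_T\|$ and its inverse are uniformly bounded for $Z\in\cU$ and $T$ fixed). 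Now apply the measure-theoretic Pliss lemma \cite[Proposition~3.5 or Lemma~3.1]{ABV00}: if a bounded observable has integral at most $-\eta T$ with respect to an ergodic (hence for $\phi^Z_T$ ergodic, after choosing $T$ as in \cite{PuSh}) measure, then the set of points for which the Birkhoff sums $\sum_{i=0}^{j-1}\xi^s(\phi^Z_{iT}(x))$ stay below $-\eta' j T$ for all $j\geq1$ has $\mu$-measure bounded below by $\theta=\theta(\eta,\eta',K_0,T)>0$. This lower bound $\theta$ depends only on the three quantities $\eta$, $\eta'$, $K_0$, $T$, all of which are fixed once $X$ (hence $\cU$) is fixed — crucially, $\theta$ does not depend on $\mu$ or on $Z$. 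That set is precisely (a $\phi^Z_t$-saturation of) $H^s(\eta')$, so $\mu(H^s(\eta'))\geq\theta$, and symmetrically $\mu(H^u(\eta'))\geq\theta$ whenever $G^u$ is nontrivial. Taking the minimum of the two $\theta$'s gives the uniform constant in the statement.

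A couple of technical points will need care but are routine. One is the reduction from the continuous-time contracting condition to the discrete cocycle: the definition of $(1,\eta',T,G^s)$-$\psi^{Z,*}_t$-contracting allows an arbitrary time partition with steps $\leq T$, so I would use the partition with constant step $T$ and note that the Pliss estimate for the step-$T$ cocycle automatically yields points with the required property for that partition, which suffices (alternatively, combine with Lemma~\ref{lem:pliss} or absorb the bounded discrepancy for the last incomplete step into the constant $C=1$ after slightly shrinking $\eta'$). Another is handling the singularities: $\mu$ is nontrivial, so $\mu(\Sing(Z))=0$, and although $\xi^s$ is only continuous off a neighborhood $W$ of $\Sing(Z)$, one can extend it continuously and boundedly on all of $\supp(\mu)$ using the dominated splitting (as done in the proof of Lemma~\ref{lem:le-away-tang}), keeping the bound $K_0$; the integral inequality is unaffected.

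The main obstacle I anticipate is ensuring genuine uniformity of $\theta$ across all $Z\in\cU$ and all nontrivial ergodic $\mu$ of $Z$ simultaneously. This hinges on two facts: that Lemma~\ref{lem:BasicAwayTang} (hence Corollary~\ref{cor:le-away-tang}) provides $\eta$, $\delta$, $T$ uniformly for $Z\in\cU$, and that the bound $K_0$ on $\xi^s$ is uniform over $\cU$ because $T$ is fixed and $\cU$ is a bounded $C^1$-neighborhood. Granting these, the Pliss constant $\theta$ produced by \cite{ABV00} depends only on $(\eta,\eta',K_0,T)$ and is therefore uniform; the dependence on $Z$ and $\mu$ enters only through objects already controlled uniformly. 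So the real work is just to verify that no hidden dependence on $\mu$ (e.g. via the modulus of continuity of the splitting, or via the size of the neighborhood $W$ of singularities) sneaks into the estimate — and since the Pliss lemma only uses the integral bound and the sup-bound $K_0$, it does not.
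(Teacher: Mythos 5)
Your proposal is correct in substance but takes a genuinely different route from the paper. You apply a measure-theoretic Pliss argument directly to $\mu$, starting from the integral bounds of Corollary~\ref{cor:le-away-tang}. The paper never invokes the Birkhoff theorem on $\mu$: it takes the fundamental sequence $(\ga_n,X_n)$ with $\de_{\ga_n}\to\mu$ furnished by the ergodic closing lemma, applies the finite combinatorial Pliss lemma to each periodic orbit --- where the ``contracting at period'' property from Lemma~\ref{lem:BasicAwayTang} turns every Pliss time relative to one period into an honest infinite Pliss time, by periodicity --- obtains $\de_{\ga_n}(H^s_n(\eta'))\ge\theta$, and then passes to the limit via $H^s_0=\bigcap_m\Cl\bigl(\bigcup_{n\ge m}H^s_n(\eta')\bigr)\subset H^s(\eta')\cup\Sing(Z)$ and upper semicontinuity of measures of closed sets under weak* convergence. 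The periodic detour buys two things that your direct route must supply by hand. First, the finite Pliss lemma only produces times that are Pliss \emph{relative to the segment} $[0,N]$; to conclude $\mu(\{\text{infinite Pliss points}\})\ge\theta$ you need the exhaustion of $H^s(\eta')$ by the closed sets $A_N$ of points that are Pliss for the next $N$ steps, together with the ergodic theorem applied to each $A_N$. Second, $T$ is fixed in advance by Lemma~\ref{lem:BasicAwayTang}, so you cannot choose it as in \cite{PuSh} to make $\mu$ ergodic for $\phi^Z_T$; the step-$T$ Birkhoff averages of $\xi^s$ then converge only to a $\phi^Z_T$-invariant function rather than to the constant $\int\xi^s d\mu$. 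Both points are repairable --- for the second, a Chebyshev estimate using the sup-bound $K_0$ shows that the set where the $\phi^Z_T$-Birkhoff average is at most $-(\eta+\eta')T/2$ has measure at least $(\eta-\eta')T/(2K_0)$, still uniform in $(Z,\mu)$, and one runs the Pliss argument there --- so your uniform $\theta$ survives; but as written these two steps are the places where your argument is incomplete, and they are precisely what the paper's periodic-orbit approach is designed to avoid.
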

\begin{proof}
Let $\cU\ni X$, $C>0$, $\de>0$, $\eta>0$ and $T>0$ be given by Lemma \ref{lem:BasicAwayTang}. By the uniform continuity of $\psi^{Z,*}_t$ (see, for instance, \cite[Lemma 2.1]{SYY}), we have
\[C(\cU,T)=\sup\{\|\psi^{Z,*}_t\|: t\in[-T,T], Z\in\cU\}<\infty,\]
where $\|\psi^{Z,*}_t\|=\sup\{|\psi^{Z,*}_t(v)|: v\in\cN^Z, \txtrm[and] \|v\|=1\}$.

Let $C_0=\log C(\cU,T)$. By the Pliss lemma \cite{Man87,Pli}, for any $\eta'\in (0,\eta)$, there exist $N_0=N_0(\eta,\eta',C_0)>0$ and $\th=\th(\eta,\eta',C_0)>0$, such that for any $N$ real numbers $a_0,\, \cdots,\, a_{N-1}$, with $N\geq N_0$, satisfying
\begin{gather*}
\sum_{i=0}^{N-1}a_i\leq -N(\eta+\eta')T/2,\\
|a_i|\leq C_0,\quad n=0,\ldots,N-1,
\end{gather*}
there exist $l\geq\th N$, $0\leq i_1<\cdots<i_l\leq N-1$ such that
\begin{equation}\label{eqn:pliss-point}
\sum_{k=i_j}^{i-1}a_k\leq -(i-i_j)\eta'T,
\end{equation}
for every $j=1,\ldots,l$ and $i_j<i\leq N$.

From the proof of Lemma \ref{lem:le-away-tang} (and hence Corollary \ref{cor:le-away-tang}), the neighborhood $\cU$ and constants $\eta$, $T$ satisfy the properties in Corollary \ref{cor:le-away-tang}. Then for any $Z\in\cU$, any nontrivial ergodic measure $\mu$ of $Z$, let $\{(\ga_n,X_n)\}$ be a fundamental sequence with a dominated splitting $\cN_{\ga_n}=G^s_n\oplus G^c_n\oplus G^u_n$ that induces the dominated splitting $\cN_{\supp(\mu)}=G^s\oplus G^c\oplus G^u$ as in Corollary \ref{cor:le-away-tang}, $\ga_n\to\supp(\mu)$ and $\de_{\ga_n}\to\mu$. We assume $X_n\in\cU$ for all $n$. Also, we may assume that $\tau_n=k_n T/l_n$ for some positive integers $k_n$ and $l_n$. Since $\mu$ is nontrivial, one has $\tau_n\to\infty$, and it follows that $k_n\to\infty$ as $n\to\infty$.

In the following, we only prove the lemma when $G^s$ is nontrivial, as the other case can be proven similarly. Define $H^s_n(\eta')$, for each $n$, as the set of $(1,\eta',T,G^s_n)$-$\psi^{X_n,*}_t$-contracting points. Note that by Lemma \ref{lem:BasicAwayTang}, $\ga_n$ is $(C,\eta,T,G^{s}_n)$-$\psi^{X_n,*}_t$-contracting at period. Then for any $x\in\ga_n$, we have
\[\sum_{i=0}^{k_n-1}\log\|\psi^{X_n,*}_T|_{G^s(\phi^{X_n}_{iT}(x))}\|\leq -k_n\eta T+\log C.\]
Since $k_n\to\infty$, we can assume that
\[\sum_{i=0}^{k_n-1}\log\|\psi^{X_n,*}_T|_{G^s(\phi^{X_n}_{iT}(x))}\|\leq -k_n(\eta+\eta')T/2.\]
Note that $|\log \|\psi^{X_n,*}_T|_{G^s(\phi^{X_n}_{iT}(x))}\||\leq C_0$ for all $i$. By inequality \eqref{eqn:pliss-point} and a standard argument for diffeomorphisms, there exists $l\geq\theta k_n$, $0\leq i_1<\cdots<i_l\leq k_n-1$, such that $\phi^{X_n}_{i_lT}(x)$ are all $(1,\eta',T,G^s)$-$\psi^{X_n,*}_t$-contracting. In other words, for any $x\in\ga_n$,  $\mu_{n,x}(H^s_n(\eta'))\geq \theta$, where
\[\mu_{n,x}=\frac{1}{k_n}\sum_{i=0}^{k_n-1}\de_{\phi^{X_n}_{iT}(x)},\]
It follows that
\[\de_{\ga_n}(H^s_n(\eta'))=\frac{1}{T}\int_0^T(\phi^{X_n}_t)_*\mu_{n,x}(H^s_n(\eta'))dt=\frac{1}{T}\int_0^T\mu_{n,\phi^{X_n}_t(x)}(H^s_n(\eta'))dt\geq\theta.\]

Let $H^s_0=\cap_{m\geq 0}\Cl(\cup_{n\geq m}H^s_n(\eta'))$. Then $H^s_0\subset H^s(\eta')\cup\Sing(Z)$. Since $\de_{\ga_n}\to \mu$ in weak* topology, one can show that
\[\mu(H^s_0)=\lim_{m\to\infty}\mu(\Cl(\cup_{n\geq m}H^s_n(\eta')))\geq\lim_{m\to\infty}\limsup_{k\to\infty}\de_{\ga_k}(\Cl(\cup_{n\geq m}H^s_n(\eta')))\geq \theta.\]
Since $\mu$ is ergodic and nontrivial, it follows that $\mu(H^s(\eta'))\geq\mu(H^s_0)\geq\theta$.
\end{proof}

\begin{Remark}\label{rmk:pliss-for-measure}
Following the same argument, the conclusion of the lemma also holds for periodic measure over periodic orbits with large period.
\end{Remark}

\subsection{Hyperbolic measure implies homoclinic intersection}
The main purpose of this section is the following result.

\begin{Proposition}\label{prop:hyp-measure}
Let $X\in\mathscr{X}^1(M)\setminus\Cl(\mathcal{HT})$ and $\mu$ be a nontrivial hyperbolic ergodic measure for $\phi^X_t$,
then there is a hyperbolic periodic orbit $P$ of index $\ind(\mu)$, such that the homoclinic class $H(P)$ of $P$ is nontrivial and $\supp(\mu)\subset H(P)$.
\end{Proposition}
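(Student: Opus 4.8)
The plan is to combine the ergodic closing lemma (Lemma \ref{lem:ergodic-closing}) with the dominated splitting coming from the away-from-tangencies hypothesis (Lemma \ref{lem:BasicAwayTang}, or more conveniently Corollary \ref{cor:le-away-tang}) and the Pliss-type estimate for ergodic measures (Lemma \ref{lem:pliss-for-measure}), together with a shadowing/closing argument of Liao-Gan-Wen type to locate a single periodic orbit whose homoclinic class captures the whole support of $\mu$. First, since $\mu$ is nontrivial, by Corollary \ref{cor:le-away-tang} there is a fundamental sequence $(\ga_n,X_n)\to(\supp(\mu),X)$ with $\de_{\ga_n}\to\mu$, admitting a $T$-dominated splitting $\cN_{\ga_n}=G^s_n\oplus G^c_n\oplus G^u_n$ inducing $\cN_{\supp(\mu)}=G^s\oplus G^c\oplus G^u$, with $\la^s\le-\eta<\eta\le\la^u$. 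Because $\mu$ is \emph{hyperbolic}, it has no central Lyapunov exponent in $[-\de,\de]$ other than the flow direction, so by the last item of Corollary \ref{cor:le-away-tang} the center bundle $G^c$ is trivial; hence $\cN_{\supp(\mu)}=G^s\oplus G^u$ is a (uniformly) dominated splitting of index $i=\ind(\mu)$, and this domination is uniform for vector fields near $X$.

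Next I would produce the periodic orbit $P$ of $X$ itself. Apply Lemma \ref{lem:pliss-for-measure}: there is $\theta>0$ such that $\mu(H^s(\eta'))\ge\theta$ and $\mu(H^u(\eta'))\ge\theta$, where $H^s(\eta')$ (resp. $H^u(\eta')$) is the set of $(1,\eta',T,G^s)$-$\psi^{X,*}_t$-contracting (resp. $(1,\eta',T,G^u)$-$\psi^{X,*}_t$-expanding) points. By ergodicity these two sets have positive measure and the generic point recurs to both; thus a $\mu$-generic point $x$ in $\supp(\mu)\cap B(\mu)\cap\Si(X)$ spends a definite fraction of time being simultaneously stably contracting and unstably expanding. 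Using Ma\~n\'e's ergodic closing lemma (Lemma \ref{lem:ergodic-closing}) and the fact that $x$ is strongly closable, one closes a long orbit segment of $x$ — chosen to begin and end at hyperbolic times for both $G^s$ and $G^u$ — to get a periodic orbit of $X$ (not just of a nearby vector field, by the standard $C^1$-generic-free argument: here $X$ is only away from tangencies, so one must argue directly via Liao's shadowing/closing for quasi-hyperbolic strings, as in \cite{GaY}). The resulting periodic orbit $P_n$ is $(C',\eta',T,G^s)$-$\psi_t$-contracting at period and $(C',\eta',T,G^u)$-$\psi_t$-expanding at period, hence by Lemma \ref{lem:pliss} and Lemma \ref{lem:stable-manifolds} it is hyperbolic of index $i=\ind(\mu)$, with stable and unstable manifolds of size proportional to $\|X\|$ along $P_n$, and $P_n$ Hausdorff-accumulates (a large portion of) $\supp(\mu)$.

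Then I would upgrade ``accumulates'' to ``$\supp(\mu)\subset H(P)$'' and to a \emph{single} $P$. Fix one such periodic orbit $P=P_{n_0}$. For every point $y\in\supp(\mu)$, the orbit of $P$ passes $\vep$-close to $y$ (for every $\vep$) because $\de_{\ga_n}\to\mu$ forces $\supp(\mu)\subset\bigcap_n\Cl(\ga_n)$ up to the periodic approximants, and the closed orbits produced above can be taken to shadow arbitrarily long pieces of the $\mu$-generic orbit, whose closure is $\supp(\mu)$. To see that $y$ lies in the homoclinic class rather than merely the closure of $\orb(P)$, use the uniform-size local stable/unstable manifolds of $P$ (uniform since $\supp(\mu)$ stays away from $\Sing(X)$, the measure being hyperbolic and nontrivial, so $\|X\|$ is bounded below on the relevant pieces) together with the dominated splitting along the shadowing segments: a long quasi-hyperbolic string from a hyperbolic-time point back near $P$ yields, by Liao's shadowing lemma, a transverse homoclinic point of $P$ within $\vep$ of $y$; hence $y\in H(P)=\Cl\big(W^s(P)\pitchfork W^u(P)\big)$. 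The index bookkeeping (that $\ind(P)=\ind(\mu)$, not merely $\le$) follows because the splitting $G^s\oplus G^u$ over $\supp(\mu)$ has dimensions exactly matching the counts of negative/positive exponents of $\mu$, by the exponent estimates transported from $\de_{\ga_n}$ to $\mu$ in the proof of Lemma \ref{lem:le-away-tang}.

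The main obstacle is the second step carried out \emph{for $X$ itself} rather than for a perturbation: since $X$ is only assumed away from homoclinic tangencies (not $C^1$-generic in this proposition), one cannot invoke generic connecting-lemma arguments, and must instead run Liao's shadowing lemma for quasi-hyperbolic pseudo-orbits / strings to both produce the periodic orbit and to manufacture the transverse homoclinic intersections — keeping careful track that all constants ($C',\eta',T$, the uniform domination constant, and the lower bound on $\|X\|$ along the relevant orbit segments) are uniform, so that the shadowing closures land genuinely on orbits of $X$. Ensuring the uniform lower bound on $\|X\|$ — equivalently, that the closing and shadowing happen on a part of $\supp(\mu)$ bounded away from singularities, which is legitimate because $\mu$ is hyperbolic and nontrivial hence gives zero mass to any neighborhood of $\Sing(X)$ — is the technical heart that makes the rescaled linear Poincar\'e flow estimates translate into honest manifold-size estimates.
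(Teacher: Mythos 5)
Your overall strategy is the right one and matches the paper's in spirit: extract a dominated splitting over $\supp(\mu)$ from Corollary \ref{cor:le-away-tang} with integral contraction/expansion estimates, then invoke closing/shadowing machinery to produce the periodic orbit and the homoclinic intersections. The paper, however, does not re-derive that machinery: it reduces the proposition in a few lines to Lemma \ref{lem:hyp-meas} (quoted from the proofs in \cite{SGW} and \cite{PYY}), which already packages the ergodic closing lemma, Liao's shadowing for quasi-hyperbolic strings near singularities, and the transverse-intersection bookkeeping into a single statement.

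There is a genuine error at the start of your argument. You claim that since $\mu$ is hyperbolic it ``has no central Lyapunov exponent in $[-\de,\de]$ other than the flow direction,'' and conclude from the last item of Corollary \ref{cor:le-away-tang} that $G^c$ is trivial. Hyperbolicity of $\mu$ only says the central exponent is \emph{nonzero}; it can perfectly well be a small nonzero number inside $(-\de,\de)$, in which case $G^c$ is one-dimensional and your subsequent index bookkeeping (``the splitting $G^s\oplus G^u$ has dimensions exactly matching the counts of negative/positive exponents'') breaks down. The paper handles exactly this case: if $G^c$ is nontrivial with exponent $\la^c>0$ (say), set $G^{cu}=G^c\oplus G^u$ and observe that by domination $\int\log\|\psi^X_{-T}|_{G^{cu}}\|d\mu=\int\log\|\psi^X_{-T}|_{G^c}\|d\mu=-\la^cT<0$, so the hypotheses of Lemma \ref{lem:hyp-meas} hold for the splitting $G^s\oplus G^{cu}$ of index $\ind(\mu)$. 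A second, related problem is your assertion that $\supp(\mu)$ ``stays away from $\Sing(X)$'' because $\mu$ is nontrivial and hyperbolic, so that $\|X\|$ is bounded below along the relevant orbit segments. This is false: $\mu(\Sing(X))=0$ does not prevent $\supp(\mu)$ from containing a singularity (the geometric Lorenz attractor is the standard example), so there is no uniform lower bound on $\|X\|$ over $\supp(\mu)$, and the translation from rescaled estimates to honest manifold-size estimates cannot be done by that shortcut. This is precisely the difficulty that the singular-flow versions of the closing/shadowing arguments in \cite{SGW} and \cite{PYY} are designed to overcome, and it is why the paper cites them rather than running a non-singular argument.
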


Similar results are already obtained for star vector fields \cite{SGW} and sectional-hyperbolic Lyapunov stable chain recurrence classes \cite{PYY}.
The following lemma concerning hyperbolic measures is contained in the proof of \cite[Theorem 5.6]{SGW}. See also~\cite[Lemma 4.5]{PYY}.
\begin{Lemma}\label{lem:hyp-meas}
Let $X\in\mathscr{X}^1(M)$, and $\mu$ be a nontrivial hyperbolic ergodic measure for $\phi^X_t$. Suppose there is a dominated splitting $\cN_{\supp(\mu)}=G^{cs}\oplus G^{cu}$ with respect to the linear Poincar\'{e} flow such that the following inequalities hold for some $\varepsilon>0$ and $T>0$,
\begin{gather}
  \int\log\|\psi^X_T|_{G^{cs}}\|d\mu<-\varepsilon,\label{eqn:hyp-1}\\
  \int\log\|\psi^X_{-T}|_{G^{cu}}\|d\mu<-\varepsilon,\label{eqn:hyp-2}
\end{gather}
then there is a hyperbolic periodic orbit $P$ of index $\ind(\mu)$, such that the homoclinic class $H(P)$ of $P$ is nontrivial and $\supp(\mu)\subset H(P)$.
\end{Lemma}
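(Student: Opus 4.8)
The plan is to produce the periodic orbit $P$ by applying Liao's shadowing lemma to long, quasi‑hyperbolic segments of a $\mu$‑generic orbit, and then to let these segments exhaust $\supp(\mu)$ so as to obtain $\supp(\mu)\subset H(P)$. Note first that the flow direction carries the single zero Lyapunov exponent of the nontrivial measure $\mu$, while \eqref{eqn:hyp-1} forces the top exponent of $G^{cs}$ to be negative and \eqref{eqn:hyp-2} forces the bottom exponent of $G^{cu}$ to be positive; hence $\ind(\mu)=\dim G^{cs}=:i$. \textbf{Step 1 (hyperbolic times a.e.).} Since $\mu$ is nontrivial, $\mu(\Sing(X))=0$, and the functions $x\mapsto\log\|\psi^X_T|_{G^{cs}(x)}\|$ and $x\mapsto\log\|\psi^X_{-T}|_{G^{cu}(x)}\|$ are $\mu$‑integrable with negative integrals by hypothesis. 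By the Birkhoff ergodic theorem together with the Pliss lemma (exactly as in Lemma~\ref{lem:pliss-for-measure} and Remark~\ref{rmk:pliss-for-measure}), fixing any $\eta'>0$ below the rates supplied by $\varepsilon$, there is $\theta>0$ such that for $\mu$‑a.e.\ $x$ the set of $n$ with $\phi_{nT}(x)$ being $(1,\eta',T,G^{cs})$‑$\psi^{X,*}_t$‑contracting has lower density at least $\theta$, and symmetrically for the backward orbit with $(1,\eta',T,G^{cu})$‑$\psi^{X,*}_t$‑expanding. By Lemma~\ref{lem:stable-manifolds}, at such a good point $y$ one gets embedded discs $W^{cs}_{\delta_0\|X(y)\|}(y)\subset W^s(\orb(y))$ and $W^{cu}_{\delta_0\|X(y)\|}(y)\subset W^u(\orb(y))$, tangent to $G^{cs}$ and $G^{cu}$, of size proportional to $\|X(y)\|$ with a uniform $\delta_0$. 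Finally $\mu$‑a.e.\ point is positively and negatively recurrent with orbit dense in $\supp(\mu)$.

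\textbf{Step 2 (producing $P$; the main obstacle).} Fix a $\mu$‑generic point $p$ enjoying the properties above. Using the positive density of good times together with recurrence of $\orb(p)$, choose times $t_k\to\infty$ such that $\phi_{t_k}(p)$ is very close to $p$ and the segment $\phi_{[0,t_k]}(p)$, viewed through the rescaled linear Poincar\'e flow, is quasi‑hyperbolic with uniform constants (contracting on $G^{cs}$ along initial blocks and expanding on $G^{cu}$ along terminal blocks, in the sense of the Liao condition). Applying Liao's shadowing lemma for singular flows --- using the rescaling $\psi^{X,*}_t$ to control the geometry near any singularities met by $\orb(p)$, cf.~\cite{Lia89} and the discussion preceding Theorem~\ref{thm:Liao} --- one obtains, for each large $k$, a genuine hyperbolic periodic orbit $P_k$ of $X$ which shadows $\phi_{[0,t_k]}(p)$, has index $i$, and carries local stable and unstable manifolds of uniform relative size; in particular $P_k\to\supp(\mu)$ in the Hausdorff topology because the shadowed segments become dense in $\supp(\mu)$. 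Set $P:=P_{k_0}$ for a large $k_0$. This step is the crux: in the $C^1$ (as opposed to $C^{1+}$) category Katok's closing lemma is unavailable, so one must instead verify the quasi‑hyperbolicity estimates along finite strings and invoke Liao's shadowing lemma, and the possible presence of singularities in $\supp(\mu)$ forces the whole argument to be run for the rescaled flow $\psi^{X,*}_t$ rather than $\psi^X_t$.

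\textbf{Step 3 (homoclinic relations and conclusion).} Any two of the orbits $P_k,P_l$ with $k,l\ge k_0$ shadow segments of $\orb(p)$ sharing a long common sub‑segment; by the uniform relative sizes of their stable and unstable plaques together with the domination $\cN=G^{cs}\oplus G^{cu}$, the stable manifold of $P_k$ meets the unstable manifold of $P_l$ transversally and vice versa, so $P_k$ and $P_l$ are homoclinically related, whence $P_k\subset H(P)$ for every $k\ge k_0$. Since $H(P)$ is closed and $P_k\to\supp(\mu)$, we conclude $\supp(\mu)\subset H(P)$; and $H(P)$ is nontrivial because $\supp(\mu)$ is not a critical element (equivalently, because it contains $P_k\neq P$ for large $k$). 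As an alternative to Step~2, one may instead begin from the fundamental sequence $(\gamma_n,X_n)\to(\supp(\mu),X)$ given by the ergodic closing lemma (Lemma~\ref{lem:ergodic-closing}) with $\delta_{\gamma_n}\to\mu$, use \eqref{eqn:hyp-1}--\eqref{eqn:hyp-2} to check that the $\gamma_n$ are hyperbolic of index $i$ with uniform constants and have homoclinic tangles of uniform relative size, and then pass to the limit $X_n\to X$; this route still rests on the same closing mechanism for its final step, so the obstacle above is unavoidable.
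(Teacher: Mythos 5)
The paper does not prove this lemma but cites it from \cite[proof of Theorem 5.6]{SGW} and \cite[Lemma 4.5]{PYY}, and your main route --- Birkhoff plus Pliss to extract quasi-hyperbolic strings on a $\mu$-generic orbit, Liao's rescaled shadowing lemma to close them into hyperbolic periodic orbits of $X$ itself of index $\ind(\mu)$, and uniform relative sizes of invariant manifolds to make these orbits homoclinically related and accumulate on $\supp(\mu)$ --- is exactly the cited argument, including the correct identification of the crux (no Katok closing in $C^1$; rescaling needed near singularities). The one step you compress is the selection of a string whose initial point is a forward Pliss time for $G^{cs}$ and whose terminal point is a backward Pliss time for $G^{cu}$ \emph{relative to that same string}, with the two endpoints close in the rescaled metric; this requires Liao's sifting-type selection rather than bare positive density plus recurrence, but it is standard and does not affect the correctness of the outline.
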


\begin{proof}[Proof of Proposition \ref{prop:hyp-measure}.]
By Corollary \ref{cor:le-away-tang}, there exist $\eta>0$ and $T>0$ such that $\supp(\mu)$ admits a $T$-dominated splitting $\cN_{\supp(\mu)}=G^s\oplus G^c\oplus G^u$ with
\begin{gather*}
\int\log\|\psi^X_T|_{G^s}\|d\mu\leq -\eta T,\\
\int\log\|\psi^X_{-T}|_{G^u}\|d\mu\leq -\eta T.
\end{gather*}
If $G^c$ is trivial, then the conclusion follows already from Lemma \ref{lem:hyp-meas}. Otherwise, since $\mu$ is hyperbolic, the central Lyapunov exponent $\la^c$ corresponding to $G^c$ is nonzero. Without loss of generality, let us assume that $\la^c>0$. Let $G^{cu}=G^c\oplus G^u$, then by domination,
\[\int\log\|\psi^X_{-T}|_{G^{cu}}\|d\mu=\int\log\|\psi^X_{-T}|_{G^c}\|d\mu=-\la^c T.\]
The conclusion again follows from Lemma \ref{lem:hyp-meas}.
\end{proof}

\begin{Corollary}\label{cor:hyp-measure}
Let $X\in\mathscr{X}^1(M)\setminus\Cl(\mathcal{HT})$ be weak Kupka-Smale and $C(\si)$ be a nontrivial chain recurrence class associated with $\si\in\Sing(X)$. If $\mu$ is a nontrivial hyperbolic ergodic measure supported on $C(\si)$ with $\ind(\mu)=\ind(\si)$, then there is a $C^1$ vector field $Y$ arbitrarily close to $X$ and a periodic orbit $\ga$, such that $W^u(\si_Y,Y)\pitchfork W^s(\ga,Y)\neq \emptyset$.
\end{Corollary}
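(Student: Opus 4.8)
The plan is to combine Proposition \ref{prop:hyp-measure} with the genericity/continuity results from Lemma \ref{lem:gen_0}, together with the Liao-type intersection result of Corollary \ref{cor:Liao} (or Theorem \ref{thm:Liao}). First I would apply Proposition \ref{prop:hyp-measure} to the nontrivial hyperbolic ergodic measure $\mu$: since $X\in\xX^1(M)\setminus\Cl(\mathcal{HT})$, there is a hyperbolic periodic orbit $P$ of index $\ind(\mu)=\ind(\si)$ whose homoclinic class $H(P)$ is nontrivial and contains $\supp(\mu)$. In particular $H(P)$ is a nontrivial compact chain transitive set. The next step is to show that $P$ and $\si$ are "in the same chain recurrence class": since $\supp(\mu)\subset C(\si)$ and $H(P)$ is chain transitive with $\supp(\mu)\subset H(P)$, and since (weak) Kupka-Smale together with the connecting lemma for pseudo-orbits arguments behind Lemma \ref{lem:gen_0} tell us that a chain recurrence class containing a periodic orbit coincides with its homoclinic class, we get $P\subset C(\si)$ and hence $C(\si)=H(P)$ (here I may need to first pass to a $C^1$-close generic perturbation, but as the statement only asks for a vector field $Y$ arbitrarily close to $X$ this is harmless).

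The heart of the argument is then to produce the transverse intersection $W^u(\si_Y)\pitchfork W^s(\ga,Y)$ for some nearby $Y$. Since $\si\in C(\si)=H(P)$ and $H(P)$ is the homoclinic class of $P$, the stable manifold of $\si$ is chain-attainable from (and accumulates on) periodic orbits homoclinically related to $P$; more directly, since $\supp(\mu)\subset C(\si)$ and $\si\in C(\si)$, applying the connecting lemma of Wen-Xia \cite{WeX} one can find $Y$ arbitrarily $C^1$-close to $X$ with $Y=X$ near $\si$ such that $\si_Y$ has a homoclinic orbit $\Gamma$ (or, alternatively, such that $W^u(\si_Y)$ meets $W^s$ of a periodic orbit of index $\ind(\si)$). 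Now I would use the dominated structure: by Corollary \ref{cor:le-away-tang} (or Lemma \ref{lem:le-away-tang}) applied to $\mu$, there is a fundamental $\ind(\si)$-sequence $(\ga_n,X_n)\to(\supp(\mu),X)$ with a $T$-dominated splitting $\cN_{\ga_n}=G^s_n\oplus G^c_n\oplus G^u_n$, and $\ga_n$ is $(C,\eta,T,G^s_n)$-$\psi^{X_n}_t$-contracting at period (and correspondingly expanding on $G^u_n$). Since $\ind(\mu)=\ind(\si)$, the contracting-at-period bundle has dimension $\ind(\si)$; applying Lemma \ref{lem:pliss} gives a $\psi^{*}_t$-contracting point $x_n\in\ga_n$, and arranging (via the connecting lemma, or because $\si\in\supp(\mu)$) that $x_n\to\si$, Theorem \ref{thm:Liao} yields $W^u(\si_{X_n})\pitchfork W^s(\ga_n,X_n)\neq\emptyset$ for $n$ large, which is exactly the desired conclusion with $Y=X_n$ and $\ga=\ga_n$.

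I expect the main obstacle to be the bookkeeping needed to ensure that the periodic orbits $\ga_n$ in the fundamental sequence accumulate on $\si$ with the flow direction avoiding the forbidden cone, so that Theorem \ref{thm:Liao} genuinely applies: Theorem \ref{thm:Liao} requires the hypothesis $\ind(\si)\le i$ where $i$ is the index of the dominated splitting, which here holds with equality, and it requires a $\psi^{*}_t$-contracting sequence $x_n\to\si$. The delicate point is producing such $x_n$ converging to $\si$ rather than to some other point of $\supp(\mu)$; this is handled by a standard genericity/connecting-lemma argument (as in the proof of Proposition \ref{prop:indices-of-periodic-orbits-0} and Lemma \ref{lem:intersection}), using that $\si\in\supp(\mu)\subset C(\si)$ and that $C(\si)$ is chain transitive, so one may perturb the fundamental sequence to pass arbitrarily close to $\si$ while retaining the contracting-at-period estimate with uniform constants. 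Once this is in place, items \ref{item:gen_0-transverse-intersection} and the robustness of transverse intersections in Lemma \ref{lem:gen_0} let us conclude, and the openness of the conclusion means we need not worry about whether $Y$ itself is generic.
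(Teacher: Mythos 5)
Your first step---invoking Proposition \ref{prop:hyp-measure} to produce a hyperbolic periodic orbit $P$ with $\ind(P)=\ind(\mu)=\ind(\si)$ and $\supp(\mu)\subset H(P)$, hence $P\subset C(\si)$---is exactly how the paper begins. The problem is in how you finish. Your main route goes through Theorem \ref{thm:Liao}, and for that you need a sequence of $\psi^*_t$-contracting points $x_n\in\ga_n$ with $x_n\to\si$. You assert this can be arranged ``because $\si\in\supp(\mu)$'', but nothing in the hypotheses puts $\si$ in $\supp(\mu)$: the measure $\mu$ is merely a nontrivial ergodic measure supported somewhere in $C(\si)$, and its support can stay far from the singularity. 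Even if $\si$ did lie in $\supp(\mu)$, the Pliss points produced by Lemma \ref{lem:pliss} sit somewhere on $\ga_n$ and there is no reason they accumulate on $\si$ rather than elsewhere; forcing them to do so while retaining the contracting-at-period estimates is precisely the kind of delicate surgery the paper carries out in Lemma \ref{lem:lorenz-like} and Proposition \ref{prop:homogeneous-index}, and it cannot be dismissed as standard bookkeeping. So, as written, the Liao route has a genuine gap.

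The alternative you mention only in parentheses is the actual proof, and it is much shorter. Since $\si$ and $P$ lie in the same chain recurrence class and $X$ is weak Kupka--Smale, the connecting lemma for pseudo-orbits gives $Y$ arbitrarily $C^1$-close to $X$ with $W^u(\si_Y,Y)\cap W^s(P_Y,Y)\neq\emptyset$; no transversality is claimed at this stage, and no fundamental sequences, dominated splittings, or hyperbolic times are needed. One then counts dimensions: $\dim W^s(P)+\dim W^u(\si)=(\ind(P)+1)+(\dim M-\ind(\si))=\dim M+1>\dim M$, so one further arbitrarily small perturbation makes the intersection transverse. This is where the hypothesis $\ind(\mu)=\ind(\si)$ is really used---it makes the dimension count come out right---whereas in your argument it only enters as the hypothesis $\ind(\si)\le i$ of Theorem \ref{thm:Liao}.
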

\begin{proof}
By Proposition \ref{prop:hyp-measure}, there is a hyperbolic periodic orbit $P$ of $X$, such that $H(P)\subset C(\si)$ and $\ind(P)=\ind(\mu)$. Applying the connecting lemma for pseudo-orbits, there is a vector field $Y$ arbitrarily close to $X$ such that $W^u(\si_Y,Y)\cap W^s(P_Y,Y)\neq\emptyset$. Since $\dim W^{s}(P)+\dim W^u(\si)=(\ind(P)+1)+(\dim M-\ind(\mu))=\dim M+1$, the intersection can be made transverse by yet another arbitrarily small perturbation.
\end{proof}

\subsection{Properties for physical-like measures and Gibbs \texorpdfstring{$F$}{F}-states}\label{sect:physical-like-measure}

We will consider time-$T$ maps of flows, which are $C^1$ diffeomorphisms on $M$.
Let $f\in\diff^1(M)$ and $\mathfrak{M}(f)$ be the set of $f$-invariant Borel probability measures.
Given $x\in M$, let $\tau_f(x)$ be the set of limit points of the sequence $\{\frac{1}{n}\sum_{i=0}^{n-1}\de_{f^i(x)}\}_{n\in\NN}$ in the weak* topology.
It is well known that $\tau_f(x)\subset\mathfrak{M}(f)$ (see {\itshape e.g.} \cite{Man87}).
Following the work of Catsigeras and Enrich \cite{CE}, a measure $\mu\in\mathfrak{M}(f)$ is called {\em physical-like} if for any $\vep>0$, the set
\[B_{\vep}(\mu)=\{x\in M: d^*(\tau_f(x),\mu)<\vep\},\]
has positive Lebesgue measure, where $d^*:\mathfrak{M}\times \mathfrak{M}\to\RR$ is a metric that gives the weak* topology.
We denote the set of physical-like measures of $f$ by $\PhL(f)$.

We are mostly interested in attracting sets.
Let $U$ be any open subset of $M$ and suppose $f(\Cl(U))\subset U$. Then we can consider the dynamics of $f$ restricted to $U$. Let us denote by $\PhL(f,U)$ the set of physical-like measures of $f$ such that $\supp(\mu)\subset U$.

The diffeomorphism $f$ is said to admit a {\em dominated splitting over $U$} if there is an invariant splitting $T_UM=E\oplus F$ and constants $K>0$, $0<\la<1$, such that
\[\|Df^n|_{E(x)}\|\cdot\|Df^{-n}|_{F(f^n(x))}\|\leq K\la^n,\quad \forall n\geq 1, x, \cdots, f^n x\in U.\]
If such a dominated splitting exists, we say that a measure $\mu\in\mathfrak{M}(f)$ is a {\em Gibbs $F$-state} if $\supp(\mu)\subset U$ and
\[h_{\mu}(f)\geq \int \log|\det(Df|_{F(x)})|d\mu(x),\]
where $h_{\mu}(f)$ is the measure-theoretic entropy of $\mu$. The set of Gibbs $F$-states is denoted as $\Gibbs^F(f,U)$.  Table~\ref{table.1}, taken from~\cite{GYYZ}, summaries the properties of physical-like measures and Gibbs $F$-states.

\begin{table}
\centering
\renewcommand{\arraystretch}{1.4}
    \begin{tabular}{lcc}
    \toprule[1pt]
    {\hspace{2mm}}  {\hspace{2mm}} & {\hspace{1cm}} $\PhL(f,U)$ {\hspace{1cm}} &   $\Gibbs^F(f,U) $  \\\hline
    Existence & True & True \\
    Convexity & False & True    \\
    Compactness & True  & {\hspace{.5mm}} If $h_{ \boldsymbol{\cdot}}(f)$ is upper semi-continuous {\hspace{.5mm}} \\
    Semi-continuity &  Theorem~\ref{thm:semi-conti}  & If $h_\mu( \boldsymbol{\cdot})$ is upper semi-continuous \\
    \bottomrule[1pt]\\
    \end{tabular}
    \caption{Properties of physical-like measures and Gibbs $F$-states}\label{tab:phl-gibbs}
    \label{table.1}
\end{table}

\begin{Remark}\label{rmk:gibbs-pef}
	If for some $\mu\in \Gibbs^F(f,U)$ it satisfies all Lyapunov exponents along the $F$ bundle (at almost every point) are non-negative and all other Lyapunov exponents are negative, then combined with Ruelle's inequality \cite{Ru}, one can see that $\mu$ satisfies {\em Pesin's entropy formula}:
\[h_{\mu}(f)=\int \sum_i \max\{\la_i(x),0\}d\mu(x),\]
where $\{\la_i(x)\}_{i=1}^{\dim M}$ are the Lyapunov exponents (counting multiplicity) given by Oseledets' theorem \cite{Ose68}.
\end{Remark}

The following result is obtained in \cite{CE,CCE}, see also \cite{GYYZ}:
\begin{Theorem}\label{thm:physical-like}
Given $f\in\diff^1(M)$ and $U\subset M$ an open subset such that $f(\Cl(U))\subset U$, the set of physical-like measures $\PhL(f,U)$ is a nonempty compact subset of $\mathfrak{M}(f)$. Moreover, suppose $f$ admits a dominated splitting $T_UM=E\oplus F$ over $U$, then one has $\PhL(f,U)\subset\Gibbs^F(f,U)$.
\end{Theorem}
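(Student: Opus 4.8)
The statement is a combination of two facts from \cite{CE,CCE}: the existence and compactness of $\PhL(f,U)$, and the inclusion $\PhL(f,U)\subset\Gibbs^F(f,U)$ under the presence of a dominated splitting over $U$. The plan is to recall the two arguments in turn, keeping only what is needed for our setting (an attracting open set $U$ with $f(\Cl(U))\subset U$).

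First I would establish existence and compactness of $\PhL(f,U)$. Fix a metric $d^*$ on $\mathfrak{M}$ inducing the weak$^*$ topology. For each $\vep>0$ consider the function $\mu\mapsto\mathrm{Leb}(B_\vep(\mu))$; an averaging/compactness argument (Catsigeras--Enrich) shows that for every $\vep>0$ there is $\mu$ with $\mathrm{Leb}(B_\vep(\mu))>0$, because $\mathrm{Leb}$-a.e.\ $x$ has $\tau_f(x)\neq\emptyset$ (as $\tau_f(x)\subset\mathfrak{M}(f)$, which is compact) and $\mathfrak{M}(f)$ can be covered by finitely many $\vep$-balls, one of which must capture a positive-measure set of orbits. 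Taking $\vep=1/k$, one gets $\mu_k\in\PhL_{1/k}$; passing to a weak$^*$ limit $\mu_\infty$ and using that $f(\Cl(U))\subset U$ forces $\supp(\mu_\infty)\subset U$, a routine verification gives $\mu_\infty\in\PhL(f,U)$, so this set is nonempty. Compactness: if $\mu_k\in\PhL(f,U)$ converges to $\mu$ then for any $\vep>0$ and $k$ large, $B_{\vep/2}(\mu_k)\subset B_\vep(\mu)$, and the former has positive Lebesgue measure; hence $\mu\in\PhL(f,U)$. That $\PhL(f,U)$ is a subset of the compact set $\mathfrak{M}(f)$ completes the argument.

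Second, I would prove $\PhL(f,U)\subset\Gibbs^F(f,U)$ assuming the dominated splitting $T_UM=E\oplus F$. Let $\mu\in\PhL(f,U)$. The key is a Lebesgue-to-entropy mechanism: for a positive-measure set of points $x$, the empirical measures $\frac1n\sum_{i=0}^{n-1}\delta_{f^i(x)}$ accumulate near $\mu$, and along the dominated direction $F$ the iterates $Df^n|_{F}$ expand volume at a controlled rate. Using absolute continuity and a Margulis--Ruelle / Ledrappier-type estimate (the entropy of an empirical average along a genuine orbit dominates the log-Jacobian along $F$ averaged against the measure), one obtains in the limit
\[
h_\mu(f)\ \geq\ \int\log|\det(Df|_{F(x)})|\,d\mu(x).
\]
More precisely one shows the Lebesgue-almost-every-point orbits produce limit measures whose entropy is bounded below by the $F$-Jacobian integral, and then transfers this bound to $\mu$ itself by affinity of entropy and of the integral under ergodic decomposition together with the weak$^*$ convergence; this is exactly the content of \cite[Theorem B]{CCE}. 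Since $\supp(\mu)\subset U$ by definition of $\PhL(f,U)$, we conclude $\mu\in\Gibbs^F(f,U)$.

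The main obstacle is the second inclusion: proving that the entropy lower bound survives the limiting procedure. One has to control the distortion of $Df^n|_F$ along long orbit segments uniformly (this is where domination, not mere invariance of the splitting, is essential), and combine it with the absolute continuity of Lebesgue measure to convert ``positive Lebesgue measure of orbits shadowing $\mu$'' into a genuine entropy estimate for $\mu$; the subtlety is that physical-like measures need not be ergodic, so one must argue through the ergodic decomposition and use that both $h_\bullet(f)$ and $\mu\mapsto\int\log|\det Df|_F|\,d\mu$ are affine. Since all of this is carried out in \cite{CE,CCE} (and summarized in \cite{GYYZ}), in the paper it suffices to cite these references and note that the open attracting set $U$ with $f(\Cl(U))\subset U$ is precisely the framework treated there.
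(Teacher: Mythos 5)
The paper does not prove this theorem at all: it is stated as a result imported from \cite{CE,CCE} (see also \cite{GYYZ}), exactly the references you invoke. Your sketch of the Catsigeras--Enrich covering/limit argument for nonemptiness and compactness, and of the Catsigeras--Cerminara--Enrich entropy estimate for the inclusion into $\Gibbs^F(f,U)$, is an accurate account of those sources, so your proposal matches the paper's approach (citation) and is correct.
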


As a simple consequence, one has

\begin{Corollary}\label{cor:physical-like}
Every quasi-attractor of a $C^1$ diffeomorphism supports a physical-like measure.
\end{Corollary}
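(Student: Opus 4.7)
The plan is to exploit the attracting-set structure of a quasi-attractor together with the existence result for physical-like measures in each trapping region, and then pass to a weak$^*$ limit. Let $\Lambda$ be a quasi-attractor of $f\in\diff^1(M)$. By the definition of Hurley, $\Lambda$ is an intersection of topological attractors, so there is a nested sequence of open sets $\{U_n\}_{n\ge1}$ with $f(\Cl(U_n))\subset U_n$ forming a neighborhood basis of $\Lambda$, i.e. $\bigcap_n \Cl(U_n)=\Lambda$. Setting $\Lambda_n=\bigcap_{k\ge 0}f^k(U_n)$, each $\Lambda_n$ is a compact attracting set whose basin contains $U_n$, and $\Lambda=\bigcap_n \Lambda_n$.

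Next, apply Theorem~\ref{thm:physical-like} to each trapping region: $\PhL(f,U_n)\ne\emptyset$, so pick $\mu_n\in\PhL(f,U_n)$. In particular $\mu_n$ is $f$-invariant and $\supp(\mu_n)\subset\Lambda_n$. Since the space of Borel probability measures on $M$ is weak$^*$ compact, extract a subsequence $\mu_{n_k}\to\mu$. The limit $\mu$ is still $f$-invariant. Because the $\Lambda_n$ are nested compact sets with $\bigcap_n \Lambda_n=\Lambda$, standard weak$^*$ convergence arguments give $\supp(\mu)\subset\Lambda$.

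It remains to verify that $\mu$ itself is physical-like (this is the only nontrivial step in the argument). Fix $\varepsilon>0$ and choose $k$ so large that $d^*(\mu_{n_k},\mu)<\varepsilon/2$. For any $x\in B_{\varepsilon/2}(\mu_{n_k})$ there is some $\nu\in\tau_f(x)$ with $d^*(\nu,\mu_{n_k})<\varepsilon/2$, whence by the triangle inequality
\[
d^*(\nu,\mu)\le d^*(\nu,\mu_{n_k})+d^*(\mu_{n_k},\mu)<\varepsilon,
\]
so $d^*(\tau_f(x),\mu)<\varepsilon$ and $x\in B_\varepsilon(\mu)$. Thus $B_{\varepsilon/2}(\mu_{n_k})\subset B_\varepsilon(\mu)$, and since $\mu_{n_k}\in\PhL(f)$ the smaller set has positive Lebesgue measure, hence so does $B_\varepsilon(\mu)$. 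As $\varepsilon>0$ was arbitrary, $\mu\in\PhL(f)$, and $\supp(\mu)\subset\Lambda$ gives the corollary.

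The only potentially subtle point is the interaction between the weak$^*$ limit and the set-valued object $\tau_f(x)$ appearing in the definition of $B_\varepsilon(\mu)$; the triangle-inequality step above is what turns convergence of the $\mu_{n_k}$ into the required containment of asymptotic basins, and is the crux of the proof.
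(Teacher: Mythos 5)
Your proof is correct and is essentially the argument the paper has in mind (the paper offers no proof, calling the corollary a ``simple consequence'' of Theorem~\ref{thm:physical-like}): exhaust the quasi-attractor by nested trapping regions $U_n$, pick $\mu_n\in\PhL(f,U_n)$, and pass to a weak$^*$ limit whose support lies in $\bigcap_n\Lambda_n=\Lambda$. Your triangle-inequality verification that the limit is again physical-like is fine, though it only re-derives the closedness of $\PhL(f,U_1)$, which is already asserted in the compactness statement of Theorem~\ref{thm:physical-like} (note all $\mu_n$ lie in $\PhL(f,U_1)$ since the $U_n$ are nested).
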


The following result gives upper semi-continuity for the physical-like measures, which is crucial for our approximation argument using $C^2$ vector fields.
\begin{Theorem}[\cite{GYYZ}]\label{thm:semi-conti}
Let $f\in \diff^1(M)$ and $U\subset M$ be an open subset with $f(\Cl(U))\subset U$. Assume that $f$ admits a dominated splitting $T_UM=E\oplus F$ over $U$. Assume that $\{f_n\}$ is a sequence of $C^1$ diffeomorphisms converging to $f$ in the $C^1$ topology and $\mu_n$ a physical-like measure of $f_n$ with $\supp(\mu_n)\subset U$.
Then any weak* limit of $\mu_n$ is a Gibbs $F$-state of $f$.
\end{Theorem}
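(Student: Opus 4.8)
# Proof proposal for Theorem~\ref{thm:semi-conti}

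The plan is to combine two ingredients: the upper semi-continuity of metric entropy along dominated-splitting-compatible sequences (which fails in general, but can be recovered on the $F$-bundle via a Ruelle-type estimate), and the continuity of the integrand $x\mapsto\log|\det(Df|_{F(x)})|$ under the convergence $f_n\to f$ together with $\mu_n\to\mu$. First I would fix a weak* convergent subsequence $\mu_{n_k}\to\mu$; since each $\supp(\mu_{n_k})\subset U$ and $f(\Cl(U))\subset U$, the limit $\mu$ is $f$-invariant and supported in $\Cl(U)$, hence (after shrinking $U$ slightly, using $f(\Cl(U))\subset U$) in $U$. The dominated splitting $T_UM=E\oplus F$ extends continuously to a neighborhood of $\Cl(U)$ and is robust under $C^1$-perturbation (cf.\ \cite{BDV05}), so for $k$ large $f_{n_k}$ admits a dominated splitting $E_{n_k}\oplus F_{n_k}$ over $U$ with $E_{n_k}\to E$, $F_{n_k}\to F$ uniformly, and with uniform domination constants. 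In particular $\dim F_{n_k}=\dim F$ and the functions $\varphi_{n_k}(x)=\log|\det(Df_{n_k}|_{F_{n_k}(x)})|$ converge uniformly on $\Cl(U)$ to $\varphi(x)=\log|\det(Df|_{F(x)})|$, which is continuous. Hence $\int\varphi_{n_k}\,d\mu_{n_k}\to\int\varphi\,d\mu$.

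Next I would use that each $\mu_{n_k}$ is physical-like for $f_{n_k}$, so by Theorem~\ref{thm:physical-like} it is a Gibbs $F$-state: $h_{\mu_{n_k}}(f_{n_k})\ge\int\varphi_{n_k}\,d\mu_{n_k}$. The goal is therefore
\[
h_\mu(f)\ \ge\ \limsup_k h_{\mu_{n_k}}(f_{n_k})\ \ge\ \limsup_k\int\varphi_{n_k}\,d\mu_{n_k}\ =\ \int\varphi\,d\mu,
\]
which would give exactly that $\mu\in\Gibbs^F(f,U)$. So everything reduces to the first inequality: an upper semi-continuity statement for metric entropy along the perturbed sequence. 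This is the heart of the matter. The classical obstruction is that $h_\bullet(\cdot)$ is not upper semi-continuous in the $C^1$ topology for general diffeomorphisms. The way around it here is that we only need an upper bound matching $\int\varphi\,d\mu$, not the full entropy; one uses a Ruelle-type inequality uniform in the perturbation. Concretely, for any invariant measure, the entropy of $f_{n_k}$ is bounded above by the integral of the sum of positive Lyapunov exponents, and along the dominated decomposition one splits these into the $E_{n_k}$-part and the $F_{n_k}$-part: since $E_{n_k}$ is uniformly dominated by $F_{n_k}$ but need not be uniformly contracting, one cannot discard its positive exponents outright. Instead, I would pass to the limit using Oseledets together with the uniform convergence of derivative cocycles: the Lyapunov spectrum (with multiplicity) of $(f_{n_k},\mu_{n_k})$ along $F_{n_k}$ has upper limit points bounded by the spectrum of $(f,\mu)$ along $F$, by upper semi-continuity of the top Lyapunov exponents of uniformly convergent cocycles (a subadditive-ergodic-theorem / Kingman argument applied to $\log\|\wedge^j Df_{n_k}|_{F_{n_k}}\|$). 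Combined with the Gibbs inequality $h_{\mu_{n_k}}(f_{n_k})\ge\int\varphi_{n_k}\,d\mu_{n_k}$, which forces all $F_{n_k}$-exponents to be non-negative $\mu_{n_k}$-a.e.\ and all $E_{n_k}$-exponents negative, one deduces that each $\mu_{n_k}$ satisfies Pesin's entropy formula (Remark~\ref{rmk:gibbs-pef}), so $h_{\mu_{n_k}}(f_{n_k})=\int\varphi_{n_k}\,d\mu_{n_k}$ exactly. Then the desired inequality is immediate from the convergence of integrals, and one concludes $h_\mu(f)\ge\int\varphi\,d\mu$ by the standard upper semi-continuity of entropy on the set of measures satisfying Pesin's formula with a fixed dominated splitting — or more directly, by Ruelle's inequality applied to $(f,\mu)$ together with the fact that the limiting $F$-exponents are non-negative (inherited in the limit) and the $E$-exponents non-positive.

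The main obstacle I anticipate is precisely the control of Lyapunov exponents of the perturbed measures $\mu_{n_k}$ along the splitting in the limit: one must rule out that a positive exponent "leaks" from the $F_{n_k}$-bundle into $E_{n_k}$ (or vice versa) in a way that destroys the Gibbs/Pesin balance in the limit. This is handled by the uniformity of the domination constants (so the gap between the $E$- and $F$-spectra is bounded below uniformly in $k$) and by semicontinuity of exterior-power cocycle growth rates; the reference \cite{GYYZ} presumably carries out this estimate in detail, and I would cite it for the technical core while presenting the skeleton above. A secondary, routine point is checking that $\supp(\mu)\subset U$ (not merely $\Cl(U)$), which follows from $f(\Cl(U))\subset U$ and invariance: $\supp(\mu)=f(\supp(\mu))\subset f(\Cl(U))\subset U$.
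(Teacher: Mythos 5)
The paper does not prove this statement; it is quoted verbatim from \cite{GYYZ}, so there is no in-paper argument to compare yours against. Judged on its own terms, your proposal has a genuine gap at its core step. You claim that the Gibbs inequality $h_{\mu_{n_k}}(f_{n_k})\ge\int\varphi_{n_k}\,d\mu_{n_k}$ ``forces all $F_{n_k}$-exponents to be non-negative $\mu_{n_k}$-a.e.\ and all $E_{n_k}$-exponents negative,'' and from this you deduce Pesin's entropy formula for each $\mu_{n_k}$. This reverses the implication in Remark~\ref{rmk:gibbs-pef}: the sign conditions on the exponents are a \emph{hypothesis} there, not a consequence of being a Gibbs $F$-state. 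A dominated splitting only separates the spectra of $E$ and $F$; nothing in the statement prevents $E$ from carrying positive exponents or $F$ from carrying negative ones, and in either case the Gibbs inequality is strictly weaker than Pesin's formula and Ruelle's inequality yields no contradiction. (In the paper's application the sign conditions are obtained from entirely separate dynamical input --- Lemma~\ref{lem:ergodic-measure}, which uses aperiodicity of the class and the away-from-tangencies hypothesis --- and are not available in the generality of the theorem.)

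The second problem is the concluding step. Even granting convergence $\int\varphi_{n_k}\,d\mu_{n_k}\to\int\varphi\,d\mu$ (which is fine), you still need the lower bound $h_\mu(f)\ge\int\varphi\,d\mu$. Your two suggested routes both fail: upper semi-continuity of $\nu\mapsto h_\nu$ jointly in the measure and the $C^1$ map is precisely what is false in general and is the content of the theorem, so invoking it is circular; and Ruelle's inequality bounds $h_\mu(f)$ from \emph{above} by the sum of positive exponents, which is the wrong direction. The actual proof in \cite{GYYZ} does not transport entropy from the $\mu_{n_k}$ at all; it bounds $h_\mu(f)$ from below directly, exploiting the defining property of physical-like measures (they attract sets of positive Lebesgue measure) to control the $\mu$-measure of dynamical balls by the volume contraction of $Df^{-n}$ along $F$, uniformly in the perturbation, in the spirit of the Catsigeras--Cerminara--Enrich argument. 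Your skeleton would need to be rebuilt around such a direct estimate rather than around entropy semi-continuity and Pesin's formula for the approximating measures.
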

\begin{Remark}
	The semi-continuity of $\Gibbs^F(f,U)$ depends on the continuity of the metric entropy as a function of the diffeomorphism, which holds for diffeomorphisms away from tangencies~\cite{LVY}. However, such result is lacking for singular flows due to the possibility of entropy vanishing near singularities. See~\cite[Theorem G]{SYY} for some related discussion. To overcome this, we use
	Theorem \ref{thm:semi-conti} which establishes certain semi-continuity of the subset $\PhL(f,U)\subset\Gibbs^F(f,U)$.
\end{Remark}

\subsection{A characterization of measures satisfying Pesin's entropy formula}

Now suppose $f$ is a $C^2$ diffeomorphism on $M$ and $\mu\in\mathfrak{M}(f)$. For $\mu$-a.e. $x$, let us denote by $\chi_1(x)<\chi_2(x)<\cdots<\chi_{r(x)}(x)$ the distinctive Lyapunov exponents at $x$ given by Oseledets' theorem \cite{Ose68}. Let $T_xM=E_1(x)\oplus\cdots\oplus E_{r(x)}(x)$ be the corresponding splitting such that for every non-zero $v\in E_i(x)$, it satisfies
\[\lim_{n\to\pm\infty}\frac{1}{n}\log\|Df^n_x(v)\|=\chi_i(x).\]
Let us define
\[E^u(x)=\oplus_{\chi_i(x)>0}E_i(x).\]
Then there is a $C^2$ immersed submanifold $W^u(x)$, called the unstable manifold, tangent to $E^u(x)$ at $x$ (see \cite{BaP07}). Moreover, the unstable manifold $W^u(x)$ is characterized as follows:
\[W^u(x)=\{y\in M: \limsup_{n\to\infty}\frac{1}{n}\log d(f^{-n}(x),f^{-n}(y))<0\}.\]

Given any immersed submanifold $W\subset M$, denote by $m_W$ the Lebesgue measure on $W$ induced by the Riemannian metric of $M$. Let $\xi$ be any $\mu$-measurable partition of $M$. Then for $\mu$-a.e. $x$, there is a probability measure $\mu_x^{\xi}$ defined on $\xi(x)$, the element of $\xi$ containing $x$. The partition $\xi$ is said to be {\em subordinate to $W^u$} if for $\mu$-a.e. $x$, it satisfies that $\xi(x)\subset W^u(x)$ and $\xi(x)$ contains a neighborhood of $x$, open in the submanifold topology of $W^u(x)$. We say that $\mu$ has {\em $u$-absolutely continuous conditional measures} if for every $\mu$-measurable partition $\xi$ subordinate to $W^u$, $\mu_x^{\xi}$ is absolutely continuous with respect to $m^u_x=m_{W^u(x)}$ for $\mu$-a.e. $x$.

\begin{Theorem}
  [\cite{LeY85}, Theorem A and Corollary 6.1.4]\label{thm:LeY}
  Let $f:M\to M$ be a $C^2$ diffeomorphism, $\mu\in\mathfrak{M}(f)$. Then $\mu$ has $u$-absolutely continuous conditional measures if and only if it satisfies Pesin's entropy formula. Moreover, when $\mu$ satisfies Pesin's entropy formula, there is a $\mu$-measurable partition $\xi$ subordinate to $W^u$ such that for $\mu$-a.e. $x$, the density function $d\mu^{\xi}_x/dm^u_x$ is a strictly positive $C^1$ function on $\xi(x)$.
\end{Theorem}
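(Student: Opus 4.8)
The plan is to reconstruct the Ledrappier--Young characterization; this is a statement about the $C^2$ diffeomorphism $f$ alone, independent of the flow machinery above, and it is really the combination of Pesin theory with a sharp analysis of entropy along unstable manifolds. I will freely use Ruelle's inequality \cite{Ru} (cited in Remark \ref{rmk:gibbs-pef}), Oseledets' theorem \cite{Ose68}, and standard Pesin theory. The first block of work is purely Pesin-theoretic and I would cite \cite{BaP07} rather than reprove it: for $\mu$-a.e.\ $x$ the unstable manifold $W^u(x)$ is a $C^2$ immersed submanifold tangent to $E^u(x)$, depending measurably on $x$ and, on each Pesin (Lyapunov regularity) block, with a uniformly controlled geometry; the holonomy maps of the family $\{W^u(x)\}$ between transversals are absolutely continuous with bounded Jacobians on each block; and in Lyapunov charts over a block, $f$ is uniformly hyperbolic with bounded distortion along the unstable direction.

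The second step is to construct a measurable partition $\xi$ of $M$ subordinate to $W^u$ with $f^{-1}\xi\ge\xi$, such that the partitions $f^{-n}\xi$ generate the point partition as $n\to\infty$ and the partitions $f^{n}\xi$ decrease to the partition into entire unstable manifolds; each atom $\xi(x)$ is then an open neighbourhood of $x$ inside $W^u(x)$. This is obtained by cutting the Pesin blocks with the chart-preimages of a generic grid, following Ledrappier--Strelcyn and Ma\~n\'e. Rokhlin's theory of measurable partitions then furnishes conditional measures $\mu^\xi_x$ on $\xi(x)$, and the Abramov--Rokhlin formula gives
\[
 h_\mu(f)\;=\;H_\mu\big(f^{-1}\xi\mid\xi\big)\;=\;\int_M -\log\mu^\xi_x\big((f^{-1}\xi)(x)\big)\,d\mu(x),
\]
which expresses the entropy entirely in terms of the conditional measures along unstable plaques.

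The easy implication, absolute continuity $\Rightarrow$ Pesin's formula, is then a change of variables. If $\mu^\xi_x\ll m^u_x$, one compares $f_*\mu^\xi_x$ with $\mu^{f\xi}_{fx}$: the Radon--Nikodym derivative of $f$ along $W^u$ with respect to Lebesgue is $|\det(Df|_{E^u})|$, and absolute continuity of the unstable holonomies lets one identify the relevant quotient measures; integrating the resulting identity against $\mu$ turns the displayed formula into $h_\mu(f)=\int\log|\det(Df|_{E^u(x)})|\,d\mu=\int\sum_i\max\{\lambda_i(x),0\}\,d\mu$, which together with Ruelle's reverse inequality is Pesin's entropy formula. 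This direction is essentially Ledrappier--Strelcyn and poses no real difficulty.

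The hard direction, Pesin's formula $\Rightarrow$ absolute continuity together with positivity and $C^1$-regularity of the density, is where the work lies, and is the step I expect to be the main obstacle. One runs the martingale $x\mapsto\mu^{f^n\xi}_x$, which converges to $\delta_x$ because $\bigvee_n f^{-n}\xi$ is the point partition, and measures the rate of this convergence against the shrinking Lebesgue plaques $f^{-n}(\text{plaque around }f^n x)$; bounded distortion on Pesin blocks, combined with a Shannon--McMillan--Breiman and Borel--Cantelli argument along $W^u(x)$, shows that \emph{equality} $h_\mu(f)=\int\log|\det(Df|_{E^u})|\,d\mu$ forces the densities of $\mu^{f^n\xi}_x$ with respect to Lebesgue to remain uniformly bounded above and below at a definite scale, hence in the limit to produce a genuine density $\rho_x=d\mu^\xi_x/dm^u_x$ bounded away from $0$ and $\infty$. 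Finally $\rho_x$ must satisfy the transfer relation dictated by $f|_{\xi}$; solving it as a convergent backward product and using that the unstable Jacobian $y\mapsto|\det(Df|_{E^u(y)})|$ is $C^1$ along $W^u$ (here the $C^2$ hypothesis on $f$ enters, via the $C^{1+\alpha}$ dependence of $E^u$ along $W^u$) upgrades $\rho_x$ to a strictly positive $C^1$ function on $\xi(x)$. The two delicate points are (i) making the entropy--rate comparison valid $\mu$-a.e.\ rather than block-by-block, which requires controlling distortion uniformly across the non-uniform Pesin blocks, and (ii) the regularity bootstrap for $\rho_x$ from the fixed-point relation.
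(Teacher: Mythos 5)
The paper does not prove Theorem \ref{thm:LeY}: it is quoted verbatim from \cite{LeY85} (Theorem A and Corollary 6.1.4) and used as a black box, so there is no internal argument to compare yours against. Judged on its own terms, your outline has the right architecture of the Ledrappier--Young proof --- Pesin blocks and absolute continuity of unstable holonomies, an increasing measurable partition $\xi$ subordinate to $W^u$, the expression of the entropy as $H_\mu(f^{-1}\xi\mid\xi)$, the change-of-variables argument for the easy direction, and the infinite backward product of unstable Jacobians (where the $C^2$ hypothesis enters) for the positivity and $C^1$-regularity of the density. One caveat before the main point: the identity $h_\mu(f)=H_\mu(f^{-1}\xi\mid\xi)$ for a subordinate partition is itself one of the substantial theorems of \cite{LeY85}; it is not the Abramov--Rokhlin formula (which concerns skew products) and cannot simply be invoked.

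The genuine gap is in the hard direction. Comparing the exponential decay rate of the $\mu^{\xi}_x$-mass of the dynamical plaques $(f^{-n}\xi)(x)$ with that of their Lebesgue mass, via Shannon--McMillan--Breiman and bounded distortion, only shows that the two measures have the same local dimension along this net; mutually singular measures can share that property, so ``densities uniformly bounded above and below at a definite scale'' does not follow, and a Borel--Cantelli refinement will at best give subexponential ratios along subsequences, never bounded Radon--Nikodym derivatives. What \cite{LeY85} actually do is construct the candidate conditionals $\lambda_x$ explicitly, with density on $\xi(x)$ proportional to $y\mapsto\prod_{n\ge1}J^u(f^{-n}x)/J^u(f^{-n}y)$ with respect to $m^u_x$, prove $H_\mu(f^{-1}\xi\mid\xi)\le\int\log|\det(Df|_{E^u})|\,d\mu$ with the defect identified as a relative entropy of $\mu^\xi_x$ with respect to $\lambda_x$, and then use the strict concavity of $\log$ (the equality case of Jensen's inequality) to conclude $\mu^\xi_x=\lambda_x$ when Pesin's formula holds. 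That rigidity step is the heart of the theorem and is missing from your sketch; once it is in place, the explicit product formula does immediately yield the strictly positive $C^1$ density, as you indicate at the end.
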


\section{Proof of the main proposition}
\label{sect:proof-of-main-prop}

In this section, we prove Proposition \ref{prop:main-proposition}.
Throughout this section, let $X\in\xX^1(M)\setminus\Cl(\mathcal{HT})$ be a $C^1$ generic vector field, $\si\in\Sing(X)$, and $C(\si)$ a nontrivial Lyapunov stable chain recurrence class.
Assume that there is a partially hyperbolic splitting $T_{C(\si)}M = E^{ss}\oplus F$, such that $E^{ss}$ is contracting and $\dim E^{ss}=\ind(\si)-1$.

By Proposition \ref{prop:homogeneous-index}, all singularities in $C(\sigma)$ are Lorenz-like. The following theorem proves Proposition \ref{prop:main-proposition} in the case $\dim F=2$.
\begin{Theorem}[{\cite[Theorem 1.5]{GXYZ}}]\label{thm.gxyz}
  Let $X\in\xX^1(M)$ be a $C^1$ generic vector field. Suppose $C(\sigma)$ is a non-trivial chain recurrence class of a singularity $\sigma$ satisfying the following conditions:
  \begin{itemize}
    \item all singularities in $C(\sigma)$ are Lorenz-like;
    \item there is a partially hyperbolic splitting $T_{C(\sigma)}M=E^{s}\oplus F$ with respect to the tangent flow, where $E^{s}$ is contracting and $\dim F=2$.
  \end{itemize}
  Then $C(\sigma)$ is a homoclinic class of a periodic orbit.
\end{Theorem}

Therefore, in the rest of this section, we will assume
$\dim F\geq 3$, which implies $\dim E^u(\si)\geq 2$.

\subsection{Idea of the proof}\label{sect:idea-of-pf}

As discussed in Section \ref{sect:first-reduction}, we will use the mechanism (Section \ref{sect:matching}) developed in \cite{LGW05} for the proof of our main proposition. Let us explain how this can be done. Suppose there is a fundamental sequence $\fF=\{(\ga_n,X_n)\}$ admitting a dominated splitting $\cN_{\ga_n}=G^{cs}_n\oplus G^{cu}_n$ of index $i=\ind(\si)$, such that $(\ga_n,X_n)\to (C(\si),X)$, then the following results hold:
\begin{itemize}
\item $B(C(\si))\subset \De(\fF)$, as $C(\si)$ is the limit of $\fF$.
\item There is a dominated splitting $\wt{\cN}_{\De(\fF)}=G^{cs}\oplus G^{cu}$ with respect to the extended linear Poincar\'e flow, and $\dim G^{cs}=\ind(\si)$ (see Remark \ref{rmk:fundamental-sequences}).
\item There is a dominated splitting $E^u(\si)=E^c(\si)\oplus E^{uu}(\si)$ with respect to the tangent flow with $\dim E^c(\si)=1$ (see Remark \ref{rmk:elp-dom}).
\item Since $C(\si)$ is nontrivial and Lyapunov stable, it follows from Lemma \ref{lem:gen_1} that $B(C(\si))\cap (E^s(\si)\oplus E^{uu}(\si))\setminus (E^s(\si)\cup E^{uu}(\si))\neq\emptyset$. By the first item above, $B(C(\si))\subset \De(\fF)$, hence
\[\De(\fF)\cap (E^s(\si)\oplus E^{uu}(\si))\setminus (E^s(\si)\cup E^{uu}(\si))\neq\emptyset,\]
which is a contradiction to Lemma \ref{lem:elp-dom2} (see also Remark \ref{rmk:elp-dom}), or to Lemma \ref{lem:matching}.
\end{itemize}

Therefore, for the proof of Proposition \ref{prop:main-proposition}, it is sufficient to obtain such a fundamental sequence assuming that $C(\si)$ is aperiodic. Note that in this argument, a key requirement is that the limit of the fundamental sequence is the entire chain recurrence class $C(\si)$.

We remark that, assuming $C(\si)$ is aperiodic, it is not difficult to show the existence of a fundamental sequence $\fF_0$ of $X$ admitting a dominated splitting of index $i=\ind(\si)$ and having a limit $\La\subset C(\si)$ (but possibly a proper subset). In fact, assuming $C(\si)$ is aperiodic, it can not be {\em locally star}\footnote{A chain recurrence class $\La$ of $X$ is called {\em locally star} if there is a neighborhood $\cU$ of $X$ and a neighborhood $U$ of $\La$ such that for any $Y\in\cU$, any periodic orbit of $Y$ in $U$ is hyperbolic.}. Otherwise, since all singularities in $C(\si)$ are Lorenz-like and have the same index (Proposition \ref{prop:homogeneous-index}), then \cite[Theorem 3.7]{SGW} implies that $C(\si)$ is singular-hyperbolic, and by \cite[Corollary E]{PYY}, it is a homoclinic class, which is a contradiction. As $X$ is not locally star, there exists a fundamental sequence $\fF_0=\{(\ga_n,X_n)\}$ of $X$ with a limit $\La\subset C(\si)$, such that $\ga_n$ is non-hyperbolic for all $n$. By partial hyperbolicity and Proposition \ref{prop:indices-of-periodic-orbits-1}, we have $\ind(\ga_n)=\ind(\si)-1$. By Lemma \ref{lem:BasicAwayTang}, $\fF_0$ admits a dominated splitting of index $i=\ind(\si)$. Moreover, by Proposition \ref{prop:non-singular-set}, $\La$ contains a singularity, say $\si$. However, the problem is that we have no control over how the fundamental sequence approaches $\si$. More precisely, we can not be sure that the sequence will step into the ``restricted area'' $(E^s(\si)\oplus E^{uu}(\si))\setminus (E^s(\si)\cup E^{uu}(\si))$ and as such, the mechanism of \cite{LGW05} (Lemma \ref{lem:elp-dom2}) can hardly be applied.

The key observation is that, since $X$ is $C^1$ generic and $C(\si)$ is Lyapunov stable, from Lemma \ref{lem:gen_1} we know that there is a residual subset $R$ of the local unstable manifold $W^u_{loc}(\si)$ such that for any $x\in R$, the forward orbit of $x$ is dense in $C(\si)$. Moreover, if there is a dominated splitting $E^u(\si)=E^c(\si)\oplus E^{uu}(\si)$, there is also a residual subset $R'$ of the local strong unstable manifold $W^{uu}_{loc}(\si)$ such that for any $x\in R'$, $\orb^+(x)$ is dense in $C(\si)$. In particular, if the limit $\La$ of a fundamental sequence of $X$ contains an open set in $W^{u}_{loc}(\si)$ (or contains an open set in $W^{uu}_{loc}(\si)$), then $\La\supset C(\si)$.

To obtain a fundamental sequence of $X$ such that its limit contains an open subset of $W^u_{loc}(\si)$ (or $W^{uu}_{loc}(\si)$), we use an argument involving $C^2$ vector fields and physical-like measures.
Let us be more precise. Assume that $C(\si)$ is aperiodic. We consider a sequence of $C^2$ vector fields $\{X_n\}$ that converges to $X$ in the $C^1$ topology. For each $X_n$, we show that the continuation of $C(\si)$ for $X_n$ supports a nontrivial physical-like measure $\mu_n$ that satisfies Pesin's entropy formula. Since $X_n$ is $C^2$, Theorem \ref{thm:LeY} implies that the measure $\mu_n$ is ``$u$-saturated'' in the sense that for $\mu_n$-a.e. $x$, the unstable manifold at $x$ is contained in the support of $\mu_n$. We will make use of this property of $\mu_n$ and show that the Hausdorff limit of $\supp(\mu_n)$ contains an open set in $W^{u}_{loc}(\si)$ (or an open set in $W^{uu}_{loc}(\si)$), and hence contains $C(\si)$. Moreover, we show that each $\supp(\mu_n)$ can be approximated by periodic orbits admitting a dominated splitting of index $i=\ind(\si)$ (uniform in $n$) with respect to the linear Poincar\'e flow\footnote{In fact, we will consider a measure $\mu'_n$ that is the restriction of $\mu_n$ to a $\phi^{X_n}_t$-invariant set $\La_n$ with $\mu_n(\La_n)\to 1$ as $n\to\infty$. See Section \ref{sect:hyperbolic-pts}.}. It follows that each $\supp(\mu_n)$ and hence $C(\si)$ admit a dominated splitting of index $i=\ind(\si)$ with respect to the linear Poincar\'e flow.
Then a contradiction will be obtained by applying the mechanism of \cite{LGW05} (Lemma \ref{lem:matching}).

\subsection{Properties for invariant measures}\label{sect:inv-measures}

Since $X$ is away from homoclinic tangencies, there is an open neighborhood $\cU$ of $X$ and numbers $C>0$, $\eta>0$, $\de>0$, $T>0$ given by Lemma \ref{lem:BasicAwayTang}, such that any periodic orbit $\ga$ of any vector field $Z\in \cU$ with $\tau(\ga)\geq T$ admits a $T$-dominated splitting $\cN_{\ga}=G^s\oplus G^c\oplus G^u$ with respect to $\psi^Z_t$, and satisfies other properties listed there.

For simplicity, let us assume that $T=1$.
Moreover, we assume that $\cU$ is small enough so that the following properties are satisfies for every vector field $Z$ in $\cU$:
\begin{enumerate}[(P1)]
\item the continuation of every singularity $\rho\in C(\si)$ is well-defined and Lorenz-like with the same index as that of $\rho$, which is equal to $\ind(\si)$ (Proposition \ref{prop:homogeneous-index}); \label{P:lorenz-like}
\item the continuation of $C(\si)$ is well-defined and is close to $C(\si)$ in the Hausdorff topology, i.e., $C(\si_Z,Z)\to C(\si)$ as $\|Z-X\|_{C^1}\to 0$ (item \ref{item:gen_0-continuity-chain-class} of Lemma \ref{lem:gen_0}); \label{P:continuity}
\item there is a partially hyperbolic splitting $T_{C(\si_Z,Z)}M = E^{ss}_Z\oplus F_Z$, $\dim E^{ss}_Z=\ind(\si)-1$. \label{P:ph}
\item there is an open neighborhood $U$ of $C(\si)$ (independent of $Z$) such that $\phi^Z_1(\Cl(U))\subset U$ and every singularity in $U$ is a continuation of some singularity in $C(\si)$; moreover, the partially hyperbolic splitting $T_{C(\si_Z,Z)}M = E^{ss}_Z\oplus F_Z$ can be extended to $U$ so that $\phi^Z_1$ admits a partially hyperbolic splitting over $U$, {\it i.e.} a dominated splitting $T_UM= E^{ss}_Z\oplus F_Z$ such that $E^{ss}_Z$ is uniformly contracting (\cite[Theorem 6]{BGW07}). \label{P:ph-nbhd}
\end{enumerate}

For any $Z\in\cU$ and any invariant measure $\mu$ of $\phi^Z_1$, let $\la_1(\mu,Z)\leq\la_2(\mu,Z)\leq\cdots\leq\la_{\dim M}(\mu,Z)$ be the Lyapunov exponents of $\mu$ for the diffeomorphism $\phi^Z_1$. That is,
\begin{equation}\label{eq:int-le}
\la_i(\mu,Z)=\int \la_i(x,\phi^Z_1)d\mu(x),
\end{equation}
where $\la_1(x,\phi^Z_1)\leq \cdots\leq\la_i(x,\phi^Z_1)\leq\cdots\leq\la_{\dim M}(x,\phi^Z_1)$ are the Lyapunov exponents at $x$, given by Oseledets' Theorem \cite{Ose68}.
Assume that $\supp(\mu)\subset U$, then by partial hyperbolicity \ref{P:ph-nbhd}, there exists $c_0<0$ such that $\la_{\ind(\si)-1}(\mu,Z)<c_0<0$. We will call $\la_{\ind(\si)+1}(\mu,Z)$ the {\em central exponent} and denote it by $\la^c(\mu,Z)$, {\it i.e.}
\[\la^c(\mu,Z) = \la_{\ind(\si)+1}(\mu,Z).\]
Correspondingly, we also denote $\la^c(x,\phi^Z_1)=\la_{\ind(\si)+1}(x,\phi^Z_1)$ and call it the central exponent at $x$.
The property of the central exponent will be clear shortly in the following sections. But first of all, we remark that for the Dirac measure $\de_{\rho}$ supported at a singularity $\rho\in C(\si_Z,Z)$, one has $\la^c(\de_{\rho},Z)>0$ and $\la_{\ind(\si)}(\de_{\rho},Z)+\la^c(\de_{\rho},Z)=\sv(\rho)>0$ because by \ref{P:lorenz-like} the singularity $\rho$ is Lorenz-like and $\ind(\rho)=\ind(\si)$.
\subsubsection{The central exponent is non-negative}
For ergodic measures, we have the following result.
\begin{Lemma}\label{lem:ergodic-measure}
	For $\si\in\Sing(X)$, if $C(\si)$ is aperiodic, then there exists a $C^1$ neighborhood $\cU_1\subset\cU$ of $X$ with the following property: for any weak Kupka-Smale $Z\in\cU_1$ and any ergodic measure $\mu$ of $\phi^Z_1$ with $\supp(\mu)\subset C(\si_Z,Z)$ and $\mu(\Sing(Z))=0$, it satisfies $\la^c(\mu,Z)\geq \la_{\ind(\si)}(\mu,Z)=0$ and $\la_j(\mu,Z)>0$ for $j=\ind(\si)+2,\ldots,\dim M$.
\end{Lemma}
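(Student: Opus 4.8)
\textbf{Proof proposal for Lemma \ref{lem:ergodic-measure}.}

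The plan is to argue by contradiction, combining the structural constraints coming from partial hyperbolicity (properties \ref{P:ph} and \ref{P:ph-nbhd}) with the generic consequences of Lyapunov stability from Section \ref{sect:genericity}, and ultimately with the homoclinic-intersection machinery of Corollary \ref{cor:hyp-measure} (via Proposition \ref{prop:hyp-measure}). Fix the neighborhood $\cU$ of $X$ and the constants from Lemma \ref{lem:BasicAwayTang}, and suppose that no neighborhood $\cU_1$ works; then there is a sequence $Z_n\to X$ of weak Kupka--Smale vector fields in $\cU$, each carrying an ergodic measure $\mu_n$ with $\supp(\mu_n)\subset C(\si_{Z_n},Z_n)$, $\mu_n(\Sing(Z_n))=0$, and such that one of the conclusions fails for $\mu_n$. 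Since $\mu_n$ is not supported on a singularity, it is a nontrivial ergodic measure, so $\mu_n(\Sing(Z_n))=0$ forces $\mu_n$ to be carried by the regular part, and the partial hyperbolicity \ref{P:ph-nbhd} over $U\supset C(\si_{Z_n},Z_n)$ already gives $\la_{\ind(\si)-1}(\mu_n,Z_n)<c_0<0$; in particular the only way $\la_{\ind(\si)}(\mu_n,Z_n)$ could fail to be $0$ is to be negative, and likewise the ``bad'' alternatives are: (a) $\la^c(\mu_n,Z_n)<0$, or (b) some $\la_j(\mu_n,Z_n)\le 0$ for $j\ge\ind(\si)+2$, or (a$'$) $\la_{\ind(\si)}(\mu_n,Z_n)<0$.

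First I would dispose of the ``too many negative exponents'' alternatives. If $\la_{\ind(\si)}(\mu_n,Z_n)<0$, then $\mu_n$ has at least $\ind(\si)$ negative Lyapunov exponents (in the flow sense, after discarding the flow direction); if moreover $\la^c(\mu_n,Z_n)<0$ one gets at least $\ind(\si)+1$. Using Corollary \ref{cor:le-away-tang} one obtains a $T$-dominated splitting $\cN_{\supp(\mu_n)}=G^s_n\oplus G^c_n\oplus G^u_n$ with $\int\log\|\psi^{Z_n}_T|_{G^s_n}\|\,d\mu_n\le-\eta T$ and $\int\log\|\psi^{Z_n}_{-T}|_{G^u_n}\|\,d\mu_n\le-\eta T$, and $\dim G^c_n\le 1$. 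The point is to read off the index of $\mu_n$ relative to $\si$: because all singularities in $C(\si_{Z_n},Z_n)$ are Lorenz-like of index $\ind(\si)$ (property \ref{P:lorenz-like}), one argues — as in the proof of Theorem \ref{thm:lorenz-like} / Proposition \ref{prop:indices-of-periodic-orbits-0} together with the mechanism of Lemma \ref{lem:matching} applied to the set $B(C(\si_{Z_n},Z_n))$ — that an ergodic measure whose index exceeds $\ind(\si)-1$ leads, after a perturbation within $\cU$ (by Lemma \ref{lem:le-away-tang}, producing a fundamental sequence, and by Lemma \ref{lem:pliss-for-measure} to find $\psi^*_t$-contracting points), to the hypotheses of Theorem \ref{thm:Liao} or Lemma \ref{lem:gen_1}(3), contradicting that $C(\si)$ is aperiodic and Lyapunov stable. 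Concretely, if $\mu_n$ is hyperbolic with $\ind(\mu_n)\ge\ind(\si)$, Corollary \ref{cor:hyp-measure} gives a vector field $Y$ arbitrarily $C^1$-close to $Z_n$ (hence to $X$) with $W^u(\si_Y,Y)\pitchfork W^s(\ga,Y)\ne\emptyset$ for a periodic orbit $\ga$; by item \ref{item:gen_0-transverse-intersection} of Lemma \ref{lem:gen_0} this persists to $X$, giving a periodic orbit $\ga$ of $X$ with $W^u(\si)\pitchfork W^s(\ga)\ne\emptyset$, which by Lyapunov stability of $C(\si)$ places $\ga\subset C(\si)$ — contradicting aperiodicity. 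If $\mu_n$ is non-hyperbolic with a zero exponent besides the flow one, Lemma \ref{lem:le-away-tang} and a Franks-type perturbation still produce a fundamental $i$-sequence with $i\ge\ind(\si)$ accumulating a subset of $C(\si_{Z_n},Z_n)$, and Proposition \ref{prop:indices-of-periodic-orbits-1} is violated.

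Having excluded $\la_{\ind(\si)}(\mu_n,Z_n)<0$ and $\la^c(\mu_n,Z_n)<0$, the remaining alternative is $\la_j(\mu_n,Z_n)\le 0$ for some $j\ge\ind(\si)+2$ while $\la^c(\mu_n,Z_n)\ge 0$; but if additionally $\la^c(\mu_n,Z_n)=0$ this contradicts the already-established inequality $\la_{\ind(\si)}(\mu_n,Z_n)=0$ only at the zero-exponent count, so one has to be slightly careful: here I would use the domination from Lemma \ref{lem:BasicAwayTang}/Corollary \ref{cor:le-away-tang} — the subbundle $G^u_n$ (dimension $\dim M-\ind(\si)$ once the flow direction is accounted for) has smallest exponent $\ge\eta>0$, so \emph{all} exponents $\la_j(\mu_n,Z_n)$ with $j\ge\ind(\si)+2$ are automatically $\ge\eta$ provided $G^c$ absorbs the single central direction; the only subtlety is whether $G^c$ is nontrivial, which by Corollary \ref{cor:le-away-tang} happens exactly when there is a central exponent in $(-\eta,\eta)$. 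In that case $\la^c(\mu_n,Z_n)$ is that central exponent and the exponents above it lie in $G^u_n$, hence are $\ge\eta>0$, so $\la_j(\mu_n,Z_n)>0$ for $j\ge\ind(\si)+2$ automatically; combined with the first two alternatives being excluded, $\la^c(\mu_n,Z_n)\ge 0$ and $\la_{\ind(\si)}(\mu_n,Z_n)=0$, contradicting that a conclusion failed. Thus in all cases a contradiction arises, proving the existence of $\cU_1$.

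The main obstacle I anticipate is the second paragraph: turning a measure-theoretic index statement (``$\mu_n$ has at least $\ind(\si)$ negative exponents'') into an honest fundamental sequence of periodic orbits accumulating a \emph{singularity-containing} piece of $C(\si_{Z_n},Z_n)$ with the \emph{right} dominated index, so that Lemma \ref{lem:matching} (or Theorem \ref{thm:Liao}, or Lemma \ref{lem:gen_1}(3)) bites and yields the periodic orbit forbidden by aperiodicity. This requires carefully juggling Lemma \ref{lem:le-away-tang}, Lemma \ref{lem:pliss-for-measure}, and the genericity items of Lemma \ref{lem:gen_0} to transport perturbative conclusions back to $X$, all while keeping the hypothesis $\mu_n(\Sing(Z_n))=0$ in play so that the ergodic closing lemma (Lemma \ref{lem:ergodic-closing}) applies and the support does not collapse onto a singularity. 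The case distinction hyperbolic vs.\ non-hyperbolic $\mu_n$ is where most of the bookkeeping lives, but each branch reduces to tools already established in Sections \ref{sect:away_HT} and \ref{sect:preliminaries-measure}.
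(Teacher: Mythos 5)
Your proposal follows essentially the same route as the paper's proof: reduce everything to showing $\la_{\ind(\si)}(\mu,Z)=0$; exclude $\la_{\ind(\si)}(\mu,Z)<0$ by combining Lemma \ref{lem:le-away-tang} with Proposition \ref{prop:indices-of-periodic-orbits-1} to force $\mu$ to be hyperbolic of index exactly $\ind(\si)$; then use Proposition \ref{prop:hyp-measure}, the connecting lemma for pseudo-orbits, item \ref{item:gen_0-transverse-intersection} of Lemma \ref{lem:gen_0} and Lyapunov stability to produce a periodic orbit inside $C(\si)$, contradicting aperiodicity; and finally deduce $\la_j>0$ for $j\ge\ind(\si)+2$ from the dominated splitting of Lemma \ref{lem:le-away-tang}/Corollary \ref{cor:le-away-tang}. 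Two points need tightening. First, a measure with $\mu(\Sing(Z))=0$ need not be \emph{nontrivial} in the paper's sense: it may be supported on a periodic orbit, a case that falls outside the hypotheses of Lemma \ref{lem:le-away-tang}, Proposition \ref{prop:hyp-measure} and Corollary \ref{cor:hyp-measure}; the paper treats it as a separate (easy) sub-case using Proposition \ref{prop:indices-of-periodic-orbits-1} directly, and you should too. Second, Corollary \ref{cor:hyp-measure} requires $\ind(\mu)=\ind(\si)$ exactly, since the transversality rests on the dimension count $\dim W^s(P)+\dim W^u(\si)=\dim M+1$; applying it with only $\ind(\mu)\ge\ind(\si)$ is not legitimate, so you must first use Proposition \ref{prop:indices-of-periodic-orbits-1} (via Lemma \ref{lem:le-away-tang}) to pin down equality, as you implicitly do in the non-hyperbolic branch.
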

\begin{proof}
Firstly, note that by partial hyperbolicity \ref{P:ph} one has $\la_i(\mu,Z)<0$ for all $1\leq i\leq\ind(\si)-1$. We show that $\la_{\ind(\si)}(\mu,Z)$ is zero. Then following from Lemma \ref{lem:le-away-tang} we must have $\la_j(\mu,Z)>0$ for $j=\ind(\si)+2,\ldots,\dim M$.

Suppose the conclusion of the lemma does not hold, {\it i.e.} there exists a weak Kupka-Smale vector field $Z\in\cU$ arbitrarily $C^1$ close to $X$ with an ergodic measure $\mu$ such that $\supp(\mu)\subset C(\si_Z,Z)$ and $\la_{\ind(\si)}(\mu,Z)<0$. If $\mu$ is supported on a periodic orbit, say $\ga'$, Proposition \ref{prop:indices-of-periodic-orbits-1} implies that $\ga'$ is hyperbolic and $\ind(\ga')=\ind(\si)$ (assuming $Z$ is close enough to $X$). In other words, $C(\si_Z, Z)$ contains a hyperbolic periodic orbit of index $\ind(\si)$.
	If $\mu$ is nontrivial, Proposition \ref{prop:indices-of-periodic-orbits-1} and Lemma \ref{lem:le-away-tang} imply that $\mu$ is hyperbolic and $\ind(\mu)=\ind(\si)$. Then by Proposition \ref{prop:hyp-measure}, it is also true that $C(\si_Z,Z)$ contains a hyperbolic periodic orbit of index $\ind(\si)$. Since $Z$ is weak Kupka-Smale, one can apply the connecting lemma for pseudo-orbits \cite{BoC} to obtain an intersection between the unstable manifold of the singularity $\si_Z$ and the stable manifold of the periodic orbit. Moreover, the intersection can be made transverse by an arbitrarily small perturbation as the periodic orbit has index $\ind(\si)$. Since $X$ is generic and $Z$ can be chosen arbitrarily close to $X$, by item \ref{item:gen_0-transverse-intersection} of Lemma \ref{lem:gen_0} there is a periodic orbit $\ga$ of $X$ such that $W^s(\ga,X)\pitchfork W^u(\si,X)\neq\emptyset$. As $C(\si)$ is Lyapunov stable, the chain recurrence class $C(\si)$ contains the periodic orbit $\ga$, a contradiction to the aperiodicity of $C(\si)$.
\end{proof}
\subsubsection{The central exponent is small}\label{sect:small-central-le}

\begin{Lemma}\label{lem:invariant-measure}
	Assuming $C(\si)$ is aperiodic. Let $\{X_n\}$ be a sequence of weak Kupka-Smale vector fields in $\cU$ that converges to $X$ in the $C^1$ topology. Let $\{\mu_n\}$ be a sequence of probability measures such that
	\begin{itemize}
		\item $\mu_n$ is $\phi^{X_n}_1$-invariant (not necessarily ergodic) and $\supp(\mu_n)\subset C(\si_{X_n},X_n)$;
		\item $\mu_n$ converges to a $\phi^X_1$-invariant measure $\mu$ in the weak* topology;
		\item $\mu_n(\Sing(X_n))=\mu(\Sing(X))=0$.
	\end{itemize}
	Then $\la^c(\mu_n,X_n)\to \la^c(\mu,X)=0$ as $n\to \infty$. Consequently, there exists a $\phi^{X_n}_t$-invariant set $\La_n\subset\supp(\mu_n)$ for each $n$ such that $\mu_n(\La_n)\to 1$ and the upper-bound of $\la^c(x,\phi^{X_n}_1)$ on $\Lambda_n$ goes to zero, as $n\to\infty$.
\end{Lemma}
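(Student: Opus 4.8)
The plan is to prove the convergence $\la^c(\mu_n,X_n)\to\la^c(\mu,X)=0$ by an upper--semicontinuity argument together with a lower bound coming from Lemma \ref{lem:ergodic-measure}, and then to extract the invariant sets $\La_n$ by a Chebyshev-type argument applied to the ergodic decomposition. First I would observe that since $\mu(\mathrm{Sing}(X))=0$ and the partially hyperbolic splitting $T_UM=E^{ss}_Z\oplus F_Z$ from property (P\ref{P:ph-nbhd}) is continuous in $Z$, the function $x\mapsto\log\|\Phi^{X_n}_1|_{E^{ss}_{X_n}(x)}\|$ (and the analogous one for $-X_n$) are uniformly bounded and converge uniformly, off a small neighborhood of the singularities, to the corresponding functions for $X$; hence the ``non-central'' exponents behave continuously: $\la_i(\mu_n,X_n)\to\la_i(\mu,X)$ for $i\le\ind(\si)-1$. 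The key point is the central exponent. By Corollary \ref{cor:le-away-tang} (applied in the uniform neighborhood $\cU$) the exponents $\la_{\ind(\si)}$ and $\la_{\ind(\si)}$-adjacent are pinned between $-\eta$ and $\eta$ only through the central bundle, so $\la^c$ is precisely the exponent that is ``free'' to wobble; what must be shown is that in the limit it equals $0$, and that there is no upward jump.

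For the \emph{upper bound} $\limsup_n\la^c(\mu_n,X_n)\le 0$: I would integrate against the ergodic decomposition. Write $\mu_n=\int\nu\,d\widehat\mu_n(\nu)$ over ergodic components $\nu$. Each ergodic component $\nu$ with $\nu(\mathrm{Sing}(X_n))=0$ satisfies, by Lemma \ref{lem:ergodic-measure} applied to $-X_n$ (or directly by a symmetric version), $\la^c(\nu,X_n)\le 0$ is \emph{false} in general — rather Lemma \ref{lem:ergodic-measure} gives $\la^c(\nu,X_n)\ge\la_{\ind(\si)}(\nu,X_n)=0$, i.e. a lower bound. So the lower bound $\liminf_n\la^c(\mu_n,X_n)\ge 0$ is immediate from Lemma \ref{lem:ergodic-measure} (integrating $\la^c(\nu,X_n)\ge 0$ over $\widehat\mu_n$, using that ergodic components charging singularities form a null set). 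For the matching upper bound I would use the Gibbs $F$-state / physical-like structure together with Ruelle's inequality: since $\mu$ is a weak* limit and $X$ is away from tangencies, any central exponent strictly positive on a positive-measure set would, via Proposition \ref{prop:hyp-measure} (hyperbolicity of the resulting measure with index $\ind(\si)$) and the connecting lemma for pseudo-orbits, produce a periodic orbit of index $\ind(\si)$ inside $C(\si)$, contradicting aperiodicity. More precisely, if $\la^c(\mu,X)>0$ then by the ergodic decomposition some ergodic component $\nu$ of $\mu$ with $\nu(\mathrm{Sing}(X))=0$ has $\la^c(\nu,X)>0$; such $\nu$ is hyperbolic of index $\ind(\si)$ by Corollary \ref{cor:le-away-tang}, so Corollary \ref{cor:hyp-measure} yields a transverse homoclinic intersection between $W^u(\si)$ and a periodic orbit, whence by Lyapunov stability of $C(\si)$ this periodic orbit lies in $C(\si)$ — a contradiction. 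Therefore $\la^c(\mu,X)=0$; and the same argument run for each $X_n$, combined with a standard semicontinuity estimate splitting the integral over a small neighborhood $W$ of $\mathrm{Sing}$ (where the integrand is bounded and $\mu_n(W)\to\mu(W)$ is small) and over its complement (where the integrand converges uniformly), gives $\la^c(\mu_n,X_n)\to 0$.

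For the \emph{last assertion}, once $\la^c(\mu_n,X_n)=\int\la^c(x,\phi^{X_n}_1)\,d\mu_n\to 0$ and the integrand is bounded below on a full-measure set by something tending to $0$ (again by Lemma \ref{lem:ergodic-measure} at the level of ergodic components: $\la^c(x,\phi^{X_n}_1)\ge 0$ for $\mu_n$-a.e.\ $x$ outside the singular set), I would set, for a suitable sequence $\vep_n\to 0$ chosen with $\la^c(\mu_n,X_n)/\vep_n\to 0$,
\[
\La_n=\bigcap_{k\in\ZZ}\phi^{X_n}_k\Big(\big\{x:\ \la^c(x,\phi^{X_n}_1)\le\vep_n\big\}\Big),
\]
which is $\phi^{X_n}_t$-invariant (Lyapunov exponents are flow-invariant, so one may even intersect over the whole flow), and by Chebyshev's inequality $\mu_n\big(\{x:\la^c(x,\phi^{X_n}_1)>\vep_n\}\big)\le \la^c(\mu_n,X_n)/\vep_n\to 0$, so $\mu_n(\La_n)\to 1$; by construction $\sup_{x\in\La_n}\la^c(x,\phi^{X_n}_1)\le\vep_n\to 0$. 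I expect the main obstacle to be the \emph{upper-semicontinuity near the singularities}: the central exponent of $\de_\rho$ at a Lorenz-like singularity is strictly positive (indeed $\la_{\ind(\si)}(\de_\rho)+\la^c(\de_\rho)=\sv(\rho)>0$), so one must ensure that $\mu_n$ does not concentrate a non-negligible amount of the central exponent on orbits that spend a long time near $\mathrm{Sing}$; this is exactly where the hypothesis $\mu_n(\mathrm{Sing}(X_n))=\mu(\mathrm{Sing}(X))=0$ and the weak* continuity $\mu_n(W)\to\mu(W)$ (for $W$ a small neighborhood of the singularities with small $\mu$-mass) must be combined with the uniform boundedness of the relevant log-Jacobians on $U$, to control the near-singular contribution uniformly in $n$.
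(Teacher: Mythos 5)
Your lower bound ($\liminf_n\la^c(\mu_n,X_n)\ge 0$ via Lemma \ref{lem:ergodic-measure} and ergodic decomposition), your identification of $\la^c(\mu,X)=0$ (a positive central exponent would make some ergodic component hyperbolic, and Proposition \ref{prop:hyp-measure} would then place a periodic orbit in $C(\si)$), and your Chebyshev extraction of $\La_n$ at the end all match the paper. The gap is in the upper bound $\limsup_n\la^c(\mu_n,X_n)\le 0$, and both mechanisms you offer for it fail. First, ``the same argument run for each $X_n$'' does not work: $C(\si_{X_n},X_n)$ need not be aperiodic, so nothing forbids an ergodic measure $\nu$ of $X_n$ with $\la^c(\nu,X_n)>0$. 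Such a $\nu$ is hyperbolic of index $\ind(\si)-1$ (not $\ind(\si)$), so the transfer back to $X$ via Corollary \ref{cor:hyp-measure} is unavailable --- $\dim W^s(P)+\dim W^u(\si)=\dim M$ exactly, and two flow-invariant manifolds whose intersection contains a regular orbit cannot meet transversally at that dimension count. Lemma \ref{lem:ergodic-measure} only yields $\la^c(\nu,X_n)\ge 0$, never $=0$, for the approximating fields. Second, the ``standard semicontinuity estimate'' is not available because $x\mapsto\la^c(x,\phi^{X_n}_1)$ is a Birkhoff-type limit defined only almost everywhere; it is measurable but not continuous, so there is no uniform convergence of the integrand off a neighborhood of $\Sing(X)$, and the splitting of the integral does not close.

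The paper's proof replaces this step by a compactness argument through \emph{uniformly} hyperbolic points. Arguing by contradiction from $\la^c(\mu_n,X_n)>\de_0$, boundedness of the exponents produces an invariant set $A_n$ with $\mu_n(A_n)\ge\de_0/(2\log K)$ on which $\la^c(\cdot,\phi^{X_n}_1)\ge\de_0/2$; applying Corollary \ref{cor:le-away-tang} and the Pliss-type Lemma \ref{lem:pliss-for-measure} to $\mu_n|_{A_n}$ yields a uniform fraction $\theta>0$ of $(1,\eta_1/2,1,G^u_n)$-$\psi^{X_n,*}_t$-expanding points. Unlike the pointwise exponent, this finite-time expansion condition is closed under $C^1$-convergence of the fields and weak* convergence of the measures, so (after excising a small neighborhood of $\Sing(X)$, where the hypothesis $\mu_n(\Sing(X_n))=\mu(\Sing(X))=0$ controls the mass) the limit measure $\mu$ charges a set of uniformly expanding points and therefore has a nontrivial hyperbolic ergodic component --- contradicting aperiodicity of $C(\si)$ via Proposition \ref{prop:hyp-measure}. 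To repair your argument you would need to substitute this Pliss-lemma mechanism for the two steps above; the rest of your proposal then goes through.
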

\begin{proof}
	By shrinking $\cU$ if necessary, we assume that $\|\Phi^Z_1\|_{C^1}\leq K$ for all $Z\in \cU$. In particular, the maximal Lyapunov exponent is uniformly bounded above by $\log K$.
	
	By Proposition \ref{prop:hyp-measure}, every nontrivial ergodic measure supported on $C(\si)$ is non-hyperbolic, {\itshape i.e.} it has exactly two zero Lyapunov exponent, one of which corresponds to the flow direction. Hence by Lemma \ref{lem:ergodic-measure} and ergodic decomposition \cite{Wa} we must have $\la^c(\mu,X)=0$. We need only show that $\la^c(\mu_n,X_n)\to 0$ as $n\to \infty$.
	
	Suppose this is not true, then combined with the result of Lemma \ref{lem:ergodic-measure}, there exists $\de_0>0$ such that the inequality $\la^c(\mu_n,X_n)>\de_0$ is satisfied by infinitely many $n$'s. Without loss of generality, we can assume that $\la^c(\mu_n,X_n)>\de_0$ for all $n$.
	Note that Lyapunov exponents are uniformly bounded above by $\log K$, equation \eqref{eq:int-le} implies the existence of a $\phi^{X_n}_1$-invariant set $A_n\subset\supp(\mu_n)$ such that $\mu_n(A_n)\geq \de_0/(2\log K)$ and $\la^c(x,\phi^{X_n}_1)\geq\de_0/2$ for all $x\in A_n$. Let $\mu'_n=\mu_n|_{A_n}$, {\it i.e.} $\mu'_n$ is defined such that for every measurable set $A$ it satisfies
	\[\mu'_n(A)=\frac{1}{\mu_n(A_n)}\mu_n(A_n\cap A).\]

	As in Remark \ref{rmk:BasicAwayTang}, we can choose $\de_1\in(0,\de_0/2)$ with $\eta_1>0$ small enough such that Lemma \ref{lem:BasicAwayTang} still holds after replacing $\de$, $\eta$ with $\de_1$, $\eta_1$ respectively. Then by Corollary \ref{cor:le-away-tang}, Lemma \ref{lem:pliss-for-measure} and ergodic decomposition, there exists a dominated splitting $\cN_{\supp(\mu'_n)}=G^s_n\oplus G^u_n$ and a constant $\theta>0$ such that for $n$ large enough one has
	\begin{equation}\label{eq:mu-prime-3}
		\mu'_n(H^u_n(\eta_1/2))\geq\theta,
	\end{equation}
	where $H^u_n(\eta_1/2)$ is the set of $(1,\eta_1/2,1,G^u_n)$-$\psi^{X_n,*}_t$-expanding points. Note that $G^s_n$ is uniformly contracting by partial hyperbolicity \ref{P:ph}. Let us assume without loss of generality that $\supp(\mu'_n)$ converges in the Hausdorff topology to a compact $\phi^X_t$-invariant set $A\subset\supp(\mu)$. The splitting $\cN_{\supp(\mu'_n)}=G^s_n\oplus G^u_n$ then induces a dominated splitting $\cN_{A}=G^s\oplus G^u$ such that $G^s$ is uniformly contracting.
	
	Let $\th'=(\de_0\th)/(2\log K)$. From inequality \eqref{eq:mu-prime-3}, one obtains
	\[\mu_n(H^u_n(\eta_1/2))\geq\th'.\]
	Since $\mu_n(\Sing(X_n))=\mu(\Sing(X))=0$, there exists an open neighborhood $U_0$ of $\Sing(X)$ such that $\mu(U_0)<\theta'/2$ and $\mu_n(U_0)<\theta'/2$ for all $n$. Note that the sets $H^u_n(\eta_1/2)\setminus U_0$ can be assumed to be compact for all $n$. By choosing a subsequence if necessary, we assume that $H^u_n(\eta_1/2)\setminus U_0$ converges in the Hausdorff topology to a compact set $H$. Since $\mu_n\to\mu$ in the weak* topology, we have
	\begin{equation}\label{eq:h-pts}
		\mu(H)=\lim_{n\to\infty}\mu(\Cl(\cup_{m\geq n} H^u_m(\eta_1/2))\setminus U_0) \geq \limsup_{n\to\infty}\mu_n(H^u_n(\eta_1/2)\setminus U_0)\geq\frac{\theta'}{2}.
	\end{equation}
	Since any limit of $(1,\eta_1/2,T,G^u_n)$-$\psi^{X_n,*}_t$-expanding points is $(1,\eta_1/2,T,G^u)$-$\psi^{X,*}_t$-expanding (if it is not a singularity), the set $H$ is comprised of  $(1,\eta_1/2,T,G^u)$-$\psi^{X,*}_t$-expanding points. Then inequality \eqref{eq:h-pts} implies that $\mu$ has a nontrivial hyperbolic ergodic component. It follows from Proposition \ref{prop:hyp-measure} that $C(\si)$ contains periodic orbits, contradicting to the assumption that $C(\si)$ is aperiodic.
	
	Hence we have shown that
	\[\la^c(\mu_n,X_n)=\int\la^c(x,\phi^{X_n}_1)d\mu_n\to 0, \quad \text{as}\ n\to \infty.\]
	By Lemma \ref{lem:ergodic-measure}, $\la^c(x,\phi^{X_n}_1)$ is non-negative for $\mu_n$-a.e. $x$ and for all $n$. The previous equation implies, in particular, the existence of an invariant set $\La_n\subset\supp(\mu_n)$ for each $n$ such that $\mu_n(\La_n)\to 1$ and the upper-bound of $\la^c(x,\phi^{X_n}_1)$ on $\Lambda_n$ goes to zero, as $n\to\infty$.
\end{proof}
\subsubsection{Gibbs $F$-states satisfy Pesin's entropy formula}

By property \ref{P:ph-nbhd}, we can consider for every $Z\in\cU$ the set of Gibbs $F$-states $\Gibbs^F(\phi^Z_1,U)$. By Theorem \ref{thm:physical-like}, $\Gibbs^F(\phi^Z_1,U)$ contains the set $\PhL(\phi^Z_1,U)$ of physical-like measures, which is a nonempty compact subset of $\mathfrak{M}(\phi^Z_1)$.

\begin{Lemma}\label{lem:gibbs-0}
	For any $Z\in\cU$ and any $\mu\in\Gibbs^F(\phi^Z_1,U)$, it satisfies $\mu(\Sing(Z))=0$.
\end{Lemma}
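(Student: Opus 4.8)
The plan is to argue by contradiction: suppose $\mu\in\Gibbs^F(\phi^Z_1,U)$ has $\mu(\Sing(Z))>0$. By ergodic decomposition and the fact that $\Sing(Z)$ is a finite invariant set (property of $\cU$ via item 2 of Lemma \ref{lem:gen_0}), there must be a singularity $\rho\in\Sing(Z)\cap U$ with $\mu(\{\rho\})>0$; write $\mu = a\,\delta_\rho + (1-a)\nu$ with $a\in(0,1]$ and $\nu$ an invariant measure with $\nu(\{\rho\})=0$. By property \ref{P:ph-nbhd}, every singularity in $U$ is a continuation of a singularity in $C(\si)$, hence by \ref{P:lorenz-like} it is Lorenz-like with index $\ind(\si)$. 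First I would compare the two sides of the defining inequality $h_\mu(\phi^Z_1)\ge \int\log|\det(D\phi^Z_1|_{F})|\,d\mu$ restricted to the atom $\delta_\rho$.

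The key computation is at $\rho$ itself. Since $\rho$ is Lorenz-like of index $\ind(\si)$, the dominated splitting $T_UM=E^{ss}_Z\oplus F_Z$ must restrict at $\rho$ to $E^{ss}_Z(\rho)=E^{ss}(\rho)$ (the strong stable space, of dimension $\ind(\si)-1$) and $F_Z(\rho)$ containing the center-stable direction $\lambda_s$ together with the whole unstable space. Thus $\log|\det(D\phi^Z_1|_{F_Z(\rho)})| = \lambda_s + \lambda_{s+1}+\cdots+\lambda_{\dim M}$ (the sum of all Lyapunov exponents of $\Phi_1$ at $\rho$ except the $\ind(\si)-1$ strong-stable ones), where $\lambda_s<0$ but $\lambda_s+\lambda_{s+1}=\sv(\rho)>0$ and $\lambda_{s+1},\ldots,\lambda_{\dim M}>0$. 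In particular this quantity is strictly positive: indeed it equals $\sv(\rho) + \sum_{i\ge s+2}\lambda_i > 0$ (here $\dim M\ge s+2$ since $\dim E^u(\rho)=\dim M - \ind(\si)\ge 2$ because $\dim F\ge 3$ forces $\dim E^u(\si)\ge 2$). On the other hand the local entropy contribution of the atom is zero: $h_{\delta_\rho}(\phi^Z_1)=0$. So the atomic part of $\mu$ contributes $0$ to the left side of the Gibbs inequality and a strictly positive amount $a\cdot(\sv(\rho)+\sum_{i\ge s+2}\lambda_i(\rho))$ to the right side.

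To turn this into a contradiction I would use affinity of both sides of the Gibbs inequality in $\mu$: the map $\mu\mapsto h_\mu(\phi^Z_1)$ is affine, and $\mu\mapsto\int\log|\det(D\phi^Z_1|_{F_Z})|\,d\mu$ is linear, so writing $\mu=a\delta_\rho+(1-a)\nu$ gives
\[
a\cdot 0 + (1-a)h_\nu(\phi^Z_1) \;=\; h_\mu(\phi^Z_1)\;\ge\;\int\log|\det(D\phi^Z_1|_{F_Z})|\,d\mu \;=\; a\,\Xi_\rho + (1-a)\!\int\!\log|\det(D\phi^Z_1|_{F_Z})|\,d\nu,
\]
where $\Xi_\rho=\sv(\rho)+\sum_{i\ge s+2}\lambda_i(\rho)>0$. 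If $a=1$ this is already absurd ($0\ge\Xi_\rho>0$). If $a<1$, then $\nu$ is again supported in $U$ and by Ruelle's inequality \cite{Ru} $h_\nu(\phi^Z_1)\le \int\sum_i\max\{\lambda_i,0\}\,d\nu \le \int\log|\det(D\phi^Z_1|_{F_Z})|\,d\nu$ (the last step because, by the partial hyperbolicity \ref{P:ph-nbhd}, all positive Lyapunov exponents lie along $F_Z$, so $\sum_i\max\{\lambda_i,0\}\le \log|\det(D\phi^Z_1|_{F_Z})|$ pointwise where defined). Substituting this bound into the displayed inequality and cancelling the $(1-a)$-terms forces $0\ge a\,\Xi_\rho$, again a contradiction. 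Hence $\mu(\Sing(Z))=0$.

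The main obstacle I anticipate is pinning down precisely the structure of the splitting $F_Z(\rho)$ at the singularity—i.e.\ that $E^{ss}_Z(\rho)$ really is the strong-stable subspace of dimension $\ind(\si)-1$ and that $F_Z(\rho)$ carries all the non-strong-stable exponents—so that the determinant along $F_Z$ at $\rho$ is genuinely positive. This requires invoking that the partially hyperbolic splitting over $U$ extends the one over $C(\si)$ (property \ref{P:ph-nbhd}), that $\dim E^{ss}_Z=\ind(\si)-1$, and the Lorenz-like condition $\sv(\rho)>0$ together with $\dim M - \ind(\si)\ge 2$; the bookkeeping with indices and the dominated-splitting invariance at the fixed point $\rho$ is the delicate point, but it is essentially forced once one uses uniqueness of invariant subbundles of a dominated splitting at a hyperbolic fixed point.
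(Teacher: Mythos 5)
Your overall strategy (decompose $\mu$ into an atomic part on singularities plus a part $\nu$ with $\nu(\Sing(Z))=0$, note that the atoms contribute zero entropy but a strictly positive amount $\sv(\rho)+\sum_{i\ge s+2}\lambda_i>0$ to $\int\log|\det(\Phi^Z_1|_F)|\,d\mu$ because the singularities are Lorenz-like of index $\ind(\si)$, and then cancel the $\nu$-terms using affinity of entropy and Ruelle's inequality) is exactly the paper's argument. However, there is one genuine gap: your justification of the inequality $h_\nu(\phi^Z_1)\le\int\log|\det(D\phi^Z_1|_{F_Z})|\,d\nu$ is incorrect as stated. You claim that since all positive Lyapunov exponents lie along $F_Z$ (partial hyperbolicity), one has $\sum_i\max\{\lambda_i,0\}\le\log|\det(D\phi^Z_1|_{F_Z})|$ pointwise. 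The inequality actually goes the other way under that hypothesis alone: $\log|\det(D\phi^Z_1|_{F_Z})|$ is the sum of \emph{all} exponents along $F_Z$, so if $F_Z$ carried a negative exponent one would get $\log|\det(D\phi^Z_1|_{F_Z})|<\sum_i\max\{\lambda_i,0\}$, and the Gibbs inequality would then tolerate a positive atomic mass (the ``excess'' of $\sum_i\max\{\lambda_i,0\}$ over the Jacobian along $F$ could absorb the positive contribution of the atoms). What is needed, and what the paper invokes, is Lemma \ref{lem:ergodic-measure} together with the ergodic decomposition: for $\nu$-a.e.\ point all Lyapunov exponents along $F$ are non-negative, so that $\sum_i\max\{\lambda_i,0\}=\log|\det(D\phi^Z_1|_{F_Z})|$ exactly. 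That lemma is a substantive dynamical input — it rests on the standing assumption that $C(\si)$ is aperiodic, on $Z$ being weak Kupka-Smale, and on Proposition \ref{prop:hyp-measure} — and cannot be replaced by the partial hyperbolicity of the splitting alone.

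Aside from this, your computation at the singularity (identifying $E^{ss}_Z(\rho)$ with the $(\ind(\si)-1)$-dimensional strong stable subspace and $F_Z(\rho)$ with the complementary invariant subspace, so that the Jacobian along $F_Z$ at $\rho$ equals $\sv(\rho)+\sum_{i\ge s+2}\lambda_i>0$) is correct and is the same bookkeeping the paper performs implicitly; the concern you flag at the end about matching the splitting at $\rho$ is handled exactly as you suggest, by uniqueness of the invariant subbundles of the dominated splitting at the hyperbolic fixed point.
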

\begin{proof}
	Let $Z\in\cU$ and $\mu\in\Gibbs^F(\phi^Z_1,U)$.
	Suppose $\mu=a\nu + \sum b_i\de_i$, where $0\leq a, b_i\leq 1$, $a+\sum b_i=1$, $\nu(\Sing(Z))=0$ and $\{\de_i\}$ is a finite set of Dirac measures supported on singularities in $U$. Note that by Lemma \ref{lem:ergodic-measure} and ergodic decomposition, the Lyapunov exponents of $\nu$ along $F$ bundle is non-negative. Moreover, by partial hyperbolicity, the Lyapunov exponents of $\nu$ along $E^{ss}$ bundle are all negative. Then by Ruelle's inequality and since the metric entropy is affine in measure, we have
	\begin{align*}
		h_{\mu}(\phi^Z_1)&=ah_{\nu}(\phi^Z_1)+ \sum b_i h_{\de_i}(\phi^Z_1)\\
			&\leq a \int\log|\det(\Phi^Z_1|_{F})|d\nu.
	\end{align*}
	Hence by the definition of Gibbs $F$-states and the previous inequality,
	\begin{align}
		\sum b_i \int \log|\det(\Phi^Z_1|_{F})|d\de_i
			&= \int \log|\det(\Phi^Z_1|_{F})|d\mu - a \int\log|\det(\Phi^Z_1|_{F})|d\nu \notag \\
			&\leq h_{\mu}(\phi^Z_1) - a \int\log|\det(\Phi^Z_1|_{F})|d\nu  \notag \\
			&\leq 0. \label{eq:gibbs}
	\end{align}
	Note that by \ref{P:lorenz-like} and \ref{P:ph-nbhd}, singularities in $U$ are Lorenz-like and have index equal to $\ind(\si)$. This implies $\int \log|\det(\Phi^Z_1|_{F})|d\de_i > 0$ for all $\de_i$. Hence \eqref{eq:gibbs} holds if and only if $b_i=0$ for all $i$. Then it follows that $\mu(\Sing(Z))=\nu(\Sing(Z))=0$.
\end{proof}

\begin{Lemma}\label{lem:gibbs-1}
	Assume that $C(\si)$ is aperiodic. Let $Z\in\cU$ be a weak Kupka-Smale vector field and $\mu$ a Gibbs $F$-state of $\phi^Z_1$ such that $\supp(\mu)\subset C(\si_Z,Z)$, then $\mu$ satisfies Pesin's entropy formula.
	Moreover, let $\mu=\int_{\cM^{erg}(\phi^Z_1)} \nu d\tau(\nu)$ be the ergodic decomposition of $\mu$, where $\cM^{erg}(\phi^Z_1)$ denotes the set of ergodic measures for $\phi^Z_1$. Then $\tau$-a.e. $\nu$ satisfies Pesin's entropy formula.
\end{Lemma}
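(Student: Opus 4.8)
The plan is to pass to the ergodic decomposition $\mu=\int\nu\,d\tau(\nu)$ and then combine Ruelle's inequality with the defining inequality of a Gibbs $F$-state; after shrinking $\cU$ if necessary we may assume $\cU\subseteq\cU_1$, the neighbourhood produced by Lemma~\ref{lem:ergodic-measure}. First I would record that, by Lemma~\ref{lem:gibbs-0}, $\mu(\Sing(Z))=0$, so for $\tau$-a.e.\ $\nu$ one has $\nu(\Sing(Z))=0$, and of course $\supp(\nu)\subseteq\supp(\mu)\subseteq C(\si_Z,Z)$. Thus $\tau$-a.e.\ $\nu$ satisfies the hypotheses of Lemma~\ref{lem:ergodic-measure}, whence $\la_{\ind(\si)}(\nu,Z)=0$, $\la^c(\nu,Z)=\la_{\ind(\si)+1}(\nu,Z)\ge 0$ and $\la_j(\nu,Z)>0$ for $j=\ind(\si)+2,\dots,\dim M$; combined with property~\ref{P:ph} (the $\ind(\si)-1$ smallest Lyapunov exponents are negative and span $E^{ss}_Z$), this shows that all Lyapunov exponents of $\nu$ along $F_Z$ are non-negative and all those along $E^{ss}_Z$ are negative. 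In particular the sum of the non-negative Lyapunov exponents of $\nu$ equals the sum of its exponents along $F_Z$, which by Oseledets' theorem~\cite{Ose68} equals $\int\log|\det(\Phi^Z_1|_{F_Z})|\,d\nu$; using ergodicity of $\nu$ this is also $\int\sum_i\max\{\la_i(x,\phi^Z_1),0\}\,d\nu(x)$.

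Second, Ruelle's inequality~\cite{Ru} gives $h_\nu(\phi^Z_1)\le\int\log|\det(\Phi^Z_1|_{F_Z})|\,d\nu$ for $\tau$-a.e.\ $\nu$. On the other hand, $\mu$ being a Gibbs $F$-state means $h_\mu(\phi^Z_1)\ge\int\log|\det(\Phi^Z_1|_{F_Z})|\,d\mu$. Since the entropy functional is affine over the ergodic decomposition and $\int\log|\det(\Phi^Z_1|_{F_Z})|\,d\mu=\int\!\big(\int\log|\det(\Phi^Z_1|_{F_Z})|\,d\nu\big)d\tau(\nu)$, I would obtain
\[
\int\Big(h_\nu(\phi^Z_1)-\int\log|\det(\Phi^Z_1|_{F_Z})|\,d\nu\Big)d\tau(\nu)\ \ge\ 0 .
\]
As the integrand is $\le 0$ for $\tau$-a.e.\ $\nu$ by the previous paragraph, it must vanish $\tau$-a.e., i.e.\ equality holds in Ruelle's inequality for $\tau$-a.e.\ $\nu$. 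By the first step this equality is precisely Pesin's entropy formula for $\nu$, which is the ``moreover'' assertion; integrating over $\tau$ once more and using affinity of entropy yields Pesin's entropy formula for $\mu$ itself.

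There is no single hard step here — the argument is a repackaging of Ruelle's inequality, Oseledets' theorem, affinity of entropy, and the ergodic decomposition — so the proof should be mostly bookkeeping. The points that need care are: verifying that $\tau$-a.e.\ ergodic component really does meet the hypotheses of Lemma~\ref{lem:ergodic-measure}, which is exactly what Lemma~\ref{lem:gibbs-0} is for; using ergodicity of $\nu$ to identify, $\nu$-a.e., the non-negative Lyapunov exponents with those along $F_Z$, so that Ruelle's bound can be phrased in terms of $\det(\Phi^Z_1|_{F_Z})$; and the (standard) measurability and affinity facts needed to distribute $h_\mu$ and the determinant integral over the decomposition. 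The only ``moving part'' specific to this paper is the mild bookkeeping that the neighbourhood $\cU$ fixed in Section~\ref{sect:inv-measures} may be taken inside $\cU_1$.
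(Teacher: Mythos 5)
Your proposal is correct and follows essentially the same route as the paper: Lemma \ref{lem:gibbs-0} to discard singularities, Lemma \ref{lem:ergodic-measure} plus partial hyperbolicity to identify the non-negative exponents with those along $F_Z$, and then the sandwich between the Gibbs $F$-state inequality and Ruelle's inequality, distributed over the ergodic decomposition by affinity of entropy. The only (immaterial) difference is the order: the paper first derives Pesin's formula for $\mu$ and then pushes it down to $\tau$-a.e.\ component, whereas you establish the componentwise statement first and integrate back up.
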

\begin{proof}
	By Lemma \ref{lem:gibbs-0}, we have $\mu(\Sing(Z))=0$. Then by Lemma \ref{lem:ergodic-measure}, all Lyapunov exponents of $\mu$ along $F$ bundle are non-negative. Also, by partial hyperbolicity \ref{P:ph}, all Lyapunov exponents of $\mu$ along $E^{ss}$ bundle are negative. Then as in Remark \ref{rmk:gibbs-pef}, the Gibbs $F$-state $\mu$ satisfies Pesin's entropy formula.
	 {\it i.e.}
	\[h_{\mu}(\phi^Z_1)=\int\log|\det(\Phi^Z_1|_F)d\mu =\int \sum_i\max\{\la_i,0\}d\mu.\]
	For the ergodic decomposition of $\mu$, since the metric entropy is affine in measure, one obtains
	\begin{equation}\label{eq:ergodic-decomposition}
		\int_{\cM^{erg}(\phi^Z_1)}\left(h_{\nu}(\phi^Z_1)-\sum_i\max\{\la_i(\nu,Z),0\}\right)d\tau(\nu)
		= h_{\mu}(\phi^Z_1)-\int \sum_i\max\{\la_i(\nu,Z),0\}d\mu=0,
	\end{equation}
	where $\{\la_i(\nu,Z)\}$ are the Lyapunov exponents of $\nu$. By Ruelle's inequality, one has
	$h_{\nu}(\phi^Z_1)-\sum_i\max\{\la_i(\nu,Z),0\}\leq0$  for every $\nu$.
	Hence by \eqref{eq:ergodic-decomposition} one must have
	\[h_{\nu}(\phi^Z_1)-\sum_i\max\{\la_i(\nu,Z),0\}=0\]
	for $\tau$-a.e. $\nu$. This concludes the proof.
\end{proof}

\subsection{Approximation with \texorpdfstring{$C^2$}{C\^2} vector fields}

We shall prove the general case of Proposition \ref{prop:main-proposition} by contradiction. Thus we assume from now on the following
\begin{itemize}
	\item $C(\si)$ is aperiodic.
\end{itemize}
Recall that in Section \ref{sect:inv-measures}, we have fixed an open neighborhood $\cU$ of $X$ and positive constants $C,\eta, \de, T$ given by Lemma \ref{lem:BasicAwayTang}, such that any periodic orbit $\ga$ of any vector field $Z\in \cU$ with $\tau(\ga)\geq T$ admits a $T$-dominated splitting $\cN_{\ga}=G^s\oplus G^c\oplus G^u$ with respect to $\psi^Z_t$, and satisfies other properties listed there. Also, we have assumed $T=1$ and $\cU$ is small enough such that the properties \ref{P:lorenz-like}$\sim$\ref{P:ph-nbhd} hold for every $Z\in\cU$.

Consider a sequence of $C^2$ weak Kupka-Smale vector fields $\{X_n\}$ that converges to $X$ in the $C^1$ topology. We assume that $X_n\in\cU$ and $C(\si_{X_n},X_n)$ is a quasi-attractor (Lemma \ref{lem:gen_0}, item \ref{item:gen_0-last}) for all $n$.
To simplify notations, let us denote
\[f=\phi^X_1,\ f_n=\phi^{X_n}_1, \ \text{and}\ C(\si_{X_n})=C(\si_{X_n},X_n).\]
Then each $f_n$ is a $C^2$ diffeomorphism on $M$.
\subsubsection{Weak* limits of physical-like measures are Gibbs $F$-states}\label{sect:phl-gibbs}
By Corollary \ref{cor:physical-like}, each $C(\si_{X_n})$ supports a physical-like measure $\mu_n$ of $f_n$. Without loss of generality, let us assume that $\{\mu_n\}$ converges to a probability measure $\mu$ in the weak* topology\footnote{We remark that the measures $\mu_n$ can not be assumed to be ergodic by considering an ergodic component. This is because an ergodic component of a physical-like measure may not be physical-like anymore. See Table \ref{tab:phl-gibbs}.}.
Then $\mu$ is $f$-invariant and
\[\supp(\mu)\subset\limsup_n\supp(\mu_n)\subset\limsup_n C(\si_{X_n})=C(\si).\]
By Theorem \ref{thm:semi-conti}, the measure $\mu$ is a Gibbs $F$-state of $f$.
Recall that physical-like measures are Gibbs $F$-states (Theorem \ref{thm:physical-like}).
By Lemma \ref{lem:gibbs-1}, the measures $\mu_n$ and $\mu$ all satisfy Pesin's entropy formula.
By Lemma \ref{lem:gibbs-0}, we have $\mu(\Sing(X))=0$ and $\mu_n(\Sing(X_n))=0$ for all $n$.
Then by Lemma \ref{lem:invariant-measure}, there exists a sequence of invariant sets $\La_n\subset\supp(\mu_n)$ such that $\mu_n(\La_n)\to 1$ and the upper-bound of $\la^c(x,\phi^{X_n}_1)$ on $\Lambda_n$ goes to zero as $n\to\infty$.

Moreover, since $\mu_n$ converges to $\mu$ in the weak* topology and $\mu(\Sing(X))=0$, one can obtain the following
\begin{Claim}\label{clm:nonsingular}
	For any $\vep>0$, there exists a neighborhood $U_0$ of $\Sing(X)$ such that $\mu_n(\Cl(U_0))<\vep$ for every $n$ large enough.
\end{Claim}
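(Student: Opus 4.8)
For any $\vep>0$, there exists a neighborhood $U_0$ of $\Sing(X)$ such that $\mu_n(\Cl(U_0))<\vep$ for every $n$ large enough.

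The plan is to combine the weak* convergence $\mu_n \to \mu$ with the fact (already established) that $\mu(\Sing(X))=0$, together with the regularity of the finite set $\Sing(X)$. First I would use the hypothesis, valid for $C^1$ generic $X$ (item 2 of Lemma~\ref{lem:gen_0}), that $X$ has only finitely many singularities; write $\Sing(X)=\{\rho_1,\dots,\rho_m\}$. Since $\mu$ is a finite Borel measure on the compact manifold $M$ and $\mu(\Sing(X))=0$, outer regularity of $\mu$ gives, for the prescribed $\vep>0$, an open neighborhood $W$ of $\Sing(X)$ with $\mu(W)<\vep/2$. By shrinking, I may take $W=\bigcup_{i=1}^m B(\rho_i,r)$ for some small $r>0$ and then choose $U_0=\bigcup_{i=1}^m B(\rho_i,r/2)$, so that $\Cl(U_0)\subset W$ and in particular $\Cl(U_0)$ is a closed set with $\mu(\Cl(U_0))\le\mu(W)<\vep/2$.

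Next I would invoke the Portmanteau theorem: weak* convergence $\mu_n\to\mu$ implies that for every closed set $F\subset M$ one has $\limsup_{n\to\infty}\mu_n(F)\le\mu(F)$. Applying this to $F=\Cl(U_0)$ yields
\[
\limsup_{n\to\infty}\mu_n(\Cl(U_0))\le\mu(\Cl(U_0))<\frac{\vep}{2}<\vep,
\]
so there is $N$ with $\mu_n(\Cl(U_0))<\vep$ for all $n\ge N$. This is exactly the assertion of the Claim. (If one prefers a neighborhood that is uniformly small for \emph{all} $n$ rather than all large $n$, one can enlarge $U_0$ finitely many times to absorb the exceptional indices $n<N$; but since the subsequent use of the Claim only needs it for $n$ large, the statement as given suffices.)

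The argument is entirely routine; there is no real obstacle. The only point requiring a modicum of care is that one must use $\mu(\Cl(U_0))$, not $\mu(U_0)$, as the comparison quantity in Portmanteau — hence the two-scale choice of balls ($r$ versus $r/2$) to guarantee $\Cl(U_0)\subset W$ while keeping $\mu$-mass of $W$ below $\vep/2$. Everything else follows from finiteness of $\Sing(X)$, outer regularity of Borel measures on a metric space, and the standard equivalence of weak* convergence with the $\limsup$ inequality on closed sets.
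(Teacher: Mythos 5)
Your argument is correct and is exactly the reasoning the paper leaves implicit: the claim is stated as a direct consequence of $\mu_n\to\mu$ in the weak* topology together with $\mu(\Sing(X))=0$ (established just before via Lemma \ref{lem:gibbs-0}), and your use of outer regularity plus the Portmanteau inequality $\limsup_n\mu_n(F)\le\mu(F)$ for closed $F$ is the standard way to make that precise. The two-scale choice of balls to ensure $\Cl(U_0)\subset W$ is the right point of care, and nothing further is needed.
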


Let $\tilde{\mu}_n=\int_{[0,1]}(\phi^{X_n}_t)_*\mu_ndt$, then it is easy to see that $\tilde{\mu}_n$ is an invariant measure for the flow $\phi^{X_n}_t$. In particular, $\tilde{\mu}_n$ remains an invariant measure for $f_n$, with the same Lyapunov exponents as $\mu_n$ and also satisfies Pesin's entropy formula. Since $\mu_n$ converges to $\mu$ in the weak* topology and $\phi^{X_n}_t\to \phi^X_t$ uniformly for $t\in[0,1]$, it follows that $\{\tilde{\mu}_n\}$ converges to $\tilde{\mu}=\int_{[0,1]}(\phi^X_t)_*\mu dt$ in the weak* topology.
As $\mu(\Sing(X))=0$, one obtains the following

\begin{Claim}\label{clm:nonsingular-1}
	$\tilde{\mu}(\Sing(X))=0$.
\end{Claim}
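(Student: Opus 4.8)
\textbf{Proof proposal for Claim \ref{clm:nonsingular-1}.} The plan is to deduce $\tilde\mu(\Sing(X))=0$ directly from Claim \ref{clm:nonsingular} (or equivalently from $\mu(\Sing(X))=0$) together with the fact that $\Sing(X)$ is a finite $\phi^X_t$-invariant set, with no singularity being accumulated by nearby singularities in the relevant sense. The only content is that averaging the flow-invariant-building operation $\mu\mapsto\tilde\mu=\int_0^1(\phi^X_t)_*\mu\,dt$ over a finite time interval cannot create mass on a set that $\mu$ does not charge, provided that set is $\phi^X_t$-invariant.

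First I would recall that $\Sing(X)$ is a compact $\phi^X_t$-invariant set: $\phi^X_t(\sigma)=\sigma$ for every singularity $\sigma$ and every $t$, so $(\phi^X_t)^{-1}(\Sing(X))=\Sing(X)$. Then, using the definition of $\tilde\mu$ and the pushforward formula,
\begin{align*}
\tilde\mu(\Sing(X)) &= \int_{[0,1]} \big((\phi^X_t)_*\mu\big)(\Sing(X))\,dt = \int_{[0,1]} \mu\big((\phi^X_t)^{-1}(\Sing(X))\big)\,dt \\
&= \int_{[0,1]} \mu(\Sing(X))\,dt = \mu(\Sing(X)) = 0,
\end{align*}
where the last equality is exactly the hypothesis recalled from Section~\ref{sect:phl-gibbs} (itself a consequence of Lemma~\ref{lem:gibbs-0}). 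This requires only that $t\mapsto \big((\phi^X_t)_*\mu\big)(\Sing(X))$ is measurable, which is immediate since it is constant, and the interchange of integral and measure is Fubini/Tonelli applied to the nonnegative kernel $(t,A)\mapsto (\phi^X_t)_*\mu(A)$.

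There is essentially no obstacle here; the statement is a formal consequence of $\phi^X_t$-invariance of $\Sing(X)$ and $\mu(\Sing(X))=0$. If one prefers to avoid invoking the pushforward-of-a-time-average identity, I would instead argue via Claim~\ref{clm:nonsingular}: fix $\vep>0$ and a neighborhood $U_0$ of $\Sing(X)$ with $\mu(\Cl(U_0))<\vep$ (available since $\mu(\Sing(X))=0$ and $\Sing(X)$ is compact); shrinking $U_0$ if necessary and using continuity of the flow on the compact time interval $[0,1]$, one can arrange that $\phi^X_t(U_0)$ stays inside a slightly larger neighborhood $U_1$ of $\Sing(X)$ with $\mu(\Cl(U_1))<2\vep$, whence $\tilde\mu(\Sing(X))\le \tilde\mu(U_1)=\int_0^1 \mu(\phi^X_{-t}(U_1))\,dt \le 2\vep$ after a further comparable enlargement; letting $\vep\to 0$ gives the claim. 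Either route is routine, so I would present the one-line pushforward computation above as the proof.
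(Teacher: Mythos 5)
Your one-line pushforward computation is correct, and it is exactly the reasoning the paper leaves implicit: the authors state the claim without proof, treating it as an immediate consequence of $\mu(\Sing(X))=0$ and the flow-invariance of $\Sing(X)$. Nothing to add.
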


By replacing $\mu_n$ with $\tilde{\mu}_n$, we assume that $\mu_n$ itself is a nontrivial invariant measure for $\phi^{X_n}_t$. Accordingly, we assume $\La_n$ is a $\phi^{X_n}_t$-invariant set by replacing it with $\cup_{t\in[0,1]}\phi^{X_n}_t(\La_n)$.

\subsubsection{Hyperbolic points}\label{sect:hyperbolic-pts}

Let $\Lambda_n$ and $\mu_n$ be as before and denote $\mu'_n = \mu_n|_{\Lambda_n}$. Since $\mu_n$ converges to $\mu$ in the weak* topology and $\mu_n(\La_n)\to1$, we have
\[\mu'_n\to\mu\ \text{in the weak* topology, as}\ n\to\infty.\]


Without loss of generality, we assume that $\la^c(x,f_n)\in [0,\eta)$ for all $n$ and for every $x\in\La_n$ (Lemma \ref{lem:ergodic-measure} and Lemma \ref{lem:invariant-measure}). By ergodic decomposition and Corollary \ref{cor:le-away-tang}, there exists a one-dominated splitting $\cN_{\supp(\mu'_n)}=G^s_n\oplus G^c_n\oplus G^u_n$ with $\dim G^s_n=\ind(\si)-1$, $\dim G^c=1$ and $\la^c(x,f_n)$ is the Lyapunov exponent corresponding to $G^c_n$.

Let $\eta'=\min\{\log(3/2), \eta/2\}$ and define $H_n=H(\La_n;\eta',1)$ as the set of $(\eta',G^u_n)$-$\psi^{X_n,*}_t$-expanding points\footnote{Here $(\eta', G^u_n)$-expanding means $(1, \eta',1, G^u_n)$-expanding.} of $\mu'_n$, {\it i.e.} $x\in H_n$ if and only if $x\in\supp(\mu'_n)\setminus\Sing(X_n)$ and $x$ is $(\eta',G^u_n)$-$\psi^{X_n,*}_t$-expanding.
By Lemma \ref{lem:pliss-for-measure} and ergodic decomposition, there exists $\theta>0$ independent of $n$, such that $\mu’_n(H_n)\geq\theta$.
%
%

Note that there is $\be_0>0$ (independent of $n$) such that for any $x\in M\setminus\Sing(X_n)$, the exponential map $\exp_x$ is a diffeomorphism from $\cN^{X_n}_x(\be_0)$ onto its image, and we can define $N^{X_n}_x(\be_0)=\exp_x(\cN^{X_n}_x(\be_0))$.

From the dominated splitting $\cN_{\supp(\mu'_n)}=(G^s_n\oplus G^c_n)\oplus G^u_n$, one can obtain a locally invariant plaque family $\mathcal{G}^{u}_n$ tangent to $G^u_n$ (Remark \ref{rmk:plaque-family}). Note that $\mathcal{G}^{u}_n(x_n)\subset N^{X_n}_{x_n}(\be_0)$ for each $x_n\in\supp(\mu'_n)\setminus\Sing(X_n)$. Moreover, by Lemma \ref{lem:stable-manifolds} there is $\de_0>0$ such that for each $x_n\in H_n$, it satisfies $\mathcal{G}^{u}_{n,\de_0\|X_n(x_n)\|}(x_n)\subset W^u_n(\orb(x_n))$. Note that this $\de_0$ is uniform in $n$ (Remark \ref{rmk:stable-manifolds}).
%
%

Since $\mu'_n$ satisfies Pesin's entropy formula, by Theorem \ref{thm:LeY}, there is a set $R_n\subset\supp(\mu'_n)$ with $\mu'_n(R_n)=1$, such that for every $x\in R_n$, $W^u_n(x)\subset\supp(\mu'_n)$. By $\phi_t^{X_n}$-invariance of $\mu'_n$, we have $W^u_n(\orb(x))\subset\supp(\mu'_n)$.
Let $H_n^*=H_n\cap R_n$, one has
\begin{equation}\label{eq:mu-prime-1}
	\mu'_n(H_n^*)=\mu'_n(H_n)\geq\th.
\end{equation}
Note that for every $x_n\in H^*_n$, one has
\[\mathcal{G}^{u}_{n,\de_0\|X_n(x_n)\|}(x_n)\subset W^u_n(\orb(x_n))\subset \supp(\mu'_n).\]

By choosing a subsequence, assume $\supp(\mu'_n)$ converges to $\La$ in the Hausdorff topology. Then one has $\supp(\mu)\subset\La\subset C(\si)$. The dominated splitting $\cN_{\supp(\mu'_n)}=G^s_n\oplus G^c_n\oplus G^u_n$ induces a dominated splitting $\cN_{\La}=G^s\oplus G^c\oplus G^u$. Note that $\dim G^s=\ind(\si)-1$, $\dim G^c=1$, and $G^s$ is uniformly contracting.


Let $H(\eta')$ be the set of $(\eta',G^u)$-$\psi^{X,*}_t$-expanding points of $\La$.
Define
\[H^*=\cap_{m\geq 0}\Cl(\cup_{n\geq m} H_n^*).\]
Then since any limit point of a sequence of $(\eta',G^u_n)$-$\psi^{X_n,*}_t$-expanding points is either a singularity or is $(\eta',G^u)$-$\psi^{X,*}_t$-expanding, one has $H^*\subset H(\eta')\cup\Sing(X)$.
Since $\mu(\Sing(X))=0$ and $\mu'_n\to \mu$ in the weak* topology, there exists a neighborhood $U_0$ of $\Sing(X)$ such that $\mu'_n(U_0)<\th/2$ for all $n$ large enough (see Claim \ref{clm:nonsingular}). By~\eqref{eq:mu-prime-1},
\begin{align}
	\mu(H^*\setminus U_0) &=\lim_m\mu(\Cl(\cup_{k\geq m}H^*_k)\setminus U_0) \notag\\
		&\geq \lim_m\limsup_n\mu'_n(\Cl(\cup_{k\geq m}H^*_k)\setminus U_0) \notag\\
		&\geq \limsup_n\mu'_n(H^*_n\setminus U_0) \notag\\
		&\geq \limsup_n(\mu'_n(H^*_n)-\mu'_n(U_0)) \notag\\
		&\geq \th/2. \label{eq:H-U0}
\end{align}
Since $\mathcal{G}^{u}_{n,\de_0\|X_n(x_n)\|}(x_n)\subset  \supp(\mu'_n)$ and converges uniformly outside $U_0$ (by \cite{HPS}) and $\supp(\mu'_n)$ converges to $\La$ in the Hausdorff topology, one obtains
\begin{equation}\label{eq:H-star}
	\mathcal{G}^{u}_{\de_0\|X(x)\|}(x)\subset \La \ \text{for any}\ x\in H^*\setminus U_0.
\end{equation}

Define
\begin{equation}\label{eq:H-sstar-def}
H^{**} = \{x\in H(\eta'):\exists t_n\to+\infty \mbox{ s.t. }\phi^X_{-t_n}(x)\in H^*\setminus U_0\}
\end{equation}
By inequality \eqref{eq:H-U0} and Poincar\'e's Recurrence Theorem, the set $H^{**}$ is nonempty and $\mu(H^{**})\geq\mu(H^*\setminus U_0)\geq\th/2$.

\begin{Claim}\label{clm:hyperbolic-points-La}
For any $x\in H^{**}$, one has $\mathcal{G}^{u}_{\de_0\|X(x)\|}(x)\subset\La$.
\end{Claim}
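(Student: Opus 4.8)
The statement to prove is: for any $x\in H^{**}$, one has $\mathcal{G}^{u}_{\de_0\|X(x)\|}(x)\subset\La$. The point $x$ is $(\eta',G^u)$-$\psi^{X,*}_t$-expanding and has a backward orbit returning infinitely often to $H^*\setminus U_0$, on which set we already know (equation \eqref{eq:H-star}) that the unstable plaque of the appropriate size is contained in $\La$. The strategy is to push the plaque inclusion forward along the flow using the local invariance of the plaque family and the expanding property of $x$, and to use that $\La$ is closed and invariant.

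First I would fix $x\in H^{**}$ and a sequence $t_n\to+\infty$ with $y_n:=\phi^X_{-t_n}(x)\in H^*\setminus U_0$. By \eqref{eq:H-star}, $\mathcal{G}^{u}_{\de_0\|X(y_n)\|}(y_n)\subset\La$ for every $n$. Since $y_n\in H^*\setminus U_0$ is a $(\eta',G^u)$-$\psi^{X,*}_t$-expanding point, the orbit from $y_n$ to $x$ is expanding along $G^u$ in the rescaled sense; combined with the local invariance of the plaque family $\mathcal{G}^u$ under the rescaled sectional Poincar\'e flow (Section~\ref{sect:plaque-families}, Remark~\ref{rmk:plaque-family}) and the uniform-size statement of Lemma~\ref{lem:stable-manifolds} (with its constant $\de_0$ uniform, Remark~\ref{rmk:stable-manifolds}), the image $\cP^X_{t_n}(\mathcal{G}^{u}_{\de_0\|X(y_n)\|}(y_n))$ contains $\mathcal{G}^{u}_{\de_0\|X(x)\|}(x)$ — that is, the plaque of fixed rescaled size $\de_0$ at $x$ is \emph{covered} by the forward image of a plaque of size at most $\de_0$ at $y_n$, because expansion along $G^u$ makes forward images grow (in rescaled coordinates) and hence the target plaque is the ``inner'' part. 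Since $\La$ is $\phi^X_t$-invariant (hence invariant under the holonomy maps $P_t$) and $\mathcal{G}^{u}_{\de_0\|X(y_n)\|}(y_n)\subset\La$, its forward holonomy image lies in $\La$, and therefore $\mathcal{G}^{u}_{\de_0\|X(x)\|}(x)\subset\La$ as desired. Actually a cleaner route: work with the plaque family on the manifold $W^u$ and note that $\mathcal{G}^{u}_{\de_0\|X(y_n)\|}(y_n)\subset W^u_n(\orb(y_n))$ already lies inside $\supp(\mu'_n)$; but here we are in the limit dynamics $X$, so one should instead argue directly with the contracting behavior of $\cP^X_{-t}$ along $G^u$: the backward image $\cP^X_{-t_n}(\mathcal{G}^{u}_{\de_0\|X(x)\|}(x))$ is contained in $\mathcal{G}^{u}_{\de_0\|X(y_n)\|}(y_n)$ (using that $y_n$ is $\psi^{X,*}$-expanding, so going backward contracts the $G^u$-plaque and keeps it within the fixed-size plaque by the Pliss-type estimate and local invariance), hence $\cP^X_{-t_n}(\mathcal{G}^{u}_{\de_0\|X(x)\|}(x))\subset\La$; applying $\cP^X_{t_n}$ and using invariance of $\La$ returns $\mathcal{G}^{u}_{\de_0\|X(x)\|}(x)\subset\La$.

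The main obstacle, and the step requiring the most care, is making precise the claim that the backward iterate $\cP^X_{-t_n}$ of the $\de_0$-sized $G^u$-plaque at $x$ stays inside the $\de_0$-sized $G^u$-plaque at $y_n$ — i.e. that the expanding-point property at $y_n$ genuinely controls plaques along the \emph{entire} backward orbit segment of length $t_n$ and not just at the endpoints, and that the rescaling factors $\|X(\phi^X_t(y_n))\|$ never force the plaque outside the domain where $\cP^X$ and the plaque family are defined. This is exactly the content of the argument behind Lemma~\ref{lem:stable-manifolds} (cf.\ Remark~\ref{rmk:stable-manifolds}), applied to $-X$: one partitions $[0,t_n]$ using the time partition witnessing that $y_n$ is $(\eta',G^u)$-$\psi^{X,*}_t$-expanding, and at each partition time the diameter of the rescaled $G^u$-plaque shrinks geometrically going backward while staying inside $\mathcal{G}^u_{\vep\|X(\cdot)\|}$, so no escape occurs. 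Once this uniform control is in hand, closedness of $\La$ and its invariance finish the proof; the passage to the limit $n\to\infty$ is not even needed since the inclusion $\mathcal{G}^{u}_{\de_0\|X(x)\|}(x)\subset\La$ already holds for each individual $n$.
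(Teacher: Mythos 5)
Your second (``cleaner'') route is essentially the paper's own argument: the paper disposes of this claim in two lines, taking $x'=\phi^X_{-t}(x)\in H^*\setminus U_0$ and invoking \eqref{eq:H-star} together with ``the property of unstable manifolds'', which is exactly what you unpack via Lemma~\ref{lem:stable-manifolds} and Remark~\ref{rmk:stable-manifolds} applied to $-X$: the backward holonomy image of $\cG^u_{\de_0\|X(x)\|}(x)$ at time $-t_n$ is a small disk through $y_n$ contained in $\cG^u_{\de_0\|X(y_n)\|}(y_n)\subset\La$, and invariance of $\La$ finishes the proof.

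One point needs correcting, however. Throughout, you attribute the control of the backward images $\cP^X_{-s}(\cG^u_{\de_0\|X(x)\|}(x))$ for $s\in[0,t_n]$ to the fact that $y_n=\phi^X_{-t_n}(x)$ is an $(\eta',G^u)$-$\psi^{X,*}_t$-expanding point (``the time partition witnessing that $y_n$ is expanding''). That property only constrains the orbit of $y_n$ further in the past, i.e.\ at times $\geq t_n$ before $x$; it says nothing about the segment between $y_n$ and $x$, so taken literally this step does not close. The contraction you need on $[0,t_n]$ --- geometric shrinking of the rescaled $G^u$-plaque under backward holonomy while staying inside the plaques $\cG^u_{\vep\|X(\cdot)\|}$ along the way --- is furnished by the expanding-point property of $x$ itself, which is available because $H^{**}\subset H(\eta')$ by definition. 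The membership $y_n\in H^*\setminus U_0$ is used only to guarantee, via \eqref{eq:H-star}, that the target plaque $\cG^u_{\de_0\|X(y_n)\|}(y_n)$ lies in $\La$ and, since $y_n\notin U_0$, that $\|X(y_n)\|$ (hence the size of that plaque) is bounded below. With that substitution your argument is the paper's, and the remaining points you flag (adjusting $t_n$ to a partition time, domains of $\cP_t$) are routine.
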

\begin{proof}
By definition, there is $t\geq 0$ such that $x'=\phi^X_{-t}(x)\in H^*\setminus U_0$. Then the conclusion follows from \eqref{eq:H-star} and the property of unstable manifolds.
\end{proof}

\subsection{Finishing the proof}
\subsubsection{The set $H^{**}$ accumulates on singularities}

In this section we are going to prove the following claim.

\begin{Claim}\label{clm:hyperbolic-points}
The set $H^{**}$ accumulates on singularities, {\itshape i.e.,} $\Cl(H^{**})\cap\Sing(X)\neq\emptyset$.
\end{Claim}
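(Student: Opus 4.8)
The plan is to use the fact, just established, that $H^{**}$ is nonempty --- indeed $\mu(H^{**})\geq\theta/2>0$ --- together with $H^{**}\subset\La\subset C(\si)$, and to prove that $H^{**}$ is \emph{forward flow invariant}, i.e.\ $\phi^X_s(H^{**})\subset H^{**}$ for all $s\geq0$; granting this, the claim follows at once. Indeed, pick any $x\in H^{**}$. Forward invariance gives $\orb^+(x)\subset H^{**}$, hence $\om(x)\subset\Cl(H^{**})$. Now $\om(x)$ is a nonempty compact invariant subset of $C(\si)$, because $x\in C(\si)$ and a chain recurrence class is a closed invariant set. If $\om(x)$ were disjoint from $\Sing(X)$, it would be a non-singular compact invariant subset of the Lyapunov stable chain recurrence class $C(\si)$, and then Proposition~\ref{prop:non-singular-set} would force $C(\si)$ to be a homoclinic class, contradicting the standing assumption that $C(\si)$ is aperiodic. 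Therefore $\om(x)\cap\Sing(X)\neq\emptyset$, and since $\om(x)\subset\Cl(H^{**})$ we get $\Cl(H^{**})\cap\Sing(X)\neq\emptyset$.

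So everything reduces to the forward flow invariance of $H^{**}$. One half is immediate: the defining condition ``there exist $t_n\to+\infty$ with $\phi^X_{-t_n}(x)\in H^*\setminus U_0$'' is preserved when $x$ is replaced by $\phi^X_s(x)$, $s>0$, simply by shifting the times $t_n$ by $s$. The other half is that, for $x\in H^{**}$ and $s>0$, the point $\phi^X_s(x)$ still lies in $H(\eta')$ --- equivalently, still carries a local unstable plaque $\mathcal{G}^{u}_{\de_0\|X(\phi^X_s(x))\|}(\phi^X_s(x))$ contained in $\La$. I would establish this exactly as in the proof of Claim~\ref{clm:hyperbolic-points-La}: choose $z=\phi^X_{-r}(\phi^X_s(x))\in H^*\setminus U_0$ far back in the past; then $z$ is $(1,\eta',1,G^u)$-$\psi^{*}_t$-expanding, its full-size plaque $\mathcal{G}^{u}_{\de_0\|X(z)\|}(z)$ lies in $\La$ by \eqref{eq:H-star}, and its forward image under $\phi^X_r$ --- which stays in $\La$ since $\La$ is invariant --- expands along $G^u$ and therefore contains $\mathcal{G}^{u}_{\de_0\|X(\phi^X_s(x))\|}(\phi^X_s(x))$.

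The hard part will be making this last ``the pushed-forward plaque still dominates the current plaque'' step rigorous along orbit segments that approach a singularity $\rho\in\La$: there the normalization $\de_0\|X(\cdot)\|$ of the plaque size degenerates, so one cannot work directly with the linear Poincar\'e flow and must instead use Liao's rescaled linear Poincar\'e flow $\psi^{*}_t$. The structural input is that, by Lemma~\ref{lem:matching} and because $\rho$ is Lorenz-like (Proposition~\ref{prop:homogeneous-index}), the bundle $G^u$ matches a subbundle of $E^{uu}(\rho)$ along which the rescaled flow remains expanding in a neighborhood of $\rho$; this keeps the expansion inherited from $H^*\setminus U_0$ from being destroyed while the orbit lingers near $\rho$. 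Once this rescaled-flow bookkeeping near the (finitely many) singularities contained in $\La$ is carried out, the forward flow invariance of $H^{**}$ --- and hence the claim --- follows.
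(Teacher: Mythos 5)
The reduction in your first paragraph would work \emph{if} $H^{**}$ were forward flow invariant: then $\om(x)\subset\Cl(H^{**})$ for $x\in H^{**}$, and $\om(x)$ must meet $\Sing(X)$, since otherwise Proposition~\ref{prop:non-singular-set} contradicts the aperiodicity of $C(\si)$. The gap is that $H^{**}$ is not forward invariant, and your argument for invariance rests on a misreading of the definition. Membership in $H(\eta')$ is not ``equivalent'' to carrying an unstable plaque of size $\de_0\|X(\cdot)\|$ inside $\La$: it is the quantitative condition that $x$ be $(1,\eta',1,G^u)$-$\psi^{*}_t$-expanding, i.e.\ a Pliss (hyperbolic) time with constant $C=1$, so that already the first backward step must satisfy $\|\psi^{X,*}_{-t_1}|_{G^u(x)}\|\le e^{-\eta' t_1}$. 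That condition \emph{implies} the plaque property via Lemma~\ref{lem:stable-manifolds}, but the converse fails. Hyperbolic times are precisely not preserved by the forward flow: for $\phi^X_s(x)$ the first backward step runs through the new segment from $x$ to $\phi^X_s(x)$, where $\|\psi^{*}_{-t}|_{G^u}\|$ can exceed $e^{-\eta' t}$. This is the very reason Lemma~\ref{lem:pliss-for-measure} only produces a set of hyperbolic times of measure $\ge\theta$ rather than full measure; if such times were forward invariant, a forward-invariant set of positive measure would be of full measure for each ergodic component and the whole Pliss apparatus of Sections~7--8 would be unnecessary. Your appeal to Lemma~\ref{lem:matching} and the Lorenz-like structure near a singularity does not repair this: what survives passage near a singularity is the plaque statement (already Claim~\ref{clm:hyperbolic-points-La}, which needs no invariance of $H^{**}$), not the uniform-from-time-zero expansion defining $H(\eta')$.

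The paper's proof is built on exactly the opposite observation. Assuming $\Cl(H^{**})\cap\Sing(X)=\emptyset$, one finds points $x_l\in H^{**}$ whose forward orbits avoid $H^{**}$ for time $l+1$ (forward orbits must linger near singularities, where $H^{**}$ is assumed absent); the absence of returns to $H(\eta')$ together with domination forces $\sum_{k=0}^{l-1}\log\|\psi^{X,*}_1|_{G^c(\phi^X_k(x^*))}\|\le -l\eta''$ along a limit orbit. Passing to an empirical limit measure $\nu=a\nu_0+\sum_i b_i\de_i$ and discounting, via Lemma~\ref{lem:H-sstar}, the \emph{expansion} of $G^c$ accumulated during passages near the Lorenz-like singularities, one concludes that $\nu_0$ has an ergodic component with negative central exponent and no singular mass, contradicting Lemma~\ref{lem:ergodic-measure}. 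You would need an argument of this kind in place of the invariance claim.
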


For the proof of Claim \ref{clm:hyperbolic-points}, we will show that the set $H^{**}$ does not have ``large gaps'' starting outside an arbitrarily small neighborhood of the singularities, {\it i.e.} for any neighborhood $U$ of $\Sing(X)$, there exists $\tau>0$ such that for any $x\in H^{**}$, the backward orbit segment $\phi^X_{[-\tau,0]}(x)$ contains a point $x'\in H^{**}\cap U$.

Let us begin by considering orbit segments going through an arbitrarily small neighborhood of a singularity. We define for each $\si_i\in\La\cap\Sing(X)$ and for each $r>0$ the set $U_r(\si_i)$ as the connected component of the set $\{x\in M: |X(x)|< r\}$ that contains $\si_i$. Since $X$ is Kupka-Smale, there exists $r_0>0$ such that for every $0<r\leq r_0$, the sets $\Cl(U_r(\si_i))$ are pairwise disjoint. Note that each $U_r(\si_i)$ is a neighborhood of $\si_i$ and it shrinks to $\si_i$ as $r$ goes to zero.

Recall that there is a dominated splitting $\cN_{\Lambda}=G^s\oplus G^c\oplus G^u$ with $\dim G^c=1$.
\begin{Lemma}\label{lem:H-sstar}
	For every $\si_i\in\La\cap\Sing(X)$, there exists $\eta_i>0$ with the following property: for every $0<r\leq r_0$ small enough, there exists $V^i_r\subset U_r(\si_i)$ a neighborhood of $\si_i$ such that for any $x\in \phi^X_{-1}U_r(\si_i)\setminus U_r(\si_i)$, if $\phi^X_k(x)\in U_r(\si_i)$ for $k=1,\ldots,l$, $\phi^X_{l+1}(x)\not\in U_r(\si_i)$ and $\{\phi^X_k(x):k=1,\ldots,l\}\cap V^i_r\neq\emptyset$, then it holds
	\[\frac{1}{l}\sum_{k=1}^{l}\log\|\psi^X_1|_{G^c(\phi^X_k(x))}\|\geq \eta_i.\]
\end{Lemma}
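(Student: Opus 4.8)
The statement to prove, Lemma~\ref{lem:H-sstar}, is a local estimate near a Lorenz-like singularity $\si_i$: orbit segments that pass through a small neighborhood of $\si_i$ and actually dip into an even smaller neighborhood $V^i_r$ must accumulate a definite amount of central expansion, measured by the rescaled linear Poincar\'e flow restricted to the one-dimensional bundle $G^c$.

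\medskip

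The plan is to linearize near $\si_i$ and exploit that $\si_i$ is Lorenz-like with index $\ind(\si)$, so $\sv(\si_i)=\la_{\ind(\si)}+\la^c>0$ where $\la^c=\la_{\ind(\si)+1}>0$ is the smallest positive exponent and $\la_{\ind(\si)}<0$ is the largest negative exponent; moreover the matching results (Lemma~\ref{lem:elp-dom}, Lemma~\ref{lem:matching}) pin down how $G^c$ over the periodic orbits accumulating on $\si_i$ must sit inside $T_{\si_i}M$. First I would fix a small $C^1$-linearizing neighborhood of $\si_i$ (using genericity/Kupka-Smale and a harmless perturbation, or simply working with the continuous extension of $G^c$ and the estimate~\eqref{eqn:intersection-3}-type domination bound already used in the proof of Lemma~\ref{lem:intersection}). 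The key point is that, since $\La$ is chain transitive and $\si_i$ is Lorenz-like, one has $\La\cap W^{ss}(\si_i)\setminus\{\si_i\}=\emptyset$, so by the argument of \cite[Lemma 4.4]{LGW05} the set $B_{\si_i}(\La)$ — and hence the limiting directions $\langle X(\phi^X_k(x))\rangle$ for points deep inside $U_r(\si_i)$ — lies in the cone around $E^{cu}(\si_i)=E^c(\si_i)\oplus E^u(\si_i)$, while $G^c$ along such points lies close to $E^c(\si_i)$. On $E^c(\si_i)$ the rescaled linear Poincar\'e flow acts (via Remark~\ref{rmk:elp}(2)) essentially as $\|\Phi^X_1|_{E^c(\si_i)}\|/\|\Phi^X_1|_{\langle X\rangle}\|$; since the flow direction at points inside $U_r(\si_i)$ is close to $E^{cu}(\si_i)$, the denominator is comparable to $\e^{\la_{\ind(\si)+1}}$ at worst and the numerator to $\e^{\la^c}$, but more importantly the \emph{entry} and \emph{exit} geometry forces a net gain. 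Concretely, an orbit segment that enters $\phi^X_{-1}U_r(\si_i)\setminus U_r(\si_i)$, spends times $1,\dots,l$ inside, touches $V^i_r$, and exits, must first contract toward $W^s(\si_i)$ (this costs the normal-direction norm roughly $\e^{l\la_{\ind(\si)}/2}$ in the strong-stable-plus-central part) and then expand away along $W^u(\si_i)$; the central bundle $G^c$, being the ``slowest expanding'' complement of $G^s$ inside the unstable cone on the way out, picks up exactly the leftover $\sv(\si_i)>0$ per unit time after the cancellation. I would choose $\eta_i = \sv(\si_i)/4$ (say), and choose $r$ small enough and $V^i_r$ small enough that the linearization error and the cone estimates degrade this by at most a factor $1/2$.

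\medskip

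The structure of the argument would be: (i) set up the linearizing chart and record, via Lemma~\ref{lem:elp-dom} and the $B_{\si_i}(\La)\subset E^{cu}(\si_i)$ containment, that for $x$ deep inside $U_r(\si_i)$ the vectors $G^c(x)$ make an angle $o_r(1)$ with $E^c(\si_i)$ and $X(x)$ makes an angle $o_r(1)$ with $E^{cu}(\si_i)$; (ii) decompose the orbit segment $\phi^X_k(x)$, $k=1,\dots,l$, into an ``incoming'' part where the point is still moving toward $W^s_{loc}(\si_i)$ and an ``outgoing'' part after the closest approach — the hypothesis that the segment meets $V^i_r$ guarantees the closest approach is genuinely small, so $l$ is large and both parts have length $\gtrsim$ (something)$\cdot l$ or one of them does; (iii) on the linearized model compute $\sum_{k=1}^l \log\|\psi^X_1|_{G^c(\phi^X_k(x))}\|$ by comparing, along the flow line, $\|\Phi^X_1|_{E^c}\|$ against the flow-direction norm, using that the flow-direction eigenvalue ``seen'' along the segment is dominated by the central/unstable eigenvalues, yielding a sum $\ge \tfrac{1}{2}\sv(\si_i)\, l$ in the linear model; (iv) transfer back to $X$ absorbing errors to get $\ge \tfrac14\sv(\si_i)\,l = \eta_i l$. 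Finally I would note that $\eta_i$ depends only on $\si_i$ (through $\sv(\si_i)$ and the fixed domination constants), not on $r$ or the particular segment, as required; the neighborhoods $V^i_r\subset U_r(\si_i)$ are then defined to be the sublevel sets where all the ``$o_r(1)$'' quantities are below the tolerance fixed in step (iv).

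\medskip

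The main obstacle I expect is step (iii): making the cancellation between the strong-stable contraction on the way in and the expansion on the way out precise enough that what survives in the \emph{central} direction alone is a definite fraction of $\sv(\si_i)$, uniformly over segments with arbitrarily long sojourn times and arbitrarily small closest approach. The subtlety is that the normal bundle $\cN$ is the orthogonal complement of a flow direction that is itself rotating (from near $E^{ss}\oplus E^c$ on entry to near $E^u$ on exit), so $G^c$ is not literally a fixed linear subspace; one must control how $\psi^{X,*}_1|_{G^c}$ compares to $\Phi^X_1|_{E^c(\si_i)}$ along the whole transit, not just asymptotically. The tools to do this are already in the paper — the rescaled linear Poincar\'e flow identifications in Remark~\ref{rmk:elp}, the matching/restricted-area analysis of Section~\ref{sect:matching}, and the domination inequality of type~\eqref{eqn:intersection-3} used in Lemma~\ref{lem:intersection} — so I expect the proof to be a careful bookkeeping argument in a linearizing chart rather than requiring a genuinely new idea, but it is where all the work lies.
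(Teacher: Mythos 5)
Your proposal correctly identifies the source of the expansion (the positive saddle value, i.e.\ the sectional expansion of $\Phi^X_t$ on $E^{ws}(\si_i)\oplus E^u(\si_i)$ for a Lorenz-like singularity) and the role of $V^i_r$ (forcing the sojourn time $l$ to be large so that entry/exit effects are averaged away). But the execution has a genuine gap, located exactly at the step you yourself flag as the main obstacle. Your step (i) is false as stated: $G^c(x)$ does \emph{not} stay within angle $o_r(1)$ of any single fixed line in $T_{\si_i}M$ during a transit. (Your notation already betrays the problem: you call $\la_{\ind(\si)+1}>0$ the exponent of $E^c(\si_i)$ and simultaneously write $E^{cu}(\si_i)=E^c(\si_i)\oplus E^u(\si_i)$, which would make $E^c$ the weak \emph{stable} direction.) Since $G^c(x)\subset\cN_x$ is orthogonal to $X(x)$, and the flow direction rotates from near the weak stable direction $E^{ws}(\si_i)$ on the way in to near $E^u(\si_i)$ on the way out, the matching of Lemma~\ref{lem:elp-dom} forces $G^c(x)$ to be close to the weak \emph{unstable} direction on the incoming part (where $\|\psi^X_1|_{G^c}\|\approx\e^{\la_{\ind(\si)+1}}>1$) and close to $E^{ws}(\si_i)$ on the outgoing part (where $\|\psi^X_1|_{G^c}\|\approx\e^{\la_{\ind(\si)}}<1$). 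The summands therefore change sign along the segment, and showing that the incoming gain beats the outgoing loss by a margin proportional to $l$ — uniformly over all transits, including those exiting along strong unstable directions — is precisely the unproven content of your step (iii).

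The paper's proof dissolves this bookkeeping with one identity: for the flow-invariant two-plane $F^c(x)=G^c(x)\oplus\langle X(x)\rangle$,
\[\log\|\psi^X_1|_{G^c(x)}\|=\log|\det(\Phi^X_1|_{F^c(x)})|+\log\|X(x)\|-\log\|X(\phi^X_1(x))\|.\]
Unlike the line $G^c(x)$, the two-plane $F^c(x)$ does remain in a small cone around the sectionally expanding subspace $E^{ws}(\si_i)\oplus E^u(\si_i)$ throughout the transit (this is where Lorenz-likeness enters), so each determinant term is at least $\al/2$ with $\al$ essentially $\sv(\si_i)$. Summing over $k=1,\dots,l$, the flow-speed terms telescope to $\log\|X(\phi^X_1(x))\|-\log\|X(\phi^X_{l+1}(x))\|$, which is bounded because entry and exit speeds are both comparable to $r$ (by the definition of $U_r(\si_i)$ as a component of $\{\|X\|<r\}$); and $V^i_r$ is chosen exactly so that $l$ is large enough to absorb this boundary term, giving $\eta_i=\al/4$. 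This identity is the missing ingredient that would replace your steps (ii)--(iv); without it, or an equivalent device, the incoming/outgoing cancellation you describe is not established.
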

\begin{proof}
	Let us consider the 2-dimensional bundle $F^c=G^c\oplus \langle X\rangle$ over $\La\setminus\Sing(X)$. The bundle $F^c$ is $\Phi^X_t$-invariant and one has
	\begin{equation}\label{eq:Fc-bundle}
		\log|\det(\Phi^X_1|_{F^c(x)})|=\log\|\psi^X_1|_{G^c(x)}\|+\log|	X(\phi^X_1(x))|-\log|X(x)|.
	\end{equation}
	Recall that all singularities in $\La$ are Lorenz-like and have the same index $\ind(\si)=\dim G^s+1$. Let $E^{ws}(\si_i)$ be the one-dimensional weak stable direction and $F(\si_i)=E^{ws}(\si_i)\oplus E^u(\si_i)$, then $\Phi^X_t|_{F(\si_i)}$ is sectionally expanding. Without loss of generality, let us assume that \[\log|\det(\Phi^X_1|_{S})|\geq \al>0\]
for any 2-dimensional subspace $S\subset F(\si_i)$. We can define the $F$-cones in a small neighborhood of $\si_i$. Then since $\si$ is Lorenz-like, $F^c(x)$ is contained in an arbitrarily small $F$-cone when $x$ is arbitrarily close to $\si_i$. By continuity, we can fix a small neighborhood $U$ of $\si_i$ such that $\log|\det(\Phi^X_1|_{F^c(x)})|\geq \al/2$ for any $x\in U$.
	
	Let $0<r\leq r_0$ be small enough such that $\phi^X_{-1}U_r(\si_i)\cup U_r(\si_i)\subset U$.
	Then for any $x\in \phi^X_{-1}U_r(\si_i)\setminus U_r(\si_i)$ with $\phi^X_k(x)\in U_r(\si_i)$ for $k=1,\ldots,l$ and $\phi^X_{l+1}(x)\not\in U_r(\si_i)$,
	\[\sum_{k=1}^{l}\log|\det(\Phi^X_1|_{F^c(\phi^X_k(x))})|\geq l\al/2,\]
	and hence by \eqref{eq:Fc-bundle}, one obtains
	\begin{equation}\label{eq:Gc-bundle}
		\sum_{k=1}^{l}\log\|\psi^X_1|_{G^c(\phi^X_k(x))}\|\geq l\al/2+\log|X(\phi^X_1(x))|-\log|X(\phi^X_{l+1}(x))|.
	\end{equation}
	Since the singularity $\si_i$ is hyperbolic, we can assume that $r$ is small enough such that $|X(y)|\geq r$ for any $y\in(\phi^X_{-1}U_r(\si_i)\cup \phi^X_1U_r(\si_i))\setminus U_r(\si_i)$. In particular, $|X(x)|\geq r$ and $|X(\phi^X_{l+1}(x)|\geq r$. Let $c=\min\{m(\Phi^X_1),m(\Phi^X_{-1})\}\in(0,1)$, where $m(\cdot)$ stands for the minimal norm. Then one has $c r\leq c|X(x)|\leq|X(\phi^X_1(x))|<r$ and similarly, $c r\leq c|X(\phi^X_{l+1}(x)|\leq|X(\phi^X_l(x))|<r$.  It follows that $c<|X(\phi^X_1(x))|/|X(\phi^X_{l+1}(x))|<1/c$. Let $V^i_r\subset U_r(\si_i)$ be a neighborhood of $\si_i$ so small that for the point $x$ as above, the condition $\{\phi^X_k(x):k=1,\ldots,l\}\cap V^i_r\neq\emptyset$ implies $l>\frac{4}{\al}\log\frac{1}{c}$. Then it follows from \eqref{eq:Gc-bundle} that
	\[\frac{1}{l}\sum_{k=1}^{l}\log\|\psi^X_1|_{G^c(\phi^X_k(x))}\|\geq\frac{\al}{4}.\]
	This concludes the proof.
\end{proof}

Now let us prove Claim \ref{clm:hyperbolic-points}.

\begin{proof}[Proof of Claim \ref{clm:hyperbolic-points}]
Let us suppose that $H^{**}$ does not accumulate on singularities, {\it i.e.}, there is an open neighborhood $U$ of $\Sing(X)$ such that $H^{**}\cap U=\emptyset$. Since every chain-transitive subset of $C(\si)$ contains a singularity (Lemma \ref{lem:non-singular-set}), there exists a sequence of points $x_l\in H^{**}$ such that the orbit segment $\phi^X_{[0,l+1]}(x_l)=\{\phi^X_t(x_l): t\in[0,l+1]\}$ has no intersection with $H^{**}$ other than $x_l$ itself.
By definition of the set $H^{**}$, see \eqref{eq:H-sstar-def}, we have $x_l\in H(\eta')$ but $\phi^X_j(x_l)\notin H(\eta')$ for each $j=1,\ldots, l+1$.
It follows that
\[\prod_{k=1}^j\|\psi^{X,*}_{-1}|_{G^u(\phi^X_k(x_l))}\|>\e^{-j\eta'}, \quad \forall j=1,\ldots, l+1.\]
As $G^c$ is dominated by $G^u$ with domination constant $T=1$ and $\eta'=\min\{\log(3/2), \eta/2\}$ (see Section \ref{sect:hyperbolic-pts}), one obtains
\[\prod_{k=0}^{j-1}\|\psi^{X,*}_1|_{G^c(\phi^{X}_{k}(x_l))}\|\leq \e^{-j\eta''},\quad \forall j=1,\ldots,l+1.\]
where $\eta''=\log (4/3)$ (see similar estimation in \eqref{eqn:intersection-2} and also \cite[page 990]{PuS}).
By choosing a subsequence, we can assume that $x_l\to x^*$. Then the inequality above implies that for all $l\geq 1$,
\begin{equation}\label{eqn:central-le-3}
\sum_{k=0}^{l-1}\log\|\psi^{X,*}_1|_{G^c(\phi^X_{k}(x^*))}\|\leq -l\eta''.
\end{equation}
Since singularities in $\La$ are Lorenz-like and have index equal to $\ind(\si)=\dim G^s+1$, the inequality \eqref{eqn:central-le-3} implies that $x^*$ is not contained in the stable manifold of any singularity. Let us assume the neighborhood $U$ is small and take a sequence $\{\frac{1}{l_j}\sum_{k=0}^{l_j-1}\de_{\phi^X_{k}(x^*)}\}_{j\in\NN}$ with $\phi^X_{l_j }(x^*)\notin U$. Without loss of generality, assume the sequence converges to a $\phi^X_1$-invariant measure $\nu$. Note that $x^*\not\in U$. Then following from \eqref{eqn:central-le-3}, there exists $C_1\in\RR$ such that
\begin{equation}\label{eqn:central-le-4}
	\sum_{k=0}^{l_j-1}\log\|\psi^X_1|_{G^c(\phi^X_{k}(x^*))}\|\leq -l_j\eta''+C_1,\quad \forall j\in\NN.
\end{equation}

For $r>0$ arbitrarily small and for each $\si_i$, one can fix a neighborhood $V^i_r\subset U_r(\si_i)$ of $\si_i$, given by Lemma \ref{lem:H-sstar}. Define $W^i_r$ as the union of connected orbit segments in $U_r(\si_i)$ that have nonempty intersection with $V^i_r$. Then $W^i_r$ is also a neighborhood of $\si_i$ and it shrinks to $\si_i$ as $r$ goes to zero. We can consider $r>0$ arbitrarily small such that $\nu(\partial W^i_r)=0$ for all $i$.
Let us assume that $\nu=a\nu_0+\sum_{i}b_i\de_i$, where $0\leq a,b_i\leq 1$, $a+\sum_i b_i=1$, $\nu_0(\Sing(X))=0$, and $\de_i$ is the Dirac measure supported on $\si_i\in \La\cap\Sing(X)$.
For each $j\in \NN$ and $\si_i\in\La\cap\Sing(X)$, we define
\[T^i_j=\{k:\phi^X_k(x^*)\in W^i_r, k=0,1,\ldots,l_j-1\},\]
and
\[S_j=\{k:\phi^X_k(x^*)\in \La\setminus \cup_i W^i_r, k=0,1,\ldots,l_j-1\}.\]
Let $t^i_j$ and $s_j$ be the number of elements in $T^i_j$ and $S_j$ respectively, for all $i$ and $j$. By \eqref{eqn:central-le-4} and Lemma \ref{lem:H-sstar},
\begin{align*}
	-\eta''+\frac{C_1}{l_j} &\geq\frac{1}{l_j}\sum_{k=0}^{l_j-1}\log\|\psi^X_1|_{G^c(\phi^X_{k}(x^*))}\|\\
	&= \frac{1}{l_j}\sum_{k\in S_j}\log \|\psi^X_1|_{G^c(\phi^X_{k}(x^*))}\|+\frac{1}{l_j}\sum_i\sum_{k\in T^i_j}\log \|\psi^X_1|_{G^c(\phi^X_{k}(x^*))}\|\\
	&\geq \frac{s_j}{l_j}\cdot\frac{1}{s_j}\sum_{k\in S_j}\log \|\psi^X_1|_{G^c(\phi^X_{k}(x^*))}\|+\sum_i \frac{t^i_j}{l_j}\cdot\eta_i
\end{align*}
where each $\eta_i>0$ is given by Lemma \ref{lem:H-sstar} for $\si_i\in\La\cap\Sing(X)$.
Note that $\frac{s_j}{l_j}\to \nu(\La\setminus\cup_i W^i_r)$ and $\frac{t^i_j}{l_j}\to \nu(W^i_r)$, when $j\to\infty$. Note also that $r>0$ can be chosen arbitrarily small. One can take $r>0$ small enough such that $\frac{s_j}{l_j}$ is arbitrarily close to $a$ and $\frac{t^i_j}{l_j}$ arbitrarily close to $b_i$ for each $i$ and for $j$ large enough. Then one obtains from the last inequality that $a>0$ and
\[\frac{1}{s_j}\sum_{k\in S_j}\log\|\psi^X_1|_{G^c(\phi^X_{k}(x^*))}\|\leq -\eta''/2.\]
Note that $S_j$ depends on $r$ and by letting $r\to 0$, we have $\frac{1}{s_j}\sum_{k\in S_j}\de_{\phi^X_k(x^*)}$ converges to $\nu_0$ in the weak* topology. The previous inequality then implies
\[\int\log\|\psi^X_1|_{G^c(x)}\|d\nu_0(x)\leq-\eta''/2.\]
Therefore, the measure $\nu_0$ (and hence $\nu$) has an ergodic component $\nu_1$ with a negative central Lyapunov exponent (and $\nu_1(\Sing(X))=0$), which is a contradiction to Lemma \ref{lem:ergodic-measure}.
\end{proof}

%
%
\subsubsection{The contradiction}\label{sect:contradiction}
By Claim \ref{clm:hyperbolic-points}, the set $H^{**}$ accumulates on singularities. Recall that all singularities in $C(\si)$ are Lorenz-like and have the same index. Without loss of generality, we assume that $\si\in\Cl(H^{**})\cap\Sing(X)$.
Note that $H^{**}\subset \La$. By Remark \ref{rmk:elp-dom}, there is a dominated splitting $E^u(\si)=E^{c}(\si)\oplus E^{uu}(\si)$ with respect to the tangent flow, where $\dim E^{c}(\si)=1$. Note that here $E^c(\si)$ is the one-dimensional {\em weak unstable} direction.

We denote by $E^{sc}(\si)=E^s(\si)\oplus E^c(\si)$, where $E^s(\si)$ is the stable subspace of $T_{\si}M$. Then $T_{\si}M=E^{sc}(\si)\oplus E^{uu}(\si)$ is a dominated splitting. By the plaque family theorem of \cite{HPS} (see also \cite[Section 3]{BW} and \cite[Section 3]{LVY}), there is a locally invariant plaque  $W^{sc}(\si)$ tangent to $E^{sc}(\si)$ at $\si$. For $r>0$ small, define $W^{sc}_r(\si)$ as the intersection of $W^{sc}(\si)$ with the ball $B_r(\si)$ of radius $r$, centered at $\si$. By changing the Riemannian metric around $\si$, we assume that the subspaces $E^s(\si)$, $E^c(\si)$ and $E^{uu}(\si)$ are mutually orthogonal.

Let us fix a small neighborhood $V_0$ of $\si$, on which we can define local cones at $\si$ {\em on the manifold} as follows:
\[D^{sc}_{\al}(\si)=\{x\in V_0:x^{uu}\leq \al x^{sc}\},\quad D^{uu}_{\al}(\si)=\{x\in V_0:x^{sc}\leq \al x^{uu}\}\]
where $\al>0$, $x^{uu}$ is the distance from $x$ to $W^{sc}(\si)$ and $x^{sc}$ the distance from $x$ to $W^{uu}_{loc}(\si)$.

The following results will be used to obtain the final contradiction and to finish the proof of Proposition \ref{prop:main-proposition}.

\begin{Lemma}\label{lem:contradiction}
	Suppose there exists a sequence of points $x_n\in H^{**}\cap D^{sc}_{\al}(\si)$ with $\al>0$ small enough such that $x_n\to\si$ as $n\to\infty$, then one has $\La=C(\si)$.
\end{Lemma}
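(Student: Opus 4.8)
\textbf{Proof plan for Lemma \ref{lem:contradiction}.}

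The plan is to exploit the fact that the points $x_n \in H^{**} \cap D^{sc}_{\al}(\si)$ carry uniform-size unstable plaques inside $\La$ (Claim \ref{clm:hyperbolic-points-La}), together with the local linear/cone structure near the Lorenz-like singularity $\si$, to show that $\La$ contains an open subset of the strong unstable manifold $W^{uu}_{loc}(\si)$. Once that is established, the genericity input from Lemma \ref{lem:gen_1} (item 1, applied along a fundamental domain of $W^{uu}(\si)$, which gives a residual set of points whose forward orbit is Lyapunov stable, combined with item 2) forces $\Cl(W^{uu}(\si)) \supset C(\si)$, and since $\La$ is a closed invariant set containing an open piece of $W^{uu}_{loc}(\si)$ and $\La \subset C(\si)$, one concludes $\La \supset C(\si)$, hence $\La = C(\si)$.

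First I would set up the local picture: after changing the metric as in Section \ref{sect:contradiction} so that $E^s(\si)$, $E^c(\si)$, $E^{uu}(\si)$ are mutually orthogonal, and after a small perturbation making $X$ $C^1$-linearizable near $\si$, I work in a linearizing neighborhood $V_0$. The key geometric observation: each $x_n \in H^{**}$ carries a plaque $\mathcal{G}^u_{\de_0\|X(x_n)\|}(x_n) \subset \La$ tangent to $G^u(x_n)$, of size proportional to $\|X(x_n)\|$; since $x_n \in D^{sc}_\al(\si)$ with $\al$ small and $x_n \to \si$, the flow direction $\langle X(x_n)\rangle$ converges into $E^{sc}(\si)$, and by Li-Gan-Wen's matching (Lemma \ref{lem:elp-dom} / Remark \ref{rmk:elp-dom}, applied with $\dim G^{cs} = \ind(\si) = \dim G^s + 1$) the bundle $G^u(x_n)$ converges to $\{L\}\times E^{uu}(\si)$ for $L = \lim\langle X(x_n)\rangle$. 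Then I follow the forward orbit of (a point on) the plaque $\mathcal{G}^u_{\de_0\|X(x_n)\|}(x_n)$: as it exits $V_0$, the linear flow expands the $E^{uu}$-component while keeping the plaque inside $\La$ (invariance of $\La$), and the uniform relative size $\de_0$ guarantees that the exit plaque has size bounded below independent of $n$. Taking $n \to \infty$ and using that $\La$ is closed, the Hausdorff limit of these exit plaques is a disk of definite size inside $\La$, lying on (a fundamental domain of) $W^{uu}(\si)$ — this is where the linearization and the cone condition $D^{sc}_\al$ are really used, to control that the plaques track $W^{uu}_{loc}(\si)$ and do not drift into the $E^c$ direction.

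With an open subset $D \subset W^{uu}_{loc}(\si)$ contained in $\La$ in hand, I invoke Lemma \ref{lem:gen_1}: since $C(\si)$ is nontrivial and Lyapunov stable, item 1 gives a residual $R$ in a fundamental domain $D^{uu}(\si)$ of $W^{uu}(\si)$ with $\Cl(\orb^+(x,X))$ Lyapunov stable for $x \in R$; picking $x \in R$ inside (the saturation of) $D$, Lyapunov stability of $\Cl(\orb^+(x))$ together with $x \in C(\si)$ and $C(\si)$ being the unique Lyapunov stable class through $\si$ forces $\Cl(\orb^+(x)) \supset C(\si)$, hence $\Cl(D) \supset C(\si)$ after saturating; more directly, item 2 of Lemma \ref{lem:gen_1} applies since $W^{uu}(\si) \cap C(\si) \setminus \{\si\} \supset D \setminus \{\si\} \ne \emptyset$, giving $\Cl(W^{uu}(\si)) \supset C(\si)$. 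Since $D \subset \La$, $\La$ closed and invariant, and $\orb^+(D)$ is dense in $\Cl(W^{uu}(\si))$ for a suitable choice, we get $C(\si) \subset \Cl(W^{uu}(\si)) \subset \La$, and combined with $\La \subset C(\si)$ this yields $\La = C(\si)$.

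\textbf{Main obstacle.} I expect the delicate point to be the plaque-propagation step: showing that the uniform-size unstable plaques $\mathcal{G}^u_{\de_0\|X(x_n)\|}(x_n) \subset \La$, when flowed forward out of the linearizing neighborhood $V_0$, limit onto a genuinely open piece of $W^{uu}_{loc}(\si)$ rather than degenerating (e.g. their exit sizes shrinking, or their limit lying in a lower-dimensional set, or drifting off $W^{uu}$ into the weak-unstable $E^c$ direction). This requires carefully coupling three facts — the cone condition $x_n \in D^{sc}_\al(\si)$, the matching $G^u(x_n) \to \{L\} \times E^{uu}(\si)$ from Lemma \ref{lem:elp-dom}, and the relative uniformity of plaque sizes (the rescaling $\|X(x_n)\|$ and Remark \ref{rmk:plaque-family}) — to get a lower bound on the exit-plaque size that is uniform in $n$, using the linear structure to make the expansion estimates explicit. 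Everything after that is a fairly direct application of Lemma \ref{lem:gen_1}.
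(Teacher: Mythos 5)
Your overall architecture matches the paper's: use the uniform unstable plaques $\cG^{u}_{\de_0\|X(x_n)\|}(x_n)\subset\La$ from Claim \ref{clm:hyperbolic-points-La} to show that $\La$ contains an open piece of an unstable-type invariant manifold of $\si$, then conclude $\La\supset C(\si)$ via Lemma \ref{lem:gen_1} together with chain transitivity and Lyapunov stability. That last step is fine. The gap is in the propagation step, which you rightly flag as the main obstacle but resolve incorrectly. The Hausdorff limit of the flowed-out plaques is in general an open subset of the \emph{unstable} manifold $W^u_{loc}(\si)$, not of the \emph{strong unstable} manifold $W^{uu}_{loc}(\si)$. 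The paper handles this by intersecting each plaque transversely with the locally invariant center-stable plaque $W^{sc}(\si)$ at a point $y_n$ and splitting into two cases: if some $y_n\in W^s_{loc}(\si)$, the $\la$-lemma indeed yields an open subset of $W^{uu}_{loc}(\si)$ inside $\La$; otherwise one iterates $y_n$ forward \emph{inside} $W^{sc}(\si)$, where the expansion is at most $\ga=e^{\tilde\eta}$ with $\tilde\eta$ strictly between the weak and strong unstable rates, so the residence time $k_n$ satisfies $\ga^{k_n}d(y_n,\si)\geq\vep$ and the iterated disks reach uniform size, converging to a disk $D^*\subset W^u_{loc}(\si)$ based at a point $z\neq\si$ and transverse to the flow. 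Tracking the center $x_n$ instead, as you propose, fails quantitatively: the $E^{uu}$-displacement of $x_n$ (small but nonzero in $D^{sc}_{\al}(\si)$) grows at the strong unstable rate, which exceeds the guaranteed cone expansion rate $\tilde\eta$, so the orbit of $x_n$ can be ejected from $B_r(\si)$ before the plaque attains definite size; a direct computation with exponents shows the exit-plaque diameter $\ga^{k_n}\|X(x_n)\|$ can tend to $0$. And in neither case does the limit disk lie on $W^{uu}_{loc}(\si)$; fortunately, an open subset of $W^u_{loc}(\si)$ suffices for the application of Lemma \ref{lem:gen_1}.

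A second, smaller but real problem: you cannot ``make $X$ $C^1$-linearizable near $\si$ by a small perturbation.'' The lemma concerns the fixed generic vector field $X$, its class $C(\si)$, and the set $H^{**}$ constructed from the physical-like measures of the approximating $C^2$ vector fields; perturbing $X$ at this stage destroys that entire setup. The paper avoids linearization altogether, working only with the cone fields $C^{sc}_{a}$, $C^{uu}_{b}$ and the Hirsch--Pugh--Shub plaque $W^{sc}(\si)$, and you should do the same.
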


\begin{Claim}\label{clm:loc-of-hyp-pts}
	For any $\al>0$, there exists a sequence of points $x_n\in H^{**}\cap D^{sc}_{\al}(\si)$ such that $x_n\to\si$ as $n\to\infty$.
\end{Claim}

Lemma \ref{lem:contradiction} and Claim \ref{clm:loc-of-hyp-pts} will be proven in Section \ref{sect:pf-contradiction} and Section \ref{sect:loc-of-hyp-pts}, respectively.
By Lemma \ref{lem:contradiction} and Claim \ref{clm:loc-of-hyp-pts}, one has $\La=C(\si)$ and it follows from Lemma \ref{lem:gen_1} that
\begin{equation}\label{eqn:contradiction}
	B(\La)\cap (E^s(\si)\oplus E^{uu}(\si))\setminus(E^s(\si)\cup E^{uu}(\si))\neq\emptyset.
\end{equation}
Recall that $B(\La)=\{L\in G^1: \exists x_n\in\La, \txtrm[such that] \langle X(x_n)\rangle\to L\}$.
Since there is a dominated splitting $\cN_{\La}= (G^s\oplus G^c)\oplus G^u$ of index $\ind(\si)$, it follows that $B(\La)$ also admits a dominated splitting of index $\ind(\si)$ (see Remark \ref{rmk:elp}). This is a contradiction\footnote{Alternatively, a contradiction can be obtained as follows. Since the central exponent of $\mu'_n$ is small (Section \ref{sect:small-central-le}), it follows from ergodic decomposition and Corollary \ref{cor:le-away-tang} that $\supp(\mu'_n)$ can be approximated by periodic orbits admitting a dominated splitting of index $\ind(\si)$. Then since $\supp(\mu'_n)\to C(\si)$, one can show that there exists a fundamental sequence $(\ga_n,X_n)\to (C(\si),X)$ which admits a dominated splitting of index $\ind(\si)$. This can be achieved by requiring the limit of the fundamental sequence to contain a point $x\in R$, where $R$ is the subset on the local strong unstable manifold of $\si$ given by Lemma \ref{lem:gen_1}. By Remark \ref{rmk:elp-dom}, one has
	\[\De(\fF)\cap(E^{s}(\si)\oplus E^{uu}(\si))\setminus(E^{s}(\si)\cup E^{uu}(\si))=\emptyset.\]
Note that $B(\La)\subset \De(\fF)$, the previous equation contradicts with \eqref{eqn:contradiction}.
} to Lemma \ref{lem:matching}. Thus we have proven Proposition \ref{prop:main-proposition}, owing only the proof of Lemma \ref{lem:contradiction} and Claim \ref{clm:loc-of-hyp-pts}.

\subsubsection{Proof of Lemma \ref{lem:contradiction}}\label{sect:pf-contradiction}
Let us first give some rough ideas for the proof of Lemma \ref{lem:contradiction}. The first step is to show that when $\al$ is small enough and $x_n\in H^{**}\cap D^{sc}_{\al}(\si)$ is close enough to $\si$, the unstable plaque $\cG^u_{n,\de_0\|X_n(x_n)\|}(x_n)$ intersects transversely with $W^{sc}(\si)$ at a point $y_n$. This is because we will have $X(x_n)\in C^{sc}_a(x_n)$ (Lemma \ref{lem:cone-relation}) and $G^u(x_n)\in C^{uu}_a(x_n)$ (Remark \ref{rmk:elp-dom}) with $a>0$ small, where $C^{sc}_a$ and $C^{uu}_a$ are cone fields on the tangent bundle on a small neighborhood of $\si$ (see definition below).
Then, if some $y_n$ is contained in the local stable manifold of $\si$, we show that $\La$ contains an open subset of the local strong unstable manifold of $\si$ (note that $\cG^u_{n,\de_0\|X_n(x_n)\|}(x_n)\subset\La$ for all $x_n$). Otherwise, we consider the forward iterates of $y_n$ and of a disk $D_n\subset\cG^u_{n,\de_0\|X_n(x_n)\|}(x_n)$ centered at $y_n$, and show that before getting away from a small neighborhood of $\si$, the iterates of the plaques $D_n$ contain disks with a uniform size that converge in the Hausdorff topology to a codimension-one disk $D$ of the local unstable manifold $W^u_{loc}(\si)$. Moreover, the disk $D$ is transverse to the flow direction. Hence $\La$ contains the set $\{\phi^X_t(x):x\in D,t\in(-s,s)\}$ for $s>0$ small, which is an open subset of $W^u_{loc}(\si)$. Then it follows from Lemma \ref{lem:gen_1} that $\La=C(\si)$.

Before the proof, we define the cones on the tangent bundle as follows. Assume that $V_0=B_{r_0}(\si)$ is small enough such that the splitting $T_{\si}M=E^{sc}(\si)\oplus E^{uu}(\si)$ can be extended to $V_0$ in a natural way by letting $E^{sc}(x)$ be the parallel translation of $E^{sc}(\si)$ to $x$ (as in local coordinates), and $E^{uu}(x)$ the parallel translation of $E^{uu}(\si)$ to $x$. Then one defines for each $a>0$ an $sc$-cone $C^{sc}_{a}$ on $V_0$ such that for any $x\in V_0$,
\[C^{sc}_{a}(x)=\{u=v+w\in T_xM:v\in E^{sc}(x),w\in E^{uu}(x),\|w\|\leq a\|v\|\},\]
and similarly a $uu$-cone $C^{uu}_{a}$ on $V_0$ such that for any $x\in V_0$,
\[C^{uu}_{a}(x)=\{u=v+w\in T_xM: v\in E^{sc}(x), w\in E^{uu}(x),\|v\|\leq a\|w\|\}.\]
The next lemma follows from the smoothness of $X$, see \cite[Lemma 3.1]{PYY2}.
\begin{Lemma}\label{lem:cone-relation}
There exists $L\geq 1$ and a neighborhood $V_1\subset V_0$ of $\si$, such that for all $\al>0$ small enough, for any $x\in V_1$,
\begin{itemize}
\item if $x\in D^{sc}_{\al}(\si)$, we have $X(x)\in C^{sc}_{L\al}(x)$;
\item if $X(x)\in C^{sc}_{\al}(x)$, then $x\in D^{sc}_{L\al}(\si)$.
\end{itemize}
Moreover, the same holds for $D^{uu}_{\al}(\si)$ and $C^{uu}_{\al}$.
\end{Lemma}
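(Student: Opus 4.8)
The statement is local at $\si$ and, as indicated, it amounts to the familiar fact that a vector field vanishing at a hyperbolic singularity is, near that singularity, governed up to a bounded distortion by its linear part; I would follow \cite[Lemma 3.1]{PYY2}. First I would pass to local coordinates on a ball $V_0$ centered at $\si$ in which the two (transverse, at least $C^1$) invariant manifolds $W^{sc}(\si)$ and $W^{uu}_{loc}(\si)$ are straightened to the coordinate subspaces $E^{sc}(\si)$ and $E^{uu}(\si)$, with the metric the orthogonal product metric, so that writing $x=(v,w)$ with $v\in E^{sc}(\si)$, $w\in E^{uu}(\si)$ one has exactly $x^{sc}=\|v\|$, $x^{uu}=\|w\|$, and $C^{sc}_a(x)=\{(v',w'):\|w'\|\le a\|v'\|\}$, $C^{uu}_a(x)=\{(v',w'):\|v'\|\le a\|w'\|\}$. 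After an arbitrarily small perturbation, exactly as in the proof of Lemma \ref{lem:lorenz-like}, $\si$ may be assumed non-resonant, hence $C^1$-linearizable near $\si$, and in these coordinates $X(v,w)=(Av,Bw)$ where $A$, $B$ are the diagonal blocks of $DX(\si)$ on $E^{sc}(\si)$ and $E^{uu}(\si)$; both are invertible because $\si$ is hyperbolic — recall $E^{sc}(\si)=E^s(\si)\oplus E^c(\si)$ with $E^c$ the weak unstable line, so no zero eigenvalue occurs in either block. More generally it suffices that $X$ be $C^2$: then $W^{sc}(\si)$, $W^{uu}_{loc}(\si)$ are $C^2$, the straightening chart can be taken $C^2$, $X$ stays $C^1$ in it with $X_2(v,0)\equiv 0$, $X_1(0,w)\equiv 0$, $DX(\si)=\mathrm{diag}(A,B)$, and writing the first-order remainder in integral form one obtains on a small enough $V_1\subset V_0$ the two-sided bounds $\tfrac12 m(A)\|v\|\le\|X_1(v,w)\|\le 2\|A\|\,\|v\|$ and $\tfrac12 m(B)\|w\|\le\|X_2(v,w)\|\le 2\|B\|\,\|w\|$ for all $(v,w)\in V_1$, where $m(\cdot)$ is the minimal norm.

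Granting these bounds the lemma is a one-line computation. If $x=(v,w)\in D^{sc}_\al(\si)\cap V_1$, i.e.\ $\|w\|\le\al\|v\|$, then the $E^{uu}$-component of $X(x)$ has norm $\le 2\|B\|\,\|w\|\le 2\|B\|\,\al\,\|v\|\le \frac{4\|B\|}{m(A)}\,\al\,\|X_1(v,w)\|$, so $X(x)\in C^{sc}_{L\al}(x)$ with $L=4\|B\|/m(A)$. Conversely, if $X(x)\in C^{sc}_\al(x)$ then $\tfrac12 m(B)\|w\|\le\|X_2(v,w)\|\le\al\|X_1(v,w)\|\le 2\al\|A\|\,\|v\|$, whence $x^{uu}=\|w\|\le \frac{4\|A\|}{m(B)}\,\al\,\|v\|=\frac{4\|A\|}{m(B)}\,\al\,x^{sc}$, i.e.\ $x\in D^{sc}_{L'\al}(\si)$ with $L'=4\|A\|/m(B)$. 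Taking $L$ to be the maximum of these two constants (enlarged to be $\ge 1$) gives the first two assertions; the statements for $D^{uu}_\al(\si)$ and $C^{uu}_a$ follow by the identical argument after interchanging the roles of $(v,A,sc)$ and $(w,B,uu)$.

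The one delicate point is the first step: for a merely $C^1$ vector field the invariant manifolds are only $C^1$, so straightening them by a $C^1$ chart leaves $X$ of class $C^0$, and the linear lower bound $\|X_1(v,w)\|\ge c\|v\|$ — on which the whole estimate rests — may then fail. I expect this to be the main obstacle, and the route around it is exactly the one above: either invoke $C^1$-linearizability of $\si$, legitimate after an arbitrarily small perturbation (cf.\ the proof of Lemma \ref{lem:lorenz-like}), or, when the lemma is applied to the $C^2$ approximating vector fields, exploit that the straightening chart can then be taken $C^2$ so that $X$ remains $C^1$. Once this is settled, everything else is constant-chasing absorbed into $L$, including the harmless metric-distortion factors if one prefers not to normalize the metric to a product near $\si$.
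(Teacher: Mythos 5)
Your proposal is correct and follows exactly the route the paper intends: the paper gives no proof of Lemma \ref{lem:cone-relation}, stating only that it ``follows from the smoothness of $X$'' and citing \cite[Lemma 3.1]{PYY2}, and that argument is precisely your Taylor/integral-remainder comparison of the components of $X$ with the block-diagonal linear part $\mathrm{diag}(A,B)$ in coordinates adapted to $E^{sc}(\si)\oplus E^{uu}(\si)$. Your discussion of the regularity issue (straightening the invariant manifolds versus $C^1$-linearization versus $C^2$ smoothness) correctly identifies the only delicate point and resolves it in the same way the surrounding text does elsewhere (adjusting the metric near $\si$ and working with the $C^2$ approximating fields).
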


Now let us prove Lemma \ref{lem:contradiction}.

\begin{proof}[Proof of Lemma \ref{lem:contradiction}]
Let us be more precise with the local dynamics near the singularity $\si$. Let $\eta^c$ be the Lyapunov exponent in the direction $E^{c}(\si)$, and $\eta^u$ be the smallest Lyapunov exponent along $E^{uu}(\si)$. We have $0<\eta^c<\eta^u$.
Let  $\eta^c<\tilde{\eta}<\eta^u$, and $\ga=e^{\tilde{\eta}}$.
Let us fix a constant $0<r\leq r_0$ (recall that $V_0=B_{r_0}(\si)$)   small enough, along with other constants given below, such that the following properties are satisfied:
\begin{enumerate}[(a)]
	\item \label{p:a}
		By continuity, there is a cone field $C^{uu}_{b}$ with $b>0$ small such that for any $x\in B_r(\si)$ it satisfies
			\begin{equation}\label{eqn:expansion}
				\|Df(u)\|\geq \ga\|u\|,\quad \forall u\in C^{uu}_{b}(x),
			\end{equation}
 where $f=\phi^X_1$.
	\item \label{p:b}
		By domination, there exists $\kappa\in(0,1)$ such that if $\phi_t(x)\in B_r(\si)$ for all $0\leq t\leq 1$, then $Df(C^{uu}_{b}(x))\subset C^{uu}_{\kappa b}(f(x))$.
	\item \label{p:c}
		Note that we have assume the subspaces $E^{s}(\si)$, $E^c(\si)$ and $E^{uu}(\si)$ to be mutually orthogonal (Section \ref{sect:contradiction}).
		By Lemma \ref{lem:elp-dom} (see also Remark \ref{rmk:elp-dom}) and continuity of the dominated splitting $\cN_{\La}=(G^s\oplus G^c)\oplus G^u$, there exists $a>0$ such that for any $x\in(\La\setminus\Sing(X))\cap B_r(\si)$ and $X(x)\in C^{sc}_{a}(x)$, one has $G^u(x)\subset C^{uu}_{b/2}(x)$.
	\item \label{p:d}
	
		Recall that for any $r'>0$ small, $W_{r'}^{sc}(\si)$ is the connected component containing $\si$ of the intersection $W^{sc}(\si)\cap B_{r'}(\si)$. Then $W^{sc}_{r}(\si)$ is tangent to the cone field $C^{sc}_{a}$ and contains the local stable manifold $W^s_{r}(\si)=W^s_{loc}(\si)\cap B_r(\si)$.
			By reducing $a>0$ (and hence $r$), we can further assume that for any $x\in W^{sc}_{r}(\si)$ and any non-zero $v\in T_xW^{sc}_{r}(\si)$, it holds $\|Df(v)\|<\ga\|v\|$.
	\item \label{p:e}
		There exist $0<\de'\leq\de_0$ (recall that $\de_0$ is a uniform constant for the plaque family $\cG^{u}_{\de_0\|X(x)\|}(x)$) such that for any $x\in H(\eta')\cap B_r(\si)$ (recall that $H^{**}\subset H(\eta')$), if $X(x)\in C^{sc}_{a}(x)$, then one has $G^u(x)\subset C^{uu}_{b/2}(x)$ (by \ref{p:c}) and hence $\mathcal{G}^{u}_{\de'\|X(x)\|}(x)$ is tangent to $C^{uu}_{b}$. Moreover, assuming $b$ is small and by further reducing $a>0$ if necessary, $\mathcal{G}^{u}_{\de'\|X(x)\|}(x)$ has a unique and transverse intersection with $W^{sc}_{r}(\si)$ at $y\in \mathcal{G}^{u}_{(\de'/2)\|X(x)\|}(x)$ (see \cite[Lemma 3.5]{PYY2}).
	\item \label{p:f}
		By local invariance, there exists $\vep>0$ such that $f(W^{sc}_{\vep}(\si))\subset W^{sc}_{r/2}(\si)$ and $f^{-1}(W^{sc}_{\vep}(\si))\subset W^{sc}_{r/2}(\si)$.
\end{enumerate}

As in the statement of the lemma, suppose there exists a sequence of points $x_n\in H^{**}\cap D^{sc}_{\al}(\si)$ with $\al$ arbitrarily small and $x_n\to\si$ as $n\to\infty$, then we can assume that $x_n\in B_r(\si)$ and $X(x_n)\in C^{sc}_a(x_n)$ for all $n$ (see Lemma \ref{lem:cone-relation}).
Hence by \ref{p:e}, $\mathcal{G}^{u}_{\de'\|X(x_n)\|}(x_n)$ is tangent to $C^{uu}_b$ and has a transverse intersection with $W^{sc}_{r}(\si)$ at $y_n\in \mathcal{G}^{u}_{(\de'/2)\|X(x_n)\|}(x_n)$.
Note that by Claim \ref{clm:hyperbolic-points-La}, we have $\mathcal{G}^{u}_{\de'\|X(x_n)\|}(x_n)\subset \La$ for all $n$.

If there exists $n$ such that $y_n$ is contained in the local stable manifold of $\si$, then by the $\la$-lemma and \ref{p:b}, one can show that $\La$ contains an open set of the local strong unstable manifold $W^{uu}_{loc}(\si)$.

Now suppose $y_n$ is not contained in the local stable manifold of $\si$ for all $n$, we are going to show that $\La$ contains an open disk in the local unstable manifold $W^u_{loc}(\si)$.
As $x_n\to \si$, one has $y_n\in W^{sc}_{\vep}(\si)$ for $n$ large enough. Moreover, $y_n\to \si$ as $n\to\infty$.
Let $k_n\geq 1$ be such that $f^i(y_n)\in W^{sc}_{\vep}(\si)$ for all $0\leq i\leq k_n-1$ and $z_n=f^{k_n}(y_n)\in W^{sc}_{r/2}(\si)\setminus W^{sc}_{\vep}(\si)$ (see \ref{p:f}). Then $k_n\to\infty$ as $n\to\infty$.
By choosing a subsequence, let us assume $z_n\to z \in \Cl(W^{sc}_{r/2}(\si)\setminus W^{sc}_{\vep}(\si))$. Then $f^{-i}(z)\in \Cl(W^{sc}_{r/2}(\si))$ for all $i\geq 0$ and $z\neq\si$. This implies that $z\in W^u_{loc}(\si)$.

Let $\vep_n=d(y_n,\si)$.
By \ref{p:d}, for any $x\in W^{sc}_{r}(\si)$ and any non-zero $v\in T_xW^{sc}_{r}(\si)$, one has $\|Df(v)\|<\ga\|v\|$. Therefore, $z_n=f^{k_n}(y_n)\in W^{sc}_{\vep'_n}(\si)$, where $\vep'_n=\ga^{k_n}\vep_n$. This implies that $\ga^{k_n}\vep_n\geq\vep$, hence
\begin{equation}\label{eqn:estimates-k_n}
	\ga^{k_n}\geq \vep/\vep_n.
\end{equation}

Since  $y_n\in \mathcal{G}^{u}_{(\de'/2)\|X(x_n)\|}(x_n)$, there is a disk $D_n\subset \mathcal{G}^{u}_{\de'\|X(x_n)\|}(x_n)$ centered at $y_n$ with diameter $\de'\|X(x_n)\|$ for the induced metric on $\mathcal{G}^{u}_{\de'\|X(x_n)\|}(x_n)$.
Let us consider its iterates $f^i(D_n)$ for $0\leq i\leq k_n$. See Figure \ref{fig:local-dynamics} for illustration.
\begin{figure}[hbtp]
	\centering
	\includegraphics[width=.6\textwidth]{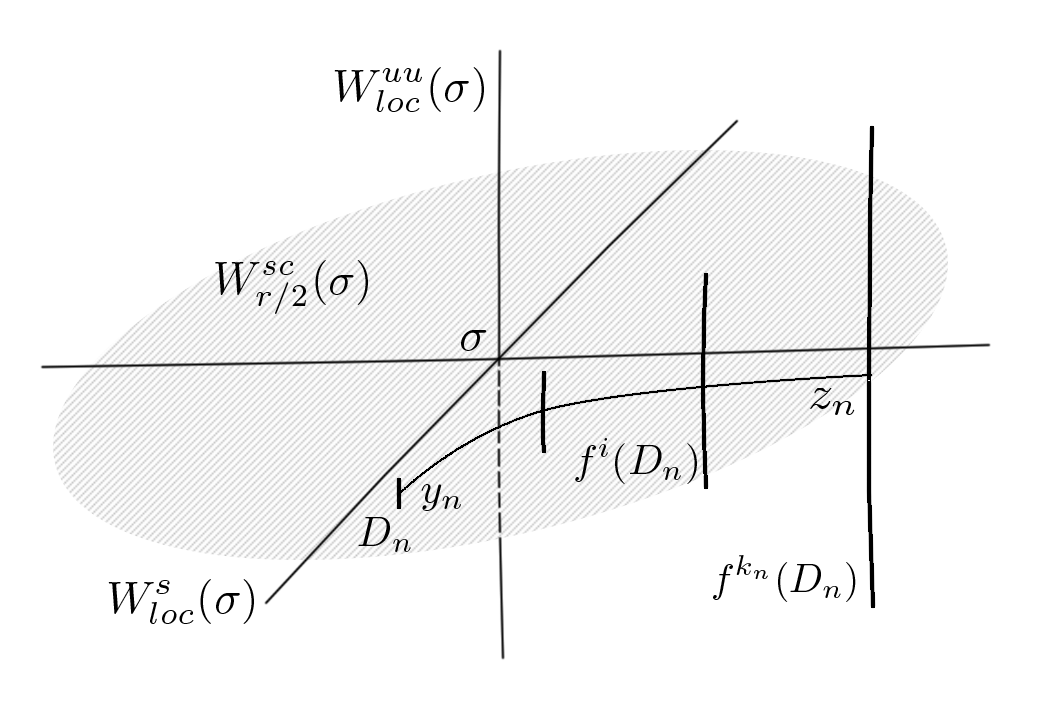}	
	\caption{Local dynamics around the singularity}\label{fig:local-dynamics}
\end{figure}

Since $D_n$ is tangent to $C^{uu}_b$ (see \ref{p:e}), the inequality \eqref{eqn:expansion} and \ref{p:b} imply that $f^{k_n}(D_n)\cap B_r(\si)$ contains a $C^1$ disk $D'_n$ of diameter
\[d_n=\min\{\vep^*, \ga^{k_n}\de'\|X(x_n)\|\},\]
where $\vep^*>0$ is the infimum of interior diameters of all $(\dim M-\ind(\si)-1)$-dimensional $C^1$ disks contained in $B_{r}(\si)$ that is tangent to $C^{uu}_b$, intersecting with $B_{r/2}(\si)$ and having its boundary contained in the boundary of $B_{r}(\si)$. By hyperbolicity of $\si$, there exist $K,K'>0$, such that for all $n$ large enough,
\[\|X(x_n)\|\geq Kd(x_n,\si)\geq K'd(y_n,\si)=K'\vep_n.\]
It follows from \eqref{eqn:estimates-k_n} that
\[\ga^{k_n}\|X(x_n)\|\geq K'\vep.\]
Hence $d_n\geq \min\{\vep^*, K'\de'\vep\}$ for all $n$ large enough. Therefore, we can assume that $D'_n$ converges in the Hausdorff topology to a $C^1$ disk $D^*\subset B_r(\si)$ with diameter $d^*>0$. One can show that $f^{-k}(D^*)\subset B_r(\si)$ and is tangent to $C^{uu}_{b}$ for all $k\in \NN$, which implies that $D^*\subset W^u_{loc}(\si)$ and it contains an open disk $D^{**}$ that is transverse to the flow. Since $D_n\subset \La$ for all $n$, we have $D^{**}\subset \La$, and it follows that $\La$ contains the set $\{\phi^X_t(x):x\in D^{**},t\in(-s,s)\}$ for $s>0$ small, which is an open subset of $W^u_{loc}(\si)$.

Therefore, we have shown that either $\La$ contains an open set of the local strong unstable manifold $W^{uu}_{loc}(\si)$ or it contains an open set of the local unstable manifold $W^u_{loc}(\si)$.
Then it follows from Lemma \ref{lem:gen_1} that $\La=C(\si)$. This completes the proof.
\end{proof}
\subsubsection{Proof of Claim \ref{clm:loc-of-hyp-pts}}
\label{sect:loc-of-hyp-pts}

Recall that $H^{**}\subset H(\eta')$ and the latter is the set of $(\eta',G^u)$-$\psi^{X,*}_t$-expanding points in $\La\setminus\Sing(X)$.
To prove Claim \ref{clm:loc-of-hyp-pts}, we will show that for any $\al>0$, if a point $x\in H(\eta')$ is close enough to $\si$, then there exists a backward iterates $x'=\phi^X_{-t}(x)$ contained in $D^{sc}_{\al}(\si)$ such that $x'$ is $(\eta_0,G^u)$-$\psi^{*}_t$-expanding for some $\eta_0$ slightly smaller than $\eta'$. Moreover, $x'$ is arbitrarily close to $\si$ if $x$ is.

We observe firstly that for a point $x$ arbitrarily close to $\si$ and contained in an arbitrarily small local cone $D^{uu}_{\be}(\si)$, the vector field at $x$ is arbitrarily close to $E^{uu}(\si)$ (see Lemma \ref{lem:cone-relation}) and $G^u(x)$ is arbitrarily close to $E^{sc}(\si)$ (assuming $E^{sc}(\si)$ and $E^{uu}(\si)$ are orthogonal to each other). Since the bundle $E^{sc}(\si)$ is dominated by $E^{uu}(\si)$, one can see that $\psi_{-t}^{X,*}(v)=\frac{\psi^X_{-t}(v)}{\|\Phi^X_{-t}|_{\langle X(x)\rangle}\|}$ is never contracting for a nonzero vector $v\in G^{u}(x)$ and $t>0$ as long as the orbit segment from $x$ to $\phi^{X}_{-t}(x)$ remains in the local cone $D^{uu}_{\be}(\si)$. This implies that $x\not\in H(\eta')$. See \cite[Lemma 3.9]{PYY2} for a similar argument with more details. Thus we obtain the following lemma.

\begin{Lemma}\label{lem:bad-region}
	There exist $r>0$ and $\be>0$ such that $H(\eta')\cap B_r(\si)\cap D^{uu}_{\be}(\si)=\emptyset$.
\end{Lemma}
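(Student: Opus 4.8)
The statement to prove is Lemma~\ref{lem:bad-region}: there exist $r>0$ and $\be>0$ such that $H(\eta')\cap B_r(\si)\cap D^{uu}_{\be}(\si)=\emptyset$. The idea is to quantify the heuristic already given in the paragraph preceding the statement: if a point $x$ lies very close to $\si$ and inside a thin $uu$-cone $D^{uu}_{\be}(\si)$, then both the flow direction $\langle X(x)\rangle$ and the bundle $G^u(x)$ are pinned near the singular subspaces, and domination of $E^{sc}(\si)$ by $E^{uu}(\si)$ forces $\psi^{X,*}_{-t}$ to be non-contracting along $G^u$ for as long as the backward orbit remains in the thin cone; this contradicts $x\in H(\eta')$, i.e., the property of being $(\eta',G^u)$-$\psi^{X,*}_t$-expanding, which by definition says that the backward orbit has strictly exponential contraction along $G^u$ for $\psi^{*}_t$ (equivalently, expansion for $\psi^{-X,*}_t$) along a suitable time partition.

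\textbf{Key steps.} First, since $\si$ is Lorenz-like, there is a dominated splitting $E^u(\si)=E^c(\si)\oplus E^{uu}(\si)$ with $\dim E^c(\si)=1$ (Remark~\ref{rmk:elp-dom}), and $T_\si M=E^{sc}(\si)\oplus E^{uu}(\si)$ with $E^{sc}(\si)=E^s(\si)\oplus E^c(\si)$ is a dominated splitting for the tangent flow, with $E^{sc}$ \emph{dominated by} $E^{uu}$ (the strong unstable subspace expands strictly faster). Fix a small neighborhood $V_0=B_{r_0}(\si)$ on which $E^{sc}$ and $E^{uu}$ extend by parallel translation, with the cone fields $C^{sc}_a$ and $C^{uu}_a$ on the tangent bundle as in Section~\ref{sect:contradiction}. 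Second, by Lemma~\ref{lem:cone-relation}, for $\be>0$ small and $x\in B_r(\si)\cap D^{uu}_{\be}(\si)$ one has $X(x)\in C^{uu}_{L\be}(x)$, so the flow direction lies in an arbitrarily thin $uu$-cone; consequently the normal bundle $\cN_x=\langle X(x)\rangle^\perp$ is arbitrarily close (as $\be\to 0$ and $r\to 0$) to $E^{sc}(\si)$, and by continuity of the dominated splitting $\cN_\La=(G^s\oplus G^c)\oplus G^u$ over $\La$ (which extends to a neighborhood, c.f.~Remark~\ref{rmk:fundamental-sequences} and \cite{BDV05}), $G^u(x)$ is arbitrarily close to $E^{sc}(\si)$. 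Third — this is the quantitative heart — choose $r,\be$ small enough that: (i) along any forward orbit segment $\phi^X_{[0,\tau]}(x)$ staying inside $B_r(\si)\cap D^{uu}_{\be}(\si)$, one has $X(\phi^X_t(x))\in C^{uu}_{L\be}$ and $G^u(\phi^X_t(x))$ inside a thin $sc$-cone, and (ii) by domination of $E^{sc}$ by $E^{uu}$ together with a continuity/cone estimate, for any unit $v\in G^u(x)$ and any $t$ with $\phi^X_{[0,t]}(x)\subset B_r(\si)\cap D^{uu}_{\be}(\si)$,
\[
\|\psi^{X,*}_{-t}|_{G^u(x)}\|=\frac{\|\psi^X_{-t}(v)\|}{\|\Phi^X_{-t}|_{\langle X(x)\rangle}\|}\ \ge\ 1,
\]
because the flow direction (being in the $uu$-cone) is expanded at least as fast backward-in-time-contracted as anything in $G^u(x)$ (which is in the $sc$-cone, hence slower): the ratio is $\ge e^{-\mathrm{o}(1)\,t}\ge 1$ for thin cones, with the $\mathrm{o}(1)$ depending only on $\be,r$ and shrinking to $0$. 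Fourth, feed this into the definition of $H(\eta')$: if $x\in H(\eta')$ then $x$ is $(1,\eta',T,G^u)$-$\psi^{X}_t$-expanding, i.e. $(1,\eta',T,G^u)$-$\psi^{-X,*}_t$-contracting, so there is a time partition $0=t_0<t_1<\cdots$ with $t_{i+1}-t_i\le T$, $t_n\to\infty$, and $\prod_{i=0}^{n-1}\|\psi^{-X,*}_{t_{i+1}-t_i}|_{G^u(\phi^{-X}_{t_i}(x))}\|\le e^{-\eta' t_n}$, which in the original time direction means $\prod \|\psi^{X,*}_{-(t_{i+1}-t_i)}|_{G^u}\|\le e^{-\eta' t_n}\to 0$. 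If $x\in B_r(\si)\cap D^{uu}_{\be}(\si)$ is close enough to $\si$, the backward orbit stays in this thin-cone region for all time (invariance of the local unstable-type cone under $\phi^X_{-t}$, as in \ref{p:b}), so by step three every partial product is $\ge 1$ — contradiction. Hence no such $x$ exists, giving the lemma for the chosen $r,\be$.

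\textbf{Main obstacle.} The delicate point is making step three uniform: I must ensure that once $x$ is sufficiently close to $\si$ (not merely inside the thin cone), the \emph{entire} backward orbit $\{\phi^X_{-t}(x):t\ge 0\}$ remains inside $B_r(\si)\cap D^{uu}_{\be}(\si)$, so that the non-contraction estimate applies over all times appearing in the $H(\eta')$-partition. This is a local-invariance argument for the backward flow near the partially hyperbolic singularity: the $uu$-cone $D^{uu}_{\be}(\si)$ is negatively invariant near $\si$ (points there move toward $\si$ under $\phi^X_{-t}$ while staying in the cone, by domination), so it suffices to take the starting point close enough. This is exactly the mechanism exploited in \cite[Lemma~3.9]{PYY2}, and I would cite that for the technical cone-invariance and the norm comparison, adapting notation. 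A secondary care point is the passage between $\psi^{X,*}_{-t}$ estimates and the time-partition formulation in Definition~\ref{def:contracting-at-period}/the $H(\eta')$ definition, but this is routine given Remark~\ref{rmk:elp} identifying the extended and ordinary rescaled linear Poincar\'e flows on regular points, and the fact that on each short subinterval $[t_i,t_{i+1}]$ of length $\le T$ the norm $\|\psi^{X,*}_{-(t_{i+1}-t_i)}|_{G^u}\|$ is bounded below by $1$ up to the thin-cone error, whose product over $n$ steps stays bounded below away from $0$.
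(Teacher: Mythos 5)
Your mechanism is the right one and is essentially the paper's: in a thin $uu$-cone near $\si$ the flow direction is pinned near $E^{uu}(\si)$, so the rescaling denominator $\|\Phi^X_{-t}|_{\langle X(x)\rangle}\|$ contracts backward at the strong-unstable rate, and domination of $E^{sc}(\si)$ by $E^{uu}(\si)$ then prevents $\psi^{X,*}_{-t}|_{G^u}$ from contracting; this is incompatible with $x\in H(\eta')$. However, the step you yourself single out as the main obstacle is resolved by a false claim. The cone $D^{uu}_{\be}(\si)$ is \emph{forward} invariant near $\si$, not negatively invariant: under $\phi^X_{-t}$ the $E^{uu}$-coordinate of a point contracts at rate roughly $e^{-\eta^u t}$ while the $E^c$-coordinate contracts only at rate $e^{-\eta^c t}$ (and the $E^s$-coordinate grows), so the ratio $x^{sc}/x^{uu}$ \emph{increases} backward in time at rate about $e^{(\eta^u-\eta^c)t}$. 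Every point of $D^{uu}_{\be}(\si)$ off $W^{uu}_{loc}(\si)$ therefore leaves the cone (and eventually $B_r(\si)$) in backward time; worse, a point sitting at the edge of the cone leaves $D^{uu}_{2\be}(\si)$ within a time bounded independently of its distance to $\si$. So you cannot assert that every partial product in the $H(\eta')$-partition is $\ge 1$, and the contradiction with the full decay $\prod\le e^{-\eta' t_n}$ for large $n$ does not follow as written. The correct way to close the argument is to use only the \emph{initial} portion of the partition: the definition of $H(\eta')$ already forces $\|\psi^{X,*}_{-t_1}|_{G^u(x)}\|\le e^{-\eta' t_1}<1$ with $0<t_1\le T=1$, and over a time interval of length at most $T$ the backward orbit stays in $B_{r}(\si)\cap D^{uu}_{K\be}(\si)$ by continuity, where your non-contraction estimate (made uniform, e.g.\ via an adapted metric, or by choosing $\be$ much smaller than the reference cone width so that the sojourn time in the reference cone exceeds the time needed to beat the multiplicative constant) yields $\|\psi^{X,*}_{-t}|_{G^u(x)}\|\ge 1$ — a contradiction. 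Equivalently, one argues by contradiction with sequences $x_n\to\si$, passes to a limit direction $L\subset E^{uu}(\si)$ in the Grassmannian, and contradicts the $(\eta',G^u)$-expansion of $L$ for the extended rescaled linear Poincar\'e flow; this is exactly what \cite[Lemma 3.9]{PYY2} does and what the paper invokes.

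A secondary inaccuracy: it is not true that $G^u(x)$ lies in a thin $sc$-cone (nor that $\cN_x$ is close to $E^{sc}(\si)$ — the dimensions do not even match in general). By the matching lemma, for $L\subset E^{uu}(\si)$ one has $G^{cs}(L)=E^s(\si)$ and hence $G^u(L)=E^c(\si)\oplus(E^{uu}(\si)\cap L^{\perp})$, which contains a hyperplane of $E^{uu}(\si)$; vectors of $G^u(x)$ lying in directions of $E^{uu}$ stronger than $L$ are in fact contracted by $\psi^{X,*}_{-t}$. Your pointwise estimate $\|\psi^{X,*}_{-t}(v)\|\ge\|v\|$ for \emph{all} $v\in G^u(x)$ is therefore false. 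What saves the argument is that only the operator norm $\|\psi^{X,*}_{-t}|_{G^u(x)}\|$ matters, and the single weak direction $E^c(\si)\subset G^u(L)$, which is backward-contracted strictly slower than $L\subset E^{uu}(\si)$, already forces this norm to be $\ge 1$ (indeed to grow like $e^{(\eta^u-\eta^c)t}$). You should phrase the comparison through that one direction rather than through a containment of $G^u$ in the $sc$-cone.
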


Moreover, as shown in \cite[Lemma 3.9]{PYY2} (although in a different setting), there exists $\al>0$ such that the local cone $D^{sc}_{\al}(\si)$ is a ``good'' region for $H(\eta')$ in the sense that if an orbit segment $\phi^{X}_{[-T,0]}(x)$ is contained in $D^{sc}_{\al}(\si)$ and is close enough to $\si$, then $\psi^{X,*}_{-t}|_{G^u(x)}$ is contracting for $t\in [0,T]$. We refer to $D^{uu}_{\be}(\si)$ as the ``bad'' region for $H(\eta')$. See Figure \ref{fig:location}.

\begin{figure}[hbtp]
	\centering
	\includegraphics[width=.5\textwidth]{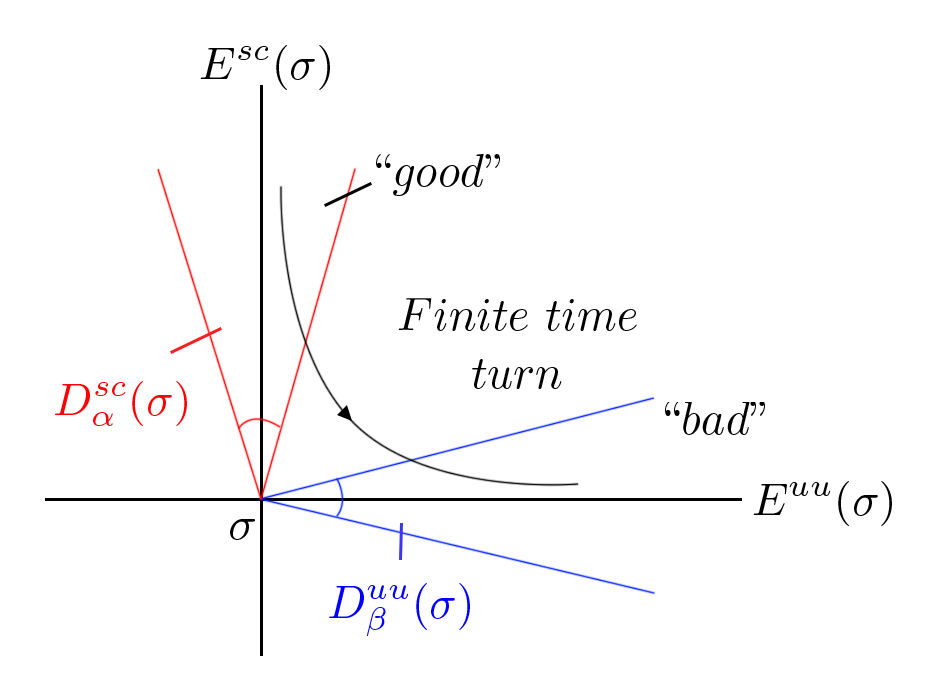}	
	\caption{The ``good'' region, ``bad'' region and finite time turn}\label{fig:location}
\end{figure}

The following lemma shows that there exists a constant $T_{\al,\be}>0$ such that any point $x$ outside the ``bad'' region $D^{uu}_{\be}(\si)$ and close enough to $\si$ goes backward under the flow to the ``good'' region $D^{sc}_{\al}(\si)$ within a time bounded by $T_{\al,\be}$.

\begin{Lemma}\label{lem:finite-time-turn}
	Given $r>0$ small and $\al,\be>0$, there exists $T_{\al,\be}>0$ such that for any $x\in B_r(\si)\setminus D^{uu}_{\be}(\si)$, there exists $t\in [0, T_{\al,\be}]$ with $\phi^X_{-t}(x)\in D^{sc}_{\al}(\si)$.
\end{Lemma}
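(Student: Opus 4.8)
\textbf{Proof proposal for Lemma~\ref{lem:finite-time-turn}.}

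The plan is to analyze the local dynamics near $\si$ using the linearization (or $C^1$ normal form) of $X$ in a neighborhood of $\si$, exploiting the partially hyperbolic splitting $T_{\si}M = E^s(\si)\oplus E^c(\si)\oplus E^{uu}(\si)$ with $E^c(\si)$ one-dimensional and $\eta^c < \eta^u$ (where $\eta^c$ is the exponent along $E^c$ and $\eta^u$ the smallest exponent along $E^{uu}$). First I would fix coordinates $(x^s, x^c, x^{uu})$ centered at $\si$, so that the cone $D^{uu}_{\be}(\si)$ is $\{x^{sc}\le\be\,x^{uu}\}$ and $D^{sc}_{\al}(\si)$ is $\{x^{uu}\le\al\,x^{sc}\}$, where $x^{sc}=\|(x^s,x^c)\|$. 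The key observation is that in backward time, $E^{uu}(\si)$ is the strong stable direction: for a point $x\in B_r(\si)\setminus D^{uu}_{\be}(\si)$ (so $x$ is not too deep in the $E^{uu}$ cone), the $x^{uu}$-coordinate contracts backward much faster than $x^{sc}$, because of the domination $\eta^{uu}>\eta^c\ge$ (exponents on $E^{sc}$ that are less than $\eta^{uu}$). Concretely, along the backward orbit $\phi^X_{-t}(x)$, one has $x^{uu}(t) \le C e^{-\eta^u t} x^{uu}(0)$ while $x^{sc}(t) \ge c e^{-\lambda t} x^{sc}(0)$ for some $\lambda<\eta^u$ (the largest backward exponent on $E^{sc}$, which is $-\eta^c$ in absolute terms or could be dominated by the unstable contribution — the point is it is strictly smaller than $\eta^u$). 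Hence the ratio $x^{uu}(t)/x^{sc}(t)$ decays at a definite exponential rate, so after a uniformly bounded time it drops below $\al$.

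The key steps, in order, are: (1) Fix a linearizing (or strongly adapted) chart on $B_{r_0}(\si)$ so that the tangent cone fields $C^{sc}_a$, $C^{uu}_b$ and the manifold-cones $D^{sc}_\al$, $D^{uu}_\be$ correspond, via $\exp_\si$, to genuine cone conditions on coordinates (this uses Lemma~\ref{lem:cone-relation} for the translation between cones on the manifold and cones on the tangent bundle). (2) Use the domination of $E^{sc}(\si)$ by $E^{uu}(\si)$ under the tangent flow to get constants $0<\lambda<\eta^u$ and $C_0\ge 1$ such that, whenever $\phi^X_{-s}(x)\in B_r(\si)$ for all $s\in[0,t]$, the coordinate ratio satisfies $x^{uu}(t)/x^{sc}(t) \le C_0 e^{-(\eta^u-\lambda)t}\,x^{uu}(0)/x^{sc}(0)$. (3) Since $x\notin D^{uu}_{\be}(\si)$, we have $x^{uu}(0)/x^{sc}(0) \le 1/\be$; therefore as soon as $t$ exceeds $T_{\al,\be} := \frac{1}{\eta^u-\lambda}\log\frac{C_0}{\al\be}$, the ratio is below $\al$, i.e.\ $\phi^X_{-t}(x)\in D^{sc}_{\al}(\si)$ — provided the backward orbit has not yet left $B_r(\si)$. (4) Handle the possibility that the backward orbit leaves $B_r(\si)$ before time $T_{\al,\be}$: here one uses that for $r$ small, the time it takes the backward orbit of a point in $B_r(\si)\setminus D^{uu}_\be(\si)$ to escape $B_r(\si)$ is itself bounded below (by hyperbolicity of $\si$ the flow speed $\|X\|$ is comparable to the distance to $\si$, and escaping requires the $x^{sc}$-coordinate to grow by a definite factor, which takes time at least of order $\frac{1}{\lambda}\log(\text{const})$); by choosing $x$ close enough to $\si$ — or more precisely, by noting that we only need the conclusion for $x$ in some smaller ball $B_{r'}(\si)$, shrinking $r$ if necessary — one arranges that the orbit stays in $B_r(\si)$ at least until time $T_{\al,\be}$, or else it already passed through $D^{sc}_\al(\si)$. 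Actually the cleanest formulation: choose $r$ small so that the transit time of any orbit segment through $B_r(\si)\setminus(D^{sc}_\al\cup D^{uu}_\be)$ exceeds $T_{\al,\be}$ is \emph{not} what we want; rather, we want that within time $T_{\al,\be}$ either we enter $D^{sc}_\al$ or we have already exited, and in the latter case we claim exit cannot happen through the complement of $D^{sc}_\al$ in backward time because that would contradict the ratio estimate — so exit in backward time, if it occurs within $B_r$, occurs while the point is in $D^{sc}_\al(\si)$, giving the conclusion.

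The main obstacle I anticipate is step (4): reconciling the two competing timescales, namely the escape time from $B_r(\si)$ versus the time $T_{\al,\be}$ needed for the ratio to drop below $\al$. One must ensure $T_{\al,\be}$ can be taken uniform (independent of $x$) while still guaranteeing the orbit has not escaped $B_r(\si)$ prematurely in a way that bypasses $D^{sc}_\al$. The resolution should be that escaping $B_r(\si)$ in backward time necessarily means the $x^{sc}$ component has grown to order $r$, but since the ratio $x^{uu}/x^{sc}$ is monotonically (up to bounded multiplicative error) decreasing along the backward orbit, once it is below $\al$ it stays below $\al$; and it drops below $\al$ within time $T_{\al,\be}$ regardless. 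So the orbit, while still in $B_r(\si)$, enters $D^{sc}_\al(\si)$ at some time $t\le T_{\al,\be}$ — unless it leaves $B_r(\si)$ first, but leaving $B_r(\si)$ forces $x^{sc}$ large, which (given $x^{uu}\le Ce^{-\eta^u t}x^{uu}(0) \le Cr$) means the ratio is already $\le Cr/(\text{order }r) = O(1)$, and a short additional argument (or simply the monotonicity of the ratio together with its value at the escape time) shows it is in fact below $\al$ by then if $r$ is chosen small relative to $\al,\be$. I would also need to double-check that the argument from \cite[Lemma 3.9]{PYY2}, referenced just above, provides exactly the cone-invariance and ratio-contraction statements needed, so that this lemma becomes a fairly direct quantitative consequence of estimates already developed there; the write-up should cite that lemma and the domination constants rather than re-deriving them.
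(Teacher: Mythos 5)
The paper does not actually prove this lemma: it states ``Lemma~\ref{lem:finite-time-turn} is essentially proven in [PYY2, Lemma 3.2]. We omit its proof here.'' So there is nothing internal to compare against, but your argument is precisely the standard one that the cited reference carries out: along a backward orbit near $\si$ the $E^{uu}$-coordinate contracts at rate at least $\eta^u$ while the $E^{sc}$-coordinate contracts at rate at most $\eta^c<\eta^u$ (the $E^s$ part even expands), so the ratio $x^{uu}/x^{sc}$ decays like $e^{-(\eta^u-\eta^c)t}$ up to a multiplicative constant; starting from the bound $1/\be$ guaranteed by $x\notin D^{uu}_{\be}(\si)$, it drops below $\al$ within a time $T_{\al,\be}$ depending only on $\al$, $\be$ and the domination constants. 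That is correct, and your steps (1)--(3) are a faithful quantitative skeleton of it.

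The one genuinely delicate point is the one you wrestle with in step (4): the backward orbit may leave $B_r(\si)$ (or the domain $V_0$ on which the cones $D^{sc}_{\al}$, $D^{uu}_{\be}$ are even defined) before the ratio reaches $\al$. This can really happen for a point whose $E^s$-component is already of order $r$: it escapes in backward time after a bounded time depending only on $r/r_0$, and whether the ratio has dropped below $\al$ by then depends on how small $r$ is compared with $\al$. Your final resolution --- that $r$ must be taken small relative to $\al,\be$, equivalently that the statement is really about points close enough to $\si$ --- is the right one, and it is consistent both with the paper's own informal gloss immediately before the lemma (``any point $x$ ... \emph{close enough to} $\si$ goes backward ... within a time bounded by $T_{\al,\be}$'') and with the only place the lemma is used (Lemma~\ref{lem:enlarge-hyperbolic-pts}, where the points $x_n\to\si$, so the backward orbit segment of length $T_{\al,\be}$ stays in $B_r(\si)$). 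In a final write-up you should state this hypothesis explicitly rather than leave it implicit in the quantifier ``$r$ small'', and you should rewrite step (4): as drafted it contains a self-contradictory sentence (``choose $r$ small so that ... is \emph{not} what we want'') and reads as exploratory notes rather than a proof. The underlying mathematics, however, is sound.
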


Lemma \ref{lem:finite-time-turn} is essentially proven in \cite[Lemma 3.2]{PYY2}. We omit its proof here.

Thus, to prove Claim \ref{clm:loc-of-hyp-pts}, the idea goes as follows.  Let $x_n\in H^{**}$ be a sequence of points that converges to $\si$ as $n\to\infty$. By Lemma \ref{lem:bad-region}, for every $n$ large the point $x_n$ is not contained in the ``bad'' region; then by Lemma \ref{lem:finite-time-turn} there is a uniform upper bound $T_0$ such that every $x_n$ goes backward into the ``good'' region within a time less than $T_0$. This allows us to obtain for every $n$ large a point $x'_n$ on the backward orbit of $x_n$ and in the ``good'' region such that it is $(\eta_0, G^u)$-$\psi^*_t$-expanding for some $\eta_0$ slightly smaller than $\eta'$. This is a standard argument, see \cite[Section 3]{PYY2}. More precisely, we have the following lemma.

\begin{Lemma}\label{lem:enlarge-hyperbolic-pts}
Suppose there is a sequence of points $x_n\in H(\eta')$ that converges to $\si$. Then for any $\eta_0\in (0,\eta')$, any $\al>0$, there is a sequence $x'_n$ such that
	\begin{itemize}
		\item each $x'_n$ is on the backward orbit of $x_n$,
		\item the sequence $x'_n$ converges to $\si$ as $n\to\infty$,
		\item for every $n$ large, $x'_n$ is $(\eta_0,G^u)$-$\psi^*_t$-expanding and $x'_n\in D^{sc}_{\al}(\si)$.
	\end{itemize}
\end{Lemma}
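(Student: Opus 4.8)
\textbf{Proof proposal for Lemma \ref{lem:enlarge-hyperbolic-pts}.}

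The plan is to combine the two "localization" lemmas that precede this statement with a standard Pliss-type selection argument, carried out backwards along the orbit. First I would record the setup: by hypothesis $x_n \to \si$ with $x_n \in H(\eta')$, i.e.\ each $x_n$ is $(1,\eta',1,G^u)$-$\psi^{X,*}_t$-expanding. Fix $r>0$ small and $\be>0$ as in Lemma \ref{lem:bad-region}, so that $H(\eta')\cap B_r(\si)\cap D^{uu}_{\be}(\si)=\emptyset$; for $n$ large, $x_n\in B_r(\si)$, hence $x_n\notin D^{uu}_{\be}(\si)$. Next I would apply Lemma \ref{lem:finite-time-turn} to get a uniform $T_{\al,\be}>0$ and, for each large $n$, a time $t_n\in[0,T_{\al,\be}]$ with $\phi^X_{-t_n}(x_n)\in D^{sc}_{\al}(\si)$. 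Since $t_n$ is uniformly bounded and $x_n\to\si$, continuity of the flow forces $\phi^X_{-t_n}(x_n)\to\si$ as well.

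The heart of the argument is to promote $x_n\in H(\eta')$ (an expanding point with exponent $\eta'$ at every iterate going forward) to a point $x'_n$ on its \emph{backward} orbit which is still expanding, with a slightly weakened exponent $\eta_0<\eta'$, \emph{and} lies in the good cone $D^{sc}_{\al}(\si)$. The mechanism is the following: being $(1,\eta',1,G^u)$-$\psi^{X,*}_t$-expanding means
\[
\prod_{i=0}^{j-1}\|\psi^{X,*}_{-1}|_{G^u(\phi^X_{-i}(x_n))}\|\le \e^{-j\eta'},\qquad \forall j\ge 1.
\]
Write $a_i = \log\|\psi^{X,*}_{-1}|_{G^u(\phi^X_{-i}(x_n))}\|$, so $\sum_{i=0}^{j-1}a_i\le -j\eta'$ for all $j$, and $|a_i|\le C_0$ for a uniform $C_0$ (uniform continuity of the rescaled linear Poincar\'e flow, as in Lemma \ref{lem:pliss-for-measure}). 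A one-sided Pliss/telescoping estimate then shows that for \emph{every} $m\ge 0$ one has $\sum_{i=m}^{j-1}a_i\le -(j-m)\eta_0$ for all $j>m$ with $j-m$ larger than a threshold depending only on $C_0,\eta',\eta_0$ — and in fact, since the running average never exceeds $-\eta'<-\eta_0$, a cleaner statement holds: for each $m$ there is $j_m\ge m$ with $\phi^X_{-j_m}(x_n)$ being $(1,\eta_0,1,G^u)$-$\psi^{X,*}_t$-expanding, where we can take $j_m$ within a bounded distance of $m$. Applying this with $m = \lceil t_n\rceil$ (or with $m$ the integer part of the turning time $t_n$ given by Lemma \ref{lem:finite-time-turn}), I obtain an integer $j_n$ with $j_n$ within a uniformly bounded distance of $t_n$, hence $j_n\le T_{\al,\be}+O(1)=:T_0$, such that $x'_n:=\phi^X_{-j_n}(x_n)$ is $(1,\eta_0,1,G^u)$-$\psi^{X,*}_t$-expanding.

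It then remains to guarantee $x'_n\in D^{sc}_{\al}(\si)$ simultaneously, and that $x'_n\to\si$. For the convergence: $j_n\le T_0$ uniformly and $x_n\to\si$, so $\phi^X_{-j_n}(x_n)\to\si$ by uniform continuity of $\phi^X_{-t}$ on $[0,T_0]$. For the cone membership: here I would invoke the quantitative version of the "good region" analysis from \cite[Section 3]{PYY2} already cited in the excerpt — namely, choosing $\al>0$ and $r>0$ small enough, the backward orbit of a point close to $\si$ that is expanding for $G^u$ must, within the bounded time window $[0,T_0]$, actually pass through $D^{sc}_{\al}(\si)$; one selects the Pliss index $j_n$ precisely at such a passage time (this is exactly how Lemma \ref{lem:finite-time-turn} is used in tandem with the Pliss selection in \cite{PYY2}). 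Slightly more carefully: Lemma \ref{lem:finite-time-turn} produces \emph{some} $t_n\le T_{\al,\be}$ with $\phi^X_{-t_n}(x_n)\in D^{sc}_{\al}(\si)$; the Pliss estimate, because it produces infinitely many expanding times with controlled gaps, produces one such time within a bounded window of $\lceil t_n\rceil$, but to land \emph{inside} the cone I would instead run the Pliss argument starting from the time $t_n$ itself and use local invariance/continuity of the cone field $D^{sc}_{\al}$ to absorb the $O(1)$ discrepancy (shrinking $\al$ and $r$). I expect the main obstacle to be this coordination step — making the Pliss-selected expanding time coincide with (or be forced into) the good cone $D^{sc}_{\al}(\si)$ rather than merely being close to it — which is handled exactly as in \cite[Section 3]{PYY2} and is the reason the statement is phrased with "$\eta_0\in(0,\eta')$" and "$\al>0$" both allowed to be small; the rest is bookkeeping with uniform constants.
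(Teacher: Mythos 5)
Your setup (Lemma \ref{lem:bad-region}, Lemma \ref{lem:finite-time-turn}, a Pliss-type selection along the backward orbit) matches the paper's ingredients, and you have correctly located the hard point. But the coordination step you flag as ``the main obstacle'' is a genuine gap, not bookkeeping, and neither of your two proposed fixes is carried out or correct as stated. Route (i) --- ``select the Pliss index precisely at a passage time'' --- is not available: Lemma \ref{lem:finite-time-turn} produces \emph{some} $t_n\le T_{\al,\be}$ with $\phi^X_{-t_n}(x_n)\in D^{sc}_{\al}(\si)$, and there is no reason a Pliss time coincides with it. Your intermediate claim that the expanding property propagates to \emph{every} $m$ after a threshold is also false (one can have $\sum_{i=0}^{j-1}a_i\le -j\eta'$ for all $j$ while a long block of $a_i=0$ starts at some $m$); only the weaker statement survives, namely that the first Pliss time $j_m\ge m$ satisfies $j_m\le m(C_0-\eta_0)/(\eta'-\eta_0)$, which is bounded only because your $m$ is. Route (ii) then requires that backward flowing the cone point $\phi^X_{-t_n}(x_n)$ for the remaining time $j_n-t_n$ keeps it in $D^{sc}_{\al}(\si)$; this is a backward-invariance statement for the \emph{position} cone near $\si$ that you never prove, and ``continuity of the cone field to absorb the discrepancy'' does not give it --- each transfer between the position cone $D^{sc}_{\al}(\si)$ and the direction cone $C^{sc}_{\al}$ via Lemma \ref{lem:cone-relation} loses a factor $L$, so a bounded number of back-and-forths cannot by itself land you in $D^{sc}_{\al}(\si)$ for the \emph{prescribed} $\al$.

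The paper closes this gap by a structurally different mechanism: it does \emph{not} stop after a bounded backward time. Because $x_n\to\si$, the backward orbit of $x_n$ lingers in a shrinking neighborhood $V_n$ of $\si$ for a time $l_n\to\infty$, and the paper sets $x'_n=\phi^X_{-k_n-T_0}(x_n)$ with $k_n\to\infty$ chosen so that $l_n-k_n\ge\xi(k_n+T_0)$. Two facts replace your Pliss selection. First, once the flow direction enters the cone $C^{sc}_{\al_0}$ at $y_n=\phi^X_{-T_0}(x_n)$, \emph{every} further backward unit step inside $B_r(\si)$ satisfies $\|\psi^*_{-1}|_{G^u}\|\le e^{-\eta_0}$ (inequality \eqref{eqn:hp-1}), so the partial products over the lingering window need no selection at all; the products beyond that window are controlled by telescoping against the $\eta'$-expansion at $x_n$, and the choice of $k_n$ makes the finite defect $(k_n+T_0)(\kappa-\eta')$ absorbable, yielding the constant $C=1$. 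Second, the cone membership for arbitrary $\al$ comes from iterating the \emph{direction} cone: $\Phi_{-1}$ contracts $C^{sc}_{\al_0}$ toward $E^{sc}(\si)$, so after $k_n\to\infty$ steps $\langle X(x'_n)\rangle$ enters $C^{sc}_{\al/L}$ and Lemma \ref{lem:cone-relation} gives $x'_n\in D^{sc}_{\al}(\si)$. In short, the unbounded backward time is what makes ``any $\al>0$'' achievable and what lets the single-step estimate in the good region substitute for the Pliss-time/cone coordination you could not supply.
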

\begin{proof}

Let $r>0$ and $\be>0$ be given by Lemma \ref{lem:bad-region}, then $x_n\notin B_r(\si)\cap D^{uu}_{\be}(\si)$ for all $n$.
By changing the Riemannian metric around $\si$, we assume that $E^{sc}(\si)\perp E^{uu}(\si)$.
Let us consider any subsequence $x_{n_k}$ such that $\langle X(x_{n_k})\rangle\to\Ga\in T_{\si}M$. Then $\Ga$ is $(\eta',G^u)$-$\tilde\psi^*_t$-expanding.
By Lemma \ref{lem:cone-relation}, $\Ga$ is not contained in the interior of $C^{uu}_{\be/L}(\si)$. If we consider the $\al$-limit of $\Ga$ under the map $\hat{f}=\hat{\Phi}^X_1$, which we denote by $\al(\Ga,X)$, then either $\al(\Ga,X)=E^{c}(\si)$ or $\al(\Ga,X)\subset E^s(\si)$. Note that $\si$ is Lorenz-like, hence in the latter case, $\al(\Ga,X)$ is in fact the one-dimensional weak stable direction. In both cases, we have $G^u(\al(\Ga,X))=E^{uu}(\si)$ by Remark \ref{rmk:elp-dom}. As $\Ga$ is $(\eta',G^u)$-$\tilde{\psi}^*_t$-expanding, so is $\al(\Ga,X)$. It follows that given $\eta_0\in(0,\eta')$, there exist $\al_0>0$, $N_0>0$, $T_0=T_{\al_0,\be}>1$ (we may assume that $T_0$ is an integer) such that for any $n>N_0$, let $y_n=\phi_{-T_0}(x_n)$, then
\begin{itemize}
\item $\{\phi_{-s}(x_n): 0\leq s\leq T_0\}\subset B_r(\si)$ and $X(y_n)\in C^{sc}_{\al_0}(y_n)$;
\item for any $t\geq 0$, if $\{\phi_{-s}(y_n):0\leq s\leq t+1\}\subset B_r(\si)$ then $X(\phi_{-t}(y_n))\in C^{sc}_{\al_0}(\phi_{-t}(y_n))$ and
\begin{equation}\label{eqn:hp-1}
\|\psi^*_{-1}|_{G^u(\phi_{-t}(y_n))}\|\leq \e^{-\eta_0}.
\end{equation}
\end{itemize}
By compactness, there is $\kappa\geq\eta'$ such that for any $n>N_0$, if $\{\phi_{-s}(x_n): 0\leq s\leq T_0+1\}\subset B_r(\si)$, then for any $0\leq t\leq T_0$, $0\leq s\leq 1$,
\begin{gather}
-\log\|\psi^*_{-T_0}|_{G^u(x_n)}\|\leq \kappa T_0,\label{eqn:hp-2}\\
-\log\|\psi^*_{-s}|_{G^u(\phi_{-t}(x_n))}\|\leq \kappa . \label{eqn:hp-3}
\end{gather}
Since $x_n\to\si$, there is a sequence of neighborhoods $B_r(\si)\supset V_1\supset V_2\supset\cdots\supset V_n\supset\cdots\supset\{\si\}$, $\cap_{n\geq 1}V_n=\{\si\}$, and correspondingly a sequence of positive integers $l_n\to\infty$, such that $\{\phi_{-s}(x_n):0\leq s\leq l_n+T_0\}\subset V_n$, for all $n>N_0$. Denote $\xi=(\kappa-\eta')/(\eta'-\eta_0)$. Take $N_1\geq N_0$, such that for $n>N_1$, we have
\[ k_n=\lfloor(l_n-\xi T_0)/(\xi+1)\rfloor>0.\]
Let $x'_n=\phi_{-k_n-T_0}(x_n)=\phi_{-k_n}(y_n)$, then by \eqref{eqn:hp-2} and \eqref{eqn:hp-3}, for any $l\geq l_n-k_n$,
\begin{align*}
\sum_{i=0}^{l-1} & \log\|\psi^*_{-1}|_{G^u(\phi_{-i}(x'_n))}\|\\
&=\left(\log\|\psi^*_{-T_0}|_{G^u(x_n)}\|+\sum_{i=0}^{l+k_n-1}\log\|\psi^*_{-1}|_{G^u(\phi_{-i}(y_n))}\|\right)\\
&\ \ \ -\left(\log\|\psi^*_{-T_0}|_{G^u(x_n)}\|+\sum_{i=0}^{k_n-1}\log\|\psi^*_{-1}|_{G^u(\phi_{-i}(y_n))}\|\right)\\
&\leq -(l+k_n+T_0)\eta'+(k_n+T_0)\kappa  \\
&\leq -l\eta_0.
\end{align*}
The last inequality and \eqref{eqn:hp-1} implies that $x'_n$ is $(\eta_0,G^u)$-$\psi^*_t$-expanding. By the way we choose $l_n$, $x'_n\in V_n$, hence $x'_n\to \si$. Moreover, since $l_n\to\infty$, $k_n\to\infty$, the contraction of cone $C^{sc}_{\al_0}$ by $\Phi_{-1}$ implies that any limit direction of $\langle X(x'_n)\rangle$ is contained in $E^{sc}(\si)$. In particular, for any $\al>0$, $X(x'_n)\in C^{sc}_{\al/L}(x'_n)$ for all $n$ large enough and by Lemma \ref{lem:cone-relation}, $x'_n\in D^{sc}_{\al}(\si)$.
\end{proof}

Note that $\eta'\in (0, \min\{\log(3/2), \eta/2\})$ was chosen arbitrarily (see Section \ref{sect:hyperbolic-pts}).
Let us write $H^{**}=H^{**}(\eta')$ to emphasize its dependence on the constant $\eta'$.
Then one can show by definition that for any $0<\eta_0< \eta_1<\min\{\log(3/2), \eta/2\}$, it holds $H(\eta_1)\subset H(\eta_0)$ and $H^{**}(\eta_1)\subset H^{**}(\eta_0)$.
Let us fix $\eta_1\in (\eta',\min\{\log(3/2), \eta/2\})$. We have $H^{**}(\eta_1)$ accumulates on a singularity (Claim \ref{clm:hyperbolic-points}), say $\si$. Let $x_n$ be a sequence in $H^{**}(\eta_1)$ such that $x_n\to \si$. Then by Lemma \ref{lem:enlarge-hyperbolic-pts}, for any $\al>0$, there exists a sequence $x'_n$ such that:
\begin{itemize}
	\item $x'_n$ is on the backward orbit of $x_n$,
	\item $x'_n\to\si$ as $n\to\infty$,
	\item $x'_n$ is $(\eta',G^u)$-$\phi^*_t$-expanding and $x'_n\in D^{sc}_{\al}(\si)$ for $n$ large.
\end{itemize}
Since $x_n\in H^{**}(\eta_1)\subset H^{**}(\eta')$, one has by definition that there exists a sequence $t_k\to+\infty$ such that $\phi^X_{-t_k}(x_n)\in H^*(\eta')\setminus U_0$ (see Section \ref{sect:hyperbolic-pts}). This property also holds for $x'_n$ as it is on the backward orbit of $x_n$. Then one concludes that $x'_n\in H^{**}(\eta')$ for $n$ large and Claim \ref{clm:loc-of-hyp-pts} holds for $H^{**}=H^{**}(\eta')$.

By ending the proof of Claim \ref{clm:loc-of-hyp-pts}, we also end the proof of Proposition \ref{prop:main-proposition}.

\vspace{0.5cm}
\noindent Shaobo Gan, School of Mathematical Sciences, Peking University, Beijing 100871, China\\
E-mail address: gansb@pku.edu.cn
\vspace{0.5cm}

\noindent Jiagang Yang, Departamento de Geometria, Instituto de Matem\'{a}tica e
Estat\'{i}stica, Universidade Federal Fluminense, Niter\'{o}i, Brazil\\
E-mail address: yangjg@impa.br
\vspace{0.5cm}

\noindent Rusong Zheng, Joint Research Center on Computational Mathematics and Control, Shenzhen MSU-BIT University, Shenzhen 518172, China\\
E-mail address: zhengrs@smbu.edu.cn

\end{document}